\documentclass{amsart}
\usepackage{amsmath,amssymb}
\usepackage{mathrsfs}
\newtheorem{df}{Definition}
\newtheorem{prp}{Proposition}
\newtheorem{thm}{Theorem}
\newtheorem{lem}{Lemma}
\newtheorem{rem}{Remark}
\newtheorem{ex}{Example}
\newcommand{\relmiddle}[1]{\mathrel{}\middle#1\mathrel{}}
\numberwithin{equation}{section}
\numberwithin{figure}{section}
\numberwithin{prp}{section}
\numberwithin{df}{section}
\numberwithin{thm}{section}
\numberwithin{lem}{section}
\numberwithin{rem}{section}
\numberwithin{ex}{section}

\begin{document}

\title[Crystal interpretation of a formula on the branching rule]
{Crystal interpretation of a formula on the branching rule of types $B_{n}$, $C_{n}$, and $D_{n}$}
\author{Toya Hiroshima}
\address{Department of Pure and Applied Mathematics,
Graduate School of Information Science and Technology,
Osaka University,
1-5 Yamadaoka, Suita, Osaka 565-0871, Japan}
\email{t-hiroshima@ist.osaka-u.ac.jp}
\date{}

\begin{abstract}
The branching coefficients of the tensor product of finite-dimensional irreducible $U_{q}(\mathfrak{g})$-modules, where $\mathfrak{g}$ is $\mathfrak{so}(2n+1,\mathbb{C})$ ($B_{n}$-type), $\mathfrak{sp}(2n,\mathbb{C})$ ($C_{n}$-type), and $\mathfrak{so}(2n,\mathbb{C})$ ($D_{n}$-type), are expressed in terms of Littlewood-Richardson (LR) coefficients in the stable region.
We give an interpretation of this relation by Kashiwara's crystal theory 
by providing an explicit surjection from the LR crystal of type $C_{n}$ to the disjoint union of Cartesian product of LR crystals of $A_{n-1}$-type and by proving that LR crystals of types $B_{n}$ and $D_{n}$ are identical to the corresponding LR crystal of type $C_{n}$ in the stable region.
\end{abstract}

\subjclass[2010]{Primary~05E10; Secondary~20G42}
\keywords{Kashiwara crystals, Littlewood-Richardson crystals, Kashiwara-Nakashima tableaux, Branching rule}

\maketitle

\section{Introduction}

The generalized Littlewood-Richardson (LR) rule in Kashiwara's crystal theory~\cite{Kas1,Kas2} 
is one of the most remarkable applications of crystals to the representation theory of quantum groups.
Let $U_{q}(\mathfrak{g})$ be the quantum group of classical Lie algebra $\mathfrak{g}$ 
and let $V_{q}(\Tilde{\lambda})$ be the finite-dimensional irreducible $U_{q}(\mathfrak{g})$-module of a dominant integral weight $\Tilde{\lambda}$, where $\mathfrak{g}$ is $\mathfrak{so}(2n+1,\mathbb{C})$ ($B_{n}$-type), $\mathfrak{sp}(2n,\mathbb{C})$ ($C_{n}$-type), and $\mathfrak{so}(2n,\mathbb{C})$ ($D_{n}$-type).
Let $\lambda$ be the Young diagram (partition) corresponding to $\Tilde{\lambda}$.
The generalized LR rule asserts that the multiplicity of $V_{q}(\Tilde{\lambda})$ in the tensor product 
$V_{q}(\Tilde{\mu})\otimes V_{q}(\Tilde{\nu})$ is given by the cardinality of the LR crystal.
The multiplicity $d_{\mu\nu}^{\lambda}$ 
is expressed by the celebrated LR coefficients as~\cite{Kin,Koi}
\begin{equation} \label{eq:KK1}
d_{\mu\nu}^{\lambda}=
{\textstyle\sum\limits_{\xi,\zeta,\eta\in\mathcal{P}_{n}}}
c_{\xi\zeta}^{\lambda}c_{\zeta\eta}^{\mu}c_{\eta\xi}^{\nu}
\end{equation}
in the stable region, i.e., $l(\mu)+l(\nu)\leq n$, 
where $l(\lambda)$ denotes the length of $\lambda$ and $\mathcal{P}_{n}$ denotes the set of all Young diagrams with at most $n$ rows.
The LR coefficient itself is also given by the cardinality of the LR crystal of type $A$.

In this paper, we give an interpretation of Eq.~\eqref{eq:KK1} in terms of crystals. 
More precisely, we construct an explicit surjection from the LR crystal of $C_{n}$-type whose cardinality is the left-hand side of Eq.~\eqref{eq:KK1} to the disjoint union of the Cartesian product of LR crystals of $A_{n-1}$-type corresponding to 
$\sum_{\xi,\zeta,\eta\in\mathcal{P}_{n}} c_{\xi\zeta}^{\lambda}c_{\zeta\eta}^{\mu}$, 
where the cardinality of the kernel of the surjection gives the missing $c_{\eta\xi}^{\nu}$.
We also show that LR crystals of types $B_{n}$ and $D_{n}$ are identical to the corresponding LR crystal of type $C_{n}$ in the stable region, which provides the crystal interpretation of Eq.~\eqref{eq:KK1} in $B_{n}$ and $C_{n}$ cases.
In the crystal theory, the LR coefficient is interpreted as the cardinality of the LR crystal.
Thus, the formulas are not in the final form from our point of view and the formulas should be understood as a shadow of the underlying set-theoretical bijections defined for LR crystals.
In this spirit, Kwon~\cite{Kwo3} studied the branching rule of classical group by his spinor model~\cite{Kwo1,Kwo2} which is a combinatorial model of classical crystals. 
Our method is different and we have a surjective map from the LR crystal of type $B_{n}$, $C_{n}$, and $D_{n}$ to the disjoint union of the products of two LR crystals of types $A$ such that each fiber gives the third LR crystal of type $A$.

This paper is organized as follows.
Section~\ref{sec:SP} is devoted to the background on crystals that we need in the sequel, 
which includes the axiomatic definition of crystals, the construction of crystals of $C_{n}$-type, 
and LR crystals of type $C_{n}$.
In Section~\ref{sec:column}, we describe the properties of single-column tableaux of $C_{n}$-type 
($C_{n}$-columns), which includes the summary of known facts as well as newly obtained results.
Section~\ref{sec:main1} presents the main theorem on $C_{n}$ case (Theorem~\ref{thm:main1}), 
which involves the maps on tableaux of $C_{n}$-type constructed based on the operations on $C_{n}$-columns.
This result is divided into two propositions (Proposition~\ref{prp:main11} and Proposition~\ref{prp:main12}), 
which are proven in Section~\ref{sec:main11} and Section~\ref{sec:main12}.
In Section~\ref{sec:Phi} and Section~\ref{sec:Psi}, the properties of maps introduced in Section~\ref{sec:main1} are investigated.
In Section~\ref{sec:main2}, we describe LR crystals of types $B_{n}$ and $D_{n}$ and prove that they are identical to the corresponding LR crystal of type $C_{n}$ in the stable region (Theorem~\ref{thm:main_B} and Theorem~\ref{thm:main_D}).

\section{Crystals of $C_{n}$-type} \label{sec:SP}

\subsection{Axioms of crystals}

Let us recall the axiomatic definition of a crystal~\cite{HK}.
Let $\mathfrak{g}$ be a symmetrizable Kac-Moody algebra with 
$P$ the weight lattice, 
$I$ the index set for the vertices of the Dynkin diagram of $\mathfrak{g}$, 
$A=(a_{ij})_{i,j\in I}$ the Cartan matrix,  
$\left\{  \alpha_{i}\in P \relmiddle| i\in I\right\}  $ the set of simple roots,  
$\left\{  \alpha_{i}^{\vee}\in P^{\ast} \relmiddle| i\in I\right\}  $ the set of simple coroots, and
$\left\langle \alpha_{i}^{\vee},\alpha_{j}\right\rangle =a_{ij}\;(i,j\in I)$.
Let $U_{q}(\mathfrak{g})$ be the quantized universal enveloping algebra or quantum group of $\mathfrak{g}$.
A $U_{q}(\mathfrak{g})$-crystal is defined as follows.

\begin{df} \label{df:crystal1}
A set $\mathcal{B}$ together with the maps
$\mathrm{wt}:\mathcal{B}\rightarrow P$ and 
$\Tilde{e_{i}},\Tilde{f_{i}}:\mathcal{B}\rightarrow \mathcal{B}\sqcup\{0\}$
is called a (semiregular) $U_{q}(\mathfrak{g})$-crystal if the following properties are satisfied $(i\in I)$:
when we define 
\[
\varepsilon_{i}(b)=\max\left\{  k\geq0 \relmiddle| \Tilde{e_{i}}^{k}b\in \mathcal{B}\right\},
\]
and
\[
\varphi_{i}(b)=\max\left\{  k\geq0 \relmiddle| \Tilde{f_{i}}^{k}b\in \mathcal{B}\right\},
\]
for $b\in \mathcal{B}$, then
\begin{itemize}
\item[(1)]
$\varepsilon_{i},\varphi_{i}:\mathcal{B}\rightarrow\mathbb{Z}_{\geq0}$ 
and 
$\varphi_{i}(b)=\varepsilon_{i}(b)+
\left\langle \alpha_{i}^{\vee},\mathrm{wt}(b)\right\rangle $,
\item[(2)]
if $\Tilde{e_{i}}b\neq0$, then 
$\mathrm{wt}(\Tilde{e_{i}}b)=\mathrm{wt}(b)+\alpha_{i}$, 
$\varepsilon_{i}(\Tilde{e_{i}}b)=\varepsilon_{i}(b)-1$, and 
$\varphi_{i}(\Tilde{e_{i}}b)=\varphi_{i}(b)+1$,
\item[(3)]
if $\Tilde{f_{i}}b\neq0$, then 
$\mathrm{wt}(\Tilde{f_{i}}b)=\mathrm{wt}(b)-\alpha_{i}$, 
$\varepsilon_{i}(\Tilde{f_{i}}b)=\varepsilon_{i}(b)+1$, and 
$\varphi_{i}(\Tilde{f_{i}}b)=\varphi_{i}(b)-1$,
\item[(4)]
for $b,b^{\prime}\in \mathcal{B}$, $\Tilde{f_{i}}b=b^{\prime}\Longleftrightarrow\Tilde{e_{i}}b^{\prime}=b$.
\end{itemize}
\end{df}
The maps $\Tilde{e_{i}}$ and $\Tilde{f_{i}}$ are called Kashiwara operators ($i\in I$) 
and $\mathrm{wt}(b)$ is called the weight of $b$.
A crystal $\mathcal{B}$ can be viewed as an oriented colored graph with colors $i\in I$ 
when we define $b\stackrel{i}{\longrightarrow} b^{\prime}$ if 
$\Tilde{f_{i}}b=b^{\prime}\; (b,b^{\prime}\in \mathcal{B})$.
This graph is called a crystal graph.

\begin{df}[tensor product rule] \label{df:tensor}
Let $\mathcal{B}_{1}$ and $\mathcal{B}_{2}$ be crystals.
The tensor product $\mathcal{B}_{1}\otimes \mathcal{B}_{2}$ is defined to be 
the set 
$\mathcal{B}_{1}\times \mathcal{B}_{2} = 
\left\{  b_{1}\otimes b_{2}\right\vert \left.  b_{1}
\in \mathcal{B}_{1},b_{2}\in \mathcal{B}_{2}\right\}  $
whose crystal structure is defined by
\begin{itemize}
\item[(1)]
$\mathrm{wt}(b_{1}\otimes b_{2})=\mathrm{wt}(b_{1})+\mathrm{wt}(b_{2})$,
\item[(2)]
$\varepsilon_{i}(b_{1}\otimes b_{2})=\max\left\{  \varepsilon_{i}(b_{1}),\varepsilon
_{i}(b_{2})-\left\langle \alpha_{i}^{\vee},\mathrm{wt}(b_{1})\right\rangle
\right\}$,
\item[(3)]
$\varphi_{i}(b_{1}\otimes b_{2})=\max\left\{  \varphi_{i}(b_{1})+\left\langle
\alpha_{i}^{\vee},\mathrm{wt}(b_{2})\right\rangle ,\varphi_{i}(b_{2})\right\}$,
\item[(4)]
$\Tilde{e_{i}}(b_{1}\otimes b_{2})=
\begin{cases}
\Tilde{e_{i}}b_{1}\otimes b_{2} & (\varphi_{i}(b_{1})\geq\varepsilon_{i}(b_{2})),\\
b_{1}\otimes\Tilde{e_{i}}b_{2} & (\varphi_{i}(b_{1})<\varepsilon_{i}(b_{2})),
\end{cases}
$
\item[(5)]
$\Tilde{f_{i}}(b_{1}\otimes b_{2})=
\begin{cases}
\Tilde{f_{i}}b_{1}\otimes b_{2} & (\varphi_{i}(b_{1})>\varepsilon_{i}(b_{2})),\\
b_{1}\otimes\Tilde{f_{i}}b_{2} & (\varphi_{i}(b_{1})\leq\varepsilon_{i}(b_{2})).
\end{cases}
$
\end{itemize}
\end{df}

\begin{df}
Let $\mathcal{B}_{1}$ and $\mathcal{B}_{2}$ be crystals.
A \emph{crystal morphism} 
$\Psi:\mathcal{B}_{1}\rightarrow \mathcal{B}_{2}$
is a map 
$\Psi:\mathcal{B}_{1}\sqcup\{0\}\rightarrow \mathcal{B}_{2}\sqcup\{0\}$
such that
\begin{itemize}
\item[(1)]
$\Psi(0)=0$,

\item[(2)]
if $b\in \mathcal{B}_{1}$ and $\Psi(b)\in \mathcal{B}_{2}$, then 
$\mathrm{wt}\left(  \Psi(b)\right)  =\mathrm{wt}(b)$, 
$\varepsilon_{i}\left(  \Psi(b)\right)  =\varepsilon_{i}(b)$ and 
$\varphi_{i}\left(  \Psi(b)\right)  =\varphi_{i}(b)$ 
$(\forall i\in I)$.

\item[(3)]
if $b,b^{\prime}\in \mathcal{B}_{1}$, 
$\Psi(b),\Psi(b^{\prime})\in \mathcal{B}_{2}$, 
and $\Tilde{f_{i}}b=b^{\prime}$, then 
$\Tilde{f_{i}}\Psi(b)=\Psi(b^{\prime})$ and
$\Psi(b)=\Tilde{e_{i}}\Psi(b^{\prime})$ $(\forall i\in I)$.
\end{itemize}
\end{df}

\begin{df} \label{df:morphism2}
\begin{itemize}
\item[(1)]
A crystal morphism $\Psi:\mathcal{B}_{1}\longrightarrow \mathcal{B}_{2}$ is called 
an \emph{embedding} 
if $\Psi$ induces an injective map from 
$\mathcal{B}_{1}\sqcup\{0\}$ to $\mathcal{B}_{2}\sqcup\{0\}$.
\item[(2)]
A crystal morphism $\Psi:\mathcal{B}_{1}\longrightarrow \mathcal{B}_{2}$ is called 
an \emph{isomorphism} 
if $\Psi$ is a bijection from 
$\mathcal{B}_{1}\sqcup\{0\}$ to $\mathcal{B}_{2}\sqcup\{0\}$.
\end{itemize}
\end{df}

\subsection{Crystals associated with finite-dimensional irreducible $U_{q}(\mathfrak{sp}_{2n})$-modules} \label{sec:crystal_C}

Let us describe crystals associated with finite-dimensional irreducible $U_{q}(\mathfrak{sp}_{2n})$-modules.
The symplectic Lie algebra $\mathfrak{sp}(2n,\mathbb{C})=\mathfrak{sp}_{2n}$ is the classical Lie algebra of $C_{n}$-type, where 
the simple roots are expressed as
\begin{align*}
\alpha_{i}  & =\epsilon_{i}-\epsilon_{i+1}\quad(i=1,2,\ldots,n-1),\\
\alpha_{n}  & =2\epsilon_{n},
\end{align*}
and fundamental weights as
\[
\omega_{i}=\epsilon_{1}+\epsilon_{2}+\cdots+\epsilon_{i} \quad (i=1,2,\ldots,n)
\]
with $\epsilon_{i}\in\mathbb{Z}^{n}$ 
being the standard $i$-th unit vector.

Let $\Tilde{\lambda}=a_{1}\omega_{1}+\cdots+a_{n}\omega_{n}$ ($a_{i}\in\mathbb{Z}_{\geq0}$) be a dominant integral weight.
Then $\Tilde{\lambda}$ can be written as 
$\Tilde{\lambda}=\lambda_{1}\epsilon_{1}+\cdots+\lambda_{n}\epsilon_{n}$, where
\begin{align*}
\lambda_{1}  & =a_{1}+a_{2}+\cdots+a_{n},\\
\lambda_{2}  & =a_{2}+\cdots+a_{n},\\
& \vdots\\
\lambda_{n}  & =a_{n}.
\end{align*}
Hence we can associate a Young diagram $\lambda=(\lambda_{1},\ldots,\lambda_{n})$ to $\Tilde{\lambda}$.


\begin{df}[\cite{HK,N}]
Let $\lambda$ be a Young diagram with at most $n$ rows.
A $C_{n}$-semistandard tableau of shape $\lambda$ is 
the semistandard tableau of shape $\lambda$ with letters (entries) taken from the set
\[\mathscr{C}_{n}:=\{1,2,\ldots,n,\Bar{n},\ldots,\Bar{1}\}\]
equipped with the total order
\[
1\prec2\prec\cdots\prec n\prec \Bar{n}\prec\cdots\prec \Bar{2}\prec \Bar{1}.
\]
\end{df}
We define 
$\mathscr{C}_{n}^{(+)}:=\{1,2,\ldots,n\}$ and 
$\mathscr{C}_{n}^{(-)}:=\{\Bar{1},\Bar{2},\ldots,\Bar{n}\}$.
In the sequel, a letter in $\mathscr{C}_{n}^{(+)}$ (resp. $\mathscr{C}_{n}^{(-)}$) is called a 
$\mathscr{C}_{n}^{(+)}$ (resp. $\mathscr{C}_{n}^{(-)}$)-letter and 
the usual order $<$ will be used within $\mathscr{C}_{n}^{(+)}$-letters instead of $\prec$.
We denote by $C_{n}\text{-}\mathrm{SST}(\lambda)$ 
the set of all  $C_{n}$-semistandard tableaux of shape $\lambda$ and set 
$C_{n}\text{-}\mathrm{SST}:=
\cup_{\lambda\in \mathcal{P}_{n}}C_{n}\text{-}\mathrm{SST}(\lambda)$.
We use the convention $C_{n}\text{-}\mathrm{SST}(\emptyset)=\{\emptyset \}$, 
where $\emptyset$ in the left-hand side is referred to as the Young diagram without any boxes.
For a $T\in C_{n}\text{-}\mathrm{SST}(\lambda)$, 
we define its weight to be
\begin{equation*}
\mathrm{wt}(T)=
{\textstyle\sum\limits_{i=1}^{n}}
(k_{i}-\overline{k_{i}})\epsilon_{i},
\end{equation*}
where $k_{i}$ (resp. $\overline{k_{i}}$) is the number of $i's$ (resp. $\Bar{i}'s$) appearing in $T$.

\begin{df}[\cite{HK,N}] \label{df:KN_C}
$T\in C_{n}\text{-}\mathrm{SST}(\lambda)$
is said to be \emph{KN-admissible} when the following conditions (C1) and (C2) are satisfied.
\begin{itemize}
\item[(C1)]
If $T$ has a column of the form
\setlength{\unitlength}{12pt}
\begin{center}
\begin{picture}(3,6)
\put(2,0){\line(0,1){6}}
\put(3,0){\line(0,1){6}}
\put(2,0){\line(1,0){1}}
\put(2,1){\line(1,0){1}}
\put(2,2){\line(1,0){1}}
\put(2,4){\line(1,0){1}}
\put(2,5){\line(1,0){1}}
\put(2,6){\line(1,0){1}}
\put(2,1){\makebox(1,1){$\Bar{b}$}}
\put(2,4){\makebox(1,1){$a$}}
\put(0,1){\makebox(2,1){$q\rightarrow$}}
\put(0,4){\makebox(2,1){$p\rightarrow$}}
\end{picture},
\end{center}
then we have
\[
(q-p)+\max (a,b) > N,
\]
where $N$ is the length of the column and 
$a (\in \mathscr{C}_{n}^{(+)})$ is at the $p$-th box from the top and 
$\Bar{b} (\in \mathscr{C}_{n}^{(-)})$ is at the $q$-th box from the top.

\item[(C2)]
If $T$ has a pair of adjacent columns having one of the following configurations with 
$p\leq q<r\leq s$, $a_{1}\leq b_{1}$, and $a_{2}\leq b_{2}$ ($a_{1}, b_{1}\in \mathscr{C}_{n}^{(+)}$):
\setlength{\unitlength}{12pt}
\begin{center}
\begin{picture}(8,5)
\put(4,0){\line(0,1){5}}
\put(7,0){\line(0,1){5}}
\put(3,4){\makebox(1,1){$a_{1}$}}
\put(4,0){\makebox(1,1){$\overline{a_{2}}$}}
\put(4,1){\makebox(1,1){$\overline{b_{2}}$}}
\put(4,3){\makebox(1,1){$b_{1}$}}
\put(6,1){\makebox(1,1){$\overline{b_{2}}$}}
\put(6,3){\makebox(1,1){$b_{1}$}}
\put(6,4){\makebox(1,1){$a_{1}$}}
\put(7,0){\makebox(1,1){$\overline{a_{2}}$}}
\put(5,0){\makebox(1,1)[l]{$,$}}
\put(0,0){\makebox(2,1){$s\rightarrow$}}
\put(0,1){\makebox(2,1){$r\rightarrow$}}
\put(0,3){\makebox(2,1){$q\rightarrow$}}
\put(0,4){\makebox(2,1){$p\rightarrow$}}
\end{picture},
\end{center}
then we have
\[
(q-p)+(s-r)<\max(b_{1},b_{2})-\min(a_{1},a_{2}).
\]
\end{itemize}
\end{df}
We denote by $C_{n}\text{-}\mathrm{SST}_{\mathrm{KN}}(\lambda)$ 
the set of all KN-admissible $C_{n}$-semistandard tableaux of shape $\lambda$ and set 
$C_{n}\text{-}\mathrm{SST}_{\mathrm{KN}}:=
\bigcup_{\lambda \in \mathcal{P}_{n}}C_{n}\text{-}\mathrm{SST}_{\mathrm{KN}}(\lambda)$.

\begin{rem}
Conditions (C1) and (C2) in Definition~\ref{df:KN_C} are equivalent to the following conditions (C1') and (C2'), respectively.
\begin{itemize}
\item[(C1')]
If $T$ has a column of the form
\setlength{\unitlength}{12pt}
\begin{center}
\begin{picture}(3,6)
\put(2,0){\line(0,1){6}}
\put(3,0){\line(0,1){6}}
\put(2,0){\line(1,0){1}}
\put(2,1){\line(1,0){1}}
\put(2,2){\line(1,0){1}}
\put(2,4){\line(1,0){1}}
\put(2,5){\line(1,0){1}}
\put(2,6){\line(1,0){1}}
\put(2,1){\makebox(1,1){$\Bar{a}$}}
\put(2,4){\makebox(1,1){$a$}}
\put(0,1){\makebox(2,1){$q\rightarrow$}}
\put(0,4){\makebox(2,1){$p\rightarrow$}}
\end{picture},
\end{center}
then we have
\[
(q-p)+a > N,
\]
where $N$ is the length of the column and 
$a (\in \mathscr{C}_{n}^{(+)})$ is at the $p$-th box from the top and 
$\Bar{b} (\in \mathscr{C}_{n}^{(-)})$ is at the $q$-th box from the top.

\item[(C2')]
If $T$ has a pair of adjacent columns having one of the following configurations with 
$p\leq q<r\leq s$ and $a\leq b$:
\setlength{\unitlength}{12pt}
\begin{center}
\begin{picture}(8,5)
\put(4,0){\line(0,1){5}}
\put(7,0){\line(0,1){5}}
\put(3,4){\makebox(1,1){$a$}}
\put(4,0){\makebox(1,1){$\overline{a}$}}
\put(4,1){\makebox(1,1){$\overline{b}$}}
\put(4,3){\makebox(1,1){$b$}}
\put(6,1){\makebox(1,1){$\overline{b}$}}
\put(6,3){\makebox(1,1){$b$}}
\put(6,4){\makebox(1,1){$a$}}
\put(7,0){\makebox(1,1){$\overline{a}$}}
\put(5,0){\makebox(1,1)[l]{$,$}}
\put(0,0){\makebox(2,1){$s\rightarrow$}}
\put(0,1){\makebox(2,1){$r\rightarrow$}}
\put(0,3){\makebox(2,1){$q\rightarrow$}}
\put(0,4){\makebox(2,1){$p\rightarrow$}}
\end{picture},
\end{center}
then we have
\[
(q-p)+(s-r)<b-a.
\]
\end{itemize}
\end{rem}

Now we can give the definition of a crystal $\mathcal{B}^{\mathfrak{sp}_{2n}}(\lambda)$ 
associated with the finite-dimensional irreducible $U_{q}(\mathfrak{sp}_{2n})$-module $V_{q}^{\mathfrak{sp}_{2n}}(\Tilde{\lambda})$ 
associated with a dominant integral weight $\Tilde{\lambda}$. 
As a set, the crystal $\mathcal{B}^{\mathfrak{sp}_{2n}}(\lambda)$ is 
$C_{n}\text{-}\mathrm{SST}_{\mathrm{KN}}(\lambda)$.
Kashiwara operators are determined by the following crystal graph of the vector representation $\mathbf{B}:=\mathcal{B}^{\mathfrak{sp}_{2n}}(\square)$ 
of the quantum group  $U_{q}(\mathfrak{sp}_{2n})$.
\setlength{\unitlength}{12pt}
\begin{center}
\begin{picture}(23,1.5)
\put(0,0){\framebox(1,1){$1$}}
\put(3,0){\framebox(1,1){$2$}}
\put(9.5,0){\framebox(1,1){$n$}}
\put(12.5,0){\framebox(1,1){$\Bar{n}$}}
\put(19,0){\framebox(1,1){$\Bar{2}$}}
\put(22,0){\framebox(1,1){$\Bar{1}$}}
\put(1,0.5){\vector(1,0){2}}
\put(4,0.5){\vector(1,0){2}}
\put(7.5,0.5){\vector(1,0){2}}
\put(10.5,0.5){\vector(1,0){2}}
\put(13.5,0.5){\vector(1,0){2}}
\put(17,0.5){\vector(1,0){2}}
\put(20,0.5){\vector(1,0){2}}
\put(6.5,0){\makebox(1,1){$\cdots$}}
\put(16,0){\makebox(1,1){$\cdots$}}
\put(1,0.5){\makebox(2,1){$1$}}
\put(4,0.5){\makebox(2,1){$2$}}
\put(7.5,0.5){\makebox(2,1){$n-1$}}
\put(10.5,0.5){\makebox(2,1){$n$}}
\put(13.5,0.5){\makebox(2,1){$n-1$}}
\put(17,0.5){\makebox(2,1){$2$}}
\put(20,0.5){\makebox(2,1){$1$}}
\end{picture},
\end{center}
where
$\mathrm{wt}\left(\framebox[12pt]{$i$\rule{0pt}{8pt}}\right)=\epsilon_{i}$ and 
$\mathrm{wt}\left(\framebox[12pt]{$\Bar{i}$\rule{0pt}{8pt}}\right)=-\epsilon_{i}$
$(i=1,2,\ldots,n)$.
Explicitly, for $i = 1,2,\ldots,n-1$,
\[
\Tilde{f_{i}}\,\framebox[12pt]{$j$} = 
\begin{cases}
\framebox[24pt]{$i+1$\rule{0pt}{8pt}} & (j=i), \\
\rule{0pt}{15pt}\framebox[12pt]{$\Bar{i}$} & (j=\overline{i+1}), \\
0 & (\text{otherwise}),
\end{cases}
\]
and
\[
\Tilde{f_{n}}\framebox[12pt]{$n$\rule{0pt}{8pt}} =\framebox[12pt]{$\Bar{n}$\rule{0pt}{8pt}}
\]
($\Tilde{e_{i}}$ is determined by these and Definition~\ref{df:crystal1}).
The crystal structure of $\mathcal{B}^{\mathfrak{sp}_{2n}}(\lambda)$ is realized by the embedding
$\Psi:\mathcal{B}^{\mathfrak{sp}_{2n}}(\lambda) \hookrightarrow \mathbf{B}^{\otimes\left\vert \lambda\right\vert }$
equipped with the tensor product rule (Definition~\ref{df:tensor}).
This embedding or reading is defined as follows.

\begin{df} \label{df:FE}
Suppose $T \in C_{n}\text{-}\mathrm{SST}_{\mathrm{KN}}(\lambda)$.
We read the entries in $T$ each column from the top to the bottom 
and from the rightmost column to the leftmost column.
Let the resulting sequence of entries be $m_{1},m_{2},\ldots,m_{N}$.
Then we define the following embedding. 
\[
\Psi:\mathcal{B}^{\mathfrak{sp}_{2n}}(\lambda)\hookrightarrow \mathbf{B}^{\otimes N}\quad
\left(T\longmapsto \framebox[15pt]{$m_{1}$\rule{0pt}{8pt}}\otimes 
\cdots \otimes \framebox[15pt]{$m_{N}$\rule{0pt}{8pt}}\right).
\]
\end{df}
This reading of $T$ in Definition~\ref{df:FE} is called the \emph{far-eastern reading} and is denoted by
\[
\mathrm{FE}(T)=\framebox[15pt]{$m_{1}$\rule{0pt}{8pt}}\otimes \cdots 
\otimes \framebox[15pt]{$m_{N}$\rule{0pt}{8pt}}.
\] 
Thanks to the KN admissible conditions ((C1) and (C2) in Definition~\ref{df:KN_C}), this reading is shown to be the embedding in the sense of Definition~\ref{df:morphism2}~\cite{HK}.

One of the most remarkable applications of crystals is the generalized LR rule described below.
Let us give a definition.

\begin{df} \label{df:addition}
Let $\lambda=(\lambda_{1},\lambda_{2},\ldots,\lambda_{n})$ be a Young diagram.
For a letter $i\in \mathscr{C}_{n}^{(+)}$ and a letter $\Bar{i}\in \mathscr{C}_{n}^{(-)}$, 
we define
\begin{equation*}
\lambda [i]:=(\lambda_{1},\ldots,\lambda_{i}+1,\ldots,\lambda_{n}),
\end{equation*}
and
\begin{equation*}
\lambda [\Bar{i}]:=(\lambda_{1},\ldots,\lambda_{i}-1,\ldots,\lambda_{n}).
\end{equation*}
In general, 
for a letter $m_{k}\in \mathscr{C}_{n}$\; $(k=1,2,\ldots,N)$, 
we define
\begin{equation*}
\lambda\lbrack m_{1},\ldots,m_{k}]:=\lambda\lbrack m_{1},\ldots,m_{k-1}][m_{k}]
\end{equation*}
$(\lambda\lbrack m_{0}]=\lambda)$, which is not necessarily a Young diagram.
If $\lambda\lbrack m_{1},\ldots,m_{k}]$ is a Young diagram for all $k=1,\ldots,N$, 
we say the sequence of letters $m_{1},m_{2}.\ldots,m_{N}$ is \emph{smooth} on $\lambda$ or 
$M:=\{ m_{1},m_{2},\ldots,m_{N} \}$ is smooth on $\lambda$, 
where $M$ is considered as the sequence of letters $m_{1},m_{2}.\ldots,m_{N}$.
If the sequence of letters $m_{1},m_{2}.\ldots,m_{N}$ comes from the far-eastern reading of a tableau $T$, 
we write $\lambda\lbrack \mathrm{FE}(T)]:=\lambda\lbrack m_{1},\ldots,m_{N}]$ and 
if such a sequence is smooth on $\lambda$, we say $\mathrm{FE}(T)$ is smooth on $\lambda$.
\end{df}

\begin{thm}[\cite{HK,KN,N}]
Let 
$\Tilde{\mu}$ and $\Tilde{\nu}$ be dominant integral weights, and  
$\mu$ and $\nu$ be the corresponding Young diagrams, respectively.
Then we have the following isomorphism:
\begin{equation} \label{eq:gLR1}
\mathcal{B}^{\mathfrak{sp}_{2n}}(\mu)\otimes\mathcal{B}^{\mathfrak{sp}_{2n}}(\nu)\simeq
{\textstyle\bigoplus\limits_{\substack{T\in \mathcal{B}^{\mathfrak{sp}_{2n}}(\nu)\\
\mathrm{FE}(T)=
\framebox[16pt]{$m_{1}$}\otimes
\cdots\otimes 
\framebox[16pt]{$m_{N}$}
}}}
\mathcal{B}^{\mathfrak{sp}_{2n}}\left( \mu\lbrack m_{1},m_{2},\ldots,m_{N}]\right),
\end{equation}
where $N=\left\vert \nu\right\vert$.
In the right-hand side of Eq.~\eqref{eq:gLR1}, we set 
$\mathcal{B}^{\mathfrak{sp}_{2n}}\left(  \mu\lbrack m_{1},\ldots,m_{N}]\right)  =\emptyset$ 
if the sequence of letters $m_{1},\ldots,m_{N}$ is not smooth on $\mu$.
\end{thm}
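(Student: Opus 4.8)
\emph{Proof strategy.} The plan is to use the fact that $\mathcal{B}^{\mathfrak{sp}_{2n}}(\mu)\otimes\mathcal{B}^{\mathfrak{sp}_{2n}}(\nu)$ is again the crystal of a finite-dimensional $U_{q}(\mathfrak{sp}_{2n})$-module, so by complete reducibility it decomposes into a disjoint union of connected components, each isomorphic to some $\mathcal{B}^{\mathfrak{sp}_{2n}}(\kappa)$ and generated by a unique highest weight element, i.e.\ an element $b$ with $\Tilde{e_{i}}b=0$ for all $i\in I$, whose weight is dominant with associated Young diagram $\kappa$. Thus the statement reduces to three tasks: (a) classifying the highest weight elements of the tensor product, (b) identifying the Young diagram of the component each one generates, and (c) checking that this indexing matches the right-hand side of Eq.~\eqref{eq:gLR1}. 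Throughout I write $u_{\mu}$ for the highest weight element of $\mathcal{B}^{\mathfrak{sp}_{2n}}(\mu)$, namely the tableau whose $i$-th row is filled with $i$'s.

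First I would classify the highest weight elements. Suppose $b_{1}\otimes b_{2}$ is highest weight. By Definition~\ref{df:tensor}(4), $\Tilde{e_{i}}(b_{1}\otimes b_{2})=0$ forces $\Tilde{e_{i}}b_{1}=0$ whenever $\varphi_{i}(b_{1})\geq\varepsilon_{i}(b_{2})$, whereas the alternative $\varphi_{i}(b_{1})<\varepsilon_{i}(b_{2})$ would require $\varepsilon_{i}(b_{2})=0$ and hence $\varphi_{i}(b_{1})<0$, which is impossible. Therefore $\varepsilon_{i}(b_{1})=0$ for every $i$, so by uniqueness of the highest weight element in the connected crystal $\mathcal{B}^{\mathfrak{sp}_{2n}}(\mu)$ we get $b_{1}=u_{\mu}$, and moreover $\varepsilon_{i}(b_{2})\leq\varphi_{i}(u_{\mu})=\langle\alpha_{i}^{\vee},\Tilde{\mu}\rangle$ for all $i$. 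Conversely, any $T\in\mathcal{B}^{\mathfrak{sp}_{2n}}(\nu)$ with $\varepsilon_{i}(T)\leq\langle\alpha_{i}^{\vee},\Tilde{\mu}\rangle$ yields a highest weight element $u_{\mu}\otimes T$, and by Definition~\ref{df:tensor}(1) its weight is $\Tilde{\mu}+\mathrm{wt}(T)$; since a box added in row $i$ contributes $+\epsilon_{i}$ and a box removed contributes $-\epsilon_{i}$, this weight has associated Young diagram $\mu[\mathrm{FE}(T)]$ exactly when the reading is smooth.

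The combinatorial heart is to show that the inequality $\varepsilon_{i}(T)\leq\langle\alpha_{i}^{\vee},\Tilde{\mu}\rangle$ for all $i$ is equivalent to $\mathrm{FE}(T)$ being smooth on $\mu$, and that in this case $u_{\mu}\otimes T$ generates a component isomorphic to $\mathcal{B}^{\mathfrak{sp}_{2n}}(\mu[\mathrm{FE}(T)])$. For this I would first establish the one-box Pieri rule
\begin{equation*}
\mathcal{B}^{\mathfrak{sp}_{2n}}(\kappa)\otimes\mathbf{B}\simeq
\bigoplus_{\substack{m\in\mathscr{C}_{n}\\ \kappa[m]\in\mathcal{P}_{n}}}
\mathcal{B}^{\mathfrak{sp}_{2n}}(\kappa[m]),
\end{equation*}
checking directly from the crystal graph of $\mathbf{B}$ that $u_{\kappa}\otimes\framebox[12pt]{$m$}$ is highest weight precisely when $\kappa[m]$ is a Young diagram (e.g.\ $\framebox[12pt]{$j$}$ with $j\geq2$ needs $\kappa_{j-1}>\kappa_{j}$, matching $\varepsilon_{j-1}(\framebox[12pt]{$j$})=1$, while $\framebox[12pt]{$\Bar{j}$}$ needs $\kappa_{j}>\kappa_{j+1}$, matching $\varepsilon_{j}(\framebox[12pt]{$\Bar{j}$})=1$). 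Iterating along the tensor factors then decomposes $\mathcal{B}^{\mathfrak{sp}_{2n}}(\mu)\otimes\mathbf{B}^{\otimes N}$ into components indexed by the sequences $m_{1},\ldots,m_{N}$ smooth on $\mu$, the component of such a sequence being $\mathcal{B}^{\mathfrak{sp}_{2n}}(\mu[m_{1},\ldots,m_{N}])$. Finally, using the embedding $\mathcal{B}^{\mathfrak{sp}_{2n}}(\nu)\hookrightarrow\mathbf{B}^{\otimes N}$ of Definition~\ref{df:FE}, I would intersect this decomposition with $\mathcal{B}^{\mathfrak{sp}_{2n}}(\mu)\otimes\mathcal{B}^{\mathfrak{sp}_{2n}}(\nu)$ and match the surviving highest weight elements with the tableaux $T$ whose far-eastern reading is smooth on $\mu$.

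The main obstacle I anticipate is the combinatorial translation in task (a): showing that the single global bound $\varepsilon_{i}(T)\leq\langle\alpha_{i}^{\vee},\Tilde{\mu}\rangle$ is equivalent to the a priori stronger step-by-step smoothness of every partial diagram $\mu[m_{1},\ldots,m_{k}]$. This is where the far-eastern reading order together with the KN-admissibility of $T$ (conditions (C1) and (C2) of Definition~\ref{df:KN_C}) must be exploited, via the signature rule for $\varepsilon_{i}$ on $\mathbf{B}^{\otimes N}$, to guarantee that no intermediate partial diagram leaves $\mathcal{P}_{n}$ precisely when the global bound holds. The remaining bookkeeping—that the component generated is the full crystal $\mathcal{B}^{\mathfrak{sp}_{2n}}(\mu[\mathrm{FE}(T)])$ rather than merely some crystal of that highest weight—then follows from uniqueness of the highest weight element in each connected $U_{q}(\mathfrak{sp}_{2n})$-crystal, with the stated convention ensuring that non-smooth $T$ contribute the empty crystal.
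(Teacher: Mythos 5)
Your proposal is correct in outline, and since the paper states this theorem with citations to \cite{HK,KN,N} rather than proving it, the relevant comparison is with those references: your argument — classifying highest weight elements of the tensor product via Definition~\ref{df:tensor} (including the correct observation that $\varphi_{i}(b_{1})<\varepsilon_{i}(b_{2})$ is impossible for a highest weight element), establishing the one-box Pieri rule for $\mathcal{B}^{\mathfrak{sp}_{2n}}(\kappa)\otimes\mathbf{B}$, iterating over $\mathbf{B}^{\otimes N}$, and restricting along the far-eastern embedding of Definition~\ref{df:FE} — is precisely the standard proof given there. The only point worth making explicit is that the image of $\mathcal{B}^{\mathfrak{sp}_{2n}}(\mu)\otimes\mathcal{B}^{\mathfrak{sp}_{2n}}(\nu)$ in $\mathcal{B}^{\mathfrak{sp}_{2n}}(\mu)\otimes\mathbf{B}^{\otimes N}$ is a full subcrystal (a union of connected components), which follows from the embedding preserving $\varepsilon_{i}$ and $\varphi_{i}$, so that ``intersecting'' the Pieri decomposition with it is legitimate.
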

Let us denote by $d_{\mu\nu}^{\lambda}$ the multiplicity of $\mathcal{B}^{\mathfrak{sp}_{2n}}(\lambda)$ 
in the right-hand side of Eq.~\eqref{eq:gLR1}.
Then Eq.~\eqref{eq:gLR1} takes the form
\begin{equation} \label{eq:gLR2}
\mathcal{B}^{\mathfrak{sp}_{2n}}(\mu)\otimes\mathcal{B}^{\mathfrak{sp}_{2n}}(\nu)\simeq
{\textstyle\bigoplus\limits_{\lambda\in \mathcal{P}_{n}}}
\mathcal{B}^{\mathfrak{sp}_{2n}}(\lambda)^{\oplus d_{\mu\nu}^{\lambda}}
\quad(\mu,\nu\in\mathcal{P}_{n}).
\end{equation}
This corresponds to the decomposition of the tensor product of finite-dimensional irreducible 
$U_{q}(\mathfrak{sp}_{2n})$-modules $V_{q}^{\mathfrak{sp}_{2n}}(\Tilde{\mu})$ and $V_{q}^{\mathfrak{sp}_{2n}}(\Tilde{\nu})$.
\begin{equation} \label{eq:LRV1}
V_{q}^{\mathfrak{sp}_{2n}}(\Tilde{\mu})\otimes V_{q}^{\mathfrak{sp}_{2n}}(\Tilde{\nu})\simeq
{\textstyle\bigoplus\limits_{\lambda\in\mathcal{P}_{n}}}
V_{q}^{\mathfrak{sp}_{2n}}(\Tilde{\lambda})^{\oplus d_{\mu\nu}^{\lambda}}
\quad (\mu,\nu\in\mathcal{P}_{n}).
\end{equation}
Equation~\eqref{eq:gLR1} or \eqref{eq:gLR2} is called the generalized LR rule~\cite{HK,KN,N}.
It follows from Eqs.~\eqref{eq:gLR1} and \eqref{eq:gLR2} that the multiplicity $d_{\mu\nu}^{\lambda}$ 
is given by the cardinality of the following set
\begin{equation} \label{eq:LRcrystal2}
\mathbf{B}_{n}^{\mathfrak{sp}_{2n}}(\nu)_{\mu}^{\lambda}:=\left\{
T\in \mathcal{B}^{\mathfrak{sp}_{2n}}(\nu)
\relmiddle|
\begin{array}
[c]{c}
\text{$\mathrm{FE}(T)=
\framebox[16pt]{$m_{1}$\rule{0pt}{8pt}}\otimes 
\framebox[16pt]{$m_{2}$\rule{0pt}{8pt}}\otimes\cdots\otimes 
\framebox[16pt]{$m_{N}$\rule{0pt}{8pt}}$
($N=\left\vert \nu\right\vert$)
}\\
\text{is smooth on $\mu$ and $\mu \lbrack m_{1},\ldots,m_{N}]=\lambda$}
\end{array}
\right\},
\end{equation}
which is called the LR crystal of $C_{n}$-type.

It is established that 
the multiplicity $d_{\mu\nu}^{\lambda}$ can be expressed in terms of LR coefficients.
More precisely, we have
\begin{equation} \label{eq:KK2}
d_{\mu\nu}^{\lambda}=
{\textstyle\sum\limits_{\xi,\zeta,\eta\in\mathcal{P}_{n}}}
c_{\xi\zeta}^{\lambda}c_{\zeta\eta}^{\mu}c_{\eta\xi}^{\nu}
\end{equation}
in the stable region, i.e., $l(\mu)+l(\nu)\leq n$~\cite{Kin,Koi}.
The LR coefficient $c_{\mu\nu}^{\lambda}$ is also given by the cardinality of the set (Eq.~\eqref{eq:LRcrystal2}) with 
$\mathcal{B}^{\mathfrak{sp}_{2n}}(\lambda)$ being replaced by $\mathcal{B}^{\mathfrak{sl}_{n}}(\lambda)$ 
the crystal associated with the finite-dimensional irreducible $U_{q}(\mathfrak{sl}_{n})$-module 
$V_{q}^{\mathfrak{sl}_{n}}(\lambda)$~\cite{HK}.
This set is called the LR crystal of $A_{n-1}$-type.
Formally a crystal $\mathcal{B}^{\mathfrak{sl}_{n}}(\lambda)$ is obtained by eliminating all tableaux 
containing $\mathscr{C}_{n}^{(-)}$-letters from $\mathcal{B}^{\mathfrak{sp}_{2n}}(\lambda)$.
In this paper, we provide the interpretation of Eq.~\eqref{eq:KK2} in terms of crystals.
For that purpose, we will need the following definitions.

\begin{df} \label{df:LR-crystal3}
For Young diagrams $\lambda$, $\mu$, and $\nu$, we define
\[
\mathbf{B}^{(+)}_{n}(\nu)_{\mu}^{\lambda}:=\left\{
T\in \mathcal{B}^{\mathfrak{sp}_{2n}}(\nu)
\relmiddle|
\begin{array}
[c]{c}
\text{All entries in $T$ are $\mathscr{C}_{n}^{(+)}$-letters.}\\
\text{
$\mathrm{FE}(T)=
\framebox[15pt]{$i_{1}$\rule{0pt}{8pt}}\otimes 
\framebox[15pt]{$i_{2}$\rule{0pt}{8pt}}\otimes\cdots\otimes 
\framebox[15pt]{$i_{N}$\rule{0pt}{8pt}}$ 
($N=\left\vert \nu\right\vert$)
}\\
\text{is smooth on $\mu$ and $\mu\lbrack i_{1},\ldots,i_{N}]=\lambda$}
\end{array}
\right\},
\]
and
\[
\mathbf{B}^{(-)}_{n}(\nu)_{\mu}^{\lambda}:=\left\{
T\in \mathcal{B}^{\mathfrak{sp}_{2n}}(\nu)
\relmiddle|
\begin{array}
[c]{c}
\text{All entries in $T$ are $\mathscr{C}_{n}^{(-)}$-letters.}\\
\text{
$\mathrm{FE}(T)=
\framebox[15pt]{$\overline{i_{1}}$}\otimes 
\framebox[15pt]{$\overline{i_{2}}$}\otimes\cdots\otimes 
\framebox[15pt]{$\overline{i_{N}}$}$ 
($N=\left\vert \nu\right\vert$)
}\\
\text{is smooth on $\lambda$ and $\lambda\lbrack \overline{i_{1}},\ldots,\overline{i_{N}}]=\mu$}
\end{array}
\right\}.
\]
\end{df}
Note that the set $\mathbf{B}^{(+)}_{n}(\nu)_{\mu}^{\lambda}$ is identical with the LR crystal of type 
$A_{n-1}$ whose cardinality is the LR coefficient $c_{\mu\nu}^{\lambda}$.

\section{$C_{n}$-columns} \label{sec:column}

Let us call a $C_{n}$-semistandard tableau with shape $(1^{N})$ a $C_{n}$-column 
of length $N$.
We denote by $C_{n}\text{-}\mathrm{Col}(N)$ (=$C_{n}\text{-}\mathrm{SST}((1^{N}))$) 
the set of all $C_{n}$-columns of length $N$ and set 
$C_{n}\text{-}\mathrm{Col}:=\bigcup_{N\in \mathbb{Z}_{>0}}C_{n}\text{-}\mathrm{Col}(N)$.
In this section, we describe the properties of $C_{n}$-columns.

For a $C_{n}$-column
\setlength{\unitlength}{15pt}
\begin{center}
\begin{picture}(3.2,4)
\put(2,0){\framebox(1.2,1){$m_{N}$}}
\put(2,1){\framebox(1.2,2){$\vdots$}}
\put(2,3){\framebox(1.2,1){$m_{1}$}}
\put(0,1.5){\makebox(2,1){$C=$}}
\end{picture},
\end{center}
let us write $w(C)=m_{1}m_{2}\cdots m_{N}$ ($m_{i} \in \mathscr{C}_{n}, i=1,2,\ldots,N$).
A part of $C$ that consists of consecutive boxes is called a block.
A block of $C$ that consists of boxes from the $p$-th position to the $q$-th position is denoted by 
$\Delta C[p,q]$ ($p \leq q$).

\setlength{\unitlength}{15pt}
\begin{center}
\begin{picture}(6.25,5)
\put(2,0){\line(0,1){5}}
\put(3,0){\line(0,1){5}}
\put(2,1){\line(1,0){1}}
\put(2,2){\line(1,0){1}}
\put(2,3.2){\line(1,0){1}}
\put(2,4.2){\line(1,0){1}}
\put(0,1){\makebox(2,1){$q \rightarrow$}}
\put(0,3){\makebox(2,1){$p \rightarrow$}}
\put(2,1){\makebox(1,1){$m_{q}$}}
\put(2,2){\makebox(1,1.2){$\vdots$}}
\put(2,3){\makebox(1,1){$m_{p}$}}
\put(2.25,1){\makebox(4,3.2){
$\left.
\begin{array}
[c]{ccc}%
\\
\\
\\
\\
\end{array}
\right\}  \Delta C[p,q]$}}
\end{picture}.
\end{center}
If the two-column tableau $C_{1}C_{2}$ is semistandard, then we write $C_{1} \preceq C_{2}$, 
where $C_{i}$ is the $i$-th column ($i=1,2$).
Let us denote by $C_{n}\text{-}\mathrm{Col}_{\mathrm{KN}}(N)$ 
the set of all $C_{n}$-columns ($\in C_{n}\text{-}\mathrm{Col}(N)$) that are KN-admissible 
and set $C_{n}\text{-}\mathrm{Col}_{\mathrm{KN}}:=
\cup_{N\in \mathbb{Z}_{>0}}C_{n}\text{-}\mathrm{Col}_{\mathrm{KN}}(N)$.
The necessary and sufficient condition that $C \in C_{n}\text{-}\mathrm{Col}(N)$ 
be KN-admissible has been given by the first condition (C1) in Definition~\ref{df:KN_C}.
Yet another but equivalent condition is given by the following.

\begin{df} \label{df:split}
Suppose that $C\in C_{n}\text{-}\mathrm{Col}(N)$ such that 
$w(C)=i_{1}\cdots i_{a}\overline{j_{b}}\cdots \overline{j_{1}}$
where $N=a+b$, $i_{k}\in \mathscr{C}_{n}^{(+)\;}(k=1,2,\ldots,a)$, and 
$\overline{j_{k}}\in \mathscr{C}_{n}^{(-)}\;(k=1,2,\ldots,b)$.
Set $\mathscr{I}:=\{i_{1},\ldots,i_{a}\}$ and 
$\mathscr{J}:=\{j_{1},\ldots,j_{b}\}$, 
and define $\mathscr{L}:=\mathscr{I}\cap \mathscr{J}=\{l_{1},\ldots,l_{c}\}$ with  
$l_{1}<l_{2}<\cdots<l_{c}$.
The letters in $\mathscr{I}$, $\mathscr{J}$, and $\mathscr{L}$ are called 
$\mathscr{I}$-letters, $\mathscr{J}$-letters, and $\mathscr{L}$-letters, respectively.
The column $C$ can be split~\cite{DeC} when there exist $\mathscr{C}_{n}^{(+)}$-letters
$l_{1}^{\ast},\ldots,l_{c}^{\ast}$, which are called $\mathscr{L}^{\ast}$-letters, 
determined by the following algorithm
(if $\mathscr{L}=\emptyset$, then $\{l_{1}^{\ast},\ldots,l_{c}^{\ast}\}=\emptyset$ and $C$ can be always split).
\begin{itemize}
\item[(i)]
$l_{c}^{\ast}$ is the largest $\mathscr{C}_{n}^{(+)}$-letter satisfying 
$l_{c}^{\ast}<l_{c}$ and $l_{c}^{\ast}\notin \mathscr{I}\cup \mathscr{J}$,
\item[(ii)]
for $k=c-1,\ldots,1$, $l_{k}^{\ast}$ is the largest $\mathscr{C}_{n}^{(+)}$-letter 
satisfying $l_{k}^{\ast}<l_{k}$, $l_{k}^{\ast}\notin \mathscr{I}\cup \mathscr{J}$, and 
$l_{k}^{\ast}\notin\{l_{k+1}^{\ast},\ldots,l_{c}^{\ast}\}$.
\end{itemize}
\end{df}
Throughout this paper, the sets of letters such as $\mathscr{I}$, $\mathscr{J}$, $\mathscr{L}$, 
and $\mathscr{L}^{\ast}=\{l_{1}^{\ast},\ldots,l_{c}^{\ast}\}$ are also considered as the ordered sequences of letters with respect to the order $<$.
Keeping the notation in Definition~\ref{df:split}, we define  
$\overline{\mathscr{I}}:=\{\overline{i_{a}},\ldots,\overline{i_{1}}\}$, 
$\overline{\mathscr{J}}:=\{\overline{j_{b}},\ldots,\overline{j_{1}}\}$, 
$\overline{\mathscr{L}}:=\{\overline{l_{c}},\ldots,\overline{l_{1}}\}$, and  
$\overline{\mathscr{L}^{\ast}}:=\{\overline{l_{c}^{\ast}},\ldots,\overline{l_{1}^{\ast}}\}$, 
which are also considered as the ordered sequence of letters with respect to the order $\prec$.
The letters in $\overline{\mathscr{I}}$, $\overline{\mathscr{J}}$, $\overline{\mathscr{L}}$, and $\overline{\mathscr{L}^{\ast}}$ are called  $\overline{\mathscr{I}}$-letters, $\overline{\mathscr{J}}$-letters, $\overline{\mathscr{L}}$-letters, and $\overline{\mathscr{L}^{\ast}}$-letters, respectively.

The equivalence between the condition (C1) in Definition~\ref{df:KN_C} 
and the condition in Definition~\ref{df:split} is proven in ~\cite{She}.

\begin{thm}[C. Lecouvey~\cite{Lec}]
A column $C\in C_{n}\text{-}\mathrm{Col}(N)$ is KN-admissible if and only if it can be split.
\end{thm}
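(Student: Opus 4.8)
The plan is to reduce both properties---KN-admissibility (governed here by condition (C1)) and splittability (governed by the greedy algorithm of Definition~\ref{df:split})---to one and the same system of numerical inequalities attached to the letters of $\mathscr{L}$, and then to link that system to the success of the greedy algorithm through a Hall-type marriage argument. Since we deal with a single column, only (C1) is relevant; (C2) plays no role.

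First I would rewrite (C1) in terms of ranks. Writing $w(C)=i_{1}\cdots i_{a}\overline{j_{b}}\cdots\overline{j_{1}}$ as in Definition~\ref{df:split}, the positive letters $i_{1}<\cdots<i_{a}$ occupy rows $1,\ldots,a$, while the barred letters occupy rows $a+1,\ldots,N$ with $\overline{j_{t}}$ in row $N-t+1$ (recall $\mathscr{J}=\{j_{1}<\cdots<j_{b}\}$). For $l_{k}\in\mathscr{L}$ put $s_{k}=|\{i\in\mathscr{I}:i\le l_{k}\}|$ and $t_{k}=|\{j\in\mathscr{J}:j\le l_{k}\}|$, the ranks of $l_{k}$ in $\mathscr{I}$ and $\mathscr{J}$. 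Then $l_{k}$ sits in row $p=s_{k}$ and $\overline{l_{k}}$ in row $q=N-t_{k}+1$, so the instance of (C1) in Definition~\ref{df:KN_C} for this same-value pair reads $(q-p)+l_{k}>N$, which simplifies to
\begin{equation*}
l_{k}\ge s_{k}+t_{k}\qquad(k=1,\ldots,c).
\end{equation*}
I would then check that the same-value pairs $(l_{k},\overline{l_{k}})$ are exactly the binding instances of (C1), so that (C1) is equivalent to the displayed system.

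Next I would analyze the algorithm of Definition~\ref{df:split}. For each $k$ set $A_{k}=\{x\in\mathscr{C}_{n}^{(+)}:x<l_{k},\ x\notin\mathscr{I}\cup\mathscr{J}\}$, the pool of admissible values for $l_{k}^{\ast}$. Because $l_{1}<\cdots<l_{c}$, these pools are nested, $A_{1}\subseteq\cdots\subseteq A_{c}$, and the algorithm is precisely the largest-first greedy selection of distinct representatives $l_{k}^{\ast}\in A_{k}$, processed from $k=c$ down to $k=1$. For a nested family the greedy selection terminates successfully if and only if a system of distinct representatives exists, which by Hall's theorem holds if and only if $|A_{k}|\ge k$ for all $k$; here I would give the short inductive argument that removing $\max A_{c}$ preserves every inequality $|A_{k}|\ge k$. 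A direct count, using inclusion--exclusion together with $|\{x\in\mathscr{L}:x<l_{k}\}|=k-1$, then gives
\begin{equation*}
|A_{k}|=(l_{k}-1)-\bigl(|\{i\in\mathscr{I}:i<l_{k}\}|+|\{j\in\mathscr{J}:j<l_{k}\}|-(k-1)\bigr)=l_{k}-s_{k}-t_{k}+k,
\end{equation*}
so that $|A_{k}|\ge k$ is literally the inequality $l_{k}\ge s_{k}+t_{k}$ of the first step. Chaining the equivalences yields: $C$ splittable $\iff$ greedy succeeds $\iff$ $|A_{k}|\ge k$ for all $k$ $\iff$ $l_{k}\ge s_{k}+t_{k}$ for all $k$ $\iff$ (C1) holds $\iff$ $C$ KN-admissible.

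I expect the main obstacle to be the first step: pinning down that the same-value pairs are the binding instances of (C1) and that the row bookkeeping is correct, since mixed pairs $(u,\overline{v})$ with $u\neq v$ must be shown to impose no extra constraint beyond the same-value ones. The Hall/nestedness part is routine once the pools $A_{k}$ are seen to be nested, but some care is still needed to confirm that the specific order prescribed in Definition~\ref{df:split} (largest available value, processed from the largest letter $l_{c}$ downward) is the one that realizes feasibility whenever a feasible choice exists.
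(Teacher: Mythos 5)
First, a point of reference: the paper offers no proof of this theorem at all; it is quoted from Lecouvey~\cite{Lec}, and the equivalence of (C1) with the splitting condition of Definition~\ref{df:split} is attributed to Sheats~\cite{She}. So your proposal can only be judged on its own merits. Its combinatorial core checks out: with your ranks $s_{k},t_{k}$ the same-value instance of (C1) is indeed $(N-t_{k}+1-s_{k})+l_{k}>N$, i.e.\ $l_{k}\geq s_{k}+t_{k}$; inclusion--exclusion does give $|A_{k}|=l_{k}-s_{k}-t_{k}+k$, so Hall's condition $|A_{k}|\geq k$ for the nested pools is literally that same system; and the largest-first greedy is correct for the reason you hint at --- if $\max A_{c}$ lies in a smaller pool $A_{k}$, then every element of $A_{c}$ is $\leq\max A_{c}<l_{k}$, forcing $A_{k}=A_{c}$, so $|A_{k}\setminus\{\max A_{c}\}|\geq c-1\geq k$ and the Hall inequalities survive the greedy choice.

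The genuine gap is the step you flag but never prove: that the same-value pairs are the binding instances of (C1). One direction is trivial and yields ``KN-admissible $\Rightarrow$ splittable''; but the converse direction of the theorem, ``splittable $\Rightarrow$ KN-admissible'', needs the implication from the same-value system to (C1) for arbitrary mixed pairs $(a,\bar{b})$, and this is precisely the nontrivial content --- it is the admissible counterpart of Lemma~\ref{lem:KN_coad}, which the paper proves (for the coadmissible condition) only through a substantial case analysis, so it cannot be waved through as bookkeeping. It can, however, be filled cleanly, essentially by passing to Lecouvey's own formulation of admissibility. For $1\leq l\leq n$ set $N(l):=\sharp\left\{x\in C \relmiddle| x\preceq l\right\}+\sharp\left\{x\in C \relmiddle| \bar{l}\preceq x\right\}$. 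For $l\in\mathscr{L}$ at rows $p_{l}<q_{l}$ one has $N(l)=p_{l}+(N-q_{l}+1)$, so your system says exactly that $N(l)\leq l$ for all $l\in\mathscr{L}$. Passing from $l-1$ to $l$ increases $N(\cdot)$ by at most one unless both $l$ and $\bar{l}$ occur in $C$, i.e.\ unless $l\in\mathscr{L}$; hence a minimal $l$ violating $N(l)\leq l$ would have to lie in $\mathscr{L}$, and therefore $N(l)\leq l$ holds for every $l\in\{1,\ldots,n\}$. Now for an arbitrary configuration $a$ at row $p$ and $\bar{b}$ at row $q>p$, put $l=\max(a,b)$: then $N(l)\geq p+(N-q+1)$, so $N(l)\leq l$ gives $(q-p)+\max(a,b)\geq N+1>N$, which is (C1). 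Inserting this lemma closes your chain of equivalences and completes the proof.
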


\begin{rem} \label{rem:algorithm1}
According to the algorithm in Definition~\ref{df:split}, 
$\mathscr{L}^{\ast}$-letters $l_{1}^{\ast},\ldots,l_{c}^{\ast}$ can be written as follows.
\[
l_{c}^{\ast}=
\begin{cases}
i_{p}-1& (\exists i_{p}\in \mathscr{I}\backslash \mathscr{L})\\
& \text{or} \\
j_{q}-1& (\exists j_{q}\in \mathscr{J}).
\end{cases}  
\]
For $k=1,2,\ldots,c-1$,
\[
l_{k}^{\ast}=
\begin{cases}
i_{p}-1& (\exists i_{p}\in \mathscr{I}\backslash \mathscr{L})\\
& \text{or}\\
j_{q}-1& (\exists j_{q}\in \mathscr{J})\\
& \text{or}\\
l_{k+1}^{\ast}-1.
\end{cases} 
\]
\end{rem}

We also need the notion of a KN-coadmissible column~\cite{Lec,She}.

\begin{df} \label{df:coad}
Let $C\in C_{n}\text{-}\mathrm{Col}(N)$ be the $C_{n}$-column described in Definition~\ref{df:split}.
For each $l\in \mathscr{L}$, 
denote by $N^{\ast}(l)$ the number of letters in $C$ satisfying
$l\preceq x\preceq \Bar{l}$.
Then the column $C$ is said to be \emph{KN-coadmissible} if
$N^{\ast}(l)\leq n-l+1$ $(\forall l\in \mathscr{L})$.
\end{df}
If $\mathscr{L}=\emptyset$, then $C$ is always KN-coadmissible.
Let us denote by $C_{n}\text{-}\mathrm{Col}_{\overline{\mathrm{KN}}}(N)$ 
the set of all $C_{n}$-columns ($\in C_{n}\text{-}\mathrm{Col}(N)$) 
that are KN-coadmissible and set 
$C_{n}\text{-}\mathrm{Col}_{\overline{\mathrm{KN}}}:=
\cup_{N\in Z_{>0}}C_{n}\text{-}\mathrm{Col}_{\overline{\mathrm{KN}}}(N)$.
The following lemma characterizes the KN-coadmissible $C_{n}$-columns.
The proof is analogous to that of Lemma 8.3.4. in \cite{HK}.

\begin{lem} \label{lem:KN_coad}
Suppose that $C\in C_{n}\text{-}\mathrm{Col}_{\overline{\mathrm{KN}}}$ takes the form
\setlength{\unitlength}{12pt}
\begin{center}
\begin{picture}(3,6)
\put(2,0){\line(0,1){6}}
\put(3,0){\line(0,1){6}}
\put(2,0){\line(1,0){1}}
\put(2,1){\line(1,0){1}}
\put(2,2){\line(1,0){1}}
\put(2,4){\line(1,0){1}}
\put(2,5){\line(1,0){1}}
\put(2,6){\line(1,0){1}}
\put(2,1){\makebox(1,1){$\Bar{b}$}}
\put(2,4){\makebox(1,1){$a$}}
\put(0,1){\makebox(2,1){$q\rightarrow$}}
\put(0,4){\makebox(2,1){$p\rightarrow$}}
\end{picture},
\end{center}
then we have 
$(q-p)+\min(a,b)\leq n$.
\end{lem}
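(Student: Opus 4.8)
The plan is to recast Definition~\ref{df:coad} as a single monotone bound on a counting function and then to propagate the hypothesis, which is imposed only at $\mathscr{L}$-letters, to \emph{every} letter by a discrete boundary argument; this last propagation is the one genuine obstacle.

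First I would extend the quantity of Definition~\ref{df:coad} to all $l\in\{1,\ldots,n\}$ by the same recipe,
\[
N^{\ast}(l)=\#\{x\in\mathscr{I}\mid x\geq l\}+\#\{x\in\mathscr{J}\mid x\geq l\},
\]
which counts the entries $x$ of $C$ with $l\preceq x\preceq\overline{l}$ (these occupy a contiguous block, since $C$ is strictly $\prec$-increasing and $\{x\mid l\preceq x\preceq\overline{l}\}$ is a $\prec$-interval). The entire lemma then reduces to the uniform claim $N^{\ast}(l)\leq n-l+1$ for all $l\in\{1,\ldots,n\}$. Indeed, put $l_{0}:=\min(a,b)$; each of the $q-p+1$ entries in rows $p,\ldots,q$ lies in $[l_{0},\overline{l_{0}}]$ (the unbarred ones are $\geq a\geq l_{0}$, the barred ones are $\overline{y}$ with $y\geq b\geq l_{0}$), so $q-p+1\leq N^{\ast}(l_{0})\leq n-l_{0}+1$, which is exactly $(q-p)+\min(a,b)\leq n$.

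To prove the claim I would introduce the potential $h(l):=N^{\ast}(l)+l-(n+1)$ and compute its increments. Since the value $l$ belongs to $\mathscr{I}$ (resp. $\mathscr{J}$) at most once, passing from $l$ to $l+1$ gives
\[
h(l+1)-h(l)=1-\#(\mathscr{I}\cap\{l\})-\#(\mathscr{J}\cap\{l\}),
\]
so $h$ strictly decreases exactly at the $\mathscr{L}$-letters (where $l\in\mathscr{I}\cap\mathscr{J}$), is constant where $l$ lies in exactly one of $\mathscr{I},\mathscr{J}$, and strictly increases at the gaps $l\notin\mathscr{I}\cup\mathscr{J}$. In this language KN-coadmissibility says precisely that $h(l)\leq0$ for every $l\in\mathscr{L}$, and I want $h\leq0$ on all of $\{1,\ldots,n\}$.

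The crux is the extremal argument that transports the bound across gaps, and I expect it to be the only delicate step. At the right endpoint $h(n)=N^{\ast}(n)-1$ with $N^{\ast}(n)=\#(\mathscr{I}\cap\{n\})+\#(\mathscr{J}\cap\{n\})\leq2$; if this equals $2$ then $n\in\mathscr{L}$ and coadmissibility forces $N^{\ast}(n)\leq1$, so $h(n)\leq0$ in every case. Now suppose for contradiction that $M:=\max_{1\leq l\leq n}h(l)>0$ and let $l^{\ast}$ be the \emph{largest} maximizer. Since $h(n)\leq0<M$ we have $l^{\ast}<n$, and maximality gives $h(l^{\ast})>h(l^{\ast}+1)$, i.e. a negative increment at $l^{\ast}$; by the increment formula this forces $l^{\ast}\in\mathscr{I}\cap\mathscr{J}=\mathscr{L}$. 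But coadmissibility then yields $h(l^{\ast})\leq0<M$, a contradiction. Hence $h\leq0$ throughout, which establishes the claim and the lemma. The whole point of the potential is that the hypothesis, available only where $h$ can decrease (the $\mathscr{L}$-letters), automatically controls $h$ at the intervening gaps and hence at the possibly non-$\mathscr{L}$ value $\min(a,b)$; in the spirit of the analogue Lemma~8.3.4 of \cite{HK}, no further combinatorial input should be required.
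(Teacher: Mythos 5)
Your proof is correct, and it takes a genuinely different route from the paper's. The paper argues by cases in the style of Lemma 8.3.4 of \cite{HK}: for $a=b$ the claim is immediate from Definition~\ref{df:coad}; for $a<b$ it selects the smallest $\mathscr{L}$-letter $j>b$ (treating separately the situation where no such $j$ exists), splits further according to whether the letter $b$ itself appears unbarred in the column, bounds the intervening blocks of entries by the number of available letter values, and combines this with coadmissibility applied at $j$; the case $a>b$ is dismissed as analogous. Your argument instead establishes the stronger uniform statement that coadmissibility at the $\mathscr{L}$-letters already forces $N^{\ast}(l)\leq n-l+1$ for \emph{every} $l\in\{1,\ldots,n\}$, via the potential $h(l)=N^{\ast}(l)+l-(n+1)$: its increments are $-1$, $0$, $+1$ according to whether $l$ lies in both, exactly one, or neither of $\mathscr{I}$, $\mathscr{J}$, the boundary value satisfies $h(n)\leq 0$, and a largest-maximizer argument shows any positive maximum would have to sit at an $\mathscr{L}$-letter, where coadmissibility forbids it. This buys uniformity (no case split among $a<b$, $a=b$, $a>b$, no separate handling of $\mathscr{L}=\emptyset$ or of the absence of an $\mathscr{L}$-letter above $b$) and a reusable stronger bound at all letters, not just those of $\mathscr{L}$; the paper's proof is more local and explicit about column configurations, consistent with the filling-diagram bookkeeping used throughout its later sections, but at the cost of several cases and an appeal to symmetry.
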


\begin{proof}
If $a=b$, the claim is just Definition~\ref{df:coad}. 
Let us assume that $a<b$.
Let $j$ be the smallest entry such that $j>b$ and both $j$ and $\Bar{j}$ appear in $C$.
Assume that $j$ (resp. $\Bar{j}$) lies at the $k$-th (resp. $l$-th) position.
The column $C$ has the following configuration, 
where the left (resp. right) configuration is the $\mathscr{C}_{n}^{(-)}$ (resp. $\mathscr{C}_{n}^{(+)}$)-letters part 
($p< k <l <q$).

\setlength{\unitlength}{12pt}
\begin{center}
\begin{picture}(11,6)
\put(3,0){\line(0,1){6}}
\put(4,0){\line(0,1){6}}
\put(3,1){\line(1,0){1}}
\put(3,2){\line(1,0){1}}
\put(3,4){\line(1,0){1}}
\put(3,5){\line(1,0){1}}
\put(3,1){\makebox(1,1){$\Bar{b}$}}
\put(3,4){\makebox(1,1){$\Bar{j}$}}
\put(3,2){\makebox(1,2){$\Bar{B}$}}
\put(1,1){\makebox(2,1){$q \rightarrow$}}
\put(1,4){\makebox(2,1){$l \rightarrow$}}
\put(8,0){\line(0,1){6}}
\put(9,0){\line(0,1){6}}
\put(8,1){\line(1,0){1}}
\put(8,2){\line(1,0){1}}
\put(8,4){\line(1,0){1}}
\put(8,5){\line(1,0){1}}
\put(8,1){\makebox(1,1){$j$}}
\put(8,4){\makebox(1,1){$a$}}
\put(8,2){\makebox(1,2){$A$}}
\put(9,1){\makebox(2,1){$\leftarrow k$}}
\put(9,4){\makebox(2,1){$\leftarrow p$}}
\end{picture}.
\end{center}
Let us consider the following two cases separately:
\begin{description}
\item[(a)]
$b\in A$.
\item[(b)]
$b\notin A$.
\end{description}

\textbf{Case (a).}
Suppose that the entry $b$ lies at the $p^{\prime}$-th position.
The number of boxes between the box containing $a$ and that containing $b$ is $p^{\prime}-p-1$ 
and entries in these boxes are taken from the set $\{a+1,\ldots,b-1\}$($=\emptyset$ if $b=a+1$).
Since $\left\vert \{a+1,\ldots,b-1\}\right\vert =b-a-1$, we have 
$p^{\prime}-p-1\leq b-a-1$, while 
$q-p^{\prime}+b\leq n$
by the definition of KN-coadmissible columns.
Hence, we have 
$(q-p)+\min(a,b)\leq n$.

\textbf{Case (b).}
We divide this case further into the following two cases:
\begin{description}
\item[(b-1)]
$a<b-1$.
\item[(b-2)]
$a=b-1$.
\end{description}
In case \textbf{(b-1)},
$A\cap B=\emptyset$
and
$A\cup B\subseteq\{a+1,\ldots,b-1,b+1,\ldots,j-1\}$
so that 
$\left\vert A\right\vert +\left\vert \Bar{B}\right\vert =\left\vert A\cup
B\right\vert \leq j-a-2$.
In case \textbf{(b-2)},
$A\cap B=\emptyset$
and 
$A\cup B\subseteq\{a+2(=b+1),\ldots,j-1\}$
so that 
$\left\vert A\right\vert +\left\vert \Bar{B}\right\vert =\left\vert A\cup
B\right\vert \leq j-a-2$.
In both cases, we have
$(k-p-1)+(q-l-1)\leq j-a-2$, while 
$l-k+j\leq n$
by the definition of KN-coadmissible columns.
Hence, we have
$(q-p)+\min(a,b)\leq n$.

If the pair of entries $j$ and $\Bar{j}$
($j>b$) does not appear in $C$, then the column $C$ has the following configuration.

\setlength{\unitlength}{12pt}
\begin{center}
\begin{picture}(3,8)
\put(2,0){\line(0,1){8}}
\put(3,0){\line(0,1){8}}
\put(2,1){\line(1,0){1}}
\put(2,2){\line(1,0){1}}
\put(2,4){\line(1,0){1}}
\put(2,6){\line(1,0){1}}
\put(2,7){\line(1,0){1}}
\put(2,1){\makebox(1,1){$\Bar{b}$}}
\put(2,2){\makebox(1,2){$\Bar{B}$}}
\put(2,4){\makebox(1,2){$A$}}
\put(2,6){\makebox(1,1){$a$}}
\put(0,1){\makebox(2,1){$q\rightarrow$}}
\put(0,6){\makebox(2,1){$p\rightarrow$}}
\end{picture},
\end{center}
where $A$ (resp. $\Bar{B}$) is the block filled with $\mathscr{C}_{n}^{(+)}$ (resp. $\mathscr{C}_{n}^{(-)}$)-letters.
If $b \in A$, then we have $(q-p)+\min(a,b)\leq n$ by the previous argument.
If $b\notin A$, then $A\cap B=\emptyset$ and 
$A\cup B\subseteq\{a+1,\ldots,n\}\backslash\{b\}$
so that $\left\vert A\right\vert +\left\vert \Bar{B}\right\vert=\left\vert A\cup B\right\vert \leq n-a-1$, 
while $\left\vert A\right\vert +\left\vert \Bar{B}\right\vert=q-p-1$.
Hence, we have $(q-p)+\min(a,b)\leq n$.
The proof for the case $a>b$ is analogous.
\end{proof}

Let $C\in C_{n}\text{-}\mathrm{Col}(N)$ be the $C_{n}$-column 
described in Definition~\ref{df:split} and assume that it is KN-admissible.
Denote by $C^{\ast}$ the $C_{n}$-column obtained by filling the shape of $C$, i.e., $(1^{N})$ 
with letters taken from the set 
$(\mathscr{I}\backslash \mathscr{L})\sqcup
(\overline{\mathscr{J}}\backslash \overline{\mathscr{L}})\sqcup
\mathscr{L}^{\ast}\sqcup \overline{\mathscr{L}^{\ast}}$.
Then the map
\begin{equation} \label{eq:map1}
\phi:C\mapsto C^{\ast}
\end{equation}
is a bijection between $C_{n}\text{-}\mathrm{Col}_{\mathrm{KN}}(N)$ and 
$C_{n}\text{-}\mathrm{Col}_{\overline{\mathrm{KN}}}(N)$
\cite{Lec}.
The inverse map $\phi ^{-1}=:\psi$ is therefore given by the following algorithm.
Suppose $C\in C_{n}\text{-}\mathrm{Col}_{\overline{\mathrm{KN}}}(N)$ such that
$w(C)=i_{1}\cdots i_{a}\overline{j_{b}}\cdots \overline{j_{1}}$
where $N=a+b$, $i_{k}\in \mathscr{C}_{n}^{(+)\;}(k=1,2,\ldots,a)$, and 
$\overline{j_{k}}\in \mathscr{C}_{n}^{(-)}\;(k=1,2,\ldots,b)$.
Set $\mathscr{I}:=\{i_{1},\ldots,i_{a}\}$ and 
$\mathscr{J}:=\{j_{1},\ldots,j_{b}\}$, 
and define $\mathscr{L}:=\mathscr{I}\cap \mathscr{J}=\{l_{1},\ldots,l_{c}\}$ with  
$l_{1}<l_{2}<\cdots<l_{c}$.
As in Definition~\ref{df:split}, the letters in $\mathscr{I}$, $\mathscr{J}$, and $\mathscr{L}$ are called 
$\mathscr{I}$-letters, $\mathscr{J}$-letters, and $\mathscr{L}$-letters.
Find $\mathscr{C}_{n}^{(+)}$-letters 
$l_{1}^{\dag},\ldots,l_{c}^{\dag}$, which are called $\mathscr{L}^{\dag}$-letters,  
by the following procedure ($\mathscr{L}^{\dag}=\{l_{1}^{\dag},\ldots,l_{c}^{\dag}\}$).
\begin{itemize}
\item[(i)]
$l_{1}^{\dag}$ is the smallest $\mathscr{C}_{n}^{(+)}$-letter satisfying $l_{1}^{\dag}>l_{1}$ 
and $l_{1}^{\dag}\notin \mathscr{I}\cup \mathscr{J}$,
\item[(ii)]
for $k=2,\ldots,c$, $l_{k}^{\dag}$ is the smallest $\mathscr{C}_{n}^{(+)}$-letter satisfying 
$l_{k}^{\dag}>l_{k}$, $l_{k}^{\dag}\notin \mathscr{I}\cup \mathscr{J}$ and 
$l_{k}^{\dag}\notin\{l_{1}^{\dag},\ldots,l_{k-1}^{\dag}\}$.
\end{itemize}
Denote by $C^{\dag}$ the $C_{n}$-column obtained by filling the shape of $C$, i.e., $(1^{N})$ 
with letters taken from the set
$(\mathscr{I}\backslash \mathscr{L})\sqcup
(\overline{\mathscr{J}}\backslash \overline{\mathscr{L}})\sqcup
\mathscr{L}^{\dag}\sqcup \overline{\mathscr{L}^{\dag}}$.
Then
\begin{equation} \label{eq:map2}
\psi:C\mapsto C^{\dag}.
\end{equation}
By construction, both maps $\phi$ and $\psi$ are weight-preserving.

\begin{rem} \label{rem:algorithm2}
$\mathscr{L}^{\dag}$-letters $l_{1}^{\dag},\ldots,l_{c}^{\dag}$ can be written as follows.
\[
l_{1}^{\dag}=
\begin{cases}
i_{p}+1& (\exists i_{p}\in \mathscr{I}\backslash \mathscr{L})\\
& \text{or}\\
j_{q}+1& (\exists j_{q}\in \mathscr{J}).
\end{cases}
\]
For $k=2,\ldots,c$,
\[
l_{k}^{\dag}=
\begin{cases}
i_{p}+1& (\exists i_{p}\in \mathscr{I}\backslash \mathscr{L})\\
& \text{or}\\
j_{q}+1& (\exists j_{q}\in \mathscr{J})\\
& \text{or}\\
l_{k-1}^{\dag}+1.
\end{cases}
\]
\end{rem}

The actual implementation of the above algorithm to compute $\phi (C)$ for 
$C\in C_{n}\text{-}\mathrm{Col}$ is as follows.
For $k=c,c-1,\ldots,1$, we delete entries $l_{k}$ and $\overline{l_{k}}$ and 
relocate entries $l_{k}^{\ast}$ and $\overline{l_{k}^{\ast}}$ in the column to obtain the updated $C_{n}$-column.
This is called the operation for $l_{k} \rightarrow l_{k}^{\ast}$.
Note that the position of 
$l_{k}^{\ast}$($\overline{l_{k}^{\ast}}$) may be changed by subsequent operations for 
$l_{k-1}\rightarrow l_{k-1}^{\ast},\ldots,l_{1}\rightarrow l_{1}^{\ast}$.
We refer to this algorithm as the \emph{first kind} algorithm for $\phi$.
The first kind algorithm for $\psi$ is prescribed similarly.

\begin{ex} \label{ex:algorithm1}
For a $C_{n}$-column with entries $\{2,5,6,7,\Bar{7},\Bar{5},\Bar{4}\}$, 
$\mathscr{L}=\{5,7\}$ and $\mathscr{L}^{\ast}=\{1,3\}$.
The updating process of the column is shown in Fig.~\ref{fig:algorithm1}.
\end{ex} 

\begin{figure}
\setlength{\unitlength}{12pt}
\centering
\begin{picture}(7,7)
\put(0,0){\line(0,1){7}}
\put(1,0){\line(0,1){7}}
\put(3,0){\line(0,1){7}}
\put(4,0){\line(0,1){7}}
\put(6,0){\line(0,1){7}}
\put(7,0){\line(0,1){7}}

\put(0,0){\line(1,0){1}}
\put(0,1){\line(1,0){1}}
\put(0,2){\line(1,0){1}}
\put(0,3){\line(1,0){1}}
\put(0,4){\line(1,0){1}}
\put(0,5){\line(1,0){1}}
\put(0,6){\line(1,0){1}}
\put(0,7){\line(1,0){1}}

\put(0,0){\makebox(1,1){$\Bar{4}$}}
\put(0,1){\makebox(1,1){$\Bar{5}$}}
\put(0,2){\makebox(1,1){$\Bar{7}$}}
\put(0,3){\makebox(1,1){$7$}}
\put(0,4){\makebox(1,1){$6$}}
\put(0,5){\makebox(1,1){$5$}}
\put(0,6){\makebox(1,1){$2$}}

\put(1,3){\makebox(2,1){$\rightarrow$}}

\put(3,0){\line(1,0){1}}
\put(3,1){\line(1,0){1}}
\put(3,2){\line(1,0){1}}
\put(3,3){\line(1,0){1}}
\put(3,4){\line(1,0){1}}
\put(3,5){\line(1,0){1}}
\put(3,6){\line(1,0){1}}
\put(3,7){\line(1,0){1}}

\put(3,0){\makebox(1,1){$\Bar{3}$}}
\put(3,1){\makebox(1,1){$\Bar{4}$}}
\put(3,2){\makebox(1,1){$\Bar{5}$}}
\put(3,3){\makebox(1,1){$6$}}
\put(3,4){\makebox(1,1){$5$}}
\put(3,5){\makebox(1,1){$3$}}
\put(3,6){\makebox(1,1){$2$}}

\put(4,3){\makebox(2,1){$\rightarrow$}}

\put(6,0){\line(1,0){1}}
\put(6,1){\line(1,0){1}}
\put(6,2){\line(1,0){1}}
\put(6,3){\line(1,0){1}}
\put(6,4){\line(1,0){1}}
\put(6,5){\line(1,0){1}}
\put(6,6){\line(1,0){1}}
\put(6,7){\line(1,0){1}}

\put(6,0){\makebox(1,1){$\Bar{1}$}}
\put(6,1){\makebox(1,1){$\Bar{3}$}}
\put(6,2){\makebox(1,1){$\Bar{4}$}}
\put(6,3){\makebox(1,1){$6$}}
\put(6,4){\makebox(1,1){$3$}}
\put(6,5){\makebox(1,1){$2$}}
\put(6,6){\makebox(1,1){$1$}}
\end{picture}
\caption{Example of the first kind algorithm for $\phi$.}
\label{fig:algorithm1}
\end{figure}
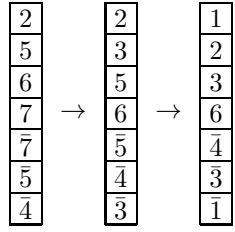

In order to view a $C_{n}$-column, we also use the \emph{filling diagram} explained below.
This is basically the circle diagram introduced by Sheats~\cite{She} 
and is useful to keep track of the change of entries when we update the column by the above algorithm.
It is constructed on $2\times n$ grid and the pair of the $k$-th squares from the left in the top and bottom rows is called the $k$-th slot. 
For example, the initial column in Fig.~\ref{fig:algorithm1}, i.e., 
the $C_{n}$-column with entries $\{2,5,6,7,\Bar{7},\Bar{5},\Bar{4}\}$, the filling diagram reads

\setlength{\unitlength}{10pt}
\begin{center}
\begin{picture}(7,2)
\put(0,0){\line(1,0){7}}
\put(0,1){\line(1,0){7}}
\put(0,2){\line(1,0){7}}
\put(0,0){\line(0,1){2}}
\put(1,0){\line(0,1){2}}
\put(2,0){\line(0,1){2}}
\put(3,0){\line(0,1){2}}
\put(4,0){\line(0,1){2}}
\put(5,0){\line(0,1){2}}
\put(6,0){\line(0,1){2}}
\put(7,0){\line(0,1){2}}
\put(0,0){\makebox(1,1){$\circ$}}
\put(0,1){\makebox(1,1){$\circ$}}
\put(1,0){\makebox(1,1){$\circ$}}
\put(1,1){\makebox(1,1){$\bullet$}}
\put(2,0){\makebox(1,1){$\circ$}}
\put(2,1){\makebox(1,1){$\circ$}}
\put(3,0){\makebox(1,1){$\bullet$}}
\put(3,1){\makebox(1,1){$\circ$}}
\put(4,0){\makebox(1,1){$\bullet$}}
\put(4,1){\makebox(1,1){$\bullet$}}
\put(5,0){\makebox(1,1){$\circ$}}
\put(5,1){\makebox(1,1){$\bullet$}}
\put(6,0){\makebox(1,1){$\bullet$}}
\put(6,1){\makebox(1,1){$\bullet$}}
\end{picture}.
\end{center}
The slot 
{\setlength{\tabcolsep}{3pt}\footnotesize
\begin{tabular}{|c|} \hline
$\circ$ \\ \hline
$\circ$ \\ \hline
\end{tabular}}, 
{\setlength{\tabcolsep}{3pt}\footnotesize
\begin{tabular}{|c|} \hline
$\bullet$ \\ \hline
$\circ$ \\ \hline
\end{tabular}}, 
{\setlength{\tabcolsep}{3pt}\footnotesize
\begin{tabular}{|c|} \hline
$\circ$ \\ \hline
$\bullet$ \\ \hline
\end{tabular}}, and  
{\setlength{\tabcolsep}{3pt}\footnotesize
\begin{tabular}{|c|} \hline
$\bullet$ \\ \hline
$\bullet$ \\ \hline
\end{tabular}}     
are called $\emptyset$-slot, $(+)$-slot, $(-)$-slot, and $(\pm)$-slot, respectively.
If the $k$-th slot in the filling diagram for a $C_{n}$-column is $\emptyset$-slot, 
then both entries $k$ and $\Bar{k}$ do not appear in the column.
If the $k$-th slot is $(+)$-slot (resp. $(-)$-slot), then the entry $k$ (resp. $\Bar{k}$) appears in the column, 
while the entry $\Bar{k}$ (resp. $k$) does not appear.
If the $k$-th slot is $(\pm)$-slot, then both entries $k$ and $\Bar{k}$ appear in the column.
According to the algorithm for $\phi$, the filling diagram of the $C_{n}$-column 
in Example~\ref{ex:algorithm1} changes as follows.

\setlength{\unitlength}{10pt}
\begin{center}
\begin{picture}(25,2)
\put(0,0){\line(1,0){7}}
\put(0,1){\line(1,0){7}}
\put(0,2){\line(1,0){7}}
\put(0,0){\line(0,1){2}}
\put(1,0){\line(0,1){2}}
\put(2,0){\line(0,1){2}}
\put(3,0){\line(0,1){2}}
\put(4,0){\line(0,1){2}}
\put(5,0){\line(0,1){2}}
\put(6,0){\line(0,1){2}}
\put(7,0){\line(0,1){2}}
\put(0,0){\makebox(1,1){$\circ$}}
\put(0,1){\makebox(1,1){$\circ$}}
\put(1,0){\makebox(1,1){$\circ$}}
\put(1,1){\makebox(1,1){$\bullet$}}
\put(2,0){\makebox(1,1){$\circ$}}
\put(2,1){\makebox(1,1){$\circ$}}
\put(3,0){\makebox(1,1){$\bullet$}}
\put(3,1){\makebox(1,1){$\circ$}}
\put(4,0){\makebox(1,1){$\bullet$}}
\put(4,1){\makebox(1,1){$\bullet$}}
\put(5,0){\makebox(1,1){$\circ$}}
\put(5,1){\makebox(1,1){$\bullet$}}
\put(6,0){\makebox(1,1){$\bullet$}}
\put(6,1){\makebox(1,1){$\bullet$}}
\put(7,0){\makebox(2,2){$\rightarrow$}}

\put(9,0){\line(1,0){7}}
\put(9,1){\line(1,0){7}}
\put(9,2){\line(1,0){7}}
\put(9,0){\line(0,1){2}}
\put(10,0){\line(0,1){2}}
\put(11,0){\line(0,1){2}}
\put(12,0){\line(0,1){2}}
\put(13,0){\line(0,1){2}}
\put(14,0){\line(0,1){2}}
\put(15,0){\line(0,1){2}}
\put(16,0){\line(0,1){2}}
\put(9,0){\makebox(1,1){$\circ$}}
\put(9,1){\makebox(1,1){$\circ$}}
\put(10,0){\makebox(1,1){$\circ$}}
\put(10,1){\makebox(1,1){$\bullet$}}
\put(11,0){\makebox(1,1){$\times$}}
\put(11,1){\makebox(1,1){$\times$}}
\put(12,0){\makebox(1,1){$\bullet$}}
\put(12,1){\makebox(1,1){$\circ$}}
\put(13,0){\makebox(1,1){$\bullet$}}
\put(13,1){\makebox(1,1){$\bullet$}}
\put(14,0){\makebox(1,1){$\circ$}}
\put(14,1){\makebox(1,1){$\bullet$}}
\put(15,0){\makebox(1,1){$\circ$}}
\put(15,1){\makebox(1,1){$\circ$}}
\put(16,0){\makebox(2,2){$\rightarrow$}}

\put(18,0){\line(1,0){7}}
\put(18,1){\line(1,0){7}}
\put(18,2){\line(1,0){7}}
\put(18,0){\line(0,1){2}}
\put(19,0){\line(0,1){2}}
\put(20,0){\line(0,1){2}}
\put(21,0){\line(0,1){2}}
\put(22,0){\line(0,1){2}}
\put(23,0){\line(0,1){2}}
\put(24,0){\line(0,1){2}}
\put(25,0){\line(0,1){2}}
\put(18,0){\makebox(1,1){$\times$}}
\put(18,1){\makebox(1,1){$\times$}}
\put(19,0){\makebox(1,1){$\circ$}}
\put(19,1){\makebox(1,1){$\bullet$}}
\put(20,0){\makebox(1,1){$\times$}}
\put(20,1){\makebox(1,1){$\times$}}
\put(21,0){\makebox(1,1){$\bullet$}}
\put(21,1){\makebox(1,1){$\circ$}}
\put(22,0){\makebox(1,1){$\circ$}}
\put(22,1){\makebox(1,1){$\circ$}}
\put(23,0){\makebox(1,1){$\circ$}}
\put(23,1){\makebox(1,1){$\bullet$}}
\put(24,0){\makebox(1,1){$\circ$}}
\put(24,1){\makebox(1,1){$\circ$}}
\end{picture},
\end{center} 
where the slot 
{\setlength{\tabcolsep}{3pt}\footnotesize
\begin{tabular}{|c|} \hline
$\times$ \\ \hline
$\times$ \\ \hline
\end{tabular}}     
is called $(\times)$-slot. 
If the $k$-th slot in the filling diagram for the updated column is $(\times)$-slot, 
then a pair of entries $l^{\ast}(=k)$ and $\overline{l^{\ast}}(=\Bar{k})$ newly appears and 
a pair of entries $l$ and $\Bar{l}$ disappears in the column, 
where $l \in \mathscr{L}$ with $\mathscr{L}$ being the set of $\mathscr{L}$-letters in the original column 
and $l^{\ast}\in \mathscr{L}^{\ast}$ with $\mathscr{L}^{\ast}$ being the set of $\mathscr{L}^{\ast}$-letters in the updated column.
We also use the filling diagram to view the updating process of a $C_{n}$-column by $\psi$.
In this case, the role of $\mathscr{L}^{\ast}$-letters is replaced by that of $\mathscr{L}^{\dag}$-letters.

\begin{lem} \label{lem:position1}
Suppose that $C\in C_{n}\text{-}\mathrm{Col}_{\mathrm{KN}}$ 
and let the set of $\mathscr{L}$-letters of $C$ be $\{l_{1},\ldots,l_{c}\}$.
Let $p_{k}$ (resp. $p_{k}^{\ast}$) be the position of $l_{k}$ (resp. $l_{k}^{\ast}$) in $C$ (resp. $\phi(C)$) and 
$q_{k}$ (resp. $q_{k}^{\ast}$) be the position of $\overline{l_{k}}$ (resp. $\overline{l_{k}^{\ast}}$) in $C$ (resp. $\phi(C)$).
Suppose that a series of operations for $l_{c}\rightarrow l_{c}^{\ast},\ldots,l_{k+1}\rightarrow l_{k+1}^{\ast}$ is finished.
The filling diagram of the updated column has the following configuration.

\setlength{\unitlength}{15pt}
\begin{center}
\begin{picture}(6,3)
\put(1,1){\line(0,1){2}}
\put(2,1){\line(0,1){2}}
\put(4,1){\line(0,1){2}}
\put(5,1){\line(0,1){2}}
\put(0,1){\line(1,0){6}}
\put(0,3){\line(1,0){6}}
\put(0,2){\line(1,0){2}}
\put(4,2){\line(1,0){2}}
\put(1,1){\makebox(1,1){$\circ$}}
\put(1,2){\makebox(1,1){$\circ$}}
\put(4,1){\makebox(1,1){$\bullet$}}
\put(4,2){\makebox(1,1){$\bullet$}}
\put(2,1){\makebox(2,2){$(0)$}}
\put(1,0){\makebox(1,1){$l_{k}^{\ast}$}}
\put(4,0){\makebox(1,1){$l_{k}$}}
\end{picture}.
\end{center}
Then we have $p_{k}-p_{k}^{\ast}=\alpha$ and  $q_{k}^{\ast}-q_{k}=\beta$, 
where $\alpha$ and $\beta$ are the number of $(+)$-slots and that of $(-)$-slots in region $(0)$, respectively.
\end{lem}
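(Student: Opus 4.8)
The plan is to reduce both identities to elementary counting statistics on the filling diagram. In any $C_{n}$-column, reading from the top, the positive letters occupy the first block in increasing order and the negative letters the second block in the order $\prec$; hence the position of a positive letter $v$ equals the number of slots of value $\le v$ whose upper box is filled (the $(+)$- and $(\pm)$-slots of value $\le v$), while the position of a negative letter $\overline{v}$ equals $A+\#\{\text{slots of value }\ge v\text{ whose lower box is filled}\}$, where $A$ is the total number of positive letters in the column. I would first record these two formulae as the dictionary between positions and the filling diagram, then verify the monotonicity $l_{1}^{\ast}<\cdots<l_{c}^{\ast}$ of the $\mathscr{L}^{\ast}$-letters directly from the greedy maximal choice in Definition~\ref{df:split}: if the later-determined $l_{m}^{\ast}$ exceeded the earlier-determined $l_{m+1}^{\ast}$, then $l_{m}^{\ast}$, being admissible and smaller than $l_{m+1}$, would have been a larger valid candidate for $l_{m+1}^{\ast}$, contradicting its maximal choice.

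The key structural input is that the operations act very simply on the filling diagram: the operation $l_{m}\to l_{m}^{\ast}$ turns the $(\pm)$-slot of value $l_{m}$ into an $\emptyset$-slot and the $\emptyset$-slot of value $l_{m}^{\ast}$ into a $(\times)$-slot, and never alters a $(+)$- or $(-)$-slot. Consequently, at every stage --- in $C$, in the partially updated column $C^{(k+1)}$ obtained after the operations for $l_{c}\to l_{c}^{\ast},\ldots,l_{k+1}\to l_{k+1}^{\ast}$ (the one pictured in the statement), and in $\phi(C)$ --- the set of $(+)$-slots is exactly $\mathscr{I}\setminus\mathscr{L}$, the set of $(-)$-slots is exactly $\mathscr{J}\setminus\mathscr{L}$, and the number $A=\#\mathscr{I}$ of positive letters is unchanged, since each operation trades one positive letter for one positive letter. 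In particular the quantity $\alpha$ (resp.\ $\beta$) read off region $(0)$ of $C^{(k+1)}$ is literally $\#\{i\in\mathscr{I}\setminus\mathscr{L}: l_{k}^{\ast}<i<l_{k}\}$ (resp.\ $\#\{w\in\mathscr{J}\setminus\mathscr{L}: l_{k}^{\ast}<w<l_{k}\}$), and may equally be computed in $C$ or in $\phi(C)$.

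With this in hand the two identities reduce to short counts. Using the dictionary together with the invariance of $A$, $\mathscr{I}\setminus\mathscr{L}$, and $\mathscr{J}\setminus\mathscr{L}$, I obtain $p_{k}=\#\{i\in\mathscr{I}\setminus\mathscr{L}: i\le l_{k}\}+k$ and $p_{k}^{\ast}=\#\{i\in\mathscr{I}\setminus\mathscr{L}: i\le l_{k}^{\ast}\}+k$, where the $+k$ in each case counts the $(\pm)$-slots (namely $l_{1},\ldots,l_{k}$ in $C$, and $l_{1}^{\ast},\ldots,l_{k}^{\ast}$ in $\phi(C)$, the latter count being exactly $k$ by monotonicity). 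Their difference is $\#\{i\in\mathscr{I}\setminus\mathscr{L}: l_{k}^{\ast}<i\le l_{k}\}$, and since $l_{k}\in\mathscr{L}$ is not an $(\mathscr{I}\setminus\mathscr{L})$-letter the upper endpoint may be dropped, giving $p_{k}-p_{k}^{\ast}=\alpha$. Symmetrically, $q_{k}=A+\#\{w\in\mathscr{J}\setminus\mathscr{L}: w\ge l_{k}\}+(c-k+1)$ and $q_{k}^{\ast}=A+\#\{w\in\mathscr{J}\setminus\mathscr{L}: w\ge l_{k}^{\ast}\}+(c-k+1)$, whose difference is $\#\{w\in\mathscr{J}\setminus\mathscr{L}: l_{k}^{\ast}\le w<l_{k}\}$; the lower endpoint is harmless because $l_{k}^{\ast}\in\mathscr{L}^{\ast}$ is disjoint from $\mathscr{J}$, so $q_{k}^{\ast}-q_{k}=\beta$.

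The main obstacle is conceptual rather than computational: one must resist comparing positions across the two consecutive stages $C^{(k+1)}$ and $C^{(k)}$, in which the upward shift of $l_{k}$ would also pick up the intervening $(\pm)$-slots and yield the wrong count, and instead compare the genuinely initial and final columns $C$ and $\phi(C)$, where the contributions of all the $(\pm)$-slots $\mathscr{L}$ and $\mathscr{L}^{\ast}$ cancel in matched pairs (this cancellation is what produces the equal $+k$ and $+(c-k+1)$ terms above). The intermediate column $C^{(k+1)}$ enters only to pin down region $(0)$ through the slots $l_{k}^{\ast}$ (still $\emptyset$) and $l_{k}$ (still $(\pm)$); once the stage-independence of the $(+)$- and $(-)$-slot sets is established, the values of $\alpha$ and $\beta$ are the same in every stage and the two identities follow at once.
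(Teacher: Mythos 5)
Your proof is correct, but it takes a genuinely different route from the paper's. The paper argues dynamically: it follows the box containing $l_{k}$ through the successive operations of the first-kind algorithm --- the box first drifts down by $\delta$ positions as the $\delta$ $(\times)$-slots of region $(0)$ are created above it, then the operation for $l_{k}\rightarrow l_{k}^{\ast}$ moves the entry up past the $\alpha+\gamma+\delta$ intervening letters to position $p_{k}-\alpha-\gamma$, and finally the subsequent operations convert the $\gamma$ $(\pm)$-slots of region $(0)$ into $\mathscr{L}^{\ast}$-letters relocated above, landing $l_{k}^{\ast}$ at $p_{k}^{\ast}=p_{k}-\alpha$, with the $q$-statement symmetric. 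You instead argue statically: you set up a dictionary converting positions in a semistandard column into slot counts (position of $v$ equals the number of slots of value at most $v$ whose upper box is filled; position of $\overline{v}$ equals $A$ plus the number of slots of value at least $v$ whose lower box is filled), you observe that each operation only turns one $(\pm)$-slot into an $\emptyset$-slot and one $\emptyset$-slot into a $(\times)$-slot, so that the $(+)$- and $(-)$-slot sets are stage-independent and equal to $\mathscr{I}\backslash\mathscr{L}$ and $\mathscr{J}\backslash\mathscr{L}$ and $A$ is invariant, and you then evaluate $p_{k},q_{k}$ in $C$ and $p_{k}^{\ast},q_{k}^{\ast}$ in $\phi(C)$ directly, the contributions of $\mathscr{L}$ and $\mathscr{L}^{\ast}$ cancelling in matched pairs thanks to the monotonicity $l_{1}^{\ast}<\cdots<l_{c}^{\ast}$, which you correctly derive from the greedy maximal choice in Definition~\ref{df:split}. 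Two minor points: your parenthetical describing ``upper box filled'' as ``$(+)$- and $(\pm)$-slots'' should also include $(\times)$-slots when applied to $\phi(C)$ (your operative definition is the correct one, so nothing breaks), and the intermediate column need not be invoked except, as you say, to identify region $(0)$. What each approach buys: yours eliminates all bookkeeping of intermediate shifts and makes transparent that the displacement depends only on the $(+)$/$(-)$ content of region $(0)$; the paper's step-by-step tracking of positions, while more delicate, is the template it reuses in later arguments (for instance the proof of Lemma~\ref{lem:coKN_lower}), where positions at intermediate stages genuinely matter.
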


\begin{proof}
Between the $l_{k}^{\ast}$-th slot and the $l_{k}$-th slot (region $(0)$), there are no $\emptyset$-slots 
by the choice of $l_{k}^{\ast}$.
Let us assume that the number of $(\pm)$-slots and that of $(\times)$-slots are 
$\gamma$ and $\delta$ in the region $(\ast)$, respectively.
When the relocation of $\mathscr{L}^{\ast}$-letters down to $l_{k+1}^{\ast}$ is finished, 
the position of the box containing $l_{k}$ is changed from $p_{k}$ to $p_{k}+\delta$ 
because $\delta$ $\mathscr{L}^{\ast}$-letters appears 
above this box.
When the relocation of $l_{k}^{\ast}$ is finished, the position of box containing $l_{k}^{\ast}$ is changed 
from $p_{k}+\delta$ to $p_{k}+\delta-(\alpha+\gamma+\delta)=p_{k}-\alpha-\gamma$.
However, $\gamma$ $\mathscr{L}$-letters below the box containing $l_{k}^{\ast}$ are 
transformed to the corresponding $\mathscr{L}^{\ast}$-letters and are 
relocated above the box containing $l_{k}^{\ast}$ in $\phi(C)$ 
so that the position of $l_{k}^{\ast}$ in $\phi(C)$ is $p_{k}^{\ast}=p_{k}-\alpha$.
Similarly, we have $q_{k}^{\ast}=q_{k}+\beta$.
\end{proof}

The following result may be proven in much the same way as in Lemma~\ref{lem:position1}

\begin{lem} \label{lem:position2}
Suppose that $C\in C_{n}\text{-}\mathrm{Col}_{\overline{\mathrm{KN}}}$ 
and let the set of $\mathscr{L}$-letters of $C$ be $\{l_{1},\ldots,l_{c}\}$.
Let $p_{k}$ (resp. $p_{k}^{\dag}$) be the position of $l_{k}$ (resp. $l_{k}^{\dag}$) in $C$ (resp. $\psi(C)$) and 
$q_{k}$ (resp. $q_{k}^{\dag}$) be the position of $\overline{l_{k}}$ (resp. $\overline{l_{k}^{\dag}}$) in $C$ (resp. $\psi(C)$).
Suppose that a series of operations for $l_{1}\rightarrow l_{1}^{\dag},\ldots,l_{k-1}\rightarrow l_{k-1}^{\dag}$ is finished.
The filling diagram of the updated column has the following configuration.

\setlength{\unitlength}{15pt}
\begin{center}
\begin{picture}(6,3)
\put(1,1){\line(0,1){2}}
\put(2,1){\line(0,1){2}}
\put(4,1){\line(0,1){2}}
\put(5,1){\line(0,1){2}}
\put(0,1){\line(1,0){6}}
\put(0,3){\line(1,0){6}}
\put(0,2){\line(1,0){2}}
\put(4,2){\line(1,0){2}}
\put(1,1){\makebox(1,1){$\bullet$}}
\put(1,2){\makebox(1,1){$\bullet$}}
\put(4,1){\makebox(1,1){$\circ$}}
\put(4,2){\makebox(1,1){$\circ$}}
\put(2,1){\makebox(2,2){$(0)$}}
\put(1,0){\makebox(1,1){$l_{k}$}}
\put(4,0){\makebox(1,1){$l_{k}^{\dag}$}}
\end{picture}.
\end{center}
Then we have $p_{k}^{\dag}-p_{k}=\alpha$ and  $q_{k}-q_{k}^{\dag}=\beta$, 
where $\alpha$ and $\beta$ are the number of $(+)$-slots and that of $(-)$-slots in region $(0)$, respectively.
\end{lem}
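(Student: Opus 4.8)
The plan is to run the argument of Lemma~\ref{lem:position1} in reverse, interchanging the roles of the $\mathscr{L}^{\ast}$-letters and the $\mathscr{L}^{\dag}$-letters and reversing the order in which the operations are performed (increasing $k$ for $\psi$, in place of decreasing $k$ for $\phi$). First I would record that region $(0)$, i.e.\ the slots strictly between the $l_{k}$-th slot and the $l_{k}^{\dag}$-th slot, contains no $\emptyset$-slot: by construction $l_{k}^{\dag}$ is the smallest $\mathscr{C}_{n}^{(+)}$-letter exceeding $l_{k}$ that lies outside $\mathscr{I}\cup\mathscr{J}$ and outside $\{l_{1}^{\dag},\ldots,l_{k-1}^{\dag}\}$, so every value $m$ with $l_{k}<m<l_{k}^{\dag}$ is either in $\mathscr{I}\cup\mathscr{J}$ (hence a $(+)$-, $(-)$-, or $(\pm)$-slot) or is one of the already placed daggers $l_{j}^{\dag}$ with $j<k$ (hence a $(\times)$-slot). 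As in the proof of Lemma~\ref{lem:position1}, I would write $\alpha,\beta,\gamma,\delta$ for the numbers of $(+)$-, $(-)$-, $(\pm)$-, and $(\times)$-slots in region $(0)$.

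Next I would track the position of the positive box through the three stages of the algorithm. A genuinely new feature compared with Lemma~\ref{lem:position1} is that the operations $l_{1}\to l_{1}^{\dag},\ldots,l_{k-1}\to l_{k-1}^{\dag}$ both \emph{delete} the letters $l_{j}$ (which lie above $l_{k}$, since $l_{j}<l_{k}$) and \emph{insert} the larger letters $l_{j}^{\dag}$, so both effects must be combined. Deleting the $k-1$ letters $l_{j}$ lowers the position by $k-1$, while inserting a dagger raises it again exactly when $l_{j}^{\dag}<l_{k}$, so the net change is $-|\{j<k:l_{j}^{\dag}>l_{k}\}|$. Using that $l_{j}^{\dag}<l_{k}^{\dag}$ for all $j<k$ (established below), the earlier daggers lying below $l_{k}$ are precisely the $\delta$ $(\times)$-slots of region $(0)$, so after these operations $l_{k}$ sits at position $p_{k}-\delta$. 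The operation $l_{k}\to l_{k}^{\dag}$ then removes $l_{k}$ and inserts $l_{k}^{\dag}$ below the $\alpha+\gamma+\delta$ positive letters of region $(0)$, moving it to $p_{k}+\alpha+\gamma$. Finally, the $\gamma$ remaining $\mathscr{L}$-letters $l_{j}$ ($j>k$) of region $(0)$ are processed afterwards; each satisfies $l_{j}^{\dag}>l_{k}^{\dag}$, so its positive box migrates from above to below the box of $l_{k}^{\dag}$, lowering the position by $\gamma$ and giving $p_{k}^{\dag}=p_{k}+\alpha$, i.e.\ $p_{k}^{\dag}-p_{k}=\alpha$. The computation for the barred letters is entirely analogous, with the roles of ``above'' and ``below'' exchanged through the order $\prec$, and yields $q_{k}-q_{k}^{\dag}=\beta$.

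The step I expect to be the main obstacle is the monotonicity of the daggers that underlies the two relocation counts, namely $l_{j}^{\dag}<l_{k}^{\dag}$ for $j<k$ and $l_{j}^{\dag}>l_{k}^{\dag}$ for the letters $l_{j}$ ($j>k$) lying in region $(0)$. Both should follow from the no-$\emptyset$-slot property: if some earlier dagger exceeded $l_{k}^{\dag}$, then $l_{k}^{\dag}$ would have been an available value strictly smaller than that dagger at the moment of its creation, contradicting the minimality in the construction of the $\mathscr{L}^{\dag}$-letters; and if some later $l_{j}^{\dag}$ (with $j>k$ and $l_{j}$ in region $(0)$) were smaller than $l_{k}^{\dag}$, it would occupy an available value in $(l_{j},l_{k}^{\dag})\subseteq(l_{k},l_{k}^{\dag})$, contradicting the absence of $\emptyset$-slots there. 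Once these two inequalities are secured, the position count closes exactly as in Lemma~\ref{lem:position1}.
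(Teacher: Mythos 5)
Your proposal is correct and follows essentially the same route as the paper: the paper proves Lemma~\ref{lem:position2} by the same three-stage position-tracking argument as Lemma~\ref{lem:position1} (preliminary operations shift $l_{k}$ by $\delta$, the operation for $l_{k}\rightarrow l_{k}^{\dag}$ moves the new letter past the $\alpha+\gamma+\delta$ occupied slots of region $(0)$, and the subsequent $\gamma$ operations on the $(\pm)$-slots restore the count), only with the roles of $\ast$ and $\dag$ and the direction of induction reversed, exactly as you do. Your added bookkeeping for the deletions above $l_{k}$ and the monotonicity claims $l_{j}^{\dag}<l_{k}^{\dag}$ ($j<k$) and $l_{j}^{\dag}>l_{k}^{\dag}$ ($j>k$, $l_{j}$ in region $(0)$) correctly fills in what the paper leaves implicit.
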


Given $C\in C_{n}\text{-}\mathrm{Col}_{\mathrm{KN}}$, the computation of $\phi(C)$ can also be achieved 
by the following algorithm, which we refer to as the algorithm of the \emph{second kind} for $\phi$.
Suppose that $C\in C_{n}\text{-}\mathrm{Col}_{\mathrm{KN}}$
and let the set of $\mathscr{L}$-letters of $C$ be
$\{l_{1},\ldots,l_{c}\}$.
For $k=c,c-1,\ldots,1$, the following procedure is applied.
Firstly, we compute $l_{k}^{\ast}$ for $l_{k}$.
Secondly, we apply the operation (A) followed by the operation (B) described below.
A pair of operations (A) and (B) is called the operation for $l_{k}\rightarrow l_{k}^{\ast}$ as in the first kind algorithm.

\textbf{Operation (A).}

Set
\[
\{i_{p+1},\ldots,i_{p+r}\}:=\left\{i \relmiddle| l_{k}^{\ast}<i<l_{k},i\in C\right\}
\]
and 
\[
\{j_{q+1},\ldots,j_{q+s}\}:=
\left\{j \relmiddle| \overline{l_{k}}\prec \Bar{j} \prec\overline{l_{k}^{\ast}},\Bar{j}\in C\right\}.
\]
The block filled with $i_{p+1},\ldots,i_{p+r}$ and $l_{k}$ is replaced by 
the block filled with $l_{k}^{\ast}$ and $i_{p+1},\ldots,i_{p+r}$.
Similarly, the block filled with $\overline{l_{k}}$ and $\overline{j_{q+s}},\ldots,\overline{j_{q+1}}$ is replaced by 
the block filled with $\overline{j_{q+s}},\ldots,\overline{j_{q+1}}$ and $\overline{l_{k}^{\ast}}$.

\setlength{\unitlength}{15pt}
\begin{center}
\begin{picture}(17,4)
\put(2,0){\line(0,1){4}}
\put(3.5,0){\line(0,1){4}}
\put(2,0){\line(1,0){1.5}}
\put(2,1){\line(1,0){1.5}}
\put(2,2){\line(1,0){1.5}}
\put(2,3){\line(1,0){1.5}}
\put(2,4){\line(1,0){1.5}}
\put(0,0){\makebox(2,1){$p_{k}\rightarrow$}}
\put(2,0){\makebox(1.5,1){$l_{k}$}}
\put(2,1){\makebox(1.5,1){$i_{p+r}$}}
\put(2,2){\makebox(1.5,1){$\vdots$}}
\put(2,3){\makebox(1.5,1){$i_{p+1}$}}
\put(4,1){\makebox(1,2){$\longrightarrow$}}
\put(5.5,0){\line(0,1){4}}
\put(7,0){\line(0,1){4}}
\put(5.5,0){\line(1,0){1.5}}
\put(5.5,1){\line(1,0){1.5}}
\put(5.5,2){\line(1,0){1.5}}
\put(5.5,3){\line(1,0){1.5}}
\put(5.5,4){\line(1,0){1.5}}
\put(5.5,0){\makebox(1.5,1){$i_{p+r}$}}
\put(5.5,1){\makebox(1.5,1){$\vdots$}}
\put(5.5,2){\makebox(1.5,1){$i_{p+1}$}}
\put(5.5,3){\makebox(1.5,1){$l_{k}^{\ast}$}}
\put(8,1){\makebox(2,2){and}}
\put(12,0){\line(0,1){4}}
\put(13.5,0){\line(0,1){4}}
\put(12,0){\line(1,0){1.5}}
\put(12,1){\line(1,0){1.5}}
\put(12,2){\line(1,0){1.5}}
\put(12,3){\line(1,0){1.5}}
\put(12,4){\line(1,0){1.5}}
\put(12,0){\makebox(1.5,1){$\overline{j_{q+1}}$}}
\put(12,1){\makebox(1.5,1){$\vdots$}}
\put(12,2){\makebox(1.5,1){$\overline{j_{q+s}}$}}
\put(12,3){\makebox(1.5,1){$\overline{l_{k}}$}}
\put(10,3){\makebox(2,1){$q_{k}\rightarrow$}}
\put(14,1){\makebox(1,2){$\longrightarrow$}}
\put(15.5,0){\line(0,1){4}}
\put(17,0){\line(0,1){4}}
\put(15.5,0){\line(1,0){1.5}}
\put(15.5,1){\line(1,0){1.5}}
\put(15.5,2){\line(1,0){1.5}}
\put(15.5,3){\line(1,0){1.5}}
\put(15.5,4){\line(1,0){1.5}}
\put(15.5,0){\makebox(1.5,1){$\overline{l_{k}^{\ast}}$}}
\put(15.5,1){\makebox(1.5,1){$\overline{j_{q+1}}$}}
\put(15.5,2){\makebox(1.5,1){$\vdots$}}
\put(15.5,3){\makebox(1.5,1){$\overline{j_{q+s}}$}}
\end{picture}.
\end{center}

\textbf{Operation (B).}

Set 
\[
\{l_{t+1},\ldots,l_{t+\gamma}=l_{k-1}\}:=\{i_{p+1},\ldots,i_{p+r}\}\cap\{j_{q+1},\ldots,j_{q+s}\},
\]
assuming $\gamma \geq 1$ (if $\gamma =0$, then this operation is not necessary).
We extract non $\mathscr{L}$-letters from $\{i_{p+1},\ldots,i_{p+r}\}$ and $\{j_{q+1},\ldots,j_{q+s}\}$;
\[
\{i_{p_{1}},i_{p_{2}},\ldots,i_{p_{\alpha}}\}:=\{i_{p+1},\ldots,i_{p+r}\}\backslash\{l_{t+1},\ldots,l_{k-1}\},
\]
and
\[
\{j_{q_{1}},j_{q_{2}},\ldots,j_{q_{\beta}}\}:=\{j_{q+1},\ldots,j_{q+s}\}\backslash\{l_{t+1},\ldots,l_{k-1}\},
\]
where $r=\alpha+\gamma$ and $s=\beta+\gamma$.
The replaced blocks in the operation (A) are further replaced by the following blocks.

\setlength{\unitlength}{15pt}
\begin{center}
\begin{picture}(7.5,7)
\put(0,0){\line(0,1){7}}
\put(1.5,0){\line(0,1){7}}
\put(0,0){\line(1,0){1.5}}
\put(0,1){\line(1,0){1.5}}
\put(0,2){\line(1,0){1.5}}
\put(0,3){\line(1,0){1.5}}
\put(0,4){\line(1,0){1.5}}
\put(0,5){\line(1,0){1.5}}
\put(0,6){\line(1,0){1.5}}
\put(0,7){\line(1,0){1.5}}
\put(0,0){\makebox(1.5,1){$i_{p_{\alpha}}$}}
\put(0,1){\makebox(1.5,1){$\vdots$}}
\put(0,2){\makebox(1.5,1){$i_{p_{1}}$}}
\put(0,3){\makebox(1.5,1){$l_{k}^{\ast}$}}
\put(0,4){\makebox(1.5,1){$l_{k-1}$}}
\put(0,5){\makebox(1.5,1){$\vdots$}}
\put(0,6){\makebox(1.5,1){$l_{t+1}$}}
\put(2.5,3){\makebox(2.5,1){and}}
\put(6,0){\line(0,1){7}}
\put(7.5,0){\line(0,1){7}}
\put(6,0){\line(1,0){1.5}}
\put(6,1){\line(1,0){1.5}}
\put(6,2){\line(1,0){1.5}}
\put(6,3){\line(1,0){1.5}}
\put(6,4){\line(1,0){1.5}}
\put(6,5){\line(1,0){1.5}}
\put(6,6){\line(1,0){1.5}}
\put(6,7){\line(1,0){1.5}}
\put(6,0){\makebox(1.5,1){$\overline{l_{t+1}}$}}
\put(6,1){\makebox(1.5,1){$\vdots$}}
\put(6,2){\makebox(1.5,1){$\overline{l_{k-1}}$}}
\put(6,3){\makebox(1.5,1){$\overline{l_{k}^{\ast}}$}}
\put(6,4){\makebox(1.5,1){$\overline{j_{q_{1}}}$}}
\put(6,5){\makebox(1.5,1){$\vdots$}}
\put(6,6){\makebox(1.5,1){$\overline{j_{q_{\beta}}}$}}
\end{picture}.
\end{center}
That is, $\mathscr{L}$ (resp. $\Bar{\mathscr{L}}$)-letters in the obtained blocks in the operation (A) 
are expelled and relocated just above (resp. below) the box containing $l_{k}^{\ast}$ (resp. $\overline{l_{k}^{\ast}}$).
Note that these blocks are not semistandard 
because $l_{k-1}>l_{k}^{\ast}$ and $\overline{l_{k-1}}\prec \overline{l_{k}^{\ast}}$ 
and that $l_{k}$ (resp. $\overline{l_{k}}$) in the operation (A) for $l_{k}\rightarrow l_{k}^{\ast}$ is always lies 
at the upper (resp. lower) position of $l_{k+1}^{\ast}$ (resp. $\overline{l_{k+1}^{\ast}}$) 
because even when $l_{k+1}^{\ast}<l_{k}$ (resp. $\overline{l_{k}}\prec\overline{l_{k+1}^{\ast}}$), 
$l_{k}$ (resp. $\overline{l_{k}}$) is relocated just above $l_{k+1}^{\ast}$ (resp. below $\overline{l_{k+1}^{\ast}}$)
by the operation (B) for $l_{k+1}\rightarrow l_{k+1}^{\ast}$.
In particular, $p_{k}$ (resp. $q_{k}$) in the operation (A) is not necessarily the original position of $l_{k}$ (resp. $\overline{l_{k}}$) in $C$.
After the operation (B) for $l_{k}\rightarrow l_{k}^{\ast}$ is finished, the subsequent operations for 
$l_{k-1}\rightarrow l_{k-1}^{\ast}$
do not affect the positions of 
$i_{p_{1}},\ldots,i_{p_{\alpha}}$ ($\overline{j_{q_{\beta}}},\ldots,\overline{j_{q_{1}}}$ and $\overline{l_{k}^{\ast}}$) in the updated column.
We define $\Delta_{k}(C)$ and $\overline{\Delta_{k}}(C)$ as

\setlength{\unitlength}{15pt}
\begin{center}
\begin{picture}(12.5,4)
\put(0,1.5){\makebox(3,1){$\Delta_{k}(C):=$}}
\put(3,0){\line(0,1){4}}
\put(4.5,0){\line(0,1){4}}
\put(3,0){\line(1,0){1.5}}
\put(3,1){\line(1,0){1.5}}
\put(3,2){\line(1,0){1.5}}
\put(3,3){\line(1,0){1.5}}
\put(3,4){\line(1,0){1.5}}
\put(3,0){\makebox(1.5,1){$i_{p_{\alpha}}$}}
\put(3,1){\makebox(1.5,1){$\vdots$}}
\put(3,2){\makebox(1.5,1){$i_{p_{1}}$}}
\put(3,3){\makebox(1.5,1){$l_{k}^{\ast}$}}
\put(5,1.5){\makebox(2.5,1){and}}
\put(8,1.5){\makebox(3,1){$\overline{\Delta_{k}}(C):=$}}
\put(11,0){\line(0,1){4}}
\put(12.5,0){\line(0,1){4}}
\put(11,0){\line(1,0){1.5}}
\put(11,1){\line(1,0){1.5}}
\put(11,2){\line(1,0){1.5}}
\put(11,3){\line(1,0){1.5}}
\put(11,4){\line(1,0){1.5}}
\put(11,0){\makebox(1.5,1){$\overline{l_{k}^{\ast}}$}}
\put(11,1){\makebox(1.5,1){$\overline{j_{q_{1}}}$}}
\put(11,2){\makebox(1.5,1){$\vdots$}}
\put(11,3){\makebox(1.5,1){$\overline{j_{q_{\beta}}}$}}
\end{picture}.
\end{center}
When the operation (A) for $l_{1}\rightarrow l_{1}^{\ast}$ is completed 
(the operation (B) is not necessary for $l_{1}\rightarrow l_{1}^{\ast}$), the column turns out to be 
$\phi(C)$ (semistandard).
The second kind algorithm for $\psi$ is prescribed similarly.

\begin{ex} \label{ex:2nd}
Let $C$ be the KN-admissible $C_{n}$-column filled with entries $2$,$7$,$8$,$9$,$\Bar{9}$,$\Bar{8}$,$\Bar{7}$,$\Bar{5}$.
Then $\mathscr{L}=\{7,9\}$ and $\mathscr{L}^{\ast}=\{1,3\}$.
The updating process for $9\rightarrow 9^{\ast}=3$ is depicted in Fig.~\ref{fig:algorithm2}.
\end{ex}

\begin{figure}
\setlength{\unitlength}{12pt}
\centering
\begin{picture}(10,7)

\put(0,0){\line(0,1){7}}
\put(1,0){\line(0,1){7}}
\put(0,0){\line(1,0){1}}
\put(0,1){\line(1,0){1}}
\put(0,2){\line(1,0){1}}
\put(0,3){\line(1,0){1}}
\put(0,4){\line(1,0){1}}
\put(0,5){\line(1,0){1}}
\put(0,6){\line(1,0){1}}
\put(0,7){\line(1,0){1}}

\put(0,0){\makebox(1,1){$\Bar{5}$}}
\put(0,1){\makebox(1,1){$\Bar{7}$}}
\put(0,2){\makebox(1,1){$\Bar{9}$}}
\put(0,3){\makebox(1,1){$9$}}
\put(0,4){\makebox(1,1){$8$}}
\put(0,5){\makebox(1,1){$7$}}
\put(0,6){\makebox(1,1){$2$}}

\put(2,3){\makebox(2,1){$\longrightarrow$}}
\put(2,2){\makebox(2,1){$(A)$}}

\put(5,0){\line(0,1){7}}
\put(6,0){\line(0,1){7}}
\put(5,0){\line(1,0){1}}
\put(5,1){\line(1,0){1}}
\put(5,2){\line(1,0){1}}
\put(5,3){\line(1,0){1}}
\put(5,4){\line(1,0){1}}
\put(5,5){\line(1,0){1}}
\put(5,6){\line(1,0){1}}
\put(5,7){\line(1,0){1}}

\put(5,0){\makebox(1,1){$\Bar{3}$}}
\put(5,1){\makebox(1,1){$\Bar{5}$}}
\put(5,2){\makebox(1,1){$\Bar{7}$}}
\put(5,3){\makebox(1,1){$8$}}
\put(5,4){\makebox(1,1){$7$}}
\put(5,5){\makebox(1,1){$3$}}
\put(5,6){\makebox(1,1){$2$}}

\put(7,3){\makebox(2,1){$\longrightarrow$}}
\put(7,2){\makebox(2,1){$(B)$}}

\put(10,0){\line(0,1){7}}
\put(11,0){\line(0,1){7}}
\put(10,0){\line(1,0){1}}
\put(10,1){\line(1,0){1}}
\put(10,2){\line(1,0){1}}
\put(10,3){\line(1,0){1}}
\put(10,4){\line(1,0){1}}
\put(10,5){\line(1,0){1}}
\put(10,6){\line(1,0){1}}
\put(10,7){\line(1,0){1}}

\put(10,0){\makebox(1,1){$\Bar{7}$}}
\put(10,1){\makebox(1,1){$\Bar{3}$}}
\put(10,2){\makebox(1,1){$\Bar{5}$}}
\put(10,3){\makebox(1,1){$8$}}
\put(10,4){\makebox(1,1){$3$}}
\put(10,5){\makebox(1,1){$7$}}
\put(10,6){\makebox(1,1){$2$}}

\end{picture}
\caption{Example of the second kind algorithm for $\phi$.}
\label{fig:algorithm2}
\end{figure}
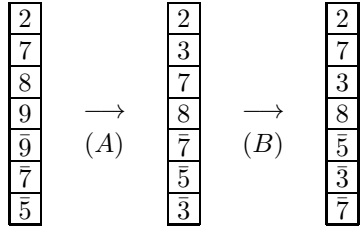

From the above procedure, the following result is obvious.

\begin{lem} \label{lem:col_sst1}
Suppose that $C\in C_{n}\text{-}\mathrm{Col}_{\mathrm{KN}}$.
If $l(\in \mathscr{L})$ lies at the $p$-th position in $C$,  
then the entry in the $p$-th position in $\phi(C)$ is strictly smaller than $l$.
Likewise, if $\overline{l}(\in\Bar{\mathscr{L}})$ lies at the $q$-th position in $C$, 
then the entry at the $q$-th position in $\phi(C)$ is strictly larger than $\Bar{l}$.
Furthermore, let $C_{+}$ (resp. $C_{-}$) be the $\mathscr{C}_{n}^{(+)}$ (resp. $\mathscr{C}_{n}^{(-)}$)-letters part of $C$ 
and $C_{+}^{\ast}$ (resp. $C_{-}^{\ast}$) be the $\mathscr{C}_{n}^{(+)}$ (resp. $\mathscr{C}_{n}^{(-)}$) part of $\phi(C)$.
Then we have $C_{+}^{\ast}\preceq C_{+}$ and $C_{-}\preceq C_{-}^{\ast}$.
\end{lem}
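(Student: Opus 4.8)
The plan is to reduce all three assertions to a single elementary rearrangement inequality for sorted sequences of letters, exploiting the fact that $\phi$ acts on $\mathscr{C}_{n}^{(+)}$-letters and $\mathscr{C}_{n}^{(-)}$-letters separately. First I would record the structural fact that $\phi$ preserves the number of positive and of negative letters: by the definition of $C^{\ast}$ preceding Eq.~\eqref{eq:map1}, the positive part of $\phi(C)$ is the set $(\mathscr{I}\backslash\mathscr{L})\sqcup\mathscr{L}^{\ast}$ and the negative part is $(\overline{\mathscr{J}}\backslash\overline{\mathscr{L}})\sqcup\overline{\mathscr{L}^{\ast}}$, so both $C$ and $\phi(C)$ have exactly $a$ positive letters (occupying the top $a$ boxes, read in increasing order) and $b$ negative letters (occupying the bottom $b$ boxes, read in increasing order with respect to $\prec$). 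In particular, for $p\le a$ the $p$-th box of either column holds the $p$-th smallest entry of its positive part, and symmetrically for negative boxes with respect to $\prec$; this is what legitimizes comparing entries box-by-box.

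Next I would isolate the key inequality. Consider the bijection $f\colon\mathscr{I}\to(\mathscr{I}\backslash\mathscr{L})\sqcup\mathscr{L}^{\ast}$ that fixes $\mathscr{I}\backslash\mathscr{L}$ and sends $l_{k}\mapsto l_{k}^{\ast}$; by Definition~\ref{df:split} each $l_{k}^{\ast}$ is a $\mathscr{C}_{n}^{(+)}$-letter with $l_{k}^{\ast}<l_{k}$ and $l_{k}^{\ast}\notin\mathscr{I}\cup\mathscr{J}$, so $f$ is a well-defined injection with $f(x)\le x$, the inequality being strict exactly when $x\in\mathscr{L}$. The elementary lemma I would prove is: for an injection $f$ on a finite set $A$ of letters with $f(x)\le x$, the $i$-th smallest $b_{i}$ of $f(A)$ satisfies $b_{i}\le a_{i}$, where $a_{i}$ is the $i$-th smallest of $A$; moreover $b_{i}<a_{i}$ whenever $f(a_{i})<a_{i}$. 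This is immediate by counting: the images $f(a_{1}),\dots,f(a_{i})$ are $i$ distinct elements of $f(A)$, each $\le a_{i}$, and all strictly below $a_{i}$ once $f(a_{i})<a_{i}$. Applying this with $A=\mathscr{I}$ gives the weak inequality in each row, i.e.\ $C_{+}^{\ast}\preceq C_{+}$, which is the positive half of the third assertion; and the strict case, taken at the box where $l=l_{k}$ sits, shows that the corresponding box of $\phi(C)$ carries an entry $<l_{k}$, which is the first assertion.

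For the $\mathscr{C}_{n}^{(-)}$-letters I would run the mirror-image argument in the order $\prec$. The relevant bijection $g\colon\overline{\mathscr{J}}\to(\overline{\mathscr{J}}\backslash\overline{\mathscr{L}})\sqcup\overline{\mathscr{L}^{\ast}}$ fixes $\overline{\mathscr{J}}\backslash\overline{\mathscr{L}}$ and sends $\overline{l_{k}}\mapsto\overline{l_{k}^{\ast}}$; since $l_{k}^{\ast}<l_{k}$ we have $\overline{l_{k}^{\ast}}\succ\overline{l_{k}}$, so $g(x)\succeq x$ with strictness exactly on $\overline{\mathscr{L}}$. The order-reversed version of the counting lemma then gives that the $m$-th smallest (in $\prec$) entry of $C_{-}^{\ast}$ dominates the $m$-th smallest of $C_{-}$, which is $C_{-}\preceq C_{-}^{\ast}$, and its strict case at the box of $\overline{l}=\overline{l_{k}}$ yields the entry $\succ\overline{l_{k}}$ demanded by the second assertion.

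I do not expect a serious obstacle, since the second-kind algorithm already makes the set-level description of $\phi(C)$ explicit; the only points requiring care are (i) keeping the bookkeeping straight between the literal box position $p$ (resp.\ $q$) in the column and the \emph{rank} of its entry within the sorted positive (resp.\ negative) part, which is valid precisely because under $\phi$ the positives and the negatives each remain sorted and occupy a fixed range of boxes, and (ii) pinpointing where the inequality turns strict, namely at exactly those boxes originally holding $\mathscr{L}$- and $\overline{\mathscr{L}}$-letters, which is what upgrades the weak row inequalities of the third assertion to the strict statements of the first two.
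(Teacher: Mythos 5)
Your proposal is correct. The counting lemma is sound: for an injection $f$ on a finite totally ordered set $A$ with $f(x)\le x$, the images $f(a_{1}),\dots,f(a_{i})$ are $i$ distinct elements of $f(A)$, all $\le a_{i}$ (and all $<a_{i}$ when $f(a_{i})<a_{i}$, since $f(a_{j})\le a_{j}<a_{i}$ for $j<i$), so the $i$-th smallest element of $f(A)$ is $\le a_{i}$ (resp.\ $<a_{i}$); the mirror statement for $\prec$ on the barred letters follows the same way. The bookkeeping step you flag is also legitimate: since $\phi$ replaces $\mathscr{L}$-letters by $\mathscr{L}^{\ast}$-letters and $\overline{\mathscr{L}}$-letters by $\overline{\mathscr{L}^{\ast}}$-letters, both $C$ and $\phi(C)$ have the same number $a$ of $\mathscr{C}_{n}^{(+)}$-letters occupying the top $a$ boxes in increasing order, so the $p$-th box always holds the entry of rank $p$ in the sorted positive part, and analogously for the negative part with rank $q-a$.

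Your route is, however, genuinely different from the paper's. The paper offers no written argument: it states that the lemma ``is obvious'' from the second-kind algorithm for $\phi$, i.e.\ it relies on inspecting the block replacements in operations (A) and (B), where at each step the entries occupying the affected positions of the positive part can only decrease (the block topped by $l_{k}^{\ast}$ replaces the block ending in $l_{k}$), and dually for the negative part. That reading is local and algorithmic, and it leaves the position-tracking implicit across successive operations. Your argument instead works entirely at the set level: it uses only the defining data of $\phi$ (the letter set $(\mathscr{I}\backslash\mathscr{L})\sqcup\mathscr{L}^{\ast}$ with $l_{k}^{\ast}<l_{k}$, and dually), plus the fact that a $C_{n}$-column is the sorted arrangement of its letters, and then invokes the rearrangement inequality once. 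This buys independence from the choice of algorithm (first or second kind) used to compute $\phi$, and it makes the exact locus of strictness transparent, whereas the paper's approach buys brevity at the cost of an unverified ``obvious.'' Both are valid proofs of the statement.
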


Similarly, we have the following.

\begin{lem} \label{lem:col_sst2}
Suppose that $C \in C_{n}\text{-}\mathrm{Col}_{\overline{\mathrm{KN}}}$.
If $l(\in \mathscr{L})$ lies at the $p$-th position in $C$,  
then the entry in the $p$-th position in $\psi(C)$ is strictly larger than $l$.
Likewise, if $\overline{l}(\in\Bar{\mathscr{L}})$ lies at the $q$-th position in $C$, 
then the entry at the $q$-th position in $\psi(C)$ is strictly smaller than $\Bar{l}$.
Furthermore, let $C_{+}$ (resp. $C_{-}$) be the $\mathscr{C}_{n}^{(+)}$ (resp. $\mathscr{C}_{n}^{(-)}$)-letters part of $C$ 
and $C_{+}^{\dag}$ (resp. $C_{-}^{\dag}$) be the $\mathscr{C}_{n}^{(+)}$ (resp. $\mathscr{C}_{n}^{(-)}$) part of $\psi(C)$.
Then we have $C_{+}\preceq C_{+}^{\dag}$ and $C_{-}^{\dag}\preceq C_{-}$.
\end{lem}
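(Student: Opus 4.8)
The plan is to follow the template of Lemma~\ref{lem:col_sst1}, reading off the effect of $\psi$ on the two halves of the column directly from Remark~\ref{rem:algorithm2}. The structural input is that $\psi$ leaves the letters of $\mathscr{I}\backslash\mathscr{L}$ and $\overline{\mathscr{J}}\backslash\overline{\mathscr{L}}$ untouched and only replaces each $\mathscr{L}$-letter $l_{k}$ by the $\mathscr{L}^{\dag}$-letter $l_{k}^{\dag}$, where $l_{k}^{\dag}>l_{k}$ and the $l_{k}^{\dag}$ are pairwise distinct and lie outside $\mathscr{I}\cup\mathscr{J}$. Since $|\mathscr{L}^{\dag}|=|\mathscr{L}|=c$, the $\mathscr{C}_{n}^{(+)}$-part of $\psi(C)$ has the same length $a$ as that of $C$ and the $\mathscr{C}_{n}^{(-)}$-part the same length $b$; hence positions $1,\dots,a$ (resp. $a+1,\dots,a+b$) carry $\mathscr{C}_{n}^{(+)}$-letters (resp. $\mathscr{C}_{n}^{(-)}$-letters) in both $C$ and $\psi(C)$, so a position $p$ holding $l\in\mathscr{L}$ (resp. $q$ holding $\overline{l}\in\overline{\mathscr{L}}$) remains in the $(+)$-part (resp. $(-)$-part) after applying $\psi$. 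It therefore suffices to compare the increasingly sorted letter sets of the two halves, and everything reduces to a counting argument.

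For the domination statements $C_{+}\preceq C_{+}^{\dag}$ and $C_{-}^{\dag}\preceq C_{-}$ I would argue entrywise. Both $C_{+}=\mathscr{I}$ and $C_{+}^{\dag}=(\mathscr{I}\backslash\mathscr{L})\sqcup\mathscr{L}^{\dag}$ are sets of size $a$ written in increasing order, so $C_{+}\preceq C_{+}^{\dag}$ is equivalent to $(C_{+})_{k}\leq (C_{+}^{\dag})_{k}$ for every $k$, which in turn is equivalent to the threshold inequality $\#\{x\in C_{+}^{\dag}:x\geq t\}\geq\#\{x\in C_{+}:x\geq t\}$ for all $t$. Cancelling the common part $\mathscr{I}\backslash\mathscr{L}$, this amounts to $\#\{k:l_{k}^{\dag}\geq t\}\geq\#\{k:l_{k}\geq t\}$, which is immediate from $l_{k}^{\dag}>l_{k}$, since $l_{k}\geq t$ forces $l_{k}^{\dag}>l_{k}\geq t$. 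The inequality $C_{-}^{\dag}\preceq C_{-}$ follows identically after replacing $<$ by $\prec$ and noting that $l_{k}^{\dag}>l_{k}$ gives $\overline{l_{k}^{\dag}}\prec\overline{l_{k}}$.

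For the strict statements, suppose $l=l_{j}$ occupies the $p$-th position of $C$, so that $l_{j}$ is the $p$-th smallest $\mathscr{I}$-letter and exactly $p-1$ of them are $<l_{j}$, of which $j-1$ lie in $\mathscr{L}$ and $p-j$ in $\mathscr{I}\backslash\mathscr{L}$. I would then count the $(+)$-letters of $\psi(C)$ exceeding $l_{j}$: the $(a-p)-(c-j)$ letters of $\mathscr{I}\backslash\mathscr{L}$ above $l_{j}$ together with $l_{j}^{\dag},\dots,l_{c}^{\dag}$, which all exceed $l_{j}$ because $l_{k}^{\dag}>l_{k}\geq l_{j}$ for $k\geq j$, give at least $(a-p-c+j)+(c-j+1)=a-p+1$ letters $>l_{j}$ (no overlap, as $\mathscr{L}^{\dag}\cap\mathscr{I}=\emptyset$). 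Hence at most $p-1$ of the $a$ letters of $C_{+}^{\dag}$ are $\leq l_{j}$, so its $p$-th smallest entry is strictly larger than $l_{j}$, as claimed. The barred statement is the exact $\prec$-mirror: replacing each $\overline{l_{k}}$ by the strictly $\prec$-smaller $\overline{l_{k}^{\dag}}$ produces at least $q-a$ barred entries strictly $\prec\overline{l}$, which form a $\prec$-initial segment and thus force the $(q-a)$-th $(-)$-entry, i.e. the $q$-th entry of $\psi(C)$, to be $\prec\overline{l}$.

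The argument is elementary once set up, so I do not anticipate a serious obstacle; the only point demanding care is matching positions between $C$ and $\psi(C)$, which is why I first record that $\psi$ preserves the lengths of the two halves so that ``the $p$-th position'' refers to the same half before and after. Alternatively, one could track the displacement of each individual letter through the relocation steps of the second kind algorithm for $\psi$ and invoke Lemma~\ref{lem:position2}, but the global counting comparison above seems cleaner and is insensitive to the order in which the operations for $l_{1}\rightarrow l_{1}^{\dag},\dots,l_{c}\rightarrow l_{c}^{\dag}$ are carried out.
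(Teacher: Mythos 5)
Your proof is correct, and it takes a genuinely different route from the one the paper intends. The paper never writes a proof of this lemma: it declares the companion Lemma~\ref{lem:col_sst1} ``obvious'' from the second kind algorithm (whose operations (A) and (B) explicitly relocate each $l_{k}$, $\overline{l_{k}}$ and insert $l_{k}^{\ast}$, $\overline{l_{k}^{\ast}}$), and then states Lemma~\ref{lem:col_sst2} with ``Similarly''; so the intended argument is positional, reading the conclusion off the local moves of the algorithm, with Lemma~\ref{lem:position2} available for tracking displacements. You instead bypass the algorithm entirely and work from the static description of $\psi(C)$ as the column filled by $(\mathscr{I}\backslash\mathscr{L})\sqcup(\overline{\mathscr{J}}\backslash\overline{\mathscr{L}})\sqcup\mathscr{L}^{\dag}\sqcup\overline{\mathscr{L}^{\dag}}$: your only inputs are $l_{k}^{\dag}>l_{k}$, pairwise distinctness, and $\mathscr{L}^{\dag}\cap(\mathscr{I}\cup\mathscr{J})=\emptyset$, all of which are built into Definition of $\psi$ and Remark~\ref{rem:algorithm2}. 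From these, the weak dominations $C_{+}\preceq C_{+}^{\dag}$ and $C_{-}^{\dag}\preceq C_{-}$ follow by the standard equivalence between entrywise comparison of sorted sequences and threshold counting, and the strict statements follow from your count that $C_{+}^{\dag}$ has at least $(a-p-c+j)+(c-j+1)=a-p+1$ entries exceeding $l_{j}$ (hence at most $p-1$ entries $\leq l_{j}$), with the exact $\prec$-mirror giving $q-a$ entries of $C_{-}^{\dag}$ strictly $\prec\overline{l}$; I verified these counts and the position-matching step (both halves keep their lengths $a$ and $b$), and everything is sound. What your route buys: it is self-contained, insensitive to the order of the operations $l_{1}\rightarrow l_{1}^{\dag},\ldots,l_{c}\rightarrow l_{c}^{\dag}$, needs neither the second kind algorithm nor Lemma~\ref{lem:position2}, and makes transparent that the lemma depends only on the defining properties of the $\mathscr{L}^{\dag}$-letters rather than on how any particular algorithm realizes them. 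What the paper's route buys: once the relocation machinery is set up (which the paper needs anyway for Lemmas~\ref{lem:welldf1}--\ref{lem:st2} and beyond), the statement is visible at a glance from each block replacement, so no separate argument has to be recorded.
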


\begin{lem} \label{lem:coKN_lower}
Suppose that $C\in C_{n}\text{-}\mathrm{Col}_{\mathrm{KN}}(N)$.
Let $\{l_{1},\ldots,l_{c}\}$ be the set of $\mathscr{L}$-letters of $C$ and 
$\{l_{1}^{\ast},\ldots,l_{c}^{\ast}\}$ be the set of the corresponding $\mathscr{L}^{\ast}$-letters .
Let $p_{k}^{\ast}$ (resp. $q_{k}^{\ast}$) be the position of $l_{k}^{\ast}$ (resp. $\overline{l_{k}^{\ast}}$) in $\phi(C)$ .
Then we have
\[
q_{k}^{\ast}-p_{k}^{\ast}+l_{k}^{\ast}\geq N-\gamma_{k}^{\ast},
\]
where
$\gamma_{k}^{\ast}:=\sharp\left\{l\in \mathscr{L} \relmiddle| l_{k}^{\ast}<l<l_{k}\right\}$
$(k=c,c-1,\ldots,1)$.
\end{lem}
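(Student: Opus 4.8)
The plan is to read off the two positions $p_k^\ast,q_k^\ast$ directly from the content of $\phi(C)$ and then use a monotonicity property of the $\mathscr{L}^\ast$-letters to evaluate the resulting count. By the construction of $\phi$ (see~\eqref{eq:map1}), the $\mathscr{C}_n^{(+)}$-part of $\phi(C)$ is filled with $(\mathscr{I}\setminus\mathscr{L})\sqcup\mathscr{L}^\ast$ and its $\mathscr{C}_n^{(-)}$-part with $(\overline{\mathscr{J}}\setminus\overline{\mathscr{L}})\sqcup\overline{\mathscr{L}^\ast}$, and since $\mathscr{L}^\ast\cap(\mathscr{I}\cup\mathscr{J})=\emptyset$ each $\mathscr{L}^\ast$-letter occurs exactly once. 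As $\phi(C)\in C_n\text{-}\mathrm{Col}_{\overline{\mathrm{KN}}}(N)$ is a genuine column, its entries strictly increase from top to bottom in $\prec$, so the position of an entry is one more than the number of entries preceding it. Reading off $l_k^\ast$ and $\overline{l_k^\ast}$ gives
\[ p_k^\ast=1+\sharp\{i\in\mathscr{I}\setminus\mathscr{L}: i<l_k^\ast\}+\sharp\{l_j^\ast\in\mathscr{L}^\ast: l_j^\ast<l_k^\ast\}, \]
and, since the entries lying below $\overline{l_k^\ast}$ are exactly the $\overline{m}$ with $m<l_k^\ast$,
\[ N-q_k^\ast=\sharp\{j\in\mathscr{J}\setminus\mathscr{L}: j<l_k^\ast\}+\sharp\{l_j^\ast\in\mathscr{L}^\ast: l_j^\ast<l_k^\ast\}. \]

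The key step, and the one I expect to be the main obstacle, is to prove the monotonicity $l_1^\ast<l_2^\ast<\cdots<l_c^\ast$ from the selection rule in Definition~\ref{df:split}. I would argue by downward induction on the index: if $l_k^\ast\ge l_{k+1}^\ast$ for some $k$, then $l_k^\ast\ne l_{k+1}^\ast$ forces $l_{k+1}^\ast<l_k^\ast<l_k<l_{k+1}$, so the value $l_k^\ast$ is a $\mathscr{C}_n^{(+)}$-letter strictly below $l_{k+1}$, lying outside $\mathscr{I}\cup\mathscr{J}$ and distinct from $l_{k+2}^\ast,\ldots,l_c^\ast$, yet larger than $l_{k+1}^\ast$; this contradicts the maximality in the choice of $l_{k+1}^\ast$. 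Hence $l_j^\ast<l_k^\ast$ precisely when $j<k$, and therefore $\sharp\{l_j^\ast\in\mathscr{L}^\ast: l_j^\ast<l_k^\ast\}=k-1$.

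Finally I would assemble the count. Subtracting the two displayed identities yields
\[ q_k^\ast-p_k^\ast=N-1-\sharp\{i\in\mathscr{I}\setminus\mathscr{L}: i<l_k^\ast\}-\sharp\{j\in\mathscr{J}\setminus\mathscr{L}: j<l_k^\ast\}-2(k-1). \]
Classifying the $l_k^\ast-1$ letters below $l_k^\ast$ as $\mathscr{L}$-, $(\mathscr{I}\setminus\mathscr{L})$-, $(\mathscr{J}\setminus\mathscr{L})$-, or $\mathscr{L}^\ast$-letters, or else as letters lying in none of $\mathscr{I}\cup\mathscr{J}\cup\mathscr{L}^\ast$ (say $E$ of the last kind), gives
\[ l_k^\ast-1=\sharp\{l\in\mathscr{L}: l<l_k^\ast\}+\sharp\{i\in\mathscr{I}\setminus\mathscr{L}: i<l_k^\ast\}+\sharp\{j\in\mathscr{J}\setminus\mathscr{L}: j<l_k^\ast\}+(k-1)+E. \]
Substituting this expression for $l_k^\ast$ and using $\sharp\{l\in\mathscr{L}: l<l_k^\ast\}=(k-1)-\gamma_k^\ast$ — valid because $\gamma_k^\ast$ counts exactly the $\mathscr{L}$-letters in $(l_k^\ast,l_k)$ while $l_k^\ast\notin\mathscr{L}$ — all the intermediate counts cancel and leave
\[ q_k^\ast-p_k^\ast+l_k^\ast=N-\gamma_k^\ast+E. \]
Since $E\ge 0$ this is the claimed inequality; moreover the computation shows that equality holds exactly when no letter below $l_k^\ast$ lies outside $\mathscr{I}\cup\mathscr{J}\cup\mathscr{L}^\ast$, consistent with the tightness observed in Example~\ref{ex:algorithm1}.
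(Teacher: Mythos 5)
Your proof is correct, but it takes a genuinely different route from the paper. The paper proves the lemma by downward induction on $k$, following the first kind algorithm for $\phi$ step by step: it uses the filling diagram and Lemma~\ref{lem:position1} to relate the new positions to the old ones ($p_{k}^{\ast}=p_{k}-\alpha$, $q_{k}^{\ast}=q_{k}+\beta$), expresses $l_{k}^{\ast}$ in terms of $l_{k}$ and the slot counts, and then feeds in the quantitative KN-admissibility bound $q_{k}-p_{k}+l_{k}\geq N+1$ from condition (C1). You instead work statically on the finished column $\phi(C)$: since its letter content is prescribed by the definition of $\phi$ and a $C_{n}$-column is strictly increasing, the positions $p_{k}^{\ast}$ and $q_{k}^{\ast}$ can simply be read off by counting smaller letters, and after your monotonicity claim $l_{1}^{\ast}<\cdots<l_{c}^{\ast}$ (whose maximality argument is sound, and which the paper tacitly assumes when it calls $\mathscr{L}^{\ast}$ an ordered sequence) everything reduces to partitioning the letters below $l_{k}^{\ast}$ into the five disjoint classes $\mathscr{L}$, $\mathscr{I}\backslash\mathscr{L}$, $\mathscr{J}\backslash\mathscr{L}$, $\{l_{1}^{\ast},\ldots,l_{k-1}^{\ast}\}$, and the rest. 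The cancellation you carry out is correct and yields the exact identity
\[
q_{k}^{\ast}-p_{k}^{\ast}+l_{k}^{\ast}=N-\gamma_{k}^{\ast}+E,\qquad
E=\sharp\left\{m<l_{k}^{\ast}\relmiddle| m\notin\mathscr{I}\cup\mathscr{J}\cup\mathscr{L}^{\ast}\right\}\geq 0,
\]
which one can check against Examples~\ref{ex:algorithm1} and~\ref{ex:2nd}, where equality holds. What your approach buys: it is non-inductive, avoids all algorithmic bookkeeping, uses KN-admissibility only through the existence of the splitting (rather than through the numerical bound of (C1)), and it sharpens the lemma to an equality with an explicit nonnegative defect, characterizing when the bound is tight. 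What the paper's approach buys: its position-tracking machinery (filling diagrams, Lemma~\ref{lem:position1}) is not specific to this lemma but is reused throughout Sections~\ref{sec:Phi} and~\ref{sec:Psi}, so the inductive proof doubles as a warm-up for, and is consistent with, the later arguments about the intermediate stages of the algorithm, which your static endpoint computation says nothing about.
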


\begin{proof}
We proceed by induction on $k=c,c-1,\ldots,1$.
We follow the algorithm of the first kind for $\phi$ here.
Let $p_{i}$ (resp. $q_{i}$) be the position of $l_{i}$ (resp. $\overline{l_{i}}$) in $C$ and 
$p_{i}^{\ast}$ (resp. $q_{i}^{\ast}$) be the position of $l_{i}^{\ast}$ (resp. $\overline{l_{i}^{\ast}}$) in 
$\phi (C)$ ($i=1,2,\ldots,c$).

\textbf{(I).}
For $k=c$, the filling diagram of the initial column $C$ has the following configuration.

\setlength{\unitlength}{15pt}
\begin{center}
\begin{picture}(6,3)
\put(1,1){\line(0,1){2}}
\put(2,1){\line(0,1){2}}
\put(4,1){\line(0,1){2}}
\put(5,1){\line(0,1){2}}
\put(0,1){\line(1,0){6}}
\put(0,2){\line(1,0){2}}
\put(4,2){\line(1,0){2}}
\put(0,3){\line(1,0){6}}
\put(1,1){\makebox(1,1){$\circ$}}
\put(1,2){\makebox(1,1){$\circ$}}
\put(4,1){\makebox(1,1){$\bullet$}}
\put(4,2){\makebox(1,1){$\bullet$}}
\put(1,0){\makebox(1,1){$l_{c}^{\ast}$}}
\put(4,0){\makebox(1,1){$l_{c}$}}
\put(2,1){\makebox(2,2){$(0)$}}
\end{picture}.
\end{center}
Region $(0)$ consists of $(+)$-slots, $(-)$-slots, and $(\pm)$-slots.
The $(\times)$-slots and $\emptyset$-slots do not exist in this region.
Let us assume that the numbers of $(+)$-slots and  $(-)$-slots are
$\alpha$ and $\beta$, respectively.
The number of $(\pm)$-slots in this region is $\gamma_{c}^{\ast}$.
Then we have 
$p_{c}^{\ast}=p_{c}-\alpha$, $q_{c}^{\ast}=q_{c}+\beta$ by Lemma~\ref{lem:position1}, and 
$l_{c}^{\ast}=l_{c}-(\alpha+\beta+\gamma_{c}^{\ast})-1$ so that 
$q_{c}^{\ast}-p_{c}^{\ast}+l_{c}^{\ast}=q_{c}-p_{c}+l_{c}-\gamma_{c}^{\ast}-1\geq N-\gamma_{c}^{\ast}$, 
where the last inequality is due to the KN-admissibility, $q_{c}-p_{c}+l_{c}\geq N+1$.

\textbf{(II).}
Suppose that $\mathscr{L}$-letters, $l_{c},\ldots,l_{k+1}$ are transformed to the corresponding 
$\mathscr{L}^{\ast}$-letters, $l_{c}^{\ast},\ldots,l_{k+1}^{\ast}$ 
and relocated in the column ($k=c-1,\ldots,1$).
If $l_{k+1}^{\ast}>l_{k}$, then the situation is the same as in \textbf{(I)} so that we have 
$q_{k}^{\ast}-p_{k}^{\ast}+l_{k}^{\ast}\geq N-\gamma_{k}^{\ast}$.
If $l_{k+1}^{\ast}<l_{k}$, then the filling diagram of the updated column has the following configuration.

\setlength{\unitlength}{15pt}
\begin{center}
\begin{picture}(15,3)
\put(1,1){\line(0,1){2}}
\put(2,1){\line(0,1){2}}
\put(4,1){\line(0,1){2}}
\put(5,1){\line(0,1){2}}
\put(7,1){\line(0,1){2}}
\put(8,1){\line(0,1){2}}
\put(10,1){\line(0,1){2}}
\put(11,1){\line(0,1){2}}
\put(13,1){\line(0,1){2}}
\put(14,1){\line(0,1){2}}
\put(0,1){\line(1,0){15}}
\put(0,3){\line(1,0){15}}
\put(0,2){\line(1,0){2}}
\put(4,2){\line(1,0){1}}
\put(7,2){\line(1,0){1}}
\put(10,2){\line(1,0){1}}
\put(13,2){\line(1,0){2}}
\put(2,1){\makebox(2,2){$(0)$}}
\put(1,1){\makebox(1,1){$\circ$}}
\put(1,2){\makebox(1,1){$\circ$}}
\put(4,1){\makebox(1,1){$\times$}}
\put(4,2){\makebox(1,1){$\times$}}
\put(7,1){\makebox(1,1){$\bullet$}}
\put(7,2){\makebox(1,1){$\bullet$}}
\put(10,1){\makebox(1,1){$\bullet$}}
\put(10,2){\makebox(1,1){$\bullet$}}
\put(13,1){\makebox(1,1){$\circ$}}
\put(13,2){\makebox(1,1){$\circ$}}
\put(1,0){\makebox(1,1){$l_{k}^{\ast}$}}
\put(4,0){\makebox(1,1){$l_{k+1}^{\ast}$}}
\put(6,0){\makebox(3,1){$l_{k+1-\gamma_{k+1}^{\ast}}$}}
\put(10,0){\makebox(1,1){$l_{k}$}}
\put(13,0){\makebox(1,1){$l_{k+1}$}}
\put(8,1){\makebox(2,2){$\cdots$}}
\end{picture}.
\end{center}
There are no $\emptyset$-slots between the $l_{k}^{\ast}$-th slot and the $l_{k+1}$-th slot but 
are $\gamma _{k+1}^{\ast}$ $(\pm)$-slots between the $l_{k+1}^{\ast}$-th slot and the $l_{k+1}$-th slot.
Let us assume that region $(0)$ contains $\gamma_{0}$ $(\pm)$-slots and that the total number of $(+)$ and that of $(-)$ 
between the $l_{k}^{\ast}$-slot and the $l_{k}$-th slot are $\alpha$ and $\beta$, respectively. 
Then we have
$p_{k}^{\ast}=p_{k}-\alpha$, $q_{k}^{\ast}=q_{k}+\beta$ by Lemma~\ref{lem:position1}, and 
$l_{k}^{\ast}=l_{k}-(\alpha+\beta+\gamma_{0}+\gamma_{k+1}^{\ast}-1)-1$ so that
$q_{k}^{\ast}-p_{k}^{\ast}+l_{k}^{\ast}=q_{k}-p_{k}+l_{k}-(\gamma_{0}+\gamma_{k+1}^{\ast})$.
Since $\gamma_{k}^{\ast}=\gamma_{k+1}^{\ast}-1+\gamma_{0}$, we have 
$q_{k}^{\ast}-p_{k}^{\ast}+l_{k}^{\ast}\geq N-\gamma_{k}^{\ast}$.
From \textbf{(I)} and \textbf{(II)}, the claim follows. 
\end{proof}

The following result may be proven in much the same way as in Lemma~\ref{lem:coKN_lower}.

\begin{lem} \label{lem:KN_upper}
Suppose that $C\in C_{n}\text{-}\mathrm{Col}_{\overline{\mathrm{KN}}}$.
Let $\{l_{1},\ldots,l_{c}\}$ be the set of $\mathscr{L}$-letters in $C$ and 
$\{l_{1}^{\dag},\ldots,l_{c}^{\dag}\}$ be the set of corresponding $\mathscr{L}^{\dag}$-letters.
Let $p_{k}^{\dag}$ (resp. $q_{k}^{\dag}$) be the position of $l_{k}^{\dag}$ (resp. $\overline{l_{k}^{\dag}}$) in $\psi(C)$ .
Then we have
\[
q_{k}^{\dag}-p_{k}^{\dag}+l_{k}^{\dag}\leq n+\gamma_{k}^{\dag}+1,
\]
where
$\gamma_{k}^{\dag}:=\sharp\left\{l\in \mathscr{L} \relmiddle| l_{k}<l<l_{k}^{\dag}\right\}$
$(k=1,2,,\ldots,c)$.
\end{lem}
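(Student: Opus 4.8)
The plan is to run the argument dual to that of Lemma~\ref{lem:coKN_lower}: I would perform an induction on $k=1,2,\ldots,c$ following the first kind algorithm for $\psi$ (so the operations are applied in the order $l_1\to l_1^\dag,\ldots,l_c\to l_c^\dag$), and systematically replace the KN-admissibility bound by the KN-coadmissibility bound and Lemma~\ref{lem:position1} by Lemma~\ref{lem:position2}. Throughout I would write $p_i,q_i$ for the positions of $l_i,\overline{l_i}$ in $C$ and $p_i^\dag,q_i^\dag$ for the positions of $l_i^\dag,\overline{l_i^\dag}$ in $\psi(C)$. The only external ingredient is the inequality coming from coadmissibility: since $l_k\in\mathscr{L}$ forces both $l_k$ and $\overline{l_k}$ to occur in $C$, one has $N^\ast(l_k)=q_k-p_k+1$, so Definition~\ref{df:coad} gives $q_k-p_k+l_k\le n$.

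For the base case $k=1$, I would read off the filling diagram of the initial column $C$ and inspect the region $(0)$ lying strictly between the $l_1$-th slot and the $l_1^\dag$-th slot. By the defining property of $l_1^\dag$ (the smallest $\emptyset$-slot to the right of $l_1$) this region contains no $\emptyset$-slots, and since no operation has been carried out yet it contains no $(\times)$-slots; denoting by $\alpha,\beta,\gamma_1^\dag$ the numbers of $(+)$-, $(-)$-, and $(\pm)$-slots in it, Lemma~\ref{lem:position2} gives $p_1^\dag=p_1+\alpha$ and $q_1^\dag=q_1-\beta$, while counting the intervening slots yields $l_1^\dag=l_1+(\alpha+\beta+\gamma_1^\dag)+1$. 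Adding these three identities produces $q_1^\dag-p_1^\dag+l_1^\dag=q_1-p_1+l_1+\gamma_1^\dag+1\le n+\gamma_1^\dag+1$ by coadmissibility, which is the desired bound.

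For the inductive step I would assume that $l_1\to l_1^\dag,\ldots,l_{k-1}\to l_{k-1}^\dag$ have been completed and distinguish two cases exactly as in Lemma~\ref{lem:coKN_lower}. If $l_{k-1}^\dag<l_k$, the picture around the $l_k$-slot is identical to the base case and the same computation applies verbatim. If $l_{k-1}^\dag>l_k$, the updated filling diagram has acquired $(\times)$-slots from the earlier operations, and the bookkeeping of these slots yields the relation $\gamma_k^\dag=\gamma_{k-1}^\dag-1+\gamma_0$, where $\gamma_0$ is the number of $(\pm)$-slots in region $(0)$; combined with Lemma~\ref{lem:position2}, the slot count for $l_k^\dag$, and the bound $q_k-p_k+l_k\le n$, this again gives $q_k^\dag-p_k^\dag+l_k^\dag=q_k-p_k+l_k+\gamma_k^\dag+1\le n+\gamma_k^\dag+1$. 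The claim then follows by induction.

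The step I expect to be the main obstacle is precisely the slot bookkeeping in the case $l_{k-1}^\dag>l_k$: one must correctly identify which of the $(\times)$-slots created by the previous $k-1$ operations fall in the relevant value range and verify the count identity $\gamma_k^\dag=\gamma_{k-1}^\dag-1+\gamma_0$. This is the dual of the most delicate part of the proof of Lemma~\ref{lem:coKN_lower}, and once it is established the remainder is a routine dualization in which every inequality and every direction ($\phi$ versus $\psi$, admissible versus coadmissible, downward versus upward relocation) is reversed.
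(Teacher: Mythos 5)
Your proposal is correct and is exactly the argument the paper intends: the paper states Lemma~\ref{lem:KN_upper} with only the remark that it ``may be proven in much the same way as in Lemma~\ref{lem:coKN_lower},'' and your dualization (coadmissibility bound $q_{k}-p_{k}+l_{k}\leq n$ in place of admissibility, Lemma~\ref{lem:position2} in place of Lemma~\ref{lem:position1}, induction running upward $k=1,\ldots,c$, and the slot identity $\gamma_{k}^{\dag}=\gamma_{k-1}^{\dag}-1+\gamma_{0}$ dual to $\gamma_{k}^{\ast}=\gamma_{k+1}^{\ast}-1+\gamma_{0}$) carries it out faithfully, with the base-case and inductive computations both yielding $q_{k}^{\dag}-p_{k}^{\dag}+l_{k}^{\dag}=q_{k}-p_{k}+l_{k}+\gamma_{k}^{\dag}+1\leq n+\gamma_{k}^{\dag}+1$.
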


\section{Main Theorem I} \label{sec:main1}

Let us begin by giving some definitions.
For $T\in C_{n}\text{-}\mathrm{SST}$ ($T$ is not necessarily KN-admissible), 
we write $T=C_{1}C_{2}\ldots C_{n_{c}}$, where
$C_{x}$ ($x=1,2,\ldots,n_{c}$) is the $x$-th column (from the left) of $T$.

\begin{df} \label{df:Phi}
For $T=C_{1}C_{2}\cdots C_{n_{c}}\in C_{n}\text{-}\mathrm{SST}$, 
let $C_{-}^{(x)}$ (resp. $C_{+}^{(y)}$) be the $\mathscr{C}_{n}^{(-)}$ (resp. $\mathscr{C}_{n}^{(+)}$)-letters part of the $x$-th (resp. $y$-th) column of $T$ and let $C^{(x,y)}$ be the $C_{n}$-column 
whose $\mathscr{C}_{n}^{(-)}$ (resp. $\mathscr{C}_{n}^{(+)}$)-letters part is $C_{-}^{(x)}$ (resp. $C_{+}^{(y)}$).
Let $C_{-}^{(x)\ast}$ (resp. $C_{+}^{(y)\ast}$) be the $\mathscr{C}_{n}^{(-)}$ (resp. $\mathscr{C}_{n}^{(+)}$)-letters part of 
$\phi(C^{(x,y)})$ assuming that $C^{(x,y)}\in C_{n}\text{-}\mathrm{Col}_{\mathrm{KN}}$.
Replace $C_{-}^{(x)}$ (resp. $C_{+}^{(y)}$) in $T$ by $C_{-}^{(x)\ast}$ (resp. $C_{+}^{(y)\ast}$) and denote by $T^{\ast}$ the resulting tableau.
Then we define 
\[
\phi^{(x,y)}(T):=
\begin{cases}
T^{\ast} & (C^{(x,y)}\in C_{n}\text{-}\mathrm{Col}_{\mathrm{KN}}),\\
\emptyset & (otherwise),
\end{cases}
\]
and $\phi^{(x,y)}(\emptyset):=\emptyset$.
Using these maps, we define $\Phi^{(x)}:=\phi^{(x,n_{c})}\circ\cdots\circ\phi^{(x,x)}$, 
$\overline{\Phi^{(x)}}:=\Phi^{(x)}\circ\cdots\circ\Phi^{(n_{c})}$ ($1\leq x\leq n_{c}$), and
$\Phi :=\overline{\Phi^{(1)}}=\Phi^{(1)}\circ\cdots\circ\Phi^{(n_{c})}$.
\end{df}

Provided that $\Phi$ is well-defined on $T\in C_{n}\text{-}\mathrm{SST}_{\mathrm{KN}}$, i.e., 
$\Phi(T) \neq \emptyset$,  
$\Phi$ preserves the shape and weight of $T$ by construction.

\begin{df} \label{df:decomp}
Suppose that $T\in \mathbf{B}_{n}^{\mathfrak{sp}_{2n}}(\nu)_{\mu}^{\lambda}$.
Let $\Phi(T)^{(+)}$ be the part filled with 
$\mathscr{C}_{n}^{(+)}$-letters in $C_{n}$-semistandard tableau $\Phi(T)$, 
which is a semistandard tableau on some Young diagram.
On the other hand, let $\Phi(T)^{(-)}$ be the part filled with 
$\mathscr{C}_{n}^{(-)}$-letters in $C_{n}$-semistandard tableau $\Phi(T)$, 
which is a semistandard tableau on some skew Young diagram (a skew semistandard tableau).
For $T,T^{\prime}\in \mathbf{B}_{n}^{\mathfrak{sp}_{2n}}(\nu)_{\mu}^{\lambda}$ we write $T\sim T^{\prime}$, 
if $\Phi(T)^{(+)}=\Phi(T^{\prime})^{(+)}$ and 
$\mathrm{Rect}\left(  \Phi(T)^{(-)}\right)  =\mathrm{Rect}\left(  \Phi(T^{\prime})^{(-)}\right)$, 
where $\mathrm{Rect}(S)$ denotes the rectification of the skew semistandard tableau $S$~\cite{F} 
with the total order $\prec$.
\end{df}

\begin{thm} \label{thm:main1}
For all $T\in \mathbf{B}_{n}^{\mathfrak{sp}_{2n}}(\nu)_{\mu}^{\lambda}$, $\Phi$ is well-defined on $T$, i.e., 
$\Phi(T)\neq\emptyset$.
Furthermore, if $l(\mu)+l(\nu)\leq n$, we have the following surjection.
\begin{align} \label{eq:main1}
\mathbf{B}_{n}^{\mathfrak{sp}_{2n}}(\nu)_{\mu}^{\lambda} \twoheadrightarrow
{\textstyle\coprod\limits_{\xi,\zeta,\eta\in\mathcal{P}_{n}}}
\mathbf{B}^{(+)}_{n}(\xi)_{\zeta}^{\lambda}\times \mathbf{B}^{(-)}_{n}(\eta)_{\zeta}^{\mu} \\
\left( T  \longmapsto \left(  \Phi(T)^{(+)},\mathrm{Rect}(\Phi(T)^{(-)})\right)  \right). \notag
\end{align}
Hence, we have
\begin{equation} \label{eq:main2}
\mathbf{B}_{n}^{\mathfrak{sp}_{2n}}(\nu)_{\mu}^{\lambda}/\sim \simeq
{\textstyle\coprod\limits_{\xi,\zeta,\eta\in\mathcal{P}_{n}}}
\mathbf{B}^{(+)}_{n}(\xi)_{\zeta}^{\lambda}\times \mathbf{B}^{(-)}_{n}(\eta)_{\zeta}^{\mu}.
\end{equation}
\end{thm}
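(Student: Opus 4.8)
The plan is to break the statement into the two assertions it actually contains and treat them separately, letting \eqref{eq:main2} fall out formally. The first assertion is that $\Phi(T)\neq\emptyset$ for every $T\in\mathbf{B}_{n}^{\mathfrak{sp}_{2n}}(\nu)_{\mu}^{\lambda}$ (unconditional in $n$) together with the fact that the assignment $T\mapsto(\Phi(T)^{(+)},\mathrm{Rect}(\Phi(T)^{(-)}))$ really lands in the target of \eqref{eq:main1}; the second is surjectivity of that assignment in the stable region. I would isolate these as the two propositions announced in the introduction and prove them in the later sections, then combine them here.

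For well-definedness I would induct on the sequence of elementary moves $\phi^{(x,y)}$ composing $\Phi$ in Definition~\ref{df:Phi}, showing at each step that the virtual column $C^{(x,y)}$ is KN-admissible (so $\phi$ applies) and that after replacing the two column-parts the tableau is again $C_{n}$-semistandard. Semistandardness across a single move is exactly what Lemma~\ref{lem:col_sst1} delivers: since $C_{+}^{\ast}\preceq C_{+}$ and $C_{-}\preceq C_{-}^{\ast}$, the positive part of each column can only decrease and its negative part only increase in the column order $\preceq$, so the inequalities with neighbouring columns are maintained. To keep the successive virtual columns KN-admissible I would track positions of $\mathscr{L}^{\ast}$-letters through Lemma~\ref{lem:position1} and use the lower bound $q_{k}^{\ast}-p_{k}^{\ast}+l_{k}^{\ast}\geq N-\gamma_{k}^{\ast}$ of Lemma~\ref{lem:coKN_lower}, which is precisely the quantity controlling condition (C1). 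This half of the argument needs no hypothesis on $n$.

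For the image and for surjectivity I would exploit that $\Phi$ is assembled from the weight- and shape-preserving bijection $\phi$ and its inverse $\psi$. Since $\Phi(T)$ has shape $\nu$, its positive entries fill a straight shape $\xi$ and its negative entries the skew shape $\nu/\xi$, whose rectification has some shape $\eta$. Letting $\zeta$ be the Young diagram obtained from $\mu$ by applying, in far-eastern order, only the $\mathscr{C}_{n}^{(-)}$-letters of $\Phi(T)$, I would show from smoothness of $\mathrm{FE}(T)$ on $\mu$ and the invariance of the branching data under $\phi$ that the $\mathscr{C}_{n}^{(-)}$-subword is smooth on $\mu$ with $\mu[\cdots]=\zeta$ and the $\mathscr{C}_{n}^{(+)}$-subword is smooth on $\zeta$ with $\zeta[\cdots]=\lambda$; compatibility of rectification with the order $\prec$ then places the two factors in $\mathbf{B}^{(-)}_{n}(\eta)_{\zeta}^{\mu}$ and $\mathbf{B}^{(+)}_{n}(\xi)_{\zeta}^{\lambda}$ for one and the same $\zeta$. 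Surjectivity is the reverse construction: from a pair in the target I would undo the rectification to recover a skew negative filling, glue it to the positive tableau, and apply the inverse moves $\psi$ to produce a preimage $T$; here the hypothesis $l(\mu)+l(\nu)\leq n$ is essential, as it is exactly what Lemma~\ref{lem:KN_upper} (the bound $q_{k}^{\dag}-p_{k}^{\dag}+l_{k}^{\dag}\leq n+\gamma_{k}^{\dag}+1$) needs to keep every virtual column KN-coadmissible so that $\psi$ is applicable throughout.

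Finally, \eqref{eq:main2} follows with no extra work: by definition $T\sim T'$ means $\Phi(T)^{(+)}=\Phi(T')^{(+)}$ and $\mathrm{Rect}(\Phi(T)^{(-)})=\mathrm{Rect}(\Phi(T')^{(-)})$, i.e.\ $T$ and $T'$ have the same image under \eqref{eq:main1}, so the surjection descends to a bijection on the quotient by the set-theoretic first isomorphism theorem. The main obstacle I anticipate is the image step: proving that the positive and negative halves of $\Phi(T)$ yield branching data meeting at a \emph{common} intermediate shape $\zeta$, and that this persists after rectification, is the delicate combinatorial core, since it is precisely where the column algorithm's separation of $\mathscr{C}_{n}^{(+)}$- and $\mathscr{C}_{n}^{(-)}$-letters must be shown compatible with the smoothness (lattice-word) conditions defining the LR crystals.
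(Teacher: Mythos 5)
Your overall architecture is exactly the paper's: the theorem is split into Proposition~\ref{prp:main11} (well-definedness of $\Phi$ plus the statement that $\left(\Phi(T)^{(+)},\mathrm{Rect}(\Phi(T)^{(-)})\right)$ lands in the coproduct at a common intermediate shape $\zeta$) and Proposition~\ref{prp:main12} (surjectivity via the inverse map $\Psi$, with the stable-region hypothesis entering through KN-coadmissibility and Lemma~\ref{lem:KN_upper}), while \eqref{eq:main2} follows formally from the definition of $\sim$. Your use of Proposition~\ref{prp:skew} for compatibility with rectification, and your reduction of the image step to smoothness of the negative subword on $\mu$ and of the positive subword on $\zeta$ (the content of Lemmas~\ref{lem:smooth1}, \ref{lem:smooth2}, \ref{lem:smooth3}) likewise match the paper.

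However, there is a genuine gap in your well-definedness argument. You claim that semistandardness across a single move $\phi^{(x,y)}$ is ``exactly what Lemma~\ref{lem:col_sst1} delivers,'' because $C_{+}^{(y)\ast}\preceq C_{+}^{(y)}$ and $C_{-}^{(x)}\preceq C_{-}^{(x)\ast}$. This inference is one-sided and fails for the neighbour on the other side: after replacing $C_{+}^{(y)}$ by the smaller column $C_{+}^{(y)\ast}$, the comparison with the column to its right is indeed preserved, since $C_{+}^{(y)\ast}\preceq C_{+}^{(y)}\preceq C_{+}^{(y+1)}$, but the required inequality with the column to its left, $C_{+}^{(y-1)}\preceq C_{+}^{(y)\ast}$, does \emph{not} follow from $C_{+}^{(y-1)}\preceq C_{+}^{(y)}$ together with $C_{+}^{(y)\ast}\preceq C_{+}^{(y)}$ (both only bound the two columns from above by the same column, which does not order them relative to each other); the symmetric problem occurs for the negative parts. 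Controlling that ``wrong'' side is precisely the hard combinatorial core of the paper: the two-column bounds $(q-p)+(s-r)<b-a-\delta$ of Lemmas~\ref{lem:kink1}--\ref{lem:kink4} and the block comparisons $\Delta C_{y-1}[p_{k}^{\prime},q_{k}^{\prime}]\preceq\Delta_{k}(C_{y}^{0})$ of Lemmas~\ref{lem:st1}--\ref{lem:st4}, which occupy most of Section~\ref{sec:Phi}; moreover KN-admissibility of the virtual columns (Lemma~\ref{lem:welldf1}) and semistandardness must be established by a simultaneous induction, since each serves as a hypothesis for the other, and the negative-part argument additionally requires re-ordering the composition of the maps $\phi^{(x,y)}$ (Lemma~\ref{lem:equiv1}). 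You flagged the common-$\zeta$ step as the main obstacle; in fact that step, while lengthy, is a comparatively routine bookkeeping induction, and the place where your proposal as written would actually break is the semistandardness maintenance just described.
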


\begin{rem}
$\left\vert \mathbf{B}_{n}^{\mathfrak{sp}_{2n}}(\nu)_{\mu}^{\lambda}\right\vert =d_{\mu\nu}^{\lambda}$ and 
$\left\vert \mathbf{B}_{n}^{(+)}(\xi)_{\zeta}^{\lambda}\right\vert =c_{\xi\zeta}^{\lambda}$. 
In the stable region, i.e., $l(\mu)+l(\nu)\leq n$, 
$\left\vert \mathbf{B}_{n}^{(-)}(\eta)_{\zeta}^{\mu}\right\vert$ must be 
$c_{\zeta\eta}^{\mu}$.
This is explained as follows.
Let the shape of $\Phi(T)^{(-)}$ be $\nu /\xi$ and 
$\mathrm{Rect}(\Phi(T)^{(-)}) \in \mathbf{B}_{n}^{(-)}(\eta)_{\zeta}^{\mu}$.
The number of tableaux $T$ satisfying the condition of Definition~\ref{df:decomp} is given by the cardinality of the set
\[
\{\text{skew tableaux } S \text{ on } \nu /\xi \text{ such that } \mathrm{Rect}(S)=\mathrm{Rect}(\Phi(T)^{(-)}) \},
\]
which is the LR coefficient $c_{\eta\xi}^{\nu}$~\cite{F} so that $\left\vert \mathbf{B}_{n}^{(-)}(\eta)_{\zeta}^{\mu}\right\vert =c_{\zeta\eta}^{\mu}$ by the branching rule~\eqref{eq:KK1}.
\end{rem}

\begin{ex} \label{ex:main}
Let $\lambda=(3,3,1)$, $\mu=(3,3)$, and $\nu=(3,2,1,1)$, 
$\mathbf{B}_{n}^{\mathfrak{sp}_{2n}}(\nu)_{\mu}^{\lambda}$ consists of four elements shown below ($d_{\mu\nu}^{\lambda}=4$).

\setlength{\unitlength}{12pt}
\[
\begin{picture}(5,4)
\put(2,0){\line(0,1){4}}
\put(3,0){\line(0,1){4}}
\put(4,2){\line(0,1){2}}
\put(5,3){\line(0,1){1}}
\put(2,0){\line(1,0){1}}
\put(2,1){\line(1,0){1}}
\put(2,2){\line(1,0){2}}
\put(2,3){\line(1,0){3}}
\put(2,4){\line(1,0){3}}
\put(2,0){\makebox(1,1){$\Bar{3}$}}
\put(2,1){\makebox(1,1){$\Bar{4}$}}
\put(2,2){\makebox(1,1){$3$}}
\put(2,3){\makebox(1,1){$2$}}
\put(3,2){\makebox(1,1){$4$}}
\put(3,3){\makebox(1,1){$3$}}
\put(4,3){\makebox(1,1){$\Bar{2}$}}
\put(0,1.5){\makebox(2,1){$T_{1}=$}}
\end{picture},
\begin{picture}(5,4)
\put(2,0){\line(0,1){4}}
\put(3,0){\line(0,1){4}}
\put(4,2){\line(0,1){2}}
\put(5,3){\line(0,1){1}}
\put(2,0){\line(1,0){1}}
\put(2,1){\line(1,0){1}}
\put(2,2){\line(1,0){2}}
\put(2,3){\line(1,0){3}}
\put(2,4){\line(1,0){3}}
\put(2,0){\makebox(1,1){$\Bar{3}$}}
\put(2,1){\makebox(1,1){$\Bar{4}$}}
\put(2,2){\makebox(1,1){$4$}}
\put(2,3){\makebox(1,1){$2$}}
\put(3,2){\makebox(1,1){$\Bar{2}$}}
\put(3,3){\makebox(1,1){$3$}}
\put(4,3){\makebox(1,1){$3$}}
\put(0,1.5){\makebox(2,1){$T_{2}=$}}
\end{picture},
\begin{picture}(5,4)
\put(2,0){\line(0,1){4}}
\put(3,0){\line(0,1){4}}
\put(4,2){\line(0,1){2}}
\put(5,3){\line(0,1){1}}
\put(2,0){\line(1,0){1}}
\put(2,1){\line(1,0){1}}
\put(2,2){\line(1,0){2}}
\put(2,3){\line(1,0){3}}
\put(2,4){\line(1,0){3}}
\put(2,0){\makebox(1,1){$\Bar{4}$}}
\put(2,1){\makebox(1,1){$4$}}
\put(2,2){\makebox(1,1){$3$}}
\put(2,3){\makebox(1,1){$2$}}
\put(3,2){\makebox(1,1){$\Bar{3}$}}
\put(3,3){\makebox(1,1){$3$}}
\put(4,3){\makebox(1,1){$\Bar{2}$}}
\put(0,1.5){\makebox(2,1){$T_{3}=$}}
\end{picture},
\begin{picture}(6.5,4)
\put(3.5,0){\line(0,1){4}}
\put(4.5,0){\line(0,1){4}}
\put(5.5,2){\line(0,1){2}}
\put(6.5,3){\line(0,1){1}}
\put(3.5,0){\line(1,0){1}}
\put(3.5,1){\line(1,0){1}}
\put(3.5,2){\line(1,0){2}}
\put(3.5,3){\line(1,0){3}}
\put(3.5,4){\line(1,0){3}}
\put(3.5,0){\makebox(1,1){$\Bar{4}$}}
\put(3.5,1){\makebox(1,1){$4$}}
\put(3.5,2){\makebox(1,1){$2$}}
\put(3.5,3){\makebox(1,1){$1$}}
\put(4.5,2){\makebox(1,1){$\Bar{1}$}}
\put(4.5,3){\makebox(1,1){$3$}}
\put(5.5,3){\makebox(1,1){$\Bar{2}$}}
\put(0,1.5){\makebox(3,1){$\text{and } T_{4}=$}}
\end{picture}.
\]
By $\Phi$ these elements are mapped to
\setlength{\unitlength}{12pt}
\[
\begin{picture}(6.5,4)
\put(3.5,0){\line(0,1){4}}
\put(4.5,0){\line(0,1){4}}
\put(5.5,2){\line(0,1){2}}
\put(6.5,3){\line(0,1){1}}
\put(3.5,0){\line(1,0){1}}
\put(3.5,1){\line(1,0){1}}
\put(3.5,2){\line(1,0){2}}
\put(3.5,3){\line(1,0){3}}
\put(3.5,4){\line(1,0){3}}
\put(3.5,0){\makebox(1,1){$\Bar{1}$}}
\put(3.5,1){\makebox(1,1){$\Bar{2}$}}
\put(3.5,2){\makebox(1,1){$2$}}
\put(3.5,3){\makebox(1,1){$1$}}
\put(4.5,2){\makebox(1,1){$3$}}
\put(4.5,3){\makebox(1,1){$2$}}
\put(5.5,3){\makebox(1,1){$\Bar{2}$}}
\put(0,1.5){\makebox(2.5,1)[l]{$\Phi(T_{1})=$}}
\end{picture},
\begin{picture}(6.5,4)
\put(3.5,0){\line(0,1){4}}
\put(4.5,0){\line(0,1){4}}
\put(5.5,2){\line(0,1){2}}
\put(6.5,3){\line(0,1){1}}
\put(3.5,0){\line(1,0){1}}
\put(3.5,1){\line(1,0){1}}
\put(3.5,2){\line(1,0){2}}
\put(3.5,3){\line(1,0){3}}
\put(3.5,4){\line(1,0){3}}
\put(3.5,0){\makebox(1,1){$\Bar{1}$}}
\put(3.5,1){\makebox(1,1){$\Bar{2}$}}
\put(3.5,2){\makebox(1,1){$2$}}
\put(3.5,3){\makebox(1,1){$1$}}
\put(4.5,2){\makebox(1,1){$\Bar{2}$}}
\put(4.5,3){\makebox(1,1){$2$}}
\put(5.5,3){\makebox(1,1){$3$}}
\put(0,1.5){\makebox(2.5,1)[l]{$\Phi(T_{2})=$}}
\end{picture},
\begin{picture}(6.5,4)
\put(3.5,0){\line(0,1){4}}
\put(4.5,0){\line(0,1){4}}
\put(5.5,2){\line(0,1){2}}
\put(6.5,3){\line(0,1){1}}
\put(3.5,0){\line(1,0){1}}
\put(3.5,1){\line(1,0){1}}
\put(3.5,2){\line(1,0){2}}
\put(3.5,3){\line(1,0){3}}
\put(3.5,4){\line(1,0){3}}
\put(3.5,0){\makebox(1,1){$\Bar{1}$}}
\put(3.5,1){\makebox(1,1){$3$}}
\put(3.5,2){\makebox(1,1){$2$}}
\put(3.5,3){\makebox(1,1){$1$}}
\put(4.5,2){\makebox(1,1){$\Bar{2}$}}
\put(4.5,3){\makebox(1,1){$2$}}
\put(5.5,3){\makebox(1,1){$\Bar{2}$}}
\put(0,1.5){\makebox(2.5,1)[l]{$\Phi(T_{3})=$}}
\end{picture},
\]
and
\begin{center}
\begin{picture}(6.5,4)
\put(3.5,0){\line(0,1){4}}
\put(4.5,0){\line(0,1){4}}
\put(5.5,2){\line(0,1){2}}
\put(6.5,3){\line(0,1){1}}
\put(3.5,0){\line(1,0){1}}
\put(3.5,1){\line(1,0){1}}
\put(3.5,2){\line(1,0){2}}
\put(3.5,3){\line(1,0){3}}
\put(3.5,4){\line(1,0){3}}
\put(3.5,0){\makebox(1,1){$\Bar{2}$}}
\put(3.5,1){\makebox(1,1){$3$}}
\put(3.5,2){\makebox(1,1){$2$}}
\put(3.5,3){\makebox(1,1){$1$}}
\put(4.5,2){\makebox(1,1){$\Bar{1}$}}
\put(4.5,3){\makebox(1,1){$2$}}
\put(5.5,3){\makebox(1,1){$\Bar{2}$}}
\put(0,1.5){\makebox(2.5,1)[l]{$\Phi(T_{4})=$}}
\end{picture},
\end{center}
respectively.
In this example, $\mathrm{Rect}\left(  \Phi(T_{i})^{(-)}\right)  $ ($i=1,\ldots,4$) are the same and are given by
\setlength{\unitlength}{12pt}
\begin{center}
\begin{picture}(2,2)
\put(0,0){\line(0,1){2}}
\put(1,0){\line(0,1){2}}
\put(2,1){\line(0,1){1}}
\put(0,0){\line(1,0){1}}
\put(0,1){\line(1,0){2}}
\put(0,2){\line(1,0){2}}
\put(0,0){\makebox(1,1){$\Bar{1}$}}
\put(0,1){\makebox(1,1){$\Bar{2}$}}
\put(1,1){\makebox(1,1){$\Bar{2}$}}
\end{picture}
\end{center}
so that $\eta=(2,1)$ and $\zeta=\mu\lbrack \Bar{2},\Bar{2},\Bar{1}]=(2,1)$ for all $T_{i}$ ($i=1,\ldots,4$).
Since the process $\mu\rightarrow\mu\lbrack \Bar{2},\Bar{2},\Bar{1}]$ is 
\setlength{\unitlength}{10pt}
\begin{center}
\begin{picture}(17,2)
\put(0,0){\line(0,1){2}}
\put(1,0){\line(0,1){2}}
\put(2,0){\line(0,1){2}}
\put(3,0){\line(0,1){2}}
\put(0,0){\line(1,0){3}}
\put(0,1){\line(1,0){3}}
\put(0,2){\line(1,0){3}}
\put(3,0){\makebox(2,2){$\rightarrow$}}
\put(5,0){\line(0,1){2}}
\put(6,0){\line(0,1){2}}
\put(7,0){\line(0,1){2}}
\put(8,1){\line(0,1){1}}
\put(5,0){\line(1,0){2}}
\put(5,1){\line(1,0){3}}
\put(5,2){\line(1,0){3}}
\put(8,0){\makebox(2,2){$\rightarrow$}}
\put(10,0){\line(0,1){2}}
\put(11,0){\line(0,1){2}}
\put(12,1){\line(0,1){1}}
\put(13,1){\line(0,1){1}}
\put(10,0){\line(1,0){1}}
\put(10,1){\line(1,0){3}}
\put(10,2){\line(1,0){3}}
\put(13,0){\makebox(2,2){$\rightarrow$}}
\put(15,0){\line(0,1){2}}
\put(16,0){\line(0,1){2}}
\put(17,1){\line(0,1){1}}
\put(15,0){\line(1,0){1}}
\put(15,1){\line(1,0){2}}
\put(15,2){\line(1,0){2}}
\end{picture},
\end{center}
$\mathrm{FE}\left(  \mathrm{Rect}\left(  \Phi(T_{i})^{(-)}\right)  \right)  $ is smooth on $\mu$ 
($i=1,\ldots,4$).
We observe that $\mathrm{FE}\left(\Phi (T_{4})^{(-)} \right)$, which is not identical to 
$\mathrm{Rect}\left(\mathrm{FE}\left(\Phi (T_{4})^{(-)} \right)\right)$, 
is also smooth on $\mu$.
This is not a mere coincidence; it holds in general (Proposition~\ref{prp:skew}).
We can check that $\mathrm{FE}\left( \Phi(T_{i})^{(+)} \right)$ is smooth on $\zeta$ and 
$\zeta\left[  \mathrm{FE}\left(  \Phi(T_{i})^{(+)}\right)  \right]  =\lambda$ ($i=1,\ldots,4$).
Indeed, the process 
$\zeta\rightarrow\zeta\left[  \mathrm{FE}\left(  \Phi(T_{1})^{(+)}\right)  \right]=\zeta\lbrack2,3,1,2]$ is
\setlength{\unitlength}{10pt}
\begin{center}
\begin{picture}(20,3)
\put(0,1){\line(0,1){2}}
\put(1,1){\line(0,1){2}}
\put(2,2){\line(0,1){1}}
\put(0,1){\line(1,0){1}}
\put(0,2){\line(1,0){2}}
\put(0,3){\line(1,0){2}}
\put(2,1){\makebox(2,2){$\rightarrow$}}
\put(4,1){\line(0,1){2}}
\put(5,1){\line(0,1){2}}
\put(6,1){\line(0,1){2}}
\put(4,1){\line(1,0){2}}
\put(4,2){\line(1,0){2}}
\put(4,3){\line(1,0){2}}
\put(6,1){\makebox(2,2){$\rightarrow$}}
\put(8,0){\line(0,1){3}}
\put(9,0){\line(0,1){3}}
\put(10,1){\line(0,1){2}}
\put(8,0){\line(1,0){1}}
\put(8,1){\line(1,0){2}}
\put(8,2){\line(1,0){2}}
\put(8,3){\line(1,0){2}}
\put(10,1){\makebox(2,2){$\rightarrow$}}
\put(12,0){\line(0,1){3}}
\put(13,0){\line(0,1){3}}
\put(14,1){\line(0,1){2}}
\put(15,2){\line(0,1){1}}
\put(12,0){\line(1,0){1}}
\put(12,1){\line(1,0){2}}
\put(12,2){\line(1,0){3}}
\put(12,3){\line(1,0){3}}
\put(15,1){\makebox(2,2){$\rightarrow$}}
\put(17,0){\line(0,1){3}}
\put(18,0){\line(0,1){3}}
\put(19,1){\line(0,1){2}}
\put(20,1){\line(0,1){2}}
\put(17,0){\line(1,0){1}}
\put(17,1){\line(1,0){3}}
\put(17,2){\line(1,0){3}}
\put(17,3){\line(1,0){3}}
\end{picture},
\end{center}
that of 
$\zeta\rightarrow\zeta\left[  \mathrm{FE}\left(  \Phi(T_{2})^{(+)}\right)  \right]=\zeta\lbrack3,2,1,2]$ is
\setlength{\unitlength}{10pt}
\begin{center}
\begin{picture}(20,3)
\put(0,1){\line(0,1){2}}
\put(1,1){\line(0,1){2}}
\put(2,2){\line(0,1){1}}
\put(0,1){\line(1,0){1}}
\put(0,2){\line(1,0){2}}
\put(0,3){\line(1,0){2}}
\put(2,1){\makebox(2,2){$\rightarrow$}}
\put(4,0){\line(0,1){3}}
\put(5,0){\line(0,1){3}}
\put(6,2){\line(0,1){1}}
\put(4,0){\line(1,0){1}}
\put(4,1){\line(1,0){1}}
\put(4,2){\line(1,0){2}}
\put(4,3){\line(1,0){2}}
\put(6,1){\makebox(2,2){$\rightarrow$}}
\put(8,0){\line(0,1){3}}
\put(9,0){\line(0,1){3}}
\put(10,1){\line(0,1){2}}
\put(8,0){\line(1,0){1}}
\put(8,1){\line(1,0){2}}
\put(8,2){\line(1,0){2}}
\put(8,3){\line(1,0){2}}
\put(10,1){\makebox(2,2){$\rightarrow$}}
\put(12,0){\line(0,1){3}}
\put(13,0){\line(0,1){3}}
\put(14,1){\line(0,1){2}}
\put(15,2){\line(0,1){1}}
\put(12,0){\line(1,0){1}}
\put(12,1){\line(1,0){2}}
\put(12,2){\line(1,0){3}}
\put(12,3){\line(1,0){3}}
\put(15,1){\makebox(2,2){$\rightarrow$}}
\put(17,0){\line(0,1){3}}
\put(18,0){\line(0,1){3}}
\put(19,1){\line(0,1){2}}
\put(20,1){\line(0,1){2}}
\put(17,0){\line(1,0){1}}
\put(17,1){\line(1,0){3}}
\put(17,2){\line(1,0){3}}
\put(17,3){\line(1,0){3}}
\end{picture},
\end{center}
and that of 
$\zeta\rightarrow\zeta\left[  \mathrm{FE}\left(  \Phi(T_{3})^{(+)}\right)  \right]=
\zeta\left[  \mathrm{FE}\left(  \Phi(T_{4})^{(+)}\right)  \right]=\zeta\lbrack2,1,2,3]$ is
\setlength{\unitlength}{10pt}
\begin{center}
\begin{picture}(21,3)
\put(0,1){\line(0,1){2}}
\put(1,1){\line(0,1){2}}
\put(2,2){\line(0,1){1}}
\put(0,1){\line(1,0){1}}
\put(0,2){\line(1,0){2}}
\put(0,3){\line(1,0){2}}
\put(2,1){\makebox(2,2){$\rightarrow$}}
\put(4,1){\line(0,1){2}}
\put(5,1){\line(0,1){2}}
\put(6,1){\line(0,1){2}}
\put(4,1){\line(1,0){2}}
\put(4,2){\line(1,0){2}}
\put(4,3){\line(1,0){2}}
\put(6,1){\makebox(2,2){$\rightarrow$}}
\put(8,1){\line(0,1){2}}
\put(9,1){\line(0,1){2}}
\put(10,1){\line(0,1){2}}
\put(11,2){\line(0,1){1}}
\put(8,1){\line(1,0){2}}
\put(8,2){\line(1,0){3}}
\put(8,3){\line(1,0){3}}
\put(11,1){\makebox(2,2){$\rightarrow$}}
\put(13,1){\line(0,1){2}}
\put(14,1){\line(0,1){2}}
\put(15,1){\line(0,1){2}}
\put(16,1){\line(0,1){2}}
\put(13,1){\line(1,0){3}}
\put(13,2){\line(1,0){3}}
\put(13,3){\line(1,0){3}}
\put(16,1){\makebox(2,2){$\rightarrow$}}
\put(18,0){\line(0,1){3}}
\put(19,0){\line(0,1){3}}
\put(20,1){\line(0,1){2}}
\put(21,1){\line(0,1){2}}
\put(18,0){\line(1,0){1}}
\put(18,1){\line(1,0){3}}
\put(18,2){\line(1,0){3}}
\put(18,3){\line(1,0){3}}
\end{picture}.
\end{center}
Since $\Phi(T_{3})^{(+)}=\Phi(T_{4})^{(+)}$ and 
$\mathrm{Rect}\left(  \Phi(T_{3})^{(-)}\right)  =\mathrm{Rect}\left(  \Phi(T_{4})^{(-)}\right)  $, 
we have $T_{3}\sim T_{4}$.
\end{ex}

Theorem~\ref{thm:main1} is the immediate consequence of the following two propositions, 
which will be proven in Section~\ref{sec:main11} and Section~\ref{sec:main12}.

\begin{prp} \label{prp:main11}
For all $T\in \mathbf{B}_{n}^{\mathfrak{sp}_{2n}}(\nu)_{\mu}^{\lambda}$, $\Phi(T)\neq\emptyset$ and
\[
\left(  \Phi(T)^{(+)},\mathrm{Rect}\left(\Phi(T)^{(-)}\right)\right)  \in
{\textstyle\coprod\limits_{\xi,\zeta,\eta\in\mathcal{P}_{n}}}
\mathbf{B}_{n}^{(+)}(\xi)_{\zeta}^{\lambda}\times \mathbf{B}_{n}^{(-)}(\eta)_{\zeta}^{\mu}.
\]
\end{prp}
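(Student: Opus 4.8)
The plan is to prove the two assertions of Proposition~\ref{prp:main11} in three stages: first that $\Phi$ never aborts on $T$ (so $\Phi(T)\neq\emptyset$), then that $\Phi(T)$ is a genuine $C_{n}$-semistandard tableau of shape $\nu$, and finally that the pair $(\Phi(T)^{(+)},\mathrm{Rect}(\Phi(T)^{(-)}))$ lands in the stated coproduct. Recall that $\Phi=\Phi^{(1)}\circ\cdots\circ\Phi^{(n_{c})}$ and each $\Phi^{(x)}=\phi^{(x,n_{c})}\circ\cdots\circ\phi^{(x,x)}$ is a composition of column splittings, where $\phi^{(x,y)}$ forms the auxiliary $C_{n}$-column $C^{(x,y)}$ from the (already updated) $\mathscr{C}_{n}^{(-)}$-part of column $x$ and the $\mathscr{C}_{n}^{(+)}$-part of column $y$, applies the splitting bijection $\phi$ of Eq.~\eqref{eq:map1}, and returns $\emptyset$ precisely when $C^{(x,y)}$ fails to be KN-admissible.

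The heart of the argument is well-definedness, which I would prove by induction following the order in which the $\phi^{(x,y)}$ are applied. The base operation $\phi^{(x,x)}$ merely splits the column of $T$ itself, which is KN-admissible by hypothesis. The inductive step is to show that combining the freshly split $\mathscr{C}_{n}^{(-)}$-part $C_{-}^{(x)\ast}$ with the next positive part $C_{+}^{(y)}$ again yields a KN-admissible column, i.e.\ that condition (C1) of Definition~\ref{df:KN_C} holds for every pair $(a,\Bar{b})$ occurring in $C^{(x,y)}$. Here I would use Lemma~\ref{lem:coKN_lower}, which controls the positions of the split negative letters through the bound $q_{k}^{\ast}-p_{k}^{\ast}+l_{k}^{\ast}\geq N-\gamma_{k}^{\ast}$, together with the nesting $C_{+}^{(x)}\preceq C_{+}^{(x+1)}\preceq\cdots$ coming from semistandardness of $T$ and the monotonicity $C_{+}^{\ast}\preceq C_{+}$ of Lemma~\ref{lem:col_sst1}. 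Translating these positional inequalities into the inequality $(q-p)+\max(a,b)>N$ for each dangerous pair is, I expect, the \emph{main obstacle}, since the positions of entries shift under each successive split and must be tracked carefully via Lemma~\ref{lem:position1}.

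Once well-definedness is in hand, semistandardness of $\Phi(T)$ follows from the ordering estimates of Lemma~\ref{lem:col_sst1}: splitting a column replaces its positive part $C_{+}$ by $C_{+}^{\ast}\preceq C_{+}$ and its negative part $C_{-}$ by $C_{-}^{\ast}\succeq C_{-}$, so each elementary operation preserves the $\preceq$-relation between adjacent columns, and hence so does the whole composition. Because $\phi$ is weight-preserving and acts only by relocating letters within the fixed shape $(1^{N})$ of each column, $\Phi$ preserves both the shape $\nu$ and the weight of $T$, as already noted after Definition~\ref{df:Phi}. In particular $\Phi(T)^{(+)}$ is a semistandard tableau on a Young diagram $\xi$ and $\Phi(T)^{(-)}$ is a skew semistandard tableau on a skew shape of size $|\eta|$.

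It remains to exhibit the intermediate shape $\zeta$ and verify the two membership conditions. I would set $\zeta:=\mu[\mathrm{FE}(\mathrm{Rect}(\Phi(T)^{(-)}))]$ and argue that this reading is smooth on $\mu$, landing on the Young diagram $\zeta$; this is exactly the skew-tableau smoothness property (Proposition~\ref{prp:skew}), which guarantees that the negative part of $\Phi(T)$, after rectification with respect to $\prec$, lies in $\mathbf{B}_{n}^{(-)}(\eta)_{\zeta}^{\mu}$. For the positive part I would show that $\mathrm{FE}(\Phi(T)^{(+)})$ is smooth on this same $\zeta$ with $\zeta[\mathrm{FE}(\Phi(T)^{(+)})]=\lambda$, so that $\Phi(T)^{(+)}\in\mathbf{B}_{n}^{(+)}(\xi)_{\zeta}^{\lambda}$. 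The crucial consistency that the same $\zeta$ serves both factors reflects the design of $\Phi$: it reorganizes the far-eastern reading of $T$, which carries $\mu$ to $\lambda$ by interleaving box-removals (negative letters) and box-additions (positive letters), into a two-step path $\mu\to\zeta\to\lambda$ in which all removals precede all additions. Since $\Phi$ preserves weight, $|\zeta|=|\mu|-|\eta|=|\lambda|-|\xi|$, and the membership of both factors then yields the claim, the disjointness over $(\xi,\zeta,\eta)$ being automatic because each $T$ determines $\xi,\zeta,\eta$ uniquely.
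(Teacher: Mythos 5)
Your overall architecture (well-definedness, then semistandardness, then the membership statements) matches the paper's, and your Stage 1 sketch points at the right tools (Lemma~\ref{lem:coKN_lower}, Lemma~\ref{lem:position1}, Lemma~\ref{lem:welldf1}-type arguments), but two of your three stages rest on claims that do not hold as stated. First, in Stage 2 you assert that semistandardness of $\Phi(T)$ follows from Lemma~\ref{lem:col_sst1} because ``each elementary operation preserves the $\preceq$-relation between adjacent columns.'' The monotonicity goes the wrong way for this: $\phi^{(x,y)}$ replaces the positive part $C_{+}^{(y)}$ of column $y$ by $C_{+}^{(y)\ast}\preceq C_{+}^{(y)}$, i.e.\ it makes the entries of column $y$ \emph{smaller}, which threatens the relation with the column on its left (and dually the negative part of column $x$ is made larger, threatening the comparison with column $x+1$). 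From $C_{+}^{(y-1)}\preceq C_{+}^{(y)}$ and $C_{+}^{(y)\ast}\preceq C_{+}^{(y)}$ nothing follows about $C_{+}^{(y-1)\ast}$ versus $C_{+}^{(y)\ast}$. This is precisely why the paper devotes Lemmas~\ref{lem:kink1}--\ref{lem:kink4} and Lemmas~\ref{lem:st1}--\ref{lem:st4} (the technical core of Section~\ref{sec:Phi}) to a quantitative, block-by-block comparison $\Delta C_{y-1}[p_{k}^{\prime},q_{k}^{\prime}]\preceq\Delta_{k}(C_{y}^{0})$. Note also that well-definedness and semistandardness cannot be run as separate sequential stages: Lemma~\ref{lem:welldf1} assumes the semistandardness of the intermediate tableaux, so the two properties must be established by a joint induction, as in Lemma~\ref{lem:Phi}.

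Second, in Stage 3 the membership statements are asserted rather than proved. Proposition~\ref{prp:skew} only \emph{transfers} smoothness between a skew tableau and its rectification; it does not establish that $\mathrm{FE}(\Phi(T)^{(-)})$ is smooth on $\mu$, nor that $\mathrm{FE}(\Phi(T)^{(+)})$ is smooth on $\zeta$ with $\zeta\left[\mathrm{FE}(\Phi(T)^{(+)})\right]=\lambda$. These two smoothness facts are the content of Lemma~\ref{lem:smooth1} and Lemma~\ref{lem:smooth2}, each a delicate induction on the updating stages $\mathscr{I}^{(x,i)}$, $\mathscr{J}^{(x,i)}$ that tracks how $\mathscr{L}$- and $\mathscr{L}^{\ast}$-letters are created and relocated by the splitting algorithm; they follow neither from semistandardness nor from weight preservation. (Your consistency argument for $\zeta$ is essentially sound once those lemmas are in hand: as in the paper, weight preservation gives $\lambda\left[\overline{\mathrm{FE}(\Phi(T))}\right]=\lambda\left[\overline{\mathrm{FE}(T)}\right]=\mu$, which forces the negative path from $\mu$ to end at $\zeta$; but the equality of diagrams comes from this identity, not from the size count $|\zeta|=|\mu|-|\eta|=|\lambda|-|\xi|$ alone.) As written, the proposal skips the two hardest components of the paper's proof.
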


\begin{prp} \label{prp:main12}
Fix $\nu\in\mathcal{P}_{n}$.
For all $(T_{1},T_{2})\in
{\textstyle\coprod\limits_{\xi,\xi,\eta\in\mathcal{P}_{n}}}
\mathbf{B}_{n}^{(+)}(\xi)_{\zeta}^{\lambda}\times \mathbf{B}_{n}^{(-)}(\eta)_{\zeta}^{\mu}$, 
let $T$ be a tableau in $C_{n}\text{-}\mathrm{SST}(\nu)$ such that $T^{(+)}=T_{1}$ and 
$\mathrm{Rect}(T^{(-)})=T_{2}$, where $T^{(+)}$ (resp. $T^{(-)}$) is the part of $T$ filled with 
$\mathscr{C}_{n}^{(+)}$ (resp. $\mathscr{C}_{n}^{(-)}$)-letters.
If $l(\mu)+l(\nu)\leq n$, then we have $\Phi^{-1}(T)\in \mathbf{B}_{n}^{\mathfrak{sp}_{2n}}(\nu)_{\mu}^{\lambda}$.
\end{prp}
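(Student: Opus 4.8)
The plan is to construct the preimage $S := \Phi^{-1}(T)$ explicitly, by running the column operations defining $\Phi$ in reverse and replacing the splitting map $\phi$ of Eq.~\eqref{eq:map1} by its inverse $\psi$ of Eq.~\eqref{eq:map2}, and then to verify directly the three conditions that place $S$ in the LR crystal $\mathbf{B}_{n}^{\mathfrak{sp}_{2n}}(\nu)_{\mu}^{\lambda}$ of Eq.~\eqref{eq:LRcrystal2}: that $S$ is a KN-admissible $C_{n}$-tableau of shape $\nu$, that $\mathrm{FE}(S)$ is smooth on $\mu$, and that $\mu[\mathrm{FE}(S)]=\lambda$. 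Since $\Phi=\Phi^{(1)}\circ\cdots\circ\Phi^{(n_{c})}$ is assembled from the maps $\phi^{(x,y)}$ and $\phi$ is a bijection between $C_{n}\text{-}\mathrm{Col}_{\mathrm{KN}}$ and $C_{n}\text{-}\mathrm{Col}_{\overline{\mathrm{KN}}}$, the natural candidate for $\Phi^{-1}$ is the composition, in reverse order, of the column maps built from $\psi$; a preliminary point is therefore to check that this reverse process is actually applicable to the given $T$, i.e. that $\Phi^{-1}$ is well-defined on $T$.

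First I would settle well-definedness together with the tableau conditions. At each stage of the reverse process the relevant column $C^{(x,y)}$ must be KN-coadmissible in order for $\psi$ to apply, and I expect to verify this inductively using the coadmissibility characterization of Lemma~\ref{lem:KN_coad} and Definition~\ref{df:coad}, with the stable hypothesis $l(\mu)+l(\nu)\leq n$ guaranteeing that every entry and its position remain within the range $(q-p)+\min(a,b)\leq n$ that coadmissibility demands. That the resulting $S$ is genuinely $C_{n}$-semistandard, so that adjacent columns stay compatible after each $\psi$-step, should follow from the monotonicity of Lemma~\ref{lem:col_sst2}, namely $C_{+}\preceq C_{+}^{\dag}$ and $C_{-}^{\dag}\preceq C_{-}$, while KN-admissibility of the final $S$ comes from the Lecouvey theorem applied to $\psi$. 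Shape preservation is automatic, since $\Phi$, hence $\Phi^{-1}$, preserves shape by construction, so $S$ has shape $\nu$.

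The heart of the argument is the reading condition. Because $(T_{1},T_{2})$ lies in $\mathbf{B}_{n}^{(+)}(\xi)_{\zeta}^{\lambda}\times\mathbf{B}_{n}^{(-)}(\eta)_{\zeta}^{\mu}$ for some $\xi,\zeta,\eta\in\mathcal{P}_{n}$, we have $\zeta[\mathrm{FE}(T_{1})]=\lambda$ and $\mu[\mathrm{FE}(T_{2})]=\zeta$, and since $\Phi(S)=T$ we know $\Phi(S)^{(+)}=T_{1}$ and $\mathrm{Rect}(\Phi(S)^{(-)})=T_{2}$. The plan is to show that $\mathrm{FE}(S)$ is smooth on $\mu$ and terminates at $\lambda$ by relating the interleaved reading of $S$ to the separate readings governing the negative passage $\mu\rightsquigarrow\zeta$ and the positive passage $\zeta\rightsquigarrow\lambda$. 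To move from the rectified datum $T_{2}=\mathrm{Rect}(T^{(-)})$ back to the unrectified skew reading I would use that $\mathrm{FE}(\Phi(S)^{(-)})$ is itself smooth on $\mu$ and yields the same intermediate shape $\zeta$ (cf. Proposition~\ref{prp:skew} and the observation in Example~\ref{ex:main}), rectification preserving the relevant crystal data. The maps $\psi$ are designed precisely so that, after unsplitting, the interleaving of $\mathscr{C}_{n}^{(+)}$- and $\mathscr{C}_{n}^{(-)}$-letters in $\mathrm{FE}(S)$ does not disturb smoothness, and tracking the displacements produced by $\psi$ via the position bookkeeping of Lemma~\ref{lem:position2} and the bound of Lemma~\ref{lem:KN_upper} should then give $\mu[\mathrm{FE}(S)]=\lambda$.

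The step I expect to be the main obstacle is exactly this last one: controlling the interaction of the two reading directions. Reading $S$ in far-eastern order alternates, column by column, positive letters that add boxes with negative letters that remove them, and one must rule out any intermediate configuration failing to be a Young diagram. The stable condition $l(\mu)+l(\nu)\leq n$ is what makes this work, since it forces the region in which the positive part builds up and the region in which the negative part cuts down to be sufficiently separated, the common $\mathscr{L}$-letters of each column having enough room below them by Lemma~\ref{lem:KN_upper}, so that the two processes never collide. Making this separation precise, and deducing from it the smoothness of $\mathrm{FE}(S)$ on $\mu$, is where the bulk of the work will lie.
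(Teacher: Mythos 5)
Your overall skeleton matches the paper's: invert $\Phi$ columnwise via $\psi$ (this is the paper's $\Psi$, Definition~\ref{df:Psi}), use coadmissibility and the stable hypothesis $l(\mu)+l(\nu)\leq n$ for well-definedness, and use Proposition~\ref{prp:skew} together with $\zeta[\mathrm{FE}(T_{1})]=\lambda$, $\mu[\mathrm{FE}(T_{2})]=\zeta$ to handle the reading condition. However, two of the load-bearing steps are either missing or pointed in the wrong direction. First, KN-admissibility of $S=\Psi(T)$: Lecouvey's theorem only gives the single-column condition (C1) of Definition~\ref{df:KN_C}. The adjacent-column condition (C2) is \emph{not} a consequence of semistandardness plus columnwise splittability, and your proposal never addresses it; the paper devotes two dedicated lemmas (Lemma~\ref{lem:kink7} and Lemma~\ref{lem:kink8}) to establishing exactly this inequality $(q-p)+(s-r)<b-a$ for pairs of adjacent columns of $\Psi(T)$, and Lemma~\ref{lem:Psi} cites them explicitly.

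Second, and more seriously, your proposed mechanism for the smoothness step is not the one that works. The paper's argument (Lemma~\ref{lem:smooth3}) is a purely shape-level induction: it introduces counters $\mathscr{I}^{(x,i)},\mathscr{J}^{(x,i)}$ and ``paired'' sets $\left\langle \mathscr{I}^{(x-1,i-1)},\overline{\mathscr{J}^{(i,n_{c}-x+1)}}\right\rangle_{\mathrm{pair}}$, and tracks only the multisets of letters and the intermediate Young diagrams, exploiting that $\psi$ is weight-preserving and the explicit description of the $\mathscr{L}^{\dag}$-letters in Remark~\ref{rem:algorithm2}, together with Lemma~\ref{lem:smooth0}. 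No positional bookkeeping enters: neither Lemma~\ref{lem:position2} nor the bound of Lemma~\ref{lem:KN_upper} is used there, and in fact the stable-region hypothesis plays no role in the smoothness argument at all --- it is consumed entirely by the well-definedness of $\Psi$ (the analogue of your first step). Your heuristic that $l(\mu)+l(\nu)\leq n$ ``separates the two regions so the processes never collide'' is therefore not the operative principle, and an attempt to carry out the smoothness proof by tracking displacements of entries would face an unbounded case analysis with no clear invariant; the correct invariant is the statement that each intermediate shape $\mu^{(x,i)}[\cdots]$ is a Young diagram, propagated through the pairing structure by a letter-by-letter induction (cases (a), (b), (c) on whether $i_{k}'$ is an $\mathscr{L}^{\dag}$-letter). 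Until that induction, or something equivalent, is supplied, the key claim $\mu\left[\underrightarrow{\mathrm{FE}(S)}\right]=\lambda$ remains unproven.
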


\begin{rem}
Keeping the notation in Proposition~\ref{prp:main12}, let $\xi$($\eta$) be the shape of $T_{1}$($T_{2}$).
Then the number of $T$'s satisfying the condition in Proposition~\ref{prp:main12} is given by 
the LR coefficient $c_{\xi,\eta}^{\nu}$~\cite{F}.
In Example~\ref{ex:main}, $c_{(2,2),(2,1)}^{(3,2,1,1)}=c_{(3,1),(2,1)}^{(3,2,1,1)}=1$ and $c_{(2,1,1),(2,1)}^{(3,2,1,1)}=2$.
Thus, we can recover the branching rule (Eq.~\eqref{eq:KK2}).
\end{rem}

We denote by $\Psi$ the inverse of $\Phi$; $\Psi:=\Phi^{-1}$.
This is given explicitly as follows.

\begin{df} \label{df:Psi}
For $T=C_{1}C_{2}\cdots C_{n_{c}}\in C_{n}\text{-}\mathrm{SST}$, 
let $C_{-}^{(x)}$ (resp. $C_{+}^{(y)}$) be the $\mathscr{C}_{n}^{(-)}$ (resp. $\mathscr{C}_{n}^{(+)}$)-letters part of the $x$-th (resp. $y$-th) column of $T$ and let $C^{(x,y)}$ be the $C_{n}$-column 
whose $\mathscr{C}_{n}^{(-)}$ (resp. $\mathscr{C}_{n}^{(+)}$)-letters part is $C_{-}^{(x)}$ (resp. $C_{+}^{(y)}$).
Let $C_{-}^{(x)\dag}$ (resp. $C_{+}^{(y)\dag}$) be the $\mathscr{C}_{n}^{(-)}$ (resp. $\mathscr{C}_{n}^{(+)}$)-letters part of 
$\psi(C^{(x,y)})$ assuming $C^{(x,y)}\in C_{n}\text{-}\mathrm{Col}_{\overline{\mathrm{KN}}}$.
Replace $C_{-}^{(x)}$ (resp. $C_{+}^{(y)}$) in $T$ by $C_{-}^{(x)\dag}$ (resp. $C_{+}^{(y)\dag}$) and denote by $T^{\dag}$ the resulting tableau.
Then we define 
\[
\psi^{(x,y)}(T):=
\begin{cases}
T^{\dag} & (C^{(x,y)}\in C_{n}\text{-}\mathrm{Col}_{\overline{\mathrm{KN}}}),\\
\emptyset & (otherwise),
\end{cases}
\]
and $\psi^{(x,y)}(\emptyset):=\emptyset$.
We define $\Psi^{(x)}:=\psi^{(x,x)}\circ\cdots\circ\psi^{(x,n_{c})}$, 
$\overline{\Psi^{(x)}}:=\Psi^{(x)}\circ\cdots\circ\Psi^{(1)}$ ($1\leq x\leq n_{c}$) and 
$\Psi:=\overline{\Psi^{(n_{c})}}=\Psi^{(n_{c})}\circ\cdots\circ\Psi^{(1)}$.
\end{df}

Provided that $\Psi$ is well-defined on $T\in C_{n}\text{-}\mathrm{SST}_{\overline{\mathrm{KN}}}$, i.e., 
$\Psi(T) \neq \emptyset$,  
$\Psi$ preserves the shape and weight of $T$ by construction.

\begin{lem} \label{lem:equiv1}
Keeping the notation in Definition~\ref{df:Phi}, we can rewrite the map $\Phi^{(x)}$ 
($1 \leq x \leq n_{c}-1$) in the form,  
\begin{equation} \label{eq:reorder}
\phi^{(x,n_{c})}\circ\left(  \phi^{(x+1,n_{c})}\circ\phi^{(x,n_{c}-1)}\right)
\circ\cdots\circ\left(  \phi^{(x+1,x+1)}\circ\phi^{(x,x)}\right)  \circ
(\Phi^{(x+1)})^{-1}.
\end{equation}
\end{lem}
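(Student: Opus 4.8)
The plan is to strip away the inverse and reduce \eqref{eq:reorder} to a pure commutation statement among the column operations $\phi^{(a,b)}$ of Definition~\ref{df:Phi}. Here $\Phi^{(x+1)-1}$ denotes the inverse of $\Phi^{(x+1)}$, which exists because each $\phi^{(x+1,y)}$ is, on its domain, the bijection $\phi$ of Eq.~\eqref{eq:map1} (with inverse $\psi$ of Eq.~\eqref{eq:map2}) applied to a single reconstructed column, hence invertible. Composing \eqref{eq:reorder} on the right with $\Phi^{(x+1)}$, the claim becomes the inverse-free identity
\[
\Phi^{(x)}\circ\Phi^{(x+1)}=\phi^{(x,n_{c})}\circ\bigl(\phi^{(x+1,n_{c})}\circ\phi^{(x,n_{c}-1)}\bigr)\circ\cdots\circ\bigl(\phi^{(x+1,x+1)}\circ\phi^{(x,x)}\bigr),
\]
and this is what I would prove; \eqref{eq:reorder} then follows by right-cancelling $\Phi^{(x+1)}$.

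First I would isolate the \emph{locality} of $\phi^{(a,b)}$. By Definition~\ref{df:Phi}, $\phi^{(a,b)}$ reads only the $\mathscr{C}_{n}^{(-)}$-part of the $a$-th column and the $\mathscr{C}_{n}^{(+)}$-part of the $b$-th column, assembles these into $C^{(a,b)}$, splits it by $\phi$, and writes the two split halves back into exactly those places, leaving all other entries untouched; moreover the alternative $\phi^{(a,b)}(T)=\emptyset$ is governed (KN-admissibility of $C^{(a,b)}$) by those same two data alone. Viewing a tableau as a family of slots indexed by (column, sign), the support of $\phi^{(a,b)}$ is therefore $\{(a,-),(b,+)\}$, and distinct operations share at most one slot. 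Consequently $\phi^{(a,b)}$ and $\phi^{(c,d)}$ commute as partial maps whenever their supports are disjoint, i.e. whenever $a\neq c$ and $b\neq d$ (the $\emptyset$-outcomes are also order-insensitive, since each depends only on its own support).

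Next I would compare the two sides as words in the $\phi^{(a,b)}$. Both $\Phi^{(x)}\circ\Phi^{(x+1)}$ and the right-hand side above are composed from the same multiset of factors, namely $\{\phi^{(x,y)}\}_{y=x}^{n_{c}}$ together with $\{\phi^{(x+1,y)}\}_{y=x+1}^{n_{c}}$. By the commutation rule the only non-commuting pairs are: two factors with equal first index (shared slot $(x,-)$ or $(x+1,-)$), and the cross pairs $\phi^{(x,y)},\phi^{(x+1,y)}$ with equal second index (shared slot $(y,+)$). I would check that each such pair occurs in the same relative order in both expressions: the $\phi^{(x,\cdot)}$ and the $\phi^{(x+1,\cdot)}$ are each applied in order of increasing second index, and in every cross pair $\phi^{(x+1,y)}$ is applied before $\phi^{(x,y)}$ (on the left because all of $\Phi^{(x+1)}$ precedes all of $\Phi^{(x)}$; on the right because, reading the composite from right to left, $\phi^{(x+1,y)}$ is applied immediately before $\phi^{(x,y)}$). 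Since distinct factors share at most one slot, agreement of the order of every non-commuting pair is equivalent to agreement of the induced subsequence of touching operations at every slot.

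Finally, two words over these partial maps with the same multiset and the same induced subsequence at each slot define the same composite: one transforms one word into the other by successive transpositions of adjacent disjoint-support factors, each harmless by the commutation above. Concretely this can be run as an induction on $n_{c}-x$ in which $\phi^{(x,x)}$ is slid to the far right past the factors $\phi^{(x+1,y)}$ ($y\geq x+1$), all of which commute with it, peeling off the pair $\phi^{(x+1,x+1)}\circ\phi^{(x,x)}$ and recursing on the shorter word $\phi^{(x,n_{c})}\circ\cdots\circ\phi^{(x,x+1)}\circ\phi^{(x+1,n_{c})}\circ\cdots\circ\phi^{(x+1,x+2)}$. This yields the displayed identity and hence \eqref{eq:reorder}. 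The genuine content lies entirely in the locality step: I expect the main care to be needed in verifying that forming $C^{(a,b)}$, splitting it, and reinserting the two halves touches no column other than $a$ and $b$ and that the KN-admissibility test is local, so that the commutations—together with the bookkeeping of the $\emptyset$ cases—are valid. The reordering and the cancellation of $\Phi^{(x+1)}$ are then routine.
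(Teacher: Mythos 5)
Your proposal is correct and takes essentially the same route as the paper: the paper's entire proof is the observation that $\phi^{(x,y)}$ and $\phi^{(x+1,y')}$ update disjoint column data and hence commute, which allows the $\phi^{(x,\cdot)}$ factors to be slid past the $\phi^{(x+1,\cdot)}$ factors so that $\Phi^{(x+1)}\circ\Phi^{(x+1)-1}$ cancels. Your bookkeeping by signed column-parts $\{(a,-),(b,+)\}$ is in fact a sharper formulation of the paper's one-line justification ``no common columns'' --- which, read literally, fails for the pair $\phi^{(x,x+1)}$, $\phi^{(x+1,j)}$ sharing column $x+1$ in opposite signed parts --- so your locality step makes precise exactly what the paper asserts.
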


\begin{proof}
The columns updated by $\phi^{(x,n_{c}-i)}$ and those by $\phi^{(x+1,j)}$ have no common columns 
($i=1,\ldots,n_{c}-x;j=n_{c}-i+1,\ldots,n_{c}$).
So we can move maps $\phi^{(x,n_{c}-1)},\phi^{(x,n_{c}-2)},\ldots$ successively to the right of 
$\phi^{(x+1,n_{c})}$ in \eqref{eq:reorder} to obtain
\[
\phi^{(x,n_{c})}\circ\phi^{(x,n_{c}-1)}\circ\cdots\circ\phi^{(x,x)}\circ
\phi^{(x+1,n_{c})}\circ\cdots\circ\phi^{(x+1,x+1)}\circ (\Phi^{(x+1)})^{-1}=
\Phi^{(x)}.
\]
\end{proof}

The following result may be proven in much the same way as in Lemma~\ref{lem:equiv1}.

\begin{lem} \label{lem:equiv2}
Keeping the notation in Definition~\ref{df:Psi}, we can rewrite the map $\Psi^{(x)}$ 
($2\leq x \leq n_{c}$) in the form, 
\[
\left(  \psi^{(x-1,x-1)}\circ\psi^{(x,x)}\right)  \circ\cdots\circ
\left(\psi^{(x-1,n_{c}-1)}\circ\psi^{(x,n_{c})}\right)  \circ
\psi^{(x-1,n_{c})}\circ (\Psi^{(x-1)})^{-1}.
\]
\end{lem}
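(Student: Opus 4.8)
The plan is to mirror the proof of Lemma~\ref{lem:equiv1}, replacing $\phi$ by $\psi$ and shifting the adjacent column index from $x+1$ to $x-1$. The essential input is a commutation statement for the elementary maps $\psi^{(x,y)}$, after which the asserted identity follows by an unshuffling of the composition. First I would record precisely which boxes each elementary map touches: by Definition~\ref{df:Psi}, $\psi^{(x,y)}$ reads and rewrites only the $\mathscr{C}_{n}^{(-)}$-part of the $x$-th column and the $\mathscr{C}_{n}^{(+)}$-part of the $y$-th column, and since the map $\psi$ of \eqref{eq:map2} preserves the number of $\mathscr{C}_{n}^{(+)}$- and of $\mathscr{C}_{n}^{(-)}$-letters separately, the shapes of these two parts are unchanged. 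Thus $\psi^{(x,y)}$ genuinely acts only on the box-set consisting of the minus-part of column $x$ together with the plus-part of column $y$.

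From this I would deduce that $\psi^{(x,j)}$ and $\psi^{(x-1,k)}$ act on disjoint box-sets whenever $j\neq k$, checking the four pairwise supports: the two minus-parts lie in the distinct columns $x$ and $x-1$; the minus-part of column $x$ and the plus-part of column $k$ are disjoint (different parts of one column if $k=x$, different columns otherwise); the plus-part of column $j$ and the minus-part of column $x-1$ lie in different columns, since $j\geq x>x-1$; and the two plus-parts, of columns $j$ and $k$, are disjoint precisely because $j\neq k$. Disjoint supports give $\psi^{(x,j)}\circ\psi^{(x-1,k)}=\psi^{(x-1,k)}\circ\psi^{(x,j)}$, and this remains valid in the presence of the absorbing value $\emptyset$, because the KN-coadmissibility condition governing whether $\psi^{(x,j)}$ returns $\emptyset$ depends only on columns that $\psi^{(x-1,k)}$ leaves untouched. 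I would then reorder the displayed composition, which reads from right to left as $\Psi^{(x-1)-1}$ followed by the interleaved sequence $\psi^{(x-1,x-1)},\psi^{(x,x)},\psi^{(x-1,x)},\psi^{(x,x+1)},\ldots,\psi^{(x,n_{c})},\psi^{(x-1,n_{c})}$, by moving each $\psi^{(x,j)}$ leftward past every $\psi^{(x-1,k)}$ currently preceding it. This unshuffle preserves the relative order within each family and produces $\psi^{(x,x)}\circ\cdots\circ\psi^{(x,n_{c})}\circ\psi^{(x-1,x-1)}\circ\cdots\circ\psi^{(x-1,n_{c})}\circ\Psi^{(x-1)-1}$, which by Definition~\ref{df:Psi} equals $\Psi^{(x)}\circ\Psi^{(x-1)}\circ\Psi^{(x-1)-1}=\Psi^{(x)}$, as required.

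The only delicate point, and hence the step I expect to be the main obstacle, is verifying that no transposition performed in the unshuffle pairs two maps with the same second index, which would force a genuine collision on a shared plus-part. I would settle this by the same bookkeeping as in Lemma~\ref{lem:equiv1}: in the given interleaving $\psi^{(x-1,k)}$ precedes $\psi^{(x,j)}$ exactly when $k<j$, so every transposition actually carried out is between maps with distinct second indices and is covered by the disjoint-support commutation; the borderline same-index pairs $\psi^{(x,j)}$ and $\psi^{(x-1,j)}$ already occur in the correct relative order in the source expression and are never swapped. Once this is confirmed, the argument closes exactly as for $\Phi^{(x)}$.
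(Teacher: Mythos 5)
Your proposal is correct and follows essentially the same route as the paper: the paper proves Lemma~\ref{lem:equiv2} by appeal to the argument of Lemma~\ref{lem:equiv1}, namely that elementary maps updating disjoint regions of the tableau commute, so the interleaved composition can be unshuffled into $\Psi^{(x)}\circ\Psi^{(x-1)}\circ\Psi^{(x-1)-1}=\Psi^{(x)}$. If anything, your bookkeeping at the level of the $\mathscr{C}_{n}^{(+)}$- and $\mathscr{C}_{n}^{(-)}$-letters parts (rather than whole columns) and your explicit treatment of the absorbing value $\emptyset$ are more careful than the paper's one-line justification.
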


\section{Properties of $\Phi$} \label{sec:Phi}

In this section, we investigate the properties of the map $\Phi$ and show that $\Phi$ is well-defined on 
$C_{n}\text{-}\mathrm{SST}_{\mathrm{KN}}$ and 
$\Phi(T) \in C_{n}\text{-}\mathrm{SST}(\lambda)$ 
for all $T \in C_{n}\text{-}\mathrm{SST}_{\mathrm{KN}}(\lambda)$.

\begin{lem} \label{lem:welldf1}
Suppose that $T=C_{1}C_{2}\cdots C_{n_{c}}\in C_{n}\text{-}\mathrm{SST}_{\mathrm{KN}}$.
The map $\phi^{(x,y)}$ is well-defined on
\[
\Tilde{T}:=\phi ^{(x,y-1)}\circ \cdots \circ \phi ^{(x,x)}\circ \overline{\Phi ^{(x+1)}}(T)
\quad (2 \leq x+1\leq y\leq n_{c}).
\]
Here, we assume that $\Tilde{T}\neq \emptyset$ and that in the updating process of the tableau from $T$ to $\Tilde{T}$ the semistandardness of the $\mathscr{C}_{n}^{(+)}$-letters part of the tableau is preserved.
\end{lem}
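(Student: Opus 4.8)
The plan is to collapse the statement to a single-column admissibility check. By Definition~\ref{df:Phi}, $\phi^{(x,y)}$ returns a non-$\emptyset$ value on $\Tilde T$ precisely when the $C_{n}$-column $C^{(x,y)}$ it assembles lies in $C_{n}\text{-}\mathrm{Col}_{\mathrm{KN}}$, and for a single column this is exactly condition (C1) of Definition~\ref{df:KN_C}. So I only need to verify (C1) for one explicit column of length $N$.

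First I would locate the two halves of $C^{(x,y)}$ inside $\Tilde T$. Since $\Phi^{(k)}$ alters only the $\mathscr{C}_{n}^{(-)}$-part of column $k$ and the $\mathscr{C}_{n}^{(+)}$-parts of columns $k,\ldots,n_{c}$, the composite $\overline{\Phi^{(x+1)}}$ leaves columns $1,\ldots,x$ wholly unchanged, and the subsequent $\phi^{(x,x)},\ldots,\phi^{(x,y-1)}$ touch only the $\mathscr{C}_{n}^{(-)}$-part of column $x$ and the $\mathscr{C}_{n}^{(+)}$-parts of columns $x,\ldots,y-1$. Hence the positive part of $C^{(x,y)}$ is the part $C_{+}^{(y)}$ delivered by $\overline{\Phi^{(x+1)}}$ and never touched afterwards, while its negative part is the negative part of $\phi(C^{(x,y-1)})$. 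The hypothesis $\Tilde T\neq\emptyset$ guarantees that the outermost map $\phi^{(x,y-1)}$ used to build $\Tilde T$ succeeded, so $C^{(x,y-1)}$ is KN-admissible and $\phi(C^{(x,y-1)})$ is its KN-coadmissible image; writing $\phi(C^{(x,y-1)})=D^{\ast}+E^{\ast}$ (negative over positive), the standing semistandardness hypothesis on the $\mathscr{C}_{n}^{(+)}$-part yields the crucial comparison $E^{\ast}\preceq C_{+}^{(y)}$, while $D^{\ast}$ is the negative part of $C^{(x,y)}$.

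To verify (C1) for $C^{(x,y)}=D^{\ast}+C_{+}^{(y)}$ I would fix a positive letter $a$ at position $p$ and a negative letter $\Bar{b}$ at position $q$ and bound $(q-p)+\max(a,b)$ from below by cases on $\Bar b$. If $\Bar b=\overline{l_{k}^{\ast}}$ is one of the freshly created $\overline{\mathscr{L}^{\ast}}$-letters of $\phi(C^{(x,y-1)})$, its position is pinned down by Lemma~\ref{lem:coKN_lower}, which supplies $q_{k}^{\ast}-p_{k}^{\ast}+l_{k}^{\ast}\geq |C^{(x,y-1)}|-\gamma_{k}^{\ast}$; if $\Bar b$ is an ordinary $\overline{\mathscr{J}}$-letter it is carried over unchanged from column $x$, and the required estimate descends from the KN-admissibility of $C^{(x,y-1)}$ itself. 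In either case the comparison $E^{\ast}\preceq C_{+}^{(y)}$ shows that replacing the positive part $E^{\ast}$ of the coadmissible column by the actual positive part $C_{+}^{(y)}$ only enlarges the entries $a$, so $\max(a,b)$ cannot decrease, and the resulting position shifts are governed by Lemma~\ref{lem:position1}.

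The hard part will be this bridging step, because the positive part of $C^{(x,y)}$ is inherited from column $y$ whereas its negative part is manufactured from column $y-1$, so no single original KN-admissible column certifies $C^{(x,y)}$ directly. The delicate pairs are those with $a$ drawn from $C_{+}^{(y)}$ and $\Bar b=\overline{l_{k}^{\ast}}$ a new $\overline{\mathscr{L}^{\ast}}$-letter: both the length $N$ and the position $q$ differ from the corresponding data for $\phi(C^{(x,y-1)})$. The key that I expect to unlock the estimate is that these two discrepancies coincide, since both $q-q_{k}^{\ast}$ and $N-|C^{(x,y-1)}|$ equal $|C_{+}^{(y)}|-|E^{\ast}|$, so that $q-N$ is invariant; the bound from Lemma~\ref{lem:coKN_lower} then transfers verbatim, and the gain in $\max(a,b)$ coming from $E^{\ast}\preceq C_{+}^{(y)}$ together with the comparison of the positions $p$ and $p_{k}^{\ast}$ is what must be shown to preserve $(q-p)+\max(a,b)>N$.
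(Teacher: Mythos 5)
Your skeleton is the same as the paper's: you assemble $C^{(x,y)}$ from the negative part of $\phi(C^{(x,y-1)})$ and the untouched positive part $C_{+}^{(y)}$, reduce well-definedness to condition (C1), split according to whether the barred letter was created by $\phi^{(x,y-1)}$ or carried over, and your normalization $q-N=q_{k}^{\ast}-N'$ (where $N'=|C^{(x,y-1)}|$) is correct and is exactly how the paper is able to phrase everything in terms of column-$x$ positions and $N_{x}$. But the proof stops precisely where the content of the lemma begins. For a freshly created letter $\Bar{b}=\overline{l_{k}^{\ast}}$, Lemma~\ref{lem:coKN_lower} carries a deficit: it gives $q_{k}^{\ast}-p_{k}^{\ast}+l_{k}^{\ast}\geq N'-\gamma_{k}^{\ast}$, which is strictly weaker than the inequality $>N'$ you must end with, so you must manufacture a gain of at least $\gamma_{k}^{\ast}+1$ from positions and entries. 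The comparison $E^{\ast}\preceq C_{+}^{(y)}$ cannot do this by itself: if $a$ sits at the same height as $l_{k}^{\ast}$, it only yields $a\geq l_{k}^{\ast}$, not $a\geq l_{k}^{\ast}+\gamma_{k}^{\ast}+1$. What closes the gap in the paper is a forbidden-configuration argument: by Lemma~\ref{lem:col_sst1} the entry of $T'=\psi^{(x,y-1)}(\Tilde{T})$ at the position where $l_{k}^{\ast}$ is created is strictly larger than $l_{k}^{\ast}$, so semistandardness of the $\mathscr{C}_{n}^{(+)}$-part of $T'$ forces the occurrence of $m=l_{k}^{\ast}$ in column $y$ to lie strictly above the created $l_{k}^{\ast}$ in column $y-1$; after the subsequent relocations this gives $p_{k}^{\ast}\geq p+\gamma_{k}^{\ast}+1$, which exactly cancels the deficit. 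You explicitly defer this step (``is what must be shown''), and it is the heart of the lemma.

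Second, your disposal of carried-over barred letters is incorrect as stated. You claim the estimate ``descends from the KN-admissibility of $C^{(x,y-1)}$ itself,'' but that works only when no $\mathscr{L}^{(x,y-1)}$-letter $l$ satisfies $\Bar{l}\prec\Bar{m}\prec\overline{l^{\ast}}$. When $s\geq1$ such letters exist, $\phi^{(x,y-1)}$ relocates $\Bar{m}$ upward by exactly $s$ positions, so the inequality inherited from admissibility of $C^{(x,y-1)}$ (via the pair $(i,\Bar{m})$ with $i<m$ in column $y-1$) is weakened by $s$ and no longer suffices. Recovering it is the longest part of the paper's proof (Case (a), sub-cases (a-1) and (a-2)): one applies Lemma~\ref{lem:coKN_lower} not to $C^{(x,y-1)}$ but to $l_{max}$, the largest straddling $\mathscr{L}$-letter, and combines the resulting bound with the position estimate $p_{max}^{\ast}\geq p+s+t$ obtained from the relocation count in operation (B) and semistandardness. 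So both substantive estimates of the lemma --- the ones produced by the filling-diagram analysis together with Lemmas~\ref{lem:position1}, \ref{lem:col_sst1}, and \ref{lem:coKN_lower} --- are either missing from your proposal or asserted on insufficient grounds.
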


\begin{proof}
Let $C_{-}^{(x)}$ (resp. $C_{+}^{(y)}$) be the $\mathscr{C}_{n}^{(-)}$ (resp. $\mathscr{C}_{n}^{(+)}$)-letters part of the $x$-th (resp. $y$-th) column of $\Tilde{T}$.
Let $C^{(x,y)}$ be the column whose  $\mathscr{C}_{n}^{(+)}$ (resp. $\mathscr{C}_{n}^{(-)}$)-letters part is $C_{+}^{(y)}$ (resp.$C_{-}^{(x)}$).
If $C^{(x,y)}$ is KN-admissible, then we can apply $\phi^{(x,y)}$ to $\Tilde{T}$.
Suppose that $\Tilde{T}$ has the following configuration.

\setlength{\unitlength}{12pt}
\begin{center}
\begin{picture}(8,6)
\put(2,0){\line(0,1){5}}
\put(3,0){\line(0,1){5}}
\put(3,2){\makebox(2,1){$\cdots$}}
\put(5,2){\line(0,1){3}}
\put(6,2){\line(0,1){3}}
\put(2,1){\line(1,0){1}}
\put(2,2){\line(1,0){1}}
\put(5,3){\line(1,0){1}}
\put(5,4){\line(1,0){1}}
\put(1,5){\line(1,0){6}}
\put(2,1){\makebox(1,1){$\Bar{m}$}}
\put(5,3){\makebox(1,1){$m$}}
\put(0,1){\makebox(2,1){$\Tilde{q} \rightarrow$}}
\put(6,3){\makebox(2,1){$\leftarrow \Tilde{p}$}}
\put(2,5){\makebox(1,1){$x$}}
\put(5,5){\makebox(1,1){$y$}}
\end{picture}.
\end{center}
Let $N_{+}$ (resp. $N_{-}$) be the length of the $\mathscr{C}_{n}^{(+)}$ (resp. $\mathscr{C}_{n}^{(-)}$)-letters part of the $y$-th (resp. $x$-th) column and $\Delta N(\geq 0)$ be the difference between the length of the $\mathscr{C}_{n}^{(+)}$-letters part of the $x$-th column and that of the $y$-th column.
Then, $N_{+}+N_{-}+\Delta N=N_{x}$, where $N_{x}$ is the length of the $x$-th column.
In the column $C^{(x,y)}$, $\Bar{m}$ lies at the $(\Tilde{q}-\Delta N)$-th position (from the top).
Hence, if $(\Tilde{q}-\Delta N)-\Tilde{p}+m>N_{+}+N_{-}$, i.e., 
$(\Tilde{q}-\Tilde{p})+m>N_{x}$, then $C^{(x,y)}$ is KN-admissible. 
Let $C_{-}^{(x)\prime}$ be the $\mathscr{C}_{n}^{(-)}$-letters part of the $x$-th column of 
$T^{\prime}:=\psi^{(x,y-1)}(\Tilde{T})$ and 
$C_{+}^{(y-1)\prime}$ be the $\mathscr{C}_{n}^{(+)}$-letters part of the $(y-1)$-st column of $T^{\prime}$.
Let $C^{(x,y-1)}$ be the column whose $\mathscr{C}_{n}^{(+)}$ (resp. $\mathscr{C}_{n}^{(-)}$)-letters part is 
$C_{+}^{(y-1)\prime}$ (resp. $C_{-}^{(x)\prime}$) and $\mathscr{L}^{(x,y-1)}$ be the set of $\mathscr{L}$-letters of $C^{(x,y-1)}$.
We consider the following two cases separately:

\begin{description}
\item[(a)]
$\overline{m}$ appears in the $x$-th column of $T^{\prime}$ and $m\notin \mathscr{L}^{(x,y-1)}$.
\item[(b)]
$\overline{m}$ in the $x$-th column of $\Tilde{T}$ is generated when $\phi ^{(x,y-1)}$ is applied to $T^{\prime}$.
\end{description}

\textbf{Case (a).}
Suppose that the tableau $T^{\prime}$ has the following configuration.

\setlength{\unitlength}{12pt}
\begin{center}
\begin{picture}(9,6)
\put(2,0){\line(0,1){5}}
\put(3,0){\line(0,1){5}}
\put(5,2){\line(0,1){3}}
\put(6,2){\line(0,1){3}}
\put(7,2){\line(0,1){3}}
\put(2,1){\line(1,0){1}}
\put(2,2){\line(1,0){1}}
\put(5,3){\line(1,0){2}}
\put(5,4){\line(1,0){2}}
\put(1,5){\line(1,0){7}}
\put(2,5){\makebox(1,1){$x$}}
\put(4.5,5){\makebox(1.5,1){${\mathstrut y-1}$}}
\put(6.25,5){\makebox(1,1){${\mathstrut y}$}}
\put(3,2){\makebox(2,1){$\cdots$}}
\put(2,1){\makebox(1,1){$\Bar{m}$}}
\put(5,3){\makebox(1,1){$i$}}
\put(6,3){\makebox(1,1){$m$}}
\put(0,1){\makebox(2,1){$q \rightarrow$}}
\put(7,3){\makebox(2,1){$\leftarrow p$}}
\end{picture}.
\end{center}
By the assumption of \textbf{(a)}, $m\notin \mathscr{L}^{(x,y-1)}$ so that $i<m$ 
(if $m \in \mathscr{L}^{(x,y-1)}$, then $\Bar{m}$ in the $x$-th column of $T^{\prime}$ disappears by $\phi^{(x,y-1)}$).
Let us set $\left\{ l \in \mathscr{L}^{(x,y-1)} \relmiddle| \Bar{l}\prec\Bar{m}\prec\overline{l^{\ast}}\right\}
 =:\{l_{r+1},\ldots,l_{r+s}=l_{max}\}$.
If this set is empty ($s=0$), then the position of $\Bar{m}$ does not change 
when $\phi^{(x,y-1)}$ is applied to $T^{\prime}$.
In this case, we have
$(q-p)+\max(i,m)=(q-p)+m>N_{x}$ because $C^{(x,y-1)}$ is KN-admissible ($\Tilde{T}\neq \emptyset$).
This inequality still holds when $\phi ^{(x,y-1)}$ is applied to $T^{\prime}$ so that 
$C^{(x,y)}$ is KN-admissible.
Now suppose that the above set is not empty ($s\geq 1$).
We adopt the second kind algorithm for $\phi^{(x,y-1)}$ here.
Let us assume that $\sharp\left\{ l\in \mathscr{L}^{(x,y-1)} \relmiddle| l_{max}^{\ast}<l<m \right\}=t $.
Since the number of $l$'s such that $l_{max}^{\ast}<l<l_{max}$ ($l\in \mathscr{L}^{(x,y-1)}$) is $s+t-1$, 
we have
\begin{equation} \label{eq:phi_xy_1}
q_{max}^{\ast}-p_{max}^{\ast}+l_{max}^{\ast}\geq N_{x}-(s+t-1)
\end{equation}
by Lemma~\ref{lem:coKN_lower}, 
where $p_{max}^{\ast}$ is the position of $l_{max}^{\ast}$ in the $(y-1)$-st column and $q_{max}^{\ast}$ is the position of $\overline{l_{max}^{\ast}}$ in the $x$-th column of $\phi^{(x,y-1)}(T^{\prime})=\Tilde{T}$.
Initially, the tableau $T^{\prime}$ has the following configuration, 
where the left (resp. right) part is the $\mathscr{C}_{n}^{(-)}$ (resp. $\mathscr{C}_{n}^{(+)}$)-letters one ($i<m<l_{r+1}<\ldots <l_{r+s}=l_{max}$).

\setlength{\unitlength}{15pt}
\begin{center}
\begin{picture}(10.5,8)
\put(2,0){\line(0,1){7}}
\put(3.5,0){\line(0,1){7}}
\put(2,1){\line(1,0){1.5}}
\put(2,2){\line(1,0){1.5}}
\put(2,3){\line(1,0){1.5}}
\put(2,4){\line(1,0){1.5}}
\put(2,5){\line(1,0){1.5}}
\put(2,6){\line(1,0){1.5}}
\put(1,7){\line(1,0){3.5}}
\put(2,1){\makebox(1.5,1){$\Bar{m}$}}
\put(2,3){\makebox(1.5,1){$\overline{l_{r+1}}$}}
\put(2,5){\makebox(1.5,1){$\overline{l_{max}}$}}
\put(2,7){\makebox(1.5,1){$x$}}
\put(0,1){\makebox(2,1){$q \rightarrow$}}
\put(7,0){\line(0,1){7}}
\put(8.5,0){\line(0,1){7}}
\put(7,1){\line(1,0){1.5}}
\put(7,2){\line(1,0){1.5}}
\put(7,3){\line(1,0){1.5}}
\put(7,4){\line(1,0){1.5}}
\put(7,5){\line(1,0){1.5}}
\put(7,6){\line(1,0){1.5}}
\put(6,7){\line(1,0){3.5}}
\put(7,1){\makebox(1.5,1){$l_{max}$}}
\put(7,3){\makebox(1.5,1){$l_{r+1}$}}
\put(7,5){\makebox(1.5,1){$i$}}
\put(7,7){\makebox(1.5,1){$y-1$}}
\put(8.5,5){\makebox(2,1){$\leftarrow p$}}
\end{picture}.
\end{center}
Let us divide this case further into the following two cases:

\begin{description}
\item[(a-1)]
$i<l_{max}^{\ast}$.
\item[(a-2)]
$l_{max}^{\ast}<i$.
\end{description}
Note that $i\neq l_{max}^{\ast}$ because $i\in C^{(x,y-1)}$ and $l_{max}^{\ast}\notin C^{(x,y-1)}$.

\textbf{Case (a-1).}
The filling diagram of the $C^{(x,y-1)}$ has the following configuration 
before the operation for $l_{max}\rightarrow l_{max}^{\ast}$.

\setlength{\unitlength}{15pt}
\begin{center}
\begin{picture}(9,3)
\put(1,1){\line(0,1){2}}
\put(2,1){\line(0,1){2}}
\put(4,1){\line(0,1){2}}
\put(5,1){\line(0,1){2}}
\put(7,1){\line(0,1){2}}
\put(8,1){\line(0,1){2}}
\put(0,1){\line(1,0){9}}
\put(0,2){\line(1,0){2}}
\put(4,2){\line(1,0){1}}
\put(7,2){\line(1,0){2}}
\put(0,3){\line(1,0){9}}
\put(1,1){\makebox(1,1){$\circ$}}
\put(1,2){\makebox(1,1){$\circ$}}
\put(4,1){\makebox(1,1){$\bullet$}}
\put(4,2){\makebox(1,1){$\circ$}}
\put(7,1){\makebox(1,1){$\bullet$}}
\put(7,2){\makebox(1,1){$\bullet$}}
\put(2,1){\makebox(2,2){$(0)$}}
\put(1,0){\makebox(1,1){$l_{max}^{\ast}$}}
\put(4,0){\makebox(1,1){$m$}}
\put(7,0){\makebox(1,1){$l_{max}$}}
\end{picture}.
\end{center}
Here the number of $(\pm)$-slots in region $(0)$ is $t$.
There are no $\emptyset$-slots in this region.
Also, there are no $(\times)$-slots in this region.
Otherwise, it would contradict the maximality of $l_{max}$ in 
$\left\{  l\in \mathscr{L}^{(x,y-1)} \relmiddle| \Bar{l}\prec\Bar{m}\prec\overline{l^{\ast}}\right\}$.
Let us assume that the number of $(+)$-slots and that of $(-)$-slots in region $(0)$ are 
$\alpha$ and $\beta$, respectively.
Then we have
\begin{equation} \label{eq:phi_xy_2}
l_{max}^{\ast}=m-(\alpha+\beta+t)-1.
\end{equation}
When the operation (A) for $l_{max}\rightarrow l_{max}^{\ast}$ is finished, 
the $(y-1)$-st column of the updated tableau has the left configuration in the figure below.

\setlength{\unitlength}{15pt}
\begin{center}
\begin{picture}(12,7)
\put(3,0){\line(0,1){7}}
\put(5,0){\line(0,1){7}}
\put(3,1){\line(1,0){2}}
\put(3,2){\line(1,0){2}}
\put(3,3){\line(1,0){2}}
\put(3,4){\line(1,0){2}}
\put(3,5){\line(1,0){2}}
\put(3,6){\line(1,0){2}}
\put(3,1){\makebox(2,1){$l_{r+s-1}$}}
\put(3,3){\makebox(2,1){$l_{max}^{\ast}$}}
\put(3,5){\makebox(2,1){$i$}}
\put(1,5){\makebox(3,1)[l]{$p \rightarrow$}}
\put(0.5,6){\makebox(1.5,1){$(A)$}}
\put(10,0){\line(0,1){7}}
\put(12,0){\line(0,1){7}}
\put(10,1){\line(1,0){2}}
\put(10,2){\line(1,0){2}}
\put(10,4){\line(1,0){2}}
\put(10,5){\line(1,0){2}}
\put(10,6){\line(1,0){2}}
\put(10,1){\makebox(2,1){$l_{max}^{\ast}$}}
\put(10,2){\makebox(2,2){$A$}}
\put(10,4){\makebox(2,1){$\vdots$}}
\put(10,5){\makebox(2,1){$i$}}
\put(7,1){\makebox(2,1)[l]{$p_{max}^{\ast} \rightarrow$}}
\put(8,5){\makebox(3,1)[l]{$p \rightarrow$}}
\put(7.5,6){\makebox(1.5,1){$(B)$}}
\end{picture}.
\end{center}
In the operation (B), $s-1$ $\mathscr{L}^{(x,y-1)}$-letters $l_{r+1},\ldots,l_{r+s-1}$ 
together with $t$ $\mathscr{L}^{(x,y-1)}$-letters are 
relocated just above the box containing $l_{max}^{\ast}$ 
so that the $(y-1)$-st column of the updated tableau has the right configuration, 
where $A$ is the block of $s+t-1$ boxes with $\mathscr{L}^{(x,y-1)}$-letters.
Therefore, we have
\begin{equation} \label{eq:phi_xy_3}
p_{max}^{\ast}\geq p+s+t.
\end{equation}
Note that $p_{max}^{\ast}$ does not change under subsequent operations for 
$l_{r+s-1}\rightarrow l_{r+s-1}^{\ast},\ldots,l_{1}\rightarrow l_{1}^{\ast}$.
The $x$-th column of the tableau has the left configuration (A) in the figure below  
when the operation (A) for $l_{max}\rightarrow l_{max}^{\ast}$ is finished.
When the entry $\overline{l_{max}^{\ast}}$ appears below $\Bar{m}$, 
the position of the box containing $\Bar{m}$ is changed from $q$ to $q-1$.
Since there are $\beta + t$ boxes with $\overline{\mathscr{J}^{(x)}}$-letters between the box containing $\Bar{m}$ and 
that containing $\overline{l_{max}^{\ast}}$, 
the position of the box containing $\overline{l_{max}^{\ast}}$ is $q+\beta +t$.

\setlength{\unitlength}{15pt}
\begin{center}
\begin{picture}(11.5,5)
\put(4,0){\line(0,1){5}}
\put(5.5,0){\line(0,1){5}}
\put(4,1){\line(1,0){1.5}}
\put(4,2){\line(1,0){1.5}}
\put(4,3){\line(1,0){1.5}}
\put(4,4){\line(1,0){1.5}}
\put(4,1){\makebox(1.5,1){$\overline{l_{max}^{\ast}}$}}
\put(4,3){\makebox(1.5,1){$\Bar{m}$}}
\put(0,1){\makebox(4,1){$q+\beta+t \rightarrow$}}
\put(1,3){\makebox(3,1){$q-1 \rightarrow$}}
\put(2,4){\makebox(1,1){$(A)$}}
\put(8,4){\makebox(1,1){$(B)$}}
\put(10,0){\line(0,1){5}}
\put(11.5,0){\line(0,1){5}}
\put(10,1){\line(1,0){1.5}}
\put(10,2){\line(1,0){1.5}}
\put(10,3){\line(1,0){1.5}}
\put(10,4){\line(1,0){1.5}}
\put(10,1){\makebox(1.5,1){$\overline{l_{max}^{\ast}}$}}
\put(10,3){\makebox(1.5,1){$\Bar{m}$}}
\put(7,1){\makebox(3,1){$q_{max}^{\ast} \rightarrow$}}
\put(7,3){\makebox(3,1){$q-s \rightarrow$}}
\end{picture}.
\end{center}
When the operation (B) for $l_{max}\rightarrow l_{max}^{\ast}$ is finished, 
the $x$-th column of the updated tableau has the right configuration (B) in the above figure. 
Since $s-1$ $\overline{\mathscr{L}^{(x,y-1)}}$-letters $\overline{l_{r+s-1}},\ldots,\overline{l_{r+1}}$ 
lying above the box containing $\Bar{m}$ before the operation (B) for $l_{max}\rightarrow l_{max}^{\ast}$ are relocated below  
$\overline{l_{max}^{\ast}}$, the position of $\Bar{m}$ is changed from $q-1$ to $q-1-(s-1)=q-s$.
Likewise, the position of the box containing $\overline{l_{max}^{\ast}}$ is changed from $q+\beta +t$ to
\begin{equation} \label{eq:phi_xy_4}
q_{max}^{\ast}=q+\beta+t-(s+t-1)=q+\beta -s+1,
\end{equation}  
which does not change under subsequent operations for  
$l_{r+s-1}\rightarrow l_{r+s-1}^{\ast},\ldots,l_{1}\rightarrow l_{1}^{\ast}$.
From Eqs.~\eqref{eq:phi_xy_1}, \eqref{eq:phi_xy_2}, and \eqref{eq:phi_xy_4}, we have
\begin{equation} \label{eq:phi_xy_5}
(q-s)-p_{max}^{\ast}+m  =q_{max}^{\ast}-p_{max}^{\ast}+l_{max}^{\ast}+\alpha+t
\geq N_{x}-s+\alpha+1.
\end{equation}
Combining Eqs.~\eqref{eq:phi_xy_3} and \eqref{eq:phi_xy_5}, we have
$(q-s)-p+m\geq N_{x}+t+\alpha+1 > N_{x}$.
Here the position of $m$ in the $y$-th column of $\Tilde{T}$ is $p$ and 
that of $\Bar{m}$ in the $x$-th column of $\Tilde{T}$ is $q-s$.
Therefore, $C^{(x,y)}$ is KN-admissible.

\textbf{Case (a-2).}
Let us assume that $i\notin \mathscr{L}^{(x,y-1)}$.
The proof for the case when $i\in \mathscr{L}^{(x,y-1)}$ is similar.
The filling diagram of the column $C^{(x,y-1)}$ has the following configuration 
before the operation for  $l_{max}\rightarrow l_{max}^{\ast}$.

\setlength{\unitlength}{15pt}
\begin{center}
\begin{picture}(12,3)
\put(1,1){\line(0,1){2}}
\put(2,1){\line(0,1){2}}
\put(4,1){\line(0,1){2}}
\put(5,1){\line(0,1){2}}
\put(7,1){\line(0,1){2}}
\put(8,1){\line(0,1){2}}
\put(10,1){\line(0,1){2}}\put(11,1){\line(0,1){2}}
\put(0,1){\line(1,0){12}}
\put(0,2){\line(1,0){2}}
\put(4,2){\line(1,0){1}}
\put(7,2){\line(1,0){1}}
\put(10,2){\line(1,0){2}}
\put(0,3){\line(1,0){12}}
\put(1,1){\makebox(1,1){$\circ$}}
\put(1,2){\makebox(1,1){$\circ$}}
\put(4,1){\makebox(1,1){$\circ$}}
\put(4,2){\makebox(1,1){$\bullet$}}
\put(7,1){\makebox(1,1){$\bullet$}}
\put(7,2){\makebox(1,1){$\circ$}}
\put(10,1){\makebox(1,1){$\bullet$}}
\put(10,2){\makebox(1,1){$\bullet$}}
\put(2,1){\makebox(2,2){$(2)$}}
\put(5,1){\makebox(2,2){$(1)$}}
\put(1,0){\makebox(1,1){$l_{max}^{\ast}$}}
\put(4,0){\makebox(1,1){$i$}}
\put(7,0){\makebox(1,1){$m$}}
\put(10,0){\makebox(1,1){$l_{max}$}}
\end{picture}.
\end{center}
The total number of $(\pm)$-slots in regions $(1)$ and $(2)$ is $t$.
Let us assume that the number of $(\pm)$-slots in region $(1)$ is $t_{1}$.
There are no $\emptyset$-slots in both regions.
Also, there are no $(\times)$-slots in both regions as in \textbf{(a-1)}.
Let us assume that the number of $(+)$-slots and that of $(-)$-slots in region $(j)$ are 
$\alpha_{j}$ and $\beta_{j}$, respectively ($j=1,2$).
Then
\begin{equation} \label{eq:phi_xy_6}
l_{max}^{\ast}=m-\sum_{i=1}^{2}(\alpha_{i}+\beta_{i})-t-2.
\end{equation}
The updated tableau has the following configuration 
when the operation (A) for $l_{max}\rightarrow l_{max}^{\ast}$ is finished.

\setlength{\unitlength}{15pt}
\begin{center}
\begin{picture}(14.5,6)
\put(5.5,0){\line(0,1){5}}
\put(7,0){\line(0,1){5}}
\put(5.5,1){\line(1,0){1.5}}
\put(5.5,2){\line(1,0){1.5}}
\put(5.5,3){\line(1,0){1.5}}
\put(5.5,4){\line(1,0){1.5}}
\put(4.5,5){\line(1,0){3.5}}
\put(5.5,1){\makebox(1.5,1){$\overline{l_{max}^{\ast}}$}}
\put(5.5,3){\makebox(1.5,1){$\Bar{m}$}}
\put(5.5,5){\makebox(1.5,1){$x$}}
\put(0,1){\makebox(5.5,1)[l]{$q+\sum_{i=1}^{2}\beta_{i}+t \rightarrow$}}
\put(3,3){\makebox(2.5,1)[l]{$q-1 \rightarrow$}}
\put(10,0){\line(0,1){5}}
\put(11.5,0){\line(0,1){5}}
\put(10,1){\line(1,0){1.5}}
\put(10,2){\line(1,0){1.5}}
\put(10,3){\line(1,0){1.5}}
\put(10,4){\line(1,0){1.5}}
\put(9,5){\line(1,0){3.5}}
\put(10,1){\makebox(1.5,1){$i$}}
\put(10,3){\makebox(1.5,1){$l_{max}^{\ast}$}}
\put(10,5){\makebox(1.5,1){$y-1$}}
\put(12,1){\makebox(2.5,1){$\leftarrow p+1$}}
\end{picture}.
\end{center}
When the operation (B) for $l_{max}\rightarrow l_{max}^{\ast}$ is finished,
the updated tableau has the following configuration.

\setlength{\unitlength}{15pt}
\begin{center}
\begin{picture}(13,6)
\put(3,0){\line(0,1){5}}
\put(4.5,0){\line(0,1){5}}
\put(3,1){\line(1,0){1.5}}
\put(3,2){\line(1,0){1.5}}
\put(3,3){\line(1,0){1.5}}
\put(3,4){\line(1,0){1.5}}
\put(2,5){\line(1,0){3.5}}
\put(3,1){\makebox(1.5,1){$\overline{l_{max}^{\ast}}$}}
\put(3,3){\makebox(1.5,1){$\Bar{m}$}}
\put(3,5){\makebox(1.5,1){$x$}}
\put(0,1){\makebox(2.5,1)[r]{$q_{max}^{\ast} \rightarrow$}}
\put(0,3){\makebox(2.5,1)[r]{$q-s \rightarrow$}}
\put(7.5,0){\line(0,1){5}}
\put(9,0){\line(0,1){5}}
\put(7.5,1){\line(1,0){1.5}}
\put(7.5,2){\line(1,0){1.5}}
\put(7.5,3){\line(1,0){1.5}}
\put(7.5,4){\line(1,0){1.5}}
\put(6.5,5){\line(1,0){3.5}}
\put(7.5,1){\makebox(1.5,1){$i$}}
\put(7.5,3){\makebox(1.5,1){$l_{max}^{\ast}$}}
\put(7.5,5){\makebox(1.5,1){$y-1$}}
\put(9.5,1){\makebox(3.5,1)[l]{$\leftarrow p+s+t_{1}$}}
\put(9.5,3){\makebox(3.5,1)[l]{$\leftarrow p_{max}^{\ast}$}}
\end{picture}.
\end{center}
The position of the box containing $i$ in the $(y-1)$-st column is changed from $p+1$ to $p+s+t_{1}$ because $s-1+t_{1}$ $\mathscr{L}^{(x,y-1)}$-letters larger than $i$ are transformed to the corresponding $\mathscr{L}^{(x,y-1)\ast}$-letters and relocated above the box containing $i$.
The position of the box containing $\Bar{m}$ in the $x$-th column is changed from $q-1$ to $q-s$ because $s-1$ $\overline{\mathscr{L}^{(x,y-1)}}$-letters smaller than $\Bar{m}$ are transformed to the corresponding $\overline{\mathscr{L}^{(x,y-1)\ast}}$-letters and relocated below the box containing $\Bar{m}$.
The position of the box containing $\overline{l_{max}^{\ast}}$ in the $x$-th column is changed to
\begin{equation} \label{eq:phi_xy_7}
q_{max}^{\ast}=q+\sum_{i=1}^{2}\beta_{i}+t-(s+t-1)=(q-s)+\sum_{i=1}^{2}\beta_{i}+1,
\end{equation}
because $s-1+t$ $\overline{\mathscr{L}^{(x,y-1)}}$-letters smaller than $\overline{l_{max}^{\ast}}$ are transformed to the corresponding $\overline{\mathscr{L}^{(x,y-1)\ast}}$-letters and relocated below the box containing $\overline{l_{max}^{\ast}}$.
Since $\alpha_{2}$ $\mathscr{I}^{(y-1)}$-letters exist between the box containing $l_{max}^{\ast}$ and that containing $i$ 
in the $(y-1)$-st column,
\begin{equation} \label{eq:phi_xy_8}
p_{max}^{\ast}+\alpha_{2}+1=p+s+t_{1}.
\end{equation}
Note that $p_{max}^{\ast}$ and $q_{max}^{\ast}$ do not change under subsequent operations for 
$l_{r+s-1}\rightarrow l_{r+s-1}^{\ast},\ldots,l_{1}\rightarrow l_{1}^{\ast}$.
From Eqs.~\eqref{eq:phi_xy_1}, \eqref{eq:phi_xy_6}, \eqref{eq:phi_xy_7}, and \eqref{eq:phi_xy_8}, we have
\begin{align*}
(q-s)-p+m  & =q_{max}^{\ast}-p_{max}^{\ast}+l_{max}^{\ast}+\alpha_{1}+s+t+t_{1}\\
& \geq N_{x}+t_{1}+\alpha_{1}+1 > N_{x}.
\end{align*}
Here, the position of the box containing $m$ in the $y$-th column of $\Tilde{T}$ is $p$ and 
that of $\Bar{m}$ in the $x$-th column of $\Tilde{T}$ is $q-s$.
Therefore, $C^{(x,y)}$ is KN-admissible.

\textbf{Case (b).}
In this case, we can write 
$m=l_{i}^{\ast}\in \mathscr{L}^{(x,y-1)\ast}=\{l_{1}^{\ast},l_{2}^{\ast},\ldots,l_{c}^{\ast}\}$.
Let us set
$\{l_{p+1},\ldots,l_{p+r}\}:=\left\{l \in \mathscr{L}^{(x,y-1)} \relmiddle| l_{i}^{\ast}<l<l_{i}\right\}$ 
(if $r=0$, then this set is considered to be empty).
We adopt the first kind algorithm for $\phi^{(x,y-1)}$ here.
When the operation for $l_{i}\rightarrow l_{i}^{\ast}=m$ is finished, 
the updated tableau has the left configuration in the figure below, 
where $A$ is the block consisting of $s$ boxes ($s\geq 1$).

\setlength{\unitlength}{12pt}
\begin{center}
\begin{picture}(17,6)
\put(2,0){\line(0,1){5}}
\put(3,0){\line(0,1){5}}
\put(4,2){\line(0,1){3}}
\put(2,1){\line(1,0){1}}
\put(2,2){\line(1,0){1}}
\put(3,3){\line(1,0){1}}
\put(2,4){\line(1,0){2}}
\put(1,5){\line(1,0){4}}
\put(1.5,5){\makebox(1.5,1){${\mathstrut y-1}$}}
\put(3.25,5){\makebox(1,1){${\mathstrut y}$}}
\put(2,1){\makebox(1,1){$m$}}
\put(2,2){\makebox(1,2){$A$}}
\put(3,3){\makebox(1,1){$m$}}
\put(0,1){\makebox(2,1){$p_{1}\rightarrow$}}
\put(4,3){\makebox(2,1){$\leftarrow p$}}
\put(5,1){\makebox(3,1){$p\leq p_{1}-1$}}

\put(13,0){\line(0,1){5}}
\put(14,0){\line(0,1){5}}
\put(15,0){\line(0,1){5}}
\put(13,1){\line(1,0){2}}
\put(13,3){\line(1,0){1}}
\put(13,4){\line(1,0){1}}
\put(14,2){\line(1,0){1}}
\put(12,5){\line(1,0){4}}
\put(12.5,5){\makebox(1.5,1){$y-1$}}
\put(14.25,5){\makebox(1,1){$y$}}
\put(13,1){\makebox(1,2){$A^{\prime}$}}
\put(13,3){\makebox(1,1){$m$}}
\put(14,1){\makebox(1,1){$m$}}
\put(11,3){\makebox(2,1){$p_{1}\rightarrow$}}
\put(15,1){\makebox(2,1){$\leftarrow p$}}
\end{picture}.
\end{center}
The right configuration is not allowed, 
where $A^{\prime}$ is the block consisting of $s^{\prime}$ boxes ($s^{\prime} \geq 0$).
This can be seen as follows.
Suppose that the entry in the $p_{1}$-th box in the $(y-1)$-st column of the initial tableau $T^{\prime}$ is $j$.
When the operation for $l_{i+1}\rightarrow l_{i+1}^{\ast}$ is finished, 
$l_{i+1}^{\ast},\ldots,l_{c}^{\ast}$ lie below the box containing $j$ in the $(y-1)$-st column
so that the $p_{1}$-th box in the $(y-1)$-st column still has the entry $j$.
The operation for $l_{i}\rightarrow l_{i}^{\ast}$ replaces the entry $j$ with $l_{i}^{\ast}=m$.
This implies that $j>l_{i}^{\ast}=m$ by Lemma~\ref{lem:col_sst1}, 
which contradicts the semistandardness of the $\mathscr{C}_{n}^{(+)}$-letters part of $T^{\prime}$ 
so that the right configuration cannot happen.
When a sequence of operations for $l_{p+r}\rightarrow l_{p+r}^{\ast},\ldots,l_{p+1}\rightarrow l_{p+1}^{\ast}$ 
is finished, 
the position of $m=l_{i}^{\ast}$ in the $(y-1)$-st column becomes to be $p^{\prime}=p_{1}+r$, 
which does not change under subsequent operations.
Since $p\leq p_{1}-1$, we have $p^{\prime}\geq p+r+1$.
On the other hand, by Lemma~\ref{lem:coKN_lower}, we have 
$(q-p^{\prime})+m\geq N_{x}-r$, 
where $q$ is the position of $\Bar{m}=\overline{l_{i}^{\ast}}$ in the $x$-th column of $\Tilde{T}$.
Combining these, we have that $(q-p)+m > N_{x}$, i.e., $C^{(x,y)}$ is KN-admissible.
\end{proof}

\begin{lem} \label{lem:kink1}
Suppose that $T=C_{1}C_{2}\cdots C_{n_{c}} \in C_{n}\text{-}\mathrm{SST}_{\mathrm{KN}}$.
Let us set 
\[
\Tilde{T}=\phi^{(x,y-1)}\circ\cdots\circ\phi^{(x,x)}\circ\overline{\Phi^{(x+1)}}(T) \quad (2 \leq x+1 \leq y \leq n_{c}).
\]
Here, we assume that $\Tilde{T}\neq \emptyset$ and that in the updating process of the tableau from $T$ to $\Tilde{T}$ the semistandardness of the $\mathscr{C}_{n}^{(+)}$-letters part of the tableau is preserved.
\begin{itemize}
\item[(1).]
Suppose that $\Tilde{T}$ has the following configuration, where the left (resp. right) part is the $\mathscr{C}_{n}^{(-)}$ (resp. $\mathscr{C}_{n}^{(+)}$)-letters one $(p\leq q< r\leq s)$.

\setlength{\unitlength}{12pt}
\begin{center}
\begin{picture}(10,6)
\put(2,0){\line(0,1){5}}
\put(3,0){\line(0,1){5}}
\put(6,0){\line(0,1){5}}
\put(7,0){\line(0,1){5}}
\put(8,0){\line(0,1){5}}
\put(2,1){\line(1,0){1}}
\put(2,2){\line(1,0){1}}
\put(2,3){\line(1,0){1}}
\put(2,4){\line(1,0){1}}
\put(6,3){\line(1,0){1}}
\put(6,4){\line(1,0){1}}
\put(7,1){\line(1,0){1}}
\put(7,2){\line(1,0){1}}
\put(1,5){\line(1,0){3}}
\put(5,5){\line(1,0){4}}
\put(2,5){\makebox(1,1){$x$}}
\put(5.5,5){\makebox(1.5,1){${\mathstrut y-1}$}}
\put(7.25,5){\makebox(1,1){${\mathstrut y}$}}
\put(2,1){\makebox(1,1){$\overline{a_{2}}$}}
\put(2,3){\makebox(1,1){$\Bar{b}$}}
\put(6,3){\makebox(1,1){$a_{1}$}}
\put(7,1){\makebox(1,1){$b$}}
\put(0,1){\makebox(2,1){$s \rightarrow$}}
\put(0,3){\makebox(2,1){$r \rightarrow$}}
\put(8,1){\makebox(2,1){$\leftarrow q$}}
\put(8,3){\makebox(2,1){$\leftarrow p$}}
\end{picture}.
\end{center}
Then we have 
\[
(q-p)+(s-r)<b-\min(a_{1},a_{2}).
\] 

\item[(2).]
Let $\mathscr{J}^{(x)}$ be the set of $\mathscr{J}$-letters in the $x$-th column and 
$\mathscr{I}^{(y)}$ be the set of $\mathscr{I}$-letters in the $y$-th column and set 
$\mathscr{L}^{(x,y)}:=\mathscr{J}^{(x)}\cap \mathscr{I}^{(y)}$.
If $\sharp\left\{  l\in \mathscr{L}^{(x,y)} \relmiddle| l^{\ast}<b<l\right\}  =\delta$, 
then we have 
\[
(q-p)+(s-r)<b-\min(a_{1},a_{2})-\delta
\]
in the above configuration in $\Tilde{T}$.
\end{itemize}
\end{lem}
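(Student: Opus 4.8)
The plan is to prove part~(2) and to read off part~(1) from it, since $\delta\geq 0$ gives $b-a-\delta\leq b-a$. Both inequalities are Kashiwara--Nakashima constraints of type (C2) (Definition~\ref{df:KN_C}) specialised to the matched values $a$ and $b$, i.e. to $a_{1}=a_{2}=a$ and $b_{1}=b_{2}=b$; here $b\in\mathscr{L}^{(x,y)}$ because $b$ occurs as a $\mathscr{C}_{n}^{(+)}$-letter associated with the $y$-th column and $\bar{b}$ occurs as a $\mathscr{C}_{n}^{(-)}$-letter in the $x$-th column, and similarly for $a$. Since $\tilde{T}$ is only an intermediate tableau and need not be KN-admissible, the inequality cannot be quoted outright; it has to be derived from the genuine KN-admissibility of $T$, exactly as the (C1)-type bound of Lemma~\ref{lem:welldf1} was derived under the very same hypotheses.

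Accordingly I would mirror the scheme of Lemma~\ref{lem:welldf1}: analyse the last operation $\phi^{(x,y-1)}$ and trace the four entries $a,b,\bar{a},\bar{b}$ back through it, reducing the claim for $\tilde{T}$ to the same claim for $\phi^{(x,y-2)}\circ\cdots\circ\phi^{(x,x)}\circ\overline{\Phi^{(x+1)}}(T)$ and iterating down to $\overline{\Phi^{(x+1)}}(T)$, where the required bound is available (ultimately from the KN-admissibility of $T$) from the analysis of the preceding blocks. For each entry I would split into the two cases of Lemma~\ref{lem:welldf1}, according to whether it already occurs before $\phi^{(x,y-1)}$ is applied or is created as an $\mathscr{L}^{\ast}$-letter during that operation, using also the standing assumption that the $\mathscr{C}_{n}^{(+)}$-part stays semistandard throughout. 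The quantitative input is the position bookkeeping of Lemma~\ref{lem:position1} (the shifts $p_{k}-p_{k}^{\ast}=\alpha$ and $q_{k}^{\ast}-q_{k}=\beta$) together with the lower bound $q_{k}^{\ast}-p_{k}^{\ast}+l_{k}^{\ast}\geq N-\gamma_{k}^{\ast}$ of Lemma~\ref{lem:coKN_lower}, read off from the filling diagrams by counting the $(+)$-, $(-)$-, $(\pm)$- and $(\times)$-slots in the intervening regions and using the description of $\mathscr{L}^{\ast}$-letters in Remark~\ref{rem:algorithm1}.

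The correction term $-\delta$ has the same provenance as the quantity $\gamma_{k}^{\ast}$ in Lemma~\ref{lem:coKN_lower}. The letters $l\in\mathscr{L}^{(x,y)}$ with $l^{\ast}<b<l$ are precisely the matched letters whose value lies above $b$ but whose splitting image $l^{\ast}$ falls below $b$; in the splitting of $C^{(x,y)}$ each of them occupies a $(\pm)$-slot between the slot of $b$ and the slot of its image, and so each contributes one additional unit to the position count controlled by Lemma~\ref{lem:coKN_lower}. Summing these $\delta$ units over the straddling letters is what turns the plain bound of part~(1) into the sharpened bound $(q-p)+(s-r)<b-a-\delta$ of part~(2).

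The hard part will be the two-sided character of the estimate. Unlike the (C1)-type bound of Lemma~\ref{lem:welldf1}, which controls a single span, here the positive-side span $q-p$ and the negative-side span $s-r$ both move under the operations and are coupled through the $\mathscr{L}$-letters shared by the $x$-th and $y$-th columns. Keeping the two contributions synchronised through the whole case analysis---pre-existing versus generated entries, and the relative order of the split images $a^{\ast},b^{\ast}$ against the surviving $\mathscr{L}$- and $\overline{\mathscr{L}}$-letters---while verifying that each of the $\delta$ straddling letters is counted once and only once, is where I expect essentially all of the technical effort to go.
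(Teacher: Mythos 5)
Your framing is right in two respects: the inequality is a (C2)-type constraint with $a_{1}=a_{2}=a$, $b_{1}=b_{2}=b$ that must be \emph{derived} for the intermediate tableau rather than quoted, and the preserved semistandardness of the $\mathscr{C}_{n}^{(+)}$-part is the hypothesis that has to carry the argument. But the architecture you propose has a genuine gap. Your induction is on the number of operations: peel off $\phi^{(x,y-1)}$ and ``iterate down to $\overline{\Phi^{(x+1)}}(T)$, where the required bound is available (ultimately from the KN-admissibility of $T$).'' This does not get off the ground. The claim you are inducting on is tied to the column triple $(x,y-1,y)$; after undoing $\phi^{(x,y-1)}$, the corresponding claim at the previous stage concerns $(x,y-2,y-1)$, so it is not ``the same claim.'' Worse, at the base $\overline{\Phi^{(x+1)}}(T)$ you would need a (C2)-type inequality coupling column $x$ with the generally \emph{non-adjacent} columns $y-1$ and $y$, and condition (C2) of Definition~\ref{df:KN_C} constrains only adjacent columns of $T$ (and columns $x+1,\dots,n_{c}$ of $\overline{\Phi^{(x+1)}}(T)$ are no longer those of $T$ in any case). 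The paper avoids exactly this trap: its induction is not on the operations but, inside the fixed tableau $\Tilde{T}$, along the chain of $\mathscr{L}^{(x,y-1)}$-letters below $b$ (note that $a\in\mathscr{L}^{(x,y-1)}$, not $\mathscr{L}^{(x,y)}$). At each step an elementary counting argument (the entries strictly between the four boxes are distinct letters confined to $\{a+1,\dots,b-1\}$) gives the weak bound $\leq$, and equality is then excluded by undoing the single last operation, $T^{\prime}=\psi^{(x,y-1)}(\Tilde{T})$, and exhibiting a violation of the semistandardness of the $\mathscr{C}_{n}^{(+)}$-part of $T^{\prime}$ via a filling-diagram analysis. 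That backward-contradiction engine is the missing idea in your sketch; the tools you lean on instead, Lemma~\ref{lem:position1} and Lemma~\ref{lem:coKN_lower}, are single-column (C1)-type bounds and cannot by themselves produce a two-column estimate.

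Second, proving part (2) first and reading off part (1) from $\delta\geq 0$ is circular in practice. The refinement mechanism you describe --- straddling letters contributing one unit each, which is precisely the paper's bookkeeping $\delta_{k+1}=(\delta_{k}-1)+\varepsilon$ across the operation $l_{k+1}\rightarrow l_{k+1}^{\ast}$ --- is a downward induction along the $\mathscr{L}^{(x,y)}$-letters lying above $b$, and its base case $b=l_{c}$, $\delta=0$ \emph{is} part (1). So part (1) must be established independently, and that is where essentially all of the work in the paper's proof lies.
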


\begin{proof}
Note that the tableau $\Tilde{T}$ does not have the following configuration.

\setlength{\unitlength}{12pt}
\begin{center}
\begin{picture}(8,6)
\put(1,0){\line(0,1){5}}
\put(2,0){\line(0,1){5}}
\put(4,2){\line(0,1){3}}
\put(5,2){\line(0,1){3}}
\put(6,2){\line(0,1){3}}
\put(1,1){\line(1,0){1}}
\put(1,2){\line(1,0){1}}
\put(4,3){\line(1,0){2}}
\put(4,4){\line(1,0){2}}
\put(0,5){\line(1,0){7}}
\put(1,5){\makebox(1,1){$x$}}
\put(3.5,5){\makebox(1.5,1){${\mathstrut y-1}$}}
\put(5.25,5){\makebox(1,1){${\mathstrut y}$}}
\put(1,1){\makebox(1,1){$\Bar{b}$}}
\put(2,2){\makebox(2,1){$\cdots$}}
\put(4,3){\makebox(1,1){$b$}}
\put(5,3){\makebox(1,1){$b$}}
\put(6,3){\makebox(2,1){$\leftarrow p^{\prime}$}}
\end{picture}.
\end{center}
Otherwise, the entry in the $p^{\prime}$-th position in the $(y-1)$-st column of 
$T^{\prime}:=\psi^{(x,y-1)}(\Tilde{T})$ would be strictly larger than $b$ by Lemma~\ref{lem:col_sst2}.
This contradicts the semistandardness of the $\mathscr{C}^{(+)}$-letters part of $T^{\prime}$.
Therefore, the case when $p=q$ and $r=s$ must be excluded.
The case when $r=s$ and $p<q$ must be also excluded because $r=s$ implies $a=b$, 
which contradicts the semistandardness of $\Tilde{T}$.
In particular, $a<b$.

Let us start by proving (1).
Firstly, we set $a_{1}=a_{2}=a$.
Let $C_{-}^{(x)}$ (resp. $C_{+}^{(y-1)}$) be the $\mathscr{C}_{n}^{(-)}$ (resp. $\mathscr{C}_{n}^{(+)}$)-letters part of 
the $x$-th (resp. $(y-1)$-st) column and 
let $C^{(x,y-1)}$ be the column whose $\mathscr{C}_{n}^{(+)}$ (resp. $\mathscr{C}_{n}^{(-)}$)-letters part is 
$C_{+}^{(y-1)}$ (resp. $C_{-}^{(x)}$).
Let $\mathscr{I}^{(y-1)}$ be the set of $\mathscr{I}$-letters in the $(y-1)$-st column in $\Tilde{T}$ and set 
$\mathscr{L}^{(x,y-1)}:=\mathscr{J}^{(x)}\cap \mathscr{I}^{(y-1)}=:\left\{ l_{1},\ldots,l_{c} \right\}$.
Let $l_{k_{0}}$ be the largest $\mathscr{L}^{(x,y-1)}$-letters such that $l_{k_{0}}<b$.
The entry $a$ can be written as $l_{k_{0}-k+1}$ for some $k$ ($k=1,\ldots,k_{0}$).
Let $p_{k}$ (resp. $q_{k}$) be the position of the entry $l_{k_{0}-k+1}$ (resp. $\overline{l_{k_{0}-k+1}}$) in the $(y-1)$-st (resp. $x$-th) column in $\Tilde{T}$.
We proceed by induction on $k$.

\textbf{(I).}
Let $k=1$.
We first consider the case when $p_{1}<q$.
Suppose that the tableau $\Tilde{T}$ has the following configuration $(p_{1}< q< r< s_{1}, k_{0}\in \{1,\ldots,c \})$.

\setlength{\unitlength}{12pt}
\begin{center}
\begin{picture}(10,7)
\put(2,0){\line(0,1){6}}
\put(3,0){\line(0,1){6}}
\put(6,0){\line(0,1){6}}
\put(7,0){\line(0,1){6}}
\put(8,0){\line(0,1){6}}
\put(2,1){\line(1,0){1}}
\put(2,2){\line(1,0){1}}
\put(2,4){\line(1,0){1}}
\put(2,5){\line(1,0){1}}
\put(6,1){\line(1,0){2}}
\put(6,2){\line(1,0){2}}
\put(6,4){\line(1,0){1}}
\put(6,5){\line(1,0){1}}
\put(6,2){\line(1,0){2}}
\put(1,6){\line(1,0){3}}
\put(5,6){\line(1,0){4}}
\put(2,6){\makebox(1,1){$x$}}
\put(5.5,6){\makebox(1.5,1){${\mathstrut y-1}$}}
\put(7.25,6){\makebox(1,1){${\mathstrut y}$}}
\put(2,1){\makebox(1,1){$\overline{l_{k_{0}}}$}}
\put(2,2){\makebox(1,2){$\overline{B_{1}}$}}
\put(2,4){\makebox(1,1){$\Bar{b}$}}
\put(6,1){\makebox(1,1){$b^{\prime}$}}
\put(6,2){\makebox(1,2){$A_{1}$}}
\put(6,4){\makebox(1,1){$l_{k_{0}}$}}
\put(7,1){\makebox(1,1){$b$}}
\put(0,1){\makebox(2,1){$s_{1} \rightarrow$}}
\put(0,4){\makebox(2,1){$r \rightarrow$}}
\put(8,1){\makebox(2,1){$\leftarrow q$}}
\put(8,4){\makebox(2,1){$\leftarrow p_{1}$}}
\end{picture},
\end{center}
where $A_{1}\cap B_{1}=\emptyset$, i.e., $A_{1}$ and $B_{1}$ have no $\mathscr{L}^{(x,y-1)}$-letters.
In this configuration, $l_{k_{0}}<b^{\prime}<b$ and $A_{1}\cap B_{1}=\emptyset$ so that
$\left\vert A_{1}\right\vert +\left\vert B_{1}\right\vert =(q-p_{1}-1)+(s_{1}-r-1)\leq
\left\vert \left\{  l_{k_{0}}+1,\ldots,b-1\right\} \backslash \{ b^{\prime} \} \right\vert $, i.e., $(q-p_{1})+(s_{1}-r)\leq b-l_{k_{0}}$.
Let us assume that
\begin{equation} \label{eq:kink1_1}
(q-p_{1})+(s_{1}-r) = b-l_{k_{0}}.
\end{equation}
We claim that $T^{\prime}=\psi^{(x,y-1)}(\Tilde{T})$ cannot be semistandard 
under the condition of Eq.~\eqref{eq:kink1_1}.
We follow the first kind algorithm for $\psi^{(x,y-1)}$ here.
The filling diagram of the initial column $C^{(x,y-1)}$ has the following configuration.

\setlength{\unitlength}{15pt}
\begin{center}
\begin{picture}(9,3)
\put(0,1){\line(1,0){9}}
\put(0,3){\line(1,0){9}}
\put(1,1){\line(0,1){2}}
\put(2,1){\line(0,1){2}}
\put(4,1){\line(0,1){2}}
\put(5,1){\line(0,1){2}}
\put(7,1){\line(0,1){2}}
\put(8,1){\line(0,1){2}}
\put(0,2){\line(1,0){2}}
\put(4,2){\line(1,0){1}}
\put(7,2){\line(1,0){2}}
\put(1,1){\makebox(1,1){$\bullet$}}
\put(1,2){\makebox(1,1){$\bullet$}}
\put(4,1){\makebox(1,1){$\circ$}}
\put(4,2){\makebox(1,1){$\bullet$}}
\put(7,1){\makebox(1,1){$\bullet$}}
\put(2,1){\makebox(2,2){$(1)$}}
\put(5,1){\makebox(2,2){$(2)$}}
\put(1,0){\makebox(1,1){$l_{k_{0}}$}}
\put(4,0){\makebox(1,1){$b^{\prime}$}}
\put(7,0){\makebox(1,1){$b$}}
\end{picture}.
\end{center}
The $b$-th slot is either $(-)$ or $(\pm)$-slot.
Since $A_{1}\cap B_{1}=\emptyset$, regions $(1)$ and $(2)$ have no $(\pm)$-slots 
and the $b^{\prime}$-th slot is $(+)$-slot.
Let us assume that the numbers of $(+)$-slots, $(-)$-slots, and $\emptyset$-slots in region $(i)$ are 
$\alpha_{i}$, $\beta_{i}$, and $\varepsilon_{i}$, respectively $(i=1,2)$.
Then  $q-p_{1}=\alpha_{1}+1$, $s_{1}-r=\sum_{i=1}^{2}\beta_{i}+1$, and 
$b-l_{k_{0}}=\sum_{i=1}^{2}(\alpha_{i}+\beta_{i}+\varepsilon_{i})+2$.
Substituting these into Eq.~\eqref{eq:kink1_1}, we have 
$\alpha_{2}+\varepsilon_{1}+\varepsilon_{2}=0$ so that
$\alpha_{2}=\varepsilon_{1}=\varepsilon_{2}=0$.
Namely, $\emptyset$-slots do not exist in both regions $(1)$ and $(2)$ and 
$(+)$-slots do not exist in region $(2)$.
Set $\left\{  l\in \mathscr{L}^{(x,y-1)} \relmiddle| l<l_{k_{0}}<l^{\dag}\right\} =:\{l_{t+1},\ldots.l_{t+\gamma}\}$ 
(if $\gamma$=0, then this set is considered to be empty).
After these $\mathscr{L}^{(x,y-1)}$-letters are transformed to $\mathscr{L}^{(x,y-1)\dag}$-letters and 
are relocated by $\psi^{(x,y-1)}$, 
the filling diagram of the updated column has the following configuration.
Note that $b<l_{t+1}^{\dag},\ldots,b<l_{t+\gamma}^{\dag}$ because $\varepsilon_{1}=\varepsilon_{2}=0$.

\setlength{\unitlength}{15pt}
\begin{center}
\begin{picture}(18,3)
\put(0,1){\line(1,0){18}}
\put(0,3){\line(1,0){18}}
\put(1,1){\line(0,1){2}}
\put(2,1){\line(0,1){2}}
\put(4,1){\line(0,1){2}}
\put(5,1){\line(0,1){2}}
\put(7,1){\line(0,1){2}}
\put(8,1){\line(0,1){2}}
\put(10,1){\line(0,1){2}}
\put(11,1){\line(0,1){2}}
\put(13,1){\line(0,1){2}}
\put(14,1){\line(0,1){2}}
\put(16,1){\line(0,1){2}}
\put(17,1){\line(0,1){2}}
\put(0,2){\line(1,0){2}}
\put(4,2){\line(1,0){1}}
\put(7,2){\line(1,0){1}}
\put(10,2){\line(1,0){1}}
\put(13,2){\line(1,0){1}}
\put(16,2){\line(1,0){2}}
\put(1,1){\makebox(1,1){$\bullet$}}
\put(1,2){\makebox(1,1){$\bullet$}}
\put(4,1){\makebox(1,1){$\circ$}}
\put(4,2){\makebox(1,1){$\bullet$}}
\put(7,1){\makebox(1,1){$\bullet$}}
\put(10,1){\makebox(1,1){$\times$}}
\put(10,2){\makebox(1,1){$\times$}}
\put(13,1){\makebox(1,1){$\times$}}
\put(13,2){\makebox(1,1){$\times$}}
\put(16,1){\makebox(1,1){$\circ$}}
\put(16,2){\makebox(1,1){$\circ$}}
\put(2,1){\makebox(2,2){$(1)$}}
\put(5,1){\makebox(2,2){$(2)$}}
\put(11,1){\makebox(2,2){$\cdots$}}
\put(1,0){\makebox(1,1){$l_{k_{0}}$}}
\put(4,0){\makebox(1,1){$b^{\prime}$}}
\put(7,0){\makebox(1,1){$b$}}
\put(10,0){\makebox(1,1){$l_{t+1}^{\dag}$}}
\put(13,0){\makebox(1,1){$l_{t+\gamma}^{\dag}$}}
\put(16,0){\makebox(1,1){$l_{k_{0}}^{\dag}$}}
\end{picture}.
\end{center}
Suppose that the operation for $l_{k_{0}} \rightarrow l_{k_{0}}^{\dag}$ is finished.
Then the relocation of the $\gamma +1$ $\mathscr{L}^{(x,y-1)\dag}$-letters, 
$l_{t+1}^{\dag},\ldots,l_{t+\gamma}^{\dag}$, and $l_{k_{0}}^{\dag}$ 
changes the initial position of $b^{\prime}$ from $q$ to $q-(\gamma +1)$. 
If $b\notin \mathscr{L}^{(x,y-1)}$, then $b\notin \mathscr{I}^{(y-1)}$ and the updated tableau has the following configuration, 
which does not change under subsequent operations. 

\setlength{\unitlength}{12pt}
\begin{center}
\begin{picture}(8,6)
\put(1,0){\line(0,1){5}}
\put(2,0){\line(0,1){5}}
\put(3,0){\line(0,1){5}}
\put(1,1){\line(1,0){2}}
\put(1,2){\line(1,0){2}}
\put(1,3){\line(1,0){1}}
\put(1,4){\line(1,0){1}}
\put(0,5){\line(1,0){4}}
\put(1,1){\makebox(1,1){$b^{\prime\prime}$}}
\put(2,1){\makebox(1,1){$b$}}
\put(1,3){\makebox(1,1){$b^{\prime}$}}
\put(0.5,5){\makebox(1.5,1){${\mathstrut y-1}$}}
\put(2.25,5){\makebox(1,1){${\mathstrut y}$}}
\put(3.5,1){\makebox(2,1)[l]{$\leftarrow q$}}
\put(3.5,3){\makebox(4.5,1)[l]{$\leftarrow q-(\gamma +1)$}}
\end{picture}.
\end{center}
Any $\mathscr{C}_{n}^{(+)}$-letters larger than $b^{\prime}$ are larger than $b$ 
because there do not exist $(+)$-slots and $(\pm)$-slots in region $(2)$ of the above filling diagram.
Therefore, the entry in the box just below the box containing $b^{\prime}$ in the $(y-1)$-st column is larger than $b$ 
so that 
$b^{\prime\prime}\geq b+1+\gamma$ because there are $\gamma$ boxes between the box containing $b^{\prime}$ and that containing $b^{\prime\prime}$.
This contradicts the semistandardness of the $\mathscr{C}_{n}^{(+)}$-letters part of $T^{\prime}$.
If $b\in \mathscr{L}^{(x,y-1)}$, then the updated tableau has the following configuration.

\setlength{\unitlength}{12pt}
\begin{center}
\begin{picture}(8,7)
\put(1,0){\line(0,1){6}}
\put(2,0){\line(0,1){6}}
\put(3,0){\line(0,1){6}}
\put(1,1){\line(1,0){2}}
\put(1,2){\line(1,0){2}}
\put(1,3){\line(1,0){1}}
\put(1,4){\line(1,0){1}}
\put(1,5){\line(1,0){1}}
\put(0,6){\line(1,0){4}}
\put(1,1){\makebox(1,1){$b^{\prime\prime}$}}
\put(2,1){\makebox(1,1){$b$}}
\put(1,3){\makebox(1,1){$b$}}
\put(1,4){\makebox(1,1){$b^{\prime}$}}
\put(0.5,6){\makebox(1.5,1){${\mathstrut y-1}$}}
\put(2.25,6){\makebox(1,1){${\mathstrut y}$}}
\put(3.5,1){\makebox(2,1)[l]{$\leftarrow q$}}
\put(3.5,4){\makebox(4.5,1)[l]{$\leftarrow q-(\gamma +1)$}}
\end{picture}.
\end{center}
After the operation $b\rightarrow b^{\dag}$, the updated configuration turns out to be the same as the previous one and the same argument leads to a contradiction.
Hence, we have $(q-p_{1})+(s_{1}-r)<b-l_{k_{0}}$.

Now let us assume that $p_{1}=q$.
Suppose that the tableau $\Tilde{T}$ has the following configuration.

\setlength{\unitlength}{12pt}
\begin{center}
\begin{picture}(10,7)
\put(1.9,0){\line(0,1){6}}
\put(3.1,0){\line(0,1){6}}
\put(6,3){\line(0,1){3}}
\put(7,3){\line(0,1){3}}
\put(8,3){\line(0,1){3}}
\put(1.9,1){\line(1,0){1.2}}
\put(1.9,2){\line(1,0){1.2}}
\put(1.9,4){\line(1,0){1.2}}
\put(1.9,5){\line(1,0){1.2}}
\put(6,4){\line(1,0){2}}
\put(6,5){\line(1,0){2}}
\put(1,6){\line(1,0){3}}
\put(5,6){\line(1,0){4}}
\put(2,6){\makebox(1,1){$x$}}
\put(5.5,6){\makebox(1.5,1){${\mathstrut y-1}$}}
\put(7.25,6){\makebox(1,1){${\mathstrut y}$}}
\put(2,1){\makebox(1,1){$\overline{l_{k_{0}}}$}}
\put(2,2){\makebox(1,2){$\overline{B_{1}}$}}
\put(2,4){\makebox(1,1){$\Bar{b}$}}
\put(6,4){\makebox(1,1){$l_{k_{0}}$}}
\put(7,4){\makebox(1,1){$b$}}
\put(0,1){\makebox(2,1){$s_{1} \rightarrow$}}
\put(0,4){\makebox(2,1){$r \rightarrow$}}
\put(8,4){\makebox(2,1){$\leftarrow q$}}
\end{picture}.
\end{center}
In this configuration, $\left\vert \overline{B_{1}}\right\vert =(s_{1}-r-1)\leq\left\vert \{l_{k_{0}}+1,\ldots,b-1\}\right\vert $, 
i.e., $s_{1}-r\leq b-l_{k_{0}}$.
Let us assume that $s_{1}-r= b-l_{k_{0}}$.
This implies that the block $\overline{B_{1}}$ is filled with consecutive $\overline{\mathscr{J}^{(x)}}$-letters, 
$\overline{b-1},\ldots,\overline{l_{k_{0}}+1}$ (if $l_{k_{0}}+1=b$, then $\overline{B_{1}}$ is empty) so that the filling diagram of the initial column $C^{(x,y-1)}$ has the following configuration.

\setlength{\unitlength}{15pt}
\begin{center}
\begin{picture}(6,3)
\put(0,1){\line(1,0){6}}
\put(0,3){\line(1,0){6}}
\put(1,1){\line(0,1){2}}
\put(2,1){\line(0,1){2}}
\put(4,1){\line(0,1){2}}
\put(5,1){\line(0,1){2}}
\put(0,2){\line(1,0){2}}
\put(4,2){\line(1,0){2}}
\put(1,1){\makebox(1,1){$\bullet$}}
\put(1,2){\makebox(1,1){$\bullet$}}
\put(4,1){\makebox(1,1){$\bullet$}}
\put(2,1){\makebox(2,2){$(1)$}}
\put(1,0){\makebox(1,1){$l_{k_{0}}$}}
\put(4,0){\makebox(1,1){$b$}}
\end{picture}.
\end{center}
Region $(1)$ consists of only $(-)$-slots ($l_{k_{0}}+1<b$) or is empty ($l_{k_{0}}+1=b$).
When the operations up to $l_{k_{0}}\rightarrow l_{k_{0}}^{\dag}$ are finished, the entry at the $q$-th position in the $(y-1)$-column is larger than $b$ because region $(1)$ has only $(-)$-slots.
This entry does not change under subsequent operations.
This contradicts the semistandardness of the $\mathscr{C}_{n}^{(+)}$-letters part of $T^{\prime}$.
Hence, we have $s_{1}-r<b-l_{k_{0}}$.

\textbf{(II)}.
We first consider the case when $p_{1}<q$.
We claim that $(q-p_{k+1})+(s_{k+1}-r)<b-l_{k_{0}-k}$ in the following configuration of the tableau $\Tilde{T}$ ($p_{k+1}<p_{k}<q<r<s_{k}<s_{k+1}$).
 
\setlength{\unitlength}{15pt}
\begin{center}
\begin{picture}(16.5,9)
\put(3.5,0){\line(0,1){8}}
\put(6,0){\line(0,1){8}}
\put(9.5,0){\line(0,1){8}}
\put(12,0){\line(0,1){8}}
\put(13,0){\line(0,1){8}}
\put(3.5,1){\line(1,0){2.5}}
\put(3.5,2){\line(1,0){2.5}}
\put(3.5,4){\line(1,0){2.5}}
\put(3.5,5){\line(1,0){2.5}}
\put(3.5,6){\line(1,0){2.5}}
\put(3.5,7){\line(1,0){2.5}}
\put(9.5,1){\line(1,0){3.5}}
\put(9.5,2){\line(1,0){3.5}}
\put(9.5,3){\line(1,0){2.5}}
\put(9.5,4){\line(1,0){2.5}}
\put(9.5,6){\line(1,0){2.5}}
\put(9.5,7){\line(1,0){2.5}}
\put(2.5,8){\line(1,0){4.5}}
\put(8.5,8){\line(1,0){5.5}}
\put(3.5,1){\makebox(2.5,1){$\overline{l_{k_{0}-k}}$}}
\put(3.5,2){\makebox(2.5,2){$\overline{B^{\prime}}$}}
\put(3.5,4){\makebox(2.5,1){$\overline{l_{k_{0}-k+1}}$}}
\put(3.5,6){\makebox(2.5,1){$\Bar{b}$}}
\put(9.5,1){\makebox(2.5,1){$b^{\prime}$}}
\put(9.5,4){\makebox(2.5,2){$A^{\prime}$}}
\put(9.5,3){\makebox(2.5,1){$l_{k_{0}-k+1}$}}
\put(9.5,6){\makebox(2.5,1){$l_{k_{0}-k}$}}
\put(12,1){\makebox(1,1){$b$}}
\put(0,1){\makebox(3,1)[r]{$s_{k+1} \rightarrow$}}
\put(0,4){\makebox(3,1)[r]{$s_{k} \rightarrow$}}
\put(0,6){\makebox(3,1)[r]{$r \rightarrow$}}
\put(13.5,1){\makebox(3,1)[l]{$\leftarrow q$}}
\put(13.5,3){\makebox(3,1)[l]{$\leftarrow p_{k}$}}
\put(13.5,6){\makebox(3,1)[l]{$\leftarrow p_{k+1}$}}
\put(3.5,8){\makebox(2.5,1){$x$}}
\put(9.5,8){\makebox(2.5,1){${\mathstrut y-1}$}}
\put(12,8){\makebox(1,1){${\mathstrut y}$}}
\end{picture}
\end{center}
under the assumption
\begin{equation} \label{eq:kink1_3}
(q-p_{k})+(s_{k}-r)<b-l_{k_{0}-k+1}
\end{equation}
and $b^{\prime}\in \mathscr{I}^{(y-1)}\backslash \mathscr{L}^{(x,y-1)}$.
Since $A^{\prime}\cap B^{\prime}=\emptyset$,
\begin{align*}
\left\vert A^{\prime}\right\vert +\left\vert \overline{B^{\prime}}\right\vert
& =(p_{k}-p_{k+1}-1)+(s_{k+1}-s_{k}-1)\\
& \leq
\begin{cases}
\left\vert \{l_{k_{0}-k}+1,\ldots,l_{k_{0}-k+1}-1\}\right\vert  & (l_{k_{0}-k}+2\leq l_{k_{0}-k+1}),\\
0 & (l_{k_{0}-k}+1=l_{k_{0}-k+1}),
\end{cases}
\end{align*}
i.e., 
\begin{equation} \label{eq:kink1_4}
(p_{k}-p_{k+1})+(s_{k+1}-s_{k})\leq l_{k_{0}-k+1}-l_{k_{0}-k}+1.
\end{equation}
Combining Eqs.~\eqref{eq:kink1_3} and \eqref{eq:kink1_4}, we have $(q-p_{k+1})+(s_{k+1}-r)\leq b-l_{k_{0}-k}$.
Let us assume that
\begin{equation} \label{eq:kink1_5}
(q-p_{k+1})+(s_{k+1}-r)=b-l_{k_{0}-k}.
\end{equation}
The filling diagram of the initial column $C^{(x,y-1)}$ has the following configuration.

\setlength{\unitlength}{15pt}
\begin{center}
\begin{picture}(15,5)
\put(0,1){\line(1,0){15}}
\put(0,3){\line(1,0){15}}
\put(1,1){\line(0,1){2}}
\put(2,1){\line(0,1){2}}
\put(4,1){\line(0,1){2}}
\put(5,1){\line(0,1){2}}
\put(7,1){\line(0,1){2}}
\put(8,1){\line(0,1){2}}
\put(10,1){\line(0,1){2}}
\put(11,1){\line(0,1){2}}
\put(13,1){\line(0,1){2}}
\put(14,1){\line(0,1){2}}
\put(0,2){\line(1,0){2}}
\put(4,2){\line(1,0){1}}
\put(7,2){\line(1,0){1}}
\put(10,2){\line(1,0){1}}
\put(13,2){\line(1,0){2}}
\put(1,1){\makebox(1,1){$\bullet$}}
\put(1,2){\makebox(1,1){$\bullet$}}
\put(4,1){\makebox(1,1){$\bullet$}}
\put(4,2){\makebox(1,1){$\bullet$}}
\put(7,1){\makebox(1,1){$\bullet$}}
\put(7,2){\makebox(1,1){$\bullet$}}
\put(10,1){\makebox(1,1){$\circ$}}
\put(10,2){\makebox(1,1){$\bullet$}}
\put(13,1){\makebox(1,1){$\bullet$}}
\put(11,1){\makebox(2,2){$(2)$}}
\put(0,0){\makebox(3,1){$l_{k_{0}-k}$}}
\put(3,0){\makebox(3,1){$l_{k_{0}-k+1}$}}
\put(7,0){\makebox(1,1){$l_{k_{0}}$}}
\put(10,0){\makebox(1,1){$b^{\prime}$}}
\put(13,0){\makebox(1,1){$b$}}
\put(5,1){\makebox(2,2){$\cdots$}}
\put(2.1,2.5){\makebox(8,1){$
\overbrace{%
\begin{array}
[c]{cccccccccccc}
& & & & & & & & & & &
\end{array}
}$
}}
\put(5,3.5){\makebox(2,1){$(1)$}}
\end{picture}.
\end{center}
Region $(1)$ contains $k$ $(\pm)$-slots and region $(2)$ contains no $(\pm)$-slots. 
Let us assume that the numbers of $(+)$-slots, $(-)$-slots, and $\emptyset$-slots in region $(i)$ are 
$\alpha_{i}$, $\beta_{i}$, and $\varepsilon_{i}$, respectively $(i=1,2)$.
Then $q-p_{k+1}=\alpha_{1}+k+1$, $s_{k+1}-r=\beta_{1}+k+\beta_{2}+1$, and 
$b-l_{k_{0}-k}=\sum_{i=1}^{2}(\alpha_{i}+\beta_{i}+\varepsilon_{i})+k+2$. 
Substituting these into Eq.~\eqref{eq:kink1_5}, we have $k=\alpha_{2}+\varepsilon_{1}+\varepsilon_{2}$.
Therefore, when $\mathscr{L}^{(x,y-1)}$-letters up to $l_{k_{0}}$ are transformed to the corresponding 
$\mathscr{L}^{(x,y-1)\dag}$-letters, at least $k+1-(\varepsilon_{1}+\varepsilon_{2})=\alpha_{2}+1$ of them are larger than $b$.
Suppose that $\sharp\left\{  l\in \mathscr{L}^{(x,y-1)} \relmiddle| l<b<l^{\dag}\right \}  =\gamma$.
Then $\gamma \geq \alpha_{2}+1$.
Suppose that the operation for $l_{k_{0}}\rightarrow l_{k_{0}}^{\dag}$ is finished.
Then the position of the box containing $b^{\prime}$ in the $(y-1)$-st column is changed from $q$ to $q-\gamma^{\prime}$, 
where 
$\gamma^{\prime}=\sharp\left\{  l\in \mathscr{L}^{(x,y-1)} \relmiddle| l<b^{\prime}<l^{\dag}\right \}$.
Since region $(2)$ has $\epsilon _{2}$ $\emptyset$-slots and no $(\pm)$-slots, 
$\gamma^{\prime}=\gamma +\epsilon _{2}$.
The updated tableau has the following configuration.

\setlength{\unitlength}{12pt}
\begin{center}
\begin{picture}(7,6)
\put(3,0){\line(0,1){6}}
\put(4,0){\line(0,1){6}}
\put(5,0){\line(0,1){6}}
\put(3,1){\line(1,0){2}}
\put(3,2){\line(1,0){2}}
\put(3,4){\line(1,0){1}}
\put(3,5){\line(1,0){1}}
\put(2,6){\line(1,0){4}}
\put(3,1){\makebox(1,1){$b^{\prime\prime}$}}
\put(3,2){\makebox(1,2){$C$}}
\put(3,4){\makebox(1,1){$b^{\prime}$}}
\put(4,1){\makebox(1,1){$b$}}
\put(0,1){\makebox(2.5,1)[r]{$q \rightarrow$}}
\put(0,4){\makebox(2.5,1)[r]{$q-\gamma^{\prime} \rightarrow$}}
\put(2.5,6){\makebox(1.5,1){${\mathstrut y-1}$}}
\put(4.25,6){\makebox(1,1){${\mathstrut y}$}}
\end{picture},
\end{center}
where $b^{\prime\prime}\leq b$ and the position of the box containing $b^{\prime}$ does not change under subsequent operations.
Since $\alpha_{2}$ $\mathscr{I}^{(y-1)}$-letters exists between $b^{\prime}$ and $b$, 
$C$ has at most $\alpha_{2}$ letters.
On the other hand, $C$ consists of $\gamma^{\prime}-1$ boxes and $\gamma^{\prime}=\gamma + \varepsilon_{2} \geq \alpha_{2} +1+\varepsilon_{2}$ so that 
$\gamma^{\prime}-1 \geq \alpha_{2}$.
This implies $\gamma^{\prime}-1= \alpha_{2}$ and $b^{\prime\prime}=b$.
Now since $b^{\prime\prime}=b \in \mathscr{L}^{(x,y-1)}$, the entry at the $q$-th position in the $(y-1)$-st column becomes strictly larger than $b$ after the operation $b^{\prime\prime}\rightarrow b^{\dag}$ by Lemma~\ref{lem:col_sst2} and does not change under subsequent operations.
This contradicts the the semistandardness of the $\mathscr{C}_{n}^{(+)}$-letters part of 
$T^{\prime}$.
Hence, we have $(q-p_{k+1})+(s_{k+1}-r)<b-l_{k_{0}-k}$.

Now let us consider the case when $p_{1}=q$.
We claim that $(q-p_{k+1})+(s_{k+1}-r)<b-l_{k_{0}-k}$ in the following configuration of the tableau $\Tilde{T}$ 
($p_{k+1}<p_{k}\leq q<r<s_{k}<s_{k+1}$).

\setlength{\unitlength}{15pt}
\begin{center}
\begin{picture}(16.5,9)
\put(3.5,0){\line(0,1){8}}
\put(6,0){\line(0,1){8}}
\put(9.5,0){\line(0,1){8}}
\put(12,0){\line(0,1){8}}
\put(13,0){\line(0,1){8}}
\put(3.5,1){\line(1,0){2.5}}
\put(3.5,2){\line(1,0){2.5}}
\put(3.5,4){\line(1,0){2.5}}
\put(3.5,5){\line(1,0){2.5}}
\put(3.5,6){\line(1,0){2.5}}
\put(3.5,7){\line(1,0){2.5}}
\put(9.5,1){\line(1,0){3.5}}
\put(9.5,2){\line(1,0){3.5}}
\put(9.5,3){\line(1,0){2.5}}
\put(9.5,4){\line(1,0){2.5}}
\put(9.5,6){\line(1,0){2.5}}
\put(9.5,7){\line(1,0){2.5}}
\put(2.5,8){\line(1,0){4.5}}
\put(8.5,8){\line(1,0){5.5}}
\put(3.5,1){\makebox(2.5,1){$\overline{l_{k_{0}-k}}$}}
\put(3.5,2){\makebox(2.5,2){$\overline{B^{\prime}}$}}
\put(3.5,4){\makebox(2.5,1){$\overline{l_{k_{0}-k+1}}$}}
\put(3.5,6){\makebox(2.5,1){$\Bar{b}$}}
\put(9.5,1){\makebox(2.5,1){$l_{k_{0}}$}}
\put(9.5,4){\makebox(2.5,2){$A^{\prime}$}}
\put(9.5,3){\makebox(2.5,1){$l_{k_{0}-k+1}$}}
\put(9.5,6){\makebox(2.5,1){$l_{k_{0}-k}$}}
\put(12,1){\makebox(1,1){$b$}}
\put(0,1){\makebox(3,1)[r]{$s_{k+1} \rightarrow$}}
\put(0,4){\makebox(3,1)[r]{$s_{k} \rightarrow$}}
\put(0,6){\makebox(3,1)[r]{$r \rightarrow$}}
\put(13.5,1){\makebox(3,1)[l]{$\leftarrow q$}}
\put(13.5,3){\makebox(3,1)[l]{$\leftarrow p_{k}$}}
\put(13.5,6){\makebox(3,1)[l]{$\leftarrow p_{k+1}$}}
\put(3.5,8){\makebox(2.5,1){$x$}}
\put(9.5,8){\makebox(2.5,1){${\mathstrut y-1}$}}
\put(12,8){\makebox(1,1){${\mathstrut y}$}}
\end{picture}
\end{center}
under the assumption
\begin{equation} \label{eq:kink1_6}
(q-p_{k})+(s_{k}-r)<b-l_{k_{0}-k+1}.
\end{equation}
By the same argument as in the case when $p_{1}<q$, we have $(q-p_{k+1})+(s_{k+1}-r)\leq b-l_{k_{0}-k}$.
Let us assume that
\begin{equation} \label{eq:kink1_7}
(q-p_{k+1})+(s_{k+1}-r)=b-l_{k_{0}-k}.
\end{equation}
The filling diagram of the initial column $C^{(x,y-1)}$ has the following configuration.

\setlength{\unitlength}{15pt}
\begin{center}
\begin{picture}(12,5)
\put(0,1){\line(1,0){12}}
\put(0,3){\line(1,0){12}}
\put(1,1){\line(0,1){2}}
\put(2,1){\line(0,1){2}}
\put(4,1){\line(0,1){2}}
\put(5,1){\line(0,1){2}}
\put(7,1){\line(0,1){2}}
\put(8,1){\line(0,1){2}}
\put(10,1){\line(0,1){2}}
\put(11,1){\line(0,1){2}}

\put(0,2){\line(1,0){2}}
\put(4,2){\line(1,0){1}}
\put(7,2){\line(1,0){1}}
\put(10,2){\line(1,0){2}}

\put(1,1){\makebox(1,1){$\bullet$}}
\put(1,2){\makebox(1,1){$\bullet$}}
\put(4,1){\makebox(1,1){$\bullet$}}
\put(4,2){\makebox(1,1){$\bullet$}}
\put(7,1){\makebox(1,1){$\bullet$}}
\put(7,2){\makebox(1,1){$\bullet$}}
\put(10,1){\makebox(1,1){$\bullet$}}

\put(8,1){\makebox(2,2){$(2)$}}
\put(0,0){\makebox(3,1){$l_{k_{0}-k}$}}
\put(3,0){\makebox(3,1){$l_{k_{0}-k+1}$}}
\put(7,0){\makebox(1,1){$l_{k_{0}}$}}
\put(10,0){\makebox(1,1){$b$}}
\put(5,1){\makebox(2,2){$\cdots$}}
\put(2.1,2.5){\makebox(5,1){$
\overbrace{%
\begin{array}
[c]{ccccccc}
& & & & & &  
\end{array}
}$
}}
\put(3.5,3.5){\makebox(2,1){$(1)$}}
\end{picture}.
\end{center}
Region $(1)$ contains $k-1$ $(\pm)$-slots and region $(2)$ contains no $(\pm)$-slots because of the choice of $l_{k_{0}}$.
Let us assume that the numbers of $(+)$-slots, $(-)$-slots, and $\emptyset$-slots in region $(i)$ are 
$\alpha_{i}$, $\beta_{i}$, and $\varepsilon_{i}$, respectively $(i=1,2)$.
Then $q-p_{k+1}=\alpha_{1}+(k-1)+1$, $s_{k+1}-r=\sum_{i=1}^{2}\beta_{i}+k+1$, and 
$b-l_{k_{0}-k}=\sum_{i=1}^{2}(\alpha_{i}+\beta_{i}+\varepsilon_{i})+k+1$. 
Substituting these into Eq.~\eqref{eq:kink1_7}, we have $k=\alpha_{2}+\varepsilon_{1}+\varepsilon_{2}$.
Therefore, when $\mathscr{L}^{(x,y-1)}$-letters up to $l_{k_{0}-1}$ are transformed to the corresponding 
$\mathscr{L}^{(x,y-1)\dag}$-letters, at least $k-\varepsilon_{1}=\alpha_{2}+\varepsilon_{2}$ of them are larger than $l_{k_{0}}$ so that 
$\gamma^{\prime}:=\sharp\left\{  l\in \mathscr{L}^{(x,y-1)} \relmiddle| l<l_{k_{0}}<l^{\dag}\right \}  \geq \alpha_{2} +\varepsilon_{2}$.
The updated tableau just before the operation $l_{k_{0}}\rightarrow l_{k_{0}}^{\dag}$ has the following configuration.

\setlength{\unitlength}{12pt}
\begin{center}
\begin{picture}(6,7)
\put(3,0){\line(0,1){7}}
\put(4,0){\line(0,1){7}}
\put(5,0){\line(0,1){7}}
\put(3,1){\line(1,0){1}}
\put(3,2){\line(1,0){2}}
\put(4,3){\line(1,0){1}}
\put(3,5){\line(1,0){1}}
\put(3,6){\line(1,0){1}}
\put(2,7){\line(1,0){4}}
\put(3,1){\makebox(1,1){$b^{\prime}$}}
\put(3,2){\makebox(1,3){$C$}}
\put(3,5){\makebox(1,1){$l_{k_{0}}$}}
\put(4,2){\makebox(1,1){$b$}}
\put(0,2){\makebox(2.5,1)[r]{$q \rightarrow$}}
\put(0,5){\makebox(2.5,1)[r]{$q-\gamma^{\prime} \rightarrow$}}
\put(2.5,7){\makebox(1.5,1){${\mathstrut y-1}$}}
\put(4.25,7){\makebox(1,1){${\mathstrut y}$}}
\end{picture}.
\end{center}
By the argument of the first paragraph of the proof, $b\notin C$ even if $k_{k_{0}}^{\ast}<b$ (it is clear $b\notin C$ if $k_{k_{0}}^{\ast}>b$) so that $C$ has at most $\alpha_{2}$ letters because $\alpha_{2}$ $\mathscr{I}^{(y-1)}$ letters exist between $l_{k_{0}}$ and $b$.
On the other hand, $C$ consists of $\gamma^{\prime}$ boxes and $\gamma^{\prime} \geq \alpha_{2}$.
This implies $\gamma^{\prime}=\alpha_{2}$ and $C$ consists of consecutive $\alpha_{2}$ letters, $l_{k_{0}}+1,\ldots,b-1$, i.e., $\beta_{2}=\varepsilon_{2}=0$.
If $b \notin \mathscr{L}^{(x,y-1)}$, then $b \notin \mathscr{I}^{(y-1)}$ so that $b^{\prime}>b$.
When the operation $l_{k_{0}}\rightarrow l_{k_{0}}^{\dag}$ is finished, the entry at the $q$-th position in the $(y-1)$-st column is $b^{\prime}$, which does not change under subsequent operations.
This contradicts the semistandardness of the $\mathscr{C}_{n}^{(+)}$-letters part of $T^{\prime}$ because $b^{\prime}>b$.
If $b\in  \mathscr{L}^{(x,y-1)}$, then $b^{\prime}=b$.
When the operation $l_{k_{0}}\rightarrow l_{k_{0}}^{\dag}$ followed by $b\rightarrow b^{\dag}$ is finished, the entry at the $q$-th position in the $(y-1)$-st column is strictly larger than $b$ by Lemma~\ref{lem:col_sst2} and does not change under subsequent operations.
This is also a contradiction.

From \textbf{(I)} and \textbf{(II)}, we have, by induction, 
\[
(q-p)+(s-r)<b-a
\] 
in the configuration depicted in the statement of Lemma~\ref{lem:kink1} with $a_{1}=a_{2}=a$.

Next, we assume that $a_{1}<a_{2}$.
The proof for the case when $a_{1}>a_{2}$ is similar.
We consider the following two cases separately:

\begin{description}
\item[(a)]
$a_{2}$ appears in the $(y-1)$-st column.
\item[(b)]
$a_{2}$ does not appear in the $(y-1)$-st column.
\end{description}

\textbf{Case (a).}
The tableau $\Tilde{T}$ has the following configuration.

\setlength{\unitlength}{12pt}
\begin{center}
\begin{picture}(10,8)
\put(2,0){\line(0,1){7}}
\put(3,0){\line(0,1){7}}
\put(6,0){\line(0,1){7}}
\put(7,0){\line(0,1){7}}
\put(8,0){\line(0,1){7}}
\put(2,1){\line(1,0){1}}
\put(2,2){\line(1,0){1}}
\put(2,5){\line(1,0){1}}
\put(2,6){\line(1,0){1}}
\put(7,1){\line(1,0){1}}
\put(7,2){\line(1,0){1}}
\put(6,3){\line(1,0){1}}
\put(6,4){\line(1,0){1}}
\put(6,5){\line(1,0){1}}
\put(6,6){\line(1,0){1}}
\put(1,7){\line(1,0){3}}
\put(5,7){\line(1,0){4}}
\put(2,7){\makebox(1,1){$x$}}
\put(5.5,7){\makebox(1.5,1){${\mathstrut y-1}$}}
\put(7.25,7){\makebox(1,1){${\mathstrut y}$}}
\put(2,1){\makebox(1,1){$\overline{a_{2}}$}}
\put(2,5){\makebox(1,1){$\Bar{b}$}}
\put(6,3){\makebox(1,1){$a_{2}$}}
\put(6,5){\makebox(1,1){$a_{1}$}}
\put(7,1){\makebox(1,1){$b$}}
\put(0,1){\makebox(2,1){$s \rightarrow$}}
\put(0,5){\makebox(2,1){$r \rightarrow$}}
\put(8,1){\makebox(2,1){$\leftarrow q$}}
\put(8,3){\makebox(2,1){$\leftarrow p^{\prime}$}}
\put(8,5){\makebox(2,1){$\leftarrow p$}}
\end{picture}.
\end{center}
Since $p^{\prime}-p-1 \leq \left\vert \{a_{1}+1,\ldots,a_{2}-1\}\right\vert $, we have 
$p^{\prime}-p \leq a_{2}-a_{1}$.
On the other hand, $(q-p^{\prime})+(s-r)<b-a_{2}$ so that we have
\[
(q-p)+(s-r)<b-a_{1}=b-\min(a_{1},a_{2}).
\]

\textbf{Case (b).}
Let $j$ be the smallest entry such that $a_{2}<j$ and $j$ (resp. $\Bar{j}$) appears in the $(y-1)$-st (resp. $x$-th) column. 
The tableau $\Tilde{T}$ has the following configuration.

\setlength{\unitlength}{12pt}
\begin{center}
\begin{picture}(10,9)
\put(2,0){\line(0,1){8}}
\put(3,0){\line(0,1){8}}
\put(6,0){\line(0,1){8}}
\put(7,0){\line(0,1){8}}
\put(8,0){\line(0,1){8}}
\put(2,1){\line(1,0){1}}
\put(2,2){\line(1,0){1}}
\put(2,4){\line(1,0){1}}
\put(2,5){\line(1,0){1}}
\put(2,6){\line(1,0){1}}
\put(2,7){\line(1,0){1}}
\put(7,1){\line(1,0){1}}
\put(7,2){\line(1,0){1}}
\put(6,3){\line(1,0){1}}
\put(6,4){\line(1,0){1}}
\put(6,6){\line(1,0){1}}
\put(6,7){\line(1,0){1}}
\put(1,8){\line(1,0){3}}
\put(5,8){\line(1,0){4}}
\put(2,8){\makebox(1,1){$x$}}
\put(5.5,8){\makebox(1.5,1){${\mathstrut y-1}$}}
\put(7.25,8){\makebox(1,1){${\mathstrut y}$}}
\put(2,1){\makebox(1,1){$\overline{a_{2}}$}}
\put(2,2){\makebox(1,2){$\Bar{B}$}}

\put(2,4){\makebox(1,1){$\Bar{j}$}}
\put(2,6){\makebox(1,1){$\Bar{b}$}}
\put(6,3){\makebox(1,1){$j$}}
\put(6,4){\makebox(1,2){$A$}}
\put(6,6){\makebox(1,1){$a_{1}$}}
\put(7,1){\makebox(1,1){$b$}}
\put(0,1){\makebox(2,1){$s \rightarrow$}}
\put(0,4){\makebox(2,1){$s^{\prime} \rightarrow$}}
\put(0,6){\makebox(2,1){$r \rightarrow$}}
\put(8,1){\makebox(2,1){$\leftarrow q$}}
\put(8,3){\makebox(2,1){$\leftarrow p^{\prime}$}}
\put(8,6){\makebox(2,1){$\leftarrow p$}}
\end{picture},
\end{center}
where $A\cap B=\emptyset$.
Since $\left\vert A\right\vert +\left\vert \Bar{B}\right\vert=\left\vert A\cup B \right\vert \leq \left\vert \{a_{1}+1,\ldots,j-1 \} \backslash \{a_{2}\} \right\vert =j-a_{2}-2$, 
we have $p^{\prime}-p+s-s^{\prime} \leq j-a_{1}$.
On the other hand, $(q-p^{\prime})+(s^{\prime}-r)<b-j$ so that we have 
\[
(q-p)+(s-r)<b-a_{1}=b-\min(a_{1},a_{2}).
\]

If such an entry $j$ does not exist, the tableau $\Tilde{T}$ has the following configuration.
 
\setlength{\unitlength}{12pt}
\begin{center}
\begin{picture}(10,7)
\put(2,0){\line(0,1){6}}
\put(3,0){\line(0,1){6}}
\put(6,0){\line(0,1){6}}
\put(7,0){\line(0,1){6}}
\put(8,0){\line(0,1){6}}
\put(2,1){\line(1,0){1}}
\put(2,2){\line(1,0){1}}
\put(2,4){\line(1,0){1}}
\put(2,5){\line(1,0){1}}
\put(6,1){\line(1,0){2}}
\put(6,4){\line(1,0){1}}
\put(6,5){\line(1,0){1}}
\put(7,2){\line(1,0){1}}
\put(1,6){\line(1,0){3}}
\put(5,6){\line(1,0){4}}
\put(2,6){\makebox(1,1){$x$}}
\put(5.5,6){\makebox(1.5,1){${\mathstrut y-1}$}}
\put(7.25,6){\makebox(1,1){${\mathstrut y}$}}
\put(2,1){\makebox(1,1){$\overline{a_{2}}$}}
\put(2,2){\makebox(1,2){$\Bar{B}$}}
\put(2,4){\makebox(1,1){$\Bar{b}$}}

\put(6,1){\makebox(1,3){$A$}}
\put(6,4){\makebox(1,1){$a_{1}$}}
\put(7,1){\makebox(1,1){$b$}}
\put(0,1){\makebox(2,1){$s \rightarrow$}}
\put(0,4){\makebox(2,1){$r \rightarrow$}}
\put(8,1){\makebox(2,1){$\leftarrow q$}}
\put(8,4){\makebox(2,1){$\leftarrow p$}}
\end{picture},
\end{center}
where $A\cap B=\emptyset$ and $a_{2}\notin A$.
Furthermore, $b\notin A$ because of the argument of the first paragraph of the proof.
Since $\left\vert A\right\vert +\left\vert \Bar{B}\right\vert=\left\vert A\cup B \right\vert \leq \left\vert \{a_{1}+1,\ldots,b-1 \} \backslash \{a_{2}\} \right\vert = b-a_{1}-2$, we have
\[
(q-p)+(s-r)<b-a_{1}=b-\min(a_{1},a_{2}).
\]

Now let us prove part (2).
We set $a_{1}=a_{2}=a$.
The proof for the case when $a_{1}\neq a_{2}$ is same as that of (1).
Note that $\phi^{(x,y)}$ is well-defined by Lemma~\ref{lem:welldf1}.
Let $C_{+}^{(y)}$ be the $\mathscr{C}^{(+)}$-letters part of the $y$-th column of 
$\Tilde{T}$ and let $C^{(x,y)}$ be the column whose $\mathscr{C}^{(+)}$ (resp. $\mathscr{C}^{(-)}$)-letters part is $C_{+}^{(y)}$ (resp. $C_{-}^{(x)}$).
If $b=l_{c}$, then $\delta=0$ and we have nothing to prove.
Suppose that $b=l_{k_{0}^{\prime}}$ ($k_{0}^{\prime}<c$) and
\begin{equation} \label{eq:kink1_8}
q_{k+1}-p+s-r_{k+1}<l_{k+1}-a-\delta_{k+1}
\end{equation}
holds, where $\delta_{k+1}=\sharp\left\{  l\in \mathscr{L}^{(x,y)} \relmiddle| l^{\ast}<l_{k+1}<l\right\}$, 
$q_{k+1}$ is the position of $l_{k+1}$ in the $y$-th column, and 
$r_{k+1}$ is the position of $\overline{l_{k+1}}$ in the $x$-th column in $\Tilde{T}$ 
($k=c-1,\ldots,k_{0}^{\prime}$).
Suppose that the operation for $l_{k+1}\rightarrow l_{k+1}^{\ast}$ is finished.
The filling diagram of the updated column has the following configuration.

\setlength{\unitlength}{15pt}
\begin{center}
\begin{picture}(15,3)
\put(1,1){\line(0,1){2}}
\put(2,1){\line(0,1){2}}
\put(4,1){\line(0,1){2}}
\put(5,1){\line(0,1){2}}
\put(7,1){\line(0,1){2}}
\put(8,1){\line(0,1){2}}
\put(10,1){\line(0,1){2}}
\put(11,1){\line(0,1){2}}
\put(13,1){\line(0,1){2}}
\put(14,1){\line(0,1){2}}
\put(0,1){\line(1,0){15}}
\put(0,3){\line(1,0){15}}
\put(0,2){\line(1,0){2}}
\put(4,2){\line(1,0){1}}
\put(7,2){\line(1,0){1}}
\put(10,2){\line(1,0){1}}
\put(13,2){\line(1,0){2}}
\put(1,1){\makebox(1,1){$\circ$}}
\put(1,2){\makebox(1,1){$\circ$}}
\put(4,1){\makebox(1,1){$\times$}}
\put(4,2){\makebox(1,1){$\times$}}
\put(7,1){\makebox(1,1){$\times$}}
\put(7,2){\makebox(1,1){$\times$}}
\put(10,1){\makebox(1,1){$\bullet$}}
\put(10,2){\makebox(1,1){$\bullet$}}
\put(13,1){\makebox(1,1){$\circ$}}
\put(13,2){\makebox(1,1){$\circ$}}
\put(1,0){\makebox(1,1){$l_{k}^{\ast}$}}
\put(4,0){\makebox(1,1){$l_{k+1}^{\ast}$}}
\put(7,0){\makebox(1,1){$l_{k+\delta_{k}}^{\ast}$}}
\put(10,0){\makebox(1,1){$l_{k}$}}
\put(13,0){\makebox(1,1){$l_{k+1}$}}
\put(5,1){\makebox(2,2){$\cdots$}}
\put(11,1){\makebox(2,2){$(0)$}}
\end{picture},
\end{center}
where $\delta_{k}=\sharp\left\{ l\in \mathscr{L}^{(x,y)} \relmiddle| l^{\ast}<l_{k}<l\right\}=
\sharp\left\{l_{k+1},\ldots,l_{k+\delta_{k}}\right\}$.
Let us assume that the numbers of $(+)$-slots, $(-)$-slots, and $(\times)$-slots in region $(0)$ are 
$\alpha$, $\beta$, and $\varepsilon$, respectively.
The $(\pm)$-slots and $\emptyset$-slots do not exist in this region.
Then $q_{k+1}=q_{k}+\alpha+1$, $r_{k+1}=r_{k}-\beta-1$, $l_{k+1}=l_{k}+(\alpha+\beta+\varepsilon)+1$, and 
$\delta_{k+1}=(\delta_{k}-1)+\varepsilon$.
Substituting these into Eq.\eqref{eq:kink1_8}, we have $q_{k}-p+s-r_{k}<l_{k}-a-\delta_{k}$.
Therefore, we have, by induction, $(q-p)+(s-r)<b-a-\delta$ in the configuration depicted in the statement of Lemma~\ref{lem:kink1} with $a_{1}=a_{2}=a$.
\end{proof}

\begin{lem} \label{lem:st1}
Suppose that $=C_{1}C_{2}\cdots C_{n_{c}}\in C_{n}\text{-}\mathrm{SST}_{\mathrm{KN}}$.
Let us set 
\[
\Tilde{T}:=\phi^{(x,y-1)}\circ \cdots \circ \phi^{(x,x)}\circ \overline{\Phi^{(x+1)}}(T) \quad (2\leq x+1\leq y \leq n_{c}).
\]
Here, we assume that $\Tilde{T}\neq \emptyset$ and that in the updating process of the tableau from $T$ to $\Tilde{T}$ the semistandardness of the $\mathscr{C}_{n}^{(+)}$-letters part of the tableau is preserved.
Then the $\mathscr{C}_{n}^{(+)}$-letters part of $\phi^{(x,y)}(\Tilde{T})$ is semistandard.
\end{lem}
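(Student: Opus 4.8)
The plan is to reduce the semistandardness of the entire $\mathscr{C}_{n}^{(+)}$-letters part to the two adjacent-column conditions that $\phi^{(x,y)}$ can actually disturb, to dispose of the easy one directly, and to extract the remaining one from Lemma~\ref{lem:kink1}. First I would record what $\phi^{(x,y)}$ changes: by Definition~\ref{df:Phi} it replaces only the $\mathscr{C}_{n}^{(-)}$-letters part of column $x$ (by $C_{-}^{(x)\ast}$) and the $\mathscr{C}_{n}^{(+)}$-letters part of column $y$ (by $C_{+}^{(y)\ast}$), leaving every other column untouched. The change in column $x$ lives entirely in the $\mathscr{C}_{n}^{(-)}$-letters part and is therefore invisible to the $\mathscr{C}_{n}^{(+)}$-letters part, so the $\mathscr{C}_{n}^{(+)}$-letters part of $\phi^{(x,y)}(\Tilde{T})$ agrees with that of $\Tilde{T}$ except that column $y$ now reads $C_{+}^{(y)\ast}$. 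Since $\phi$ sends each $\mathscr{L}$-letter $l_{k}$ to an $\mathscr{L}^{\ast}$-letter $l_{k}^{\ast}$ and each $\overline{l_{k}}$ to $\overline{l_{k}^{\ast}}$, it preserves the number of $\mathscr{C}_{n}^{(+)}$-letters of the column; hence the $(+)/(-)$ boundary, and in particular the height of the $(+)$ part of column $y$, is unchanged. Thus the $(+)$-part tableau keeps the same shape, only column $y$ is altered, and since by the standing hypothesis the $(+)$ part of $\Tilde{T}$ is semistandard, it suffices to check that the pairs $(y-1,y)$ and $(y,y+1)$ are column-semistandard.

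The pair $(y,y+1)$ is immediate. Lemma~\ref{lem:col_sst1} gives $C_{+}^{(y)\ast}\preceq C_{+}^{(y)}$, i.e.\ $C_{+}^{(y)\ast}[i]\le C_{+}^{(y)}[i]$ in every row $i$, while the standing hypothesis gives $C_{+}^{(y)}\preceq C_{+}^{(y+1)}$ in $\Tilde{T}$. Since the heights match and column $y+1$ is untouched, reading the row inequalities off position by position yields $C_{+}^{(y)\ast}\preceq C_{+}^{(y+1)}$.

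The real work is the pair $(y-1,y)$: lowering column $y$ from $C_{+}^{(y)}$ to $C_{+}^{(y)\ast}$ threatens the row condition with the unchanged column $y-1$. A new violation can only occur in a row now occupied by an $\mathscr{L}^{\ast}$-letter $l_{k}^{\ast}$, since elsewhere $C_{+}^{(y)\ast}[i]=C_{+}^{(y)}[i]$ and the condition already held in $\Tilde{T}$. So I would fix such a letter, locate it at position $p_{k}^{\ast}$ via Lemma~\ref{lem:position1}, and show that the $\mathscr{C}_{n}^{(+)}$-entry $a$ of column $y-1$ in that row satisfies $a\le l_{k}^{\ast}$. The decisive input is Lemma~\ref{lem:kink1}: applied to the configuration formed by $a,\,l_{k}$ in columns $y-1,\,y$ and $\overline{a},\,\overline{l_{k}}$ in column $x$, its $\mathrm{(C2)}$-type inequality, sharpened in part~(2) by the count $\delta=\sharp\{l\in\mathscr{L}^{(x,y)}\mid l^{\ast}<l_{k}<l\}$, combines with the value of $l_{k}^{\ast}$ produced by the splitting algorithm and the lower bound of Lemma~\ref{lem:coKN_lower} to give exactly $a\le l_{k}^{\ast}$. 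Running this over all $k$ yields $C_{+}^{(y-1)}\preceq C_{+}^{(y)\ast}$.

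The hard part is this last step, and within it the bookkeeping that converts the \emph{positional} inequality of Lemma~\ref{lem:kink1} into the \emph{value} inequality $a\le l_{k}^{\ast}$: using the filling-diagram analysis of Lemmas~\ref{lem:position1} and~\ref{lem:coKN_lower} one must track how far each $l_{k}^{\ast}$ migrates upward, how many $\mathscr{L}$-letters lie between $l_{k}^{\ast}$ and $l_{k}$, and match these shifts against the correct row of column $y-1$. The subcase in which $\overline{a}$ does not occur in column $x$ must be handled separately, since then Lemma~\ref{lem:kink1} does not apply directly; there I would argue instead from the semistandardness of $\Tilde{T}$ together with $C_{+}^{(y)\ast}\preceq C_{+}^{(y)}$, which already forces $a\le l_{k}^{\ast}$ because no $(-)$-pressure from column $x$ lowered that row below the value inherited from $\Tilde{T}$.
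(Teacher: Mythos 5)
Your skeleton matches the paper's: only the $\mathscr{C}_{n}^{(+)}$-part of column $y$ changes, the pair $(y,y+1)$ is disposed of by Lemma~\ref{lem:col_sst1} (via $C_{+}^{(y)\ast}\preceq C_{+}^{(y)}\preceq C_{+}^{(y+1)}$ with column $y+1$ untouched), and the pair $(y-1,y)$ is attacked through Lemma~\ref{lem:kink1}. The gap is in your reduction of the pair $(y-1,y)$ to the rows occupied by $\mathscr{L}^{\ast}$-letters. The premise ``elsewhere $C_{+}^{(y)\ast}[i]=C_{+}^{(y)}[i]$'' is false: $\phi$ replaces each $l_{k}$ by $l_{k}^{\ast}$ and re-sorts, so every letter of $\mathscr{I}^{(y)}\setminus\mathscr{L}$ lying strictly between $l_{k}^{\ast}$ and $l_{k}$ in value is pushed down one row (this is exactly operation (A) of the second-kind algorithm). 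For instance, if the $(+)$-part of column $y$ is $2,5,7$ with $\mathscr{L}=\{7\}$ and $\mathscr{L}^{\ast}=\{3\}$, then $C_{+}^{(y)\ast}$ reads $2,3,5$: the third row drops from $7$ to $5$, and $5$ is not an $\mathscr{L}^{\ast}$-letter. At such a row the required inequality, namely that the entry of column $y-1$ in row $3$ be at most $5$, is genuinely new: semistandardness of $\Tilde{T}$ only gives a bound by $7$, and the $\mathscr{L}^{\ast}$-row conditions cannot supply it either, since column entries increase downward, so a bound at an $\mathscr{L}^{\ast}$-row higher up yields a lower bound, not an upper bound, at row $3$. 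Hence checking only the $\mathscr{L}^{\ast}$-rows does not prove the lemma.

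These displaced rows are in fact where the paper spends most of its effort: its induction proves the block inequalities $\Delta C_{y-1}[p_{k}^{\prime},q_{k}^{\prime}]\preceq\Delta_{k}(C_{y}^{0})$, where $\Delta_{k}(C_{y}^{0})$ contains, besides $l_{k}^{\ast}$, all the displaced letters $i_{p_{1}},\ldots,i_{p_{\alpha}}$ and the relocated $\mathscr{L}$-letters, and steps (i)--(ii) of that induction establish precisely the comparisons $I_{s}^{(y-1)}\preceq I_{s}^{(y)}$ at the displaced rows, each by a contradiction argument from Lemma~\ref{lem:kink1}; only step (iii) concerns the $\mathscr{L}^{\ast}$-row. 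A second, smaller, problem: your fallback subcase ``$\overline{a}$ does not occur in column $x$'' is both unnecessary and unsupported. In the paper's set-up, assuming a violation at a given row forces the offending value into a range of consecutive values all of whose barred partners do occur in column $x$ (the blocks $\overline{J_{s}^{(x)}}$ together with $\overline{l_{k}}$), so the forbidden configuration of Lemma~\ref{lem:kink1} always materializes and no separate subcase arises; your phrase ``no $(-)$-pressure from column $x$ lowered that row'' is not an argument. To complete your proof you would have to run the kink-type contradiction for every displaced letter as well as for $l_{k}^{\ast}$, organized as a downward induction on $k$ --- which is essentially the paper's proof of Lemma~\ref{lem:st1}.
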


\begin{proof} 
The map $\phi^{(x,y)}$ is well-defined by Lemma~\ref{lem:welldf1}.
Let $C_{y-1}$ be the $(y-1)$-st column of $\Tilde{T}$ and $C_{y}$ (resp. $C_{y}^{0}$) be the $y$-th column of $\phi^{(x,y)}(\Tilde{T})$ (resp. $\Tilde{T}$).
In what follows, we show that the $\mathscr{C}_{n}^{(+)}$-letters part of the two-column tableau $C_{y-1}C_{y}$ 
in $\phi^{(x,y)}(\Tilde{T})$ is semistandard.
If this is true, the claim of Lemma~\ref{lem:st1} follows because the $\mathscr{C}_{n}^{(+)}$-letters part of $C_{y}C_{y+1}$ 
in $\phi^{(x,y)}(\Tilde{T})$ is guaranteed to be seminstandard by Lemma~\ref{lem:col_sst1}, where $C_{y+1}$ is the $(y+1)$-st column of 
$\phi^{(x,y)}(\Tilde{T})$ ($y\leq n_{c}-1$).
Let us denote by $\mathscr{I}^{(y)}$ the set of $\mathscr{I}$-letters in the $y$-th column of $\Tilde{T}$ and 
by $\mathscr{J}^{(x)}$ the set of $\mathscr{J}$-letters in the $x$-th column of $\Tilde{T}$ and 
set $\mathscr{L}^{(x,y)}:=\mathscr{J}^{(x)}\cap \mathscr{I}^{(y)}=:\left\{l_{1},\ldots,l_{c}\right\}$.
We adopt the second kind algorithm for $\phi^{(x,y)}$ when we treat the $y$-th column, 
while we adopt the first kind one when we treat the $x$-th column.
We claim that $\Delta C_{y-1}[p_{k}^{\prime},q_{k}^{\prime}]\preceq \Delta_{k}(C_{y}^{0})$ for all $k=c,c-1,\ldots,1$ 
so that $\mathscr{C}_{n}^{(+)}$-letters part of $C_{y-1}C_{y}$ is semistandard, where 
$p_{k}^{\prime}$ (resp. $q_{k}^{\prime}$) is the position of the top (resp. bottom) box of the block $\Delta_{k}(C_{y}^{0})$, 
which is defined in the explanation of the second kind algorithm for $\phi$.
The proof is by induction on $k$.
Namely, we prove

\textbf{(I).} 
$\Delta C_{y-1}[p_{c}^{\prime},q_{c}^{\prime}]\preceq \Delta_{c}(C_{y}^{0})$.

\textbf{(II).}
$\Delta C_{y-1}[p_{k}^{\prime},q_{k}^{\prime}]\preceq \Delta_{k}(C_{y}^{0})$ 
under the assumption that 
$\Delta C_{y-1}[p_{k+1}^{\prime},q_{k+1}^{\prime}]\preceq \Delta_{k+1}(C_{y}^{0})$ ($k=c-1,\ldots,1$).

We first prove \textbf{(II)}.  
Suppose that 
$\left\{l\in \mathscr{L}^{(x,y-1)} \relmiddle| l^{\ast}< l_{k} <l \right\}=\left\{l_{k+1},\ldots,l_{k+\delta}\right\}$.
Let $C_{+}^{(y)}$ (resp. $C_{-}^{(x)}$) be the $\mathscr{C}_{n}^{(+)}$ (resp. $\mathscr{C}_{n}^{(-)}$)-letters part of 
the $y$-th (resp. $x$-th) column of $\Tilde{T}$ and let $C^{(x,y)}$ be the column 
whose $\mathscr{C}_{n}^{(+)}$ (resp. $\mathscr{C}_{n}^{(-)}$)-letters part is $C_{+}^{(y)}$ (resp. $C_{-}^{(x)}$).
Suppose that the operation for $l_{k+1}\rightarrow l_{k+1}^{\ast}$ is completed ($k\leq c-1$).
Let $\Tilde{T}^{\prime}$ be the updated tableau and $C^{(x,y)\prime}$ be the resulting column.
Let us assume that  
$\Delta C_{y-1}[p_{k+1}^{\prime},q_{k+1}^{\prime}]\preceq\Delta_{k+1}(C_{y}^{0})$.
The filling diagram of the column $C^{(x,y)\prime}$ has the following configuration.
Here, we assume $r\geq 1$.
The proof for the case when $r=0$ is similar and much simpler.

\setlength{\unitlength}{15pt}
\begin{center}
\begin{picture}(23,5)
\put(1,1){\line(0,1){2}}
\put(2,1){\line(0,1){2}}
\put(4,1){\line(0,1){2}}
\put(5,1){\line(0,1){2}}
\put(7,1){\line(0,1){2}}
\put(8,1){\line(0,1){2}}
\put(12,1){\line(0,1){2}}
\put(13,1){\line(0,1){2}}
\put(15,1){\line(0,1){2}}
\put(16,1){\line(0,1){2}}
\put(18,1){\line(0,1){2}}
\put(19,1){\line(0,1){2}}
\put(21,1){\line(0,1){2}}
\put(22,1){\line(0,1){2}}
\put(0,1){\line(1,0){23}}
\put(0,2){\line(1,0){2}}
\put(4,2){\line(1,0){1}}
\put(7,2){\line(1,0){1}}
\put(12,2){\line(1,0){1}}
\put(15,2){\line(1,0){1}}
\put(18,2){\line(1,0){1}}
\put(21,2){\line(1,0){2}}
\put(0,3){\line(1,0){23}}
\put(1,1){\makebox(1,1){$\circ$}}
\put(1,2){\makebox(1,1){$\circ$}}
\put(4,1){\makebox(1,1){$\circ$}}
\put(4,2){\makebox(1,1){$\bullet$}}
\put(7,1){\makebox(1,1){$\circ$}}
\put(7,2){\makebox(1,1){$\bullet$}}
\put(12,1){\makebox(1,1){$\circ$}}
\put(12,2){\makebox(1,1){$\bullet$}}
\put(15,1){\makebox(1,1){$\circ$}}
\put(15,2){\makebox(1,1){$\bullet$}}
\put(18,1){\makebox(1,1){$\circ$}}
\put(18,2){\makebox(1,1){$\bullet$}}
\put(21,1){\makebox(1,1){$\bullet$}}
\put(21,2){\makebox(1,1){$\bullet$}}
\put(2,1){\makebox(2,2){$(r)$}}
\put(5,1){\makebox(2,2){$\cdots$}}
\put(8,1){\makebox(4,2){$(r-1)\;\cdots$}}
\put(13,1){\makebox(2,2){$(1)$}}
\put(16,1){\makebox(2,2){$\cdots$}}
\put(19,1){\makebox(2,2){$(0)$}}
\put(1,0){\makebox(1,1){$l_{k}^{\ast}$}}
\put(4,0){\makebox(1,1){$i_{r,1}^{(y)}$}}
\put(7,0){\makebox(1,1){$i_{r,\alpha_{r}}^{(y)}$}}
\put(12,0){\makebox(1,1){$i_{2,\alpha_{2}}^{(y)}$}}
\put(15,0){\makebox(1,1){$i_{1,1}^{(y)}$}}
\put(18,0){\makebox(1,1){$i_{1,\alpha_{1}}^{(y)}$}}
\put(21,0){\makebox(1,1){$l_{k}$}}
\put(4,2.5){\makebox(4,1){$
\overbrace{
\begin{array}
[c]{cccccc}
& & & & &
\end{array}
}$
}}
\put(5,3.5){\makebox(2,1){$\alpha _{r}$}}
\put(15,2.5){\makebox(4,1){$
\overbrace{
\begin{array}
[c]{cccccc}
& & & & &
\end{array}
}$
}}
\put(16,3.5){\makebox(2,1){$\alpha _{1}$}}
\end{picture}.
\end{center}
Region $(s)$ consists of $(-)$-slots, $(\pm)$-slots, and $(\times)$-slots.
Let us assume that the numbers of $(-)$-slots, $(\pm)$-slots, and $(\times)$-slots in this region are
$\beta_{s}$, $\gamma_{s}$, and $\delta _{s}$ respectively 
and that the position of $(\times)$-slots in region (s) are 
$l_{s,1}^{\ast},\ldots$, and $l_{s,\delta_{s}}^{\ast}$ ($s=0,1,\ldots,r$);
$\left\{  l_{k+1}^{\ast},\ldots,l_{k+\delta}^{\ast}\right\}  =
\left\{l_{0,1}^{\ast},\ldots,l_{0,\delta_{0}}^{\ast},\ldots,l_{r,1}^{\ast},\ldots,l_{r,\delta_{r}}^{\ast}\right\}$.
Between two regions $(s-1)$ and $(s)$, $\alpha_{s}$ $(+)$-slots lie consecutively.

\setlength{\unitlength}{15pt}
\begin{center}
\begin{picture}(9,3)
\put(2,1){\line(0,1){2}}
\put(3,1){\line(0,1){2}}
\put(5,1){\line(0,1){2}}
\put(6,1){\line(0,1){2}}
\put(0,1){\line(1,0){9}}
\put(2,2){\line(1,0){1}}
\put(5,2){\line(1,0){1}}
\put(0,3){\line(1,0){9}}
\put(0,1){\makebox(2,2){$(s)$}}
\put(3,1){\makebox(2,2){$\cdots$}}
\put(6,1){\makebox(3,2){$(s-1)$}}
\put(2,1){\makebox(1,1){$\circ$}}
\put(2,2){\makebox(1,1){$\bullet$}}
\put(5,1){\makebox(1,1){$\circ$}}
\put(5,2){\makebox(1,1){$\bullet$}}
\put(2,0){\makebox(1,1){$i_{s,1}^{(y)}$}}
\put(5,0){\makebox(1,1){$i_{s,\alpha_{s}}^{(y)}$}}
\end{picture}.
\end{center}
The updated tableau $\Tilde{T}^{\prime}$ has the following configuration.
There are no $\mathscr{L}^{(x,y)\ast}$-letters above the box containing $l_{k}$ in the $y$-th column 
because we adopt the second kind algorithm for $\phi^{(x,y)}$ in the $y$-th column, 
while $\overline{\mathscr{L}^{(x,y)\ast}}$-letters may exist below the box containing $\overline{l_{k}}$ in the $x$-th column.

\setlength{\unitlength}{12pt}
\begin{center}
\begin{picture}(7,9)
\put(0.9,0){\line(0,1){8}}
\put(2.1,0){\line(0,1){8}}
\put(4,3){\line(0,1){5}}
\put(5,3){\line(0,1){5}}
\put(0.9,1){\line(1,0){1.2}}
\put(0.9,3){\line(1,0){1.2}}
\put(0.9,4){\line(1,0){1.2}}
\put(4,4){\line(1,0){1}}
\put(4,5){\line(1,0){1}}
\put(4,7){\line(1,0){1}}
\put(0,8){\line(1,0){6}}
\put(1,1){\makebox(1,2){$\overline{C_{r}}$}}
\put(1,3){\makebox(1,1){$\overline{l_{k}}$}}
\put(4,4){\makebox(1,1){$l_{k}$}}
\put(4,5){\makebox(1,2){$A$}}
\put(2,3){\makebox(2,2){$\cdots$}}
\put(1,8){\makebox(1,1){$x$}}
\put(4,8){\makebox(1,1){$y$}}
\put(5,4){\makebox(2,1){$\leftarrow q_{k}^{\prime}$}}
\end{picture}.
\end{center}
where $A$ is the stack of the sequence of blocks 
$L_{r}^{(y)},I_{r}^{(y)},\ldots,I_{1}^{(y)},L_{0}^{(y)}$ in this order (from top to bottom) 
and $\overline{C_{r}}$ is the stack of the sequence of blocks 
$\overline{J_{0}^{(x)}},\overline{J_{1}^{(x)}},\ldots,\overline{J_{r}^{(x)}}$ in this order (from the top).
The block $I_{s}^{(y)}$ consists of consecutive $\alpha_{s}$ $\mathscr{I}^{(y)}\backslash \mathscr{L}^{(x,y)}$-letters
$\{i_{s,1}^{(y)},\ldots,i_{s,\alpha_{s}}^{(y)}\}$, where
\[
i_{s,\alpha_{s}-t+1}^{(y)}=l_{k}-\sum_{i=1}^{s-1}\alpha_{i}-\sum_{i=0}^{s-1}\tau_{i}-t \quad (t=1,\ldots,\alpha_{s})
\]
with $\tau_{i}:=\beta_{i}+\gamma_{i}+\delta_{i}$
and $L_{s}^{(y)}$ is the block of $\gamma_{s}$ $\mathscr{L}^{(x,y)}$-letters ($s=0,1,\ldots,r$).
The block $\overline{J_{s}^{(x)}}$ consists of consecutive $\tau_{s}$ $\mathscr{C}_{n}^{(-)}$-letters 
$\left\{\overline{j_{s,\tau_{s}}^{(x)}},\ldots,\overline{j_{s,1}^{(x)}}\right\}$, where
\[
j_{s,\tau_{s}-t+1}^{(x)}=l_{k}-\sum_{i=1}^{s}\alpha_{i}-\sum_{i=0}^{s-1}\tau_{i}-t
\quad(t=1,\ldots,\tau_{s};s=0,\ldots,r).
\]
Note that $\overline{J_{s}^{(x)}}$ contains $\overline{\mathscr{L}^{(x,y)\ast}}$-letters, 
$\overline{l_{s,1}^{\ast}},\ldots,$ and $\overline{l_{s,\delta_{s}}^{\ast}}$.
Let us assume that the $(y-1)$-st column of $\Tilde{T}^{\prime}$ has the following configuration.

\setlength{\unitlength}{12pt}
\begin{center}
\begin{picture}(4,5)
\put(2,0){\line(0,1){5}}
\put(4,0){\line(0,1){5}}
\put(2,1){\line(1,0){2}}
\put(2,3){\line(1,0){2}}
\put(2,4){\line(1,0){2}}
\put(2,1){\makebox(2,2){$B_{r}$}}
\put(2,3){\makebox(2,1){\small $i_{0}^{(y-1)}$}}
\put(0,1){\makebox(2,1){$q_{k}^{\prime} \rightarrow$}}
\end{picture},
\end{center}
where $B_{r}$ is the stack of the sequence of blocks 
$I_{r}^{(y-1)}, I_{r-1}^{(y-1)},\ldots, I_{1}^{(y-1)}$ in this order (from top to bottom) 
and the position of the bottom box in $B_{r}$ is $q_{k}^{\prime}$ 
(the block $B_{r}$ is not empty because of the assumption of $r\geq 1$).
The block $I_{s}^{(y-1)}$ consists of $\alpha_{k}$ $\mathscr{C}_{n}^{(+)}$-letters 
$\{i_{s,1}^{(y-1)},\ldots,i_{s,\alpha_{s}}^{(y-1)}\}$ 
so that $\left\vert I_{s}^{(y-1)}\right\vert =\left\vert I_{s}^{(y)}\right\vert $
($s=1,\ldots,r$).

\textbf{(i).}
We claim that
$i_{1,\alpha_{1}}^{(y-1)}\leq i_{1,\alpha_{1}}^{(y)}=l_{k}-\tau_{0}-1$.
If this is not true, 
$i_{1,\alpha_{1}}^{(y-1)}\in\{l_{k}-\tau_{0},l_{k}-\tau_{0}+1,\ldots,l_{k}\}$
($i_{1,\alpha_{1}}^{(y-1)}\leq l_{k}$), i.e., $\overline{i_{1,\alpha_{1}}^{(y-1)}}$ is in the block $\overline{J_{0}^{(x)}}$ 
or $\overline{i_{1,\alpha_{1}}^{(y-1)}}=\overline{l_{k}}$.
Suppose that $i_{1,\alpha_{1}}^{(y-1)}=l_{k}-t$ ($t=0,\ldots,\tau_{0}$).
The updated tableau $\Tilde{T}^{\prime}$ has the following configuration.

\setlength{\unitlength}{15pt}
\begin{center}
\begin{picture}(14,6)
\put(2,0){\line(0,1){5}}
\put(4,0){\line(0,1){5}}
\put(7,2){\line(0,1){3}}
\put(9,2){\line(0,1){3}}
\put(10,2){\line(0,1){3}}
\put(2,0.9){\line(1,0){2}}
\put(2,2){\line(1,0){2}}
\put(2,3){\line(1,0){2}}
\put(2,4){\line(1,0){2}}
\put(7,3){\line(1,0){3}}
\put(7,4){\line(1,0){3}}
\put(1,5){\line(1,0){4}}
\put(6,5){\line(1,0){5}}
\put(2,0.9){\makebox(2,1){$\overline{i_{1,\alpha _{1}}^{(y-1)}}$}}
\put(2,3){\makebox(1.5,1){$\overline{l_{k}}$}}
\put(7,3){\makebox(2,1){$i_{1,\alpha _{1}}^{(y-1)}$}}
\put(9,3){\makebox(1,1){$l_{k}$}}
\put(0,1){\makebox(2,1){$s^{\prime} \rightarrow$}}
\put(0,3){\makebox(2,1){$r^{\prime} \rightarrow$}}
\put(5,3){\makebox(2,1){$p^{\prime} \rightarrow$}}
\put(10,3){\makebox(4,1){$\leftarrow q^{\prime}(=q_{k}^{\prime})$}}
\put(2,5){\makebox(2,1){$x$}}
\put(7,5){\makebox(2,1){${\mathstrut y-1}$}}
\put(9,5){\makebox(1,1){${\mathstrut y}$}}
\end{picture}.
\end{center}
Let $p_{k}$ and $q_{k}$ be the initial position of $i_{1,\alpha_{1}}^{(y-1)}$ in the $(y-1)$-st column and 
that of $l_{k}$ in the $y$-th column of $\Tilde{T}$, respectively.
We consider the following two cases separately:

\begin{description}
\item[(a)]
$i_{1,\alpha_{1}}^{(y-1)}\notin \mathscr{L}^{(x,y)\ast}$.
\item[(b)]
$i_{1,\alpha_{1}}^{(y-1)}\in \mathscr{L}^{(x,y)\ast}$.
\end{description}

\textbf{Case (a).}
The entry $\overline{i_{1,\alpha_{1}}^{(y-1)}}$ exists initially in the $x$-th column of $\Tilde{T}$.
Let $r_{k}$ and $s_{k}$ be the initial position of $\overline{l_{k}}$ and 
that of $\overline{i_{1,\alpha_{1}}^{(y-1)}}$ in the $x$-th column of $\Tilde{T}$, respectively.
Then $p_{k}=p^{\prime}$ and $q_{k}\geq q^{\prime}$ because $l_{k}$ is relocated upward by the operations for 
$l_{c}\rightarrow l_{c}^{\ast},\ldots,l_{k+1}\rightarrow l_{k+1}^{\ast}$ or still lies at the initial position.
Suppose that $\delta^{\prime}$ $\overline{\mathscr{L}^{(x,y)\ast}}$-letters appear 
between the $r^{\prime}$-th box and the $s^{\prime}$-th box 
in the $x$-th column ($\delta^{\prime}\leq\delta$).
Then $s^{\prime}-r^{\prime}=s_{k}-r_{k}+\delta^{\prime}$ so that
\begin{align} \label{eq:ineq1}
q_{k}-p_{k}+s_{k}-r_{k}  & \geq q^{\prime}-p^{\prime}+s^{\prime}-r^{\prime}-\delta^{\prime}=t-\delta^{\prime} \\
& \geq l_{k}-i_{1,\alpha_{1}}^{(y-1)}-\delta, \nonumber
\end{align}
which contradicts the assertion of Lemma~\ref{lem:kink1}.

\textbf{Case (b).} 
We can write $i_{1,\alpha_{1}}^{(y-1)}=a^{\ast}$ ($a\in \mathscr{L}^{(x,y)}$).
Let $r_{k}$ be the initial position of $\overline{l_{k}}$ in the $x$-th column of $\Tilde{T}$.
Furthermore, let us suppose that the initial entry at the $s_{k}$-th position ($s_{k}\geq r_{k}$) in the $x$-th column of $\Tilde{T}$ is $\Bar{b}$ and that the operation $a \rightarrow a^{\ast}$ replaces the entry $\Bar{b}$ by $\overline{a^{\ast}}$.

\setlength{\unitlength}{12pt}
\begin{center}
\begin{picture}(11,7)
\put(3,0){\line(0,1){6}}
\put(4,0){\line(0,1){6}}
\put(3,1){\line(1,0){1}}
\put(3,2){\line(1,0){1}}
\put(3,4){\line(1,0){1}}
\put(3,5){\line(1,0){1}}
\put(3,1){\makebox(1,1){$\Bar{b}$}}
\put(3,4){\makebox(1,1){$\Bar{a}$}}
\put(1,1){\makebox(2,1){$s_{k} \rightarrow$}}
\put(5,2){\makebox(2,1){$\longrightarrow$}}
\put(8,0){\line(0,1){6}}
\put(9,0){\line(0,1){6}}
\put(8,1){\line(1,0){1}}
\put(8,2){\line(1,0){1}}
\put(8,3){\line(1,0){1}}
\put(8,1){\makebox(1,1){$\overline{a^{\ast}}$}}
\put(8,2){\makebox(1,1){$\Bar{b}$}}
\put(9,1){\makebox(2,1){$\leftarrow s_{k}$}}
\put(2,6){\line(1,0){3}}
\put(7,6){\line(1,0){3}}
\put(3,6){\makebox(1,1){$x$}}
\put(8,6){\makebox(1,1){$x$}}
\end{picture},
\end{center}
so that $b>i_{1,\alpha_{1}}^{(y-1)}$.
The initial tableau $\Tilde{T}$ has the following configuration.

\setlength{\unitlength}{15pt}
\begin{center}
\begin{picture}(12,6)
\put(2,0){\line(0,1){5}}
\put(3,0){\line(0,1){5}}
\put(7,0){\line(0,1){5}}
\put(9,0){\line(0,1){5}}
\put(10,0){\line(0,1){5}}
\put(2,1){\line(1,0){1}}
\put(2,2){\line(1,0){1}}
\put(2,3){\line(1,0){1}}
\put(2,4){\line(1,0){1}}
\put(7,3){\line(1,0){2}}
\put(7,4){\line(1,0){2}}
\put(9,1){\line(1,0){1}}
\put(9,2){\line(1,0){1}}
\put(1,5){\line(1,0){3}}
\put(6,5){\line(1,0){5}}
\put(2,1){\makebox(1,1){$\Bar{b}$}}
\put(2,3){\makebox(1,1){$\overline{l_{k}}$}}
\put(7,3){\makebox(2,1){$i_{1,\alpha _{1}}^{(y-1)}$}}
\put(9,1){\makebox(1,1){$l_{k}$}}
\put(0,1){\makebox(2,1){$s_{k} \rightarrow$}}
\put(0,3){\makebox(2,1){$r_{k} \rightarrow$}}
\put(5,3){\makebox(2,1){$p_{k} \rightarrow$}}
\put(10,1){\makebox(2,1){$\leftarrow q_{k}$}}
\put(2,5){\makebox(1,1){$x$}}
\put(7,5){\makebox(2,1){${\mathstrut y-1}$}}
\put(9,5){\makebox(1,1){${\mathstrut y}$}}
\end{picture}.
\end{center}
Inequality~\eqref{eq:ineq1} still holds in this case and this contradicts the assertion of Lemma~\ref{lem:kink1}.

In both cases, we have $i_{1,\alpha_{1}}^{(y-1)}\leq i_{1,\alpha_{1}}^{(y)}=l_{k}-\tau_{0}-1$ 
and
\[
i_{1,\alpha_{1}-t+1}^{(y-1)}\leq i_{1,\alpha_{1}-t+1}^{(y)}=l_{k}-\tau _{0}-t \quad (t=1,\ldots,\alpha_{1}).
\]

\textbf{(ii).}
Suppose that 
\[
i_{s,1}^{(y-1)}\leq i_{s,1}^{(y)}=l_{k}-\sum_{i=1}^{s}\alpha_{i}-\sum_{i=0}^{s-1}\tau_{i} \quad (s=1,\ldots,r-1).
\]
This is satisfied for $s=1$.
Under this assumption, let us show that
\[
i_{s+1,\alpha_{s+1}}^{(y-1)}\leq i_{s+1,\alpha_{s+1}}^{(y)}=l_{k}-\sum_{i=1}^{s}\alpha_{i}-\sum_{i=0}^{s}\tau_{i}-1.
\]
If this is not true, 
\[
l_{k}-\sum_{i=1}^{s}\alpha_{i}-\sum_{i=0}^{s}\tau_{i} \leq i_{s+1,\alpha_{s+1}}^{(y-1)} \leq i_{s,1}^{(y-1)}-1 
\leq l_{k}-\sum_{i=1}^{s}\alpha_{i}-\sum_{i=0}^{s-1}\tau_{i}-1.
\]
Suppose that $i_{s+1,\alpha_{s+1}}^{(y-1)}=l_{k}-\sum_{i=1}^{s}\alpha_{i}-\sum_{i=0}^{s-1}\tau_{i}-t
=j_{s,\tau_{s}-t+1}^{(x)}$ ($t=1,\ldots,\tau_{s}$).
Then the updated tableau $\Tilde{T}^{\prime}$ has the following configuration.

\setlength{\unitlength}{15pt}
\begin{center}
\begin{picture}(14,10)
\put(2,1){\line(0,1){7}}
\put(4,1){\line(0,1){7}}
\put(8,3){\line(0,1){5}}
\put(9,3){\line(0,1){5}}
\put(10,3){\line(0,1){5}}
\put(2,2){\line(1,0){2}}
\put(2,3){\line(1,0){2}}
\put(2,4){\line(1,0){2}}
\put(2,6){\line(1,0){2}}
\put(2,7){\line(1,0){2}}
\put(8,4){\line(1,0){2}}
\put(9,5){\line(1,0){1}}
\put(8,6){\line(1,0){1}}
\put(8,7){\line(1,0){1}}
\put(1,8){\line(1,0){4}}
\put(7,8){\line(1,0){4}}
\put(2,3){\makebox(2,1){$\vdots$}}
\put(2,4){\makebox(2,2){$\overline{C_{s-1}}$}}
\put(2,6){\makebox(2,1){$\overline{l_{k}}$}}
\put(0,2){\makebox(2,1){$s^{\prime} \rightarrow$}}
\put(0,6){\makebox(2,1){$r^{\prime} \rightarrow$}}

\put(4.5,1){\vector(-1,1){1.5}}
\put(5,0){\makebox(2.5,1){$\overline{i_{s+1,\alpha _{s+1}}^{(y-1)}}$}}

\put(8,4){\makebox(1,2){$B_{s}$}}
\put(6,6){\makebox(2,1)[l]{$p^{\prime} \rightarrow$}}
\put(10,4){\makebox(4,1){$\leftarrow q^{\prime}(=q_{k}^{\prime})$}}
\put(9,4){\makebox(1,1){$l_{k}$}}
\put(2,8){\makebox(2,1){$x$}}
\put(7,8){\makebox(2.5,1){${\mathstrut y-1}$}}
\put(9,8){\makebox(1,1){${\mathstrut y}$}}

\put(7,5){\vector(1,1){1.5}}
\put(5,4){\makebox(2.5,1){$i_{s+1,\alpha _{s+1}}^{(y-1)}$}}

\end{picture},
\end{center}
where $\overline{C_{s-1}}$ denotes the stack of blocks, 
$\overline{J_{0}^{(x)}},\ldots,\overline{J_{s-1}^{(x)}}$ in this order (from top to bottom).
Similarly, $B_{s}$ denotes the stack of blocks, 
$I_{s}^{(y-1)},\ldots,,I_{1}^{(y-1)}$ in this order (from top to bottom).

Let $p_{k}$ and $q_{k}$ be the initial position of $i_{s+1,\alpha_{s+1}}^{(y-1)}$ in the $(y-1)$-st column and 
that of $l_{k}$ in the $y$-th column of $\Tilde{T}$, respectively.
We consider the following two cases separately:

\begin{description}
\item[(a)]
$i_{s+1,\alpha_{s+1}}^{(y-1)}\notin \mathscr{L}^{(x,y)\ast}$.
\item[(b)]
$i_{s+1,\alpha_{s+1}}^{(y-1)}\in \mathscr{L}^{(x,y)\ast}$.
\end{description}

\textbf{Case (a).}
The entry $\overline{i_{s+1,\alpha_{s+1}}^{(y-1)}}$ exists initially in the $x$-th column of $\Tilde{T}$.
Let $r_{k}$ and $s_{k}$ be the initial position of $\overline{l_{k}}$ and 
that of $\overline{i_{s+1,\alpha_{s+1}}^{(y-1)}}$ in the $x$-th column of $\Tilde{T}$, respectively.
Then $p_{k}=p^{\prime}$ and $q_{k}\geq q^{\prime}$.
Suppose $\delta^{\prime}$ $\mathscr{L}^{(x,y)\ast}$-letters appear between the $r^{\prime}$-th box and the $s^{\prime}$-th box 
in the $x$-th column ($\delta^{\prime}\leq\delta$).
Then $s^{\prime}-r^{\prime}=s_{k}-r_{k}+\delta^{\prime}$ so that
$q_{k}-p_{k}+s_{k}-r_{k}\geq q^{\prime}-p^{\prime}+s^{\prime}-r^{\prime}-\delta^{\prime}$.
Here
$q^{\prime}-p^{\prime}= \sum_{i=1}^{s}\left\vert I_{i}^{(y-1)}\right\vert =\sum_{i=1}^{s}\alpha_{i}$ and
$s^{\prime}-r^{\prime}=\sum_{i=0}^{s-1}\left\vert \overline{J_{i}^{(x)}}\right\vert +t=\sum_{i=0}^{s-1}\tau_{i}+t$.
Combining these, we have
\begin{equation} \label{eq:ineq2}
q_{k}-p_{k}+s_{k}-r_{k}\geq
\sum_{i=1}^{s}\alpha_{i}+\sum_{i=0}^{s-1}\tau_{i}+t-\delta
=l_{k}-i_{s+1,\alpha_{s+1}}^{(y-1)}-\delta.
\end{equation}
This contradicts the assertion of Lemma~\ref{lem:kink1}.

\textbf{Case (b).} 
We can write $i_{s+1,\alpha_{s+1}}^{(y-1)}=a^{\ast}$ ($a\in \mathscr{L}^{(x,y)}$).
Let $r_{k}$ be the initial position of $\overline{l_{k}}$ in the $x$-th column of $\Tilde{T}$.
Furthermore, let us suppose that the initial entry at the $s_{k}$-th position ($s_{k}\geq r_{k}$) in the $x$-th column of $\Tilde{T}$ is $\Bar{b}$ and that the operation $a \rightarrow a^{\ast}$ replaces the entry $\Bar{b}$ by $\overline{a^{\ast}}$ 
so that $b>i_{s+1,\alpha_{s+1}}^{(y-1)}$.
The initial tableau $\Tilde{T}$ has the following configuration.

\setlength{\unitlength}{15pt}
\begin{center}
\begin{picture}(11,6)
\put(2,0){\line(0,1){5}}
\put(3,0){\line(0,1){5}}
\put(7,0){\line(0,1){5}}
\put(8,0){\line(0,1){5}}
\put(9,0){\line(0,1){5}}
\put(2,1){\line(1,0){1}}
\put(2,2){\line(1,0){1}}
\put(2,3){\line(1,0){1}}
\put(2,4){\line(1,0){1}}
\put(7,3){\line(1,0){1}}
\put(7,4){\line(1,0){1}}
\put(8,1){\line(1,0){1}}
\put(8,2){\line(1,0){1}}
\put(1,5){\line(1,0){3}}
\put(6,5){\line(1,0){4}}
\put(2,1){\makebox(1,1){$\Bar{b}$}}
\put(2,3){\makebox(1,1){$\overline{l_{k}}$}}
\put(8,1){\makebox(1,1){$l_{k}$}}
\put(0,1){\makebox(2,1){$s_{k} \rightarrow$}}
\put(0,3){\makebox(2,1){$r_{k} \rightarrow$}}
\put(5,3){\makebox(2,1){$p_{k} \rightarrow$}}
\put(9,1){\makebox(2,1){$\leftarrow q_{k}$}}
\put(2,5){\makebox(1,1){$x$}}
\put(6,5){\makebox(2,1){${\mathstrut y-1}$}}
\put(8,5){\makebox(1,1){${\mathstrut y}$}}

\put(6,2){\vector(1,1){1.5}}
\put(4,1){\makebox(2.5,1){$i_{s+1,\alpha _{s+1}}^{(y-1)}$}}

\end{picture}.
\end{center}
Inequality~\eqref{eq:ineq2} still holds in this case and this contradicts the assertion of Lemma~\ref{lem:kink1}.

In both cases, we have $i_{s+1,t}^{(y-1)}\leq i_{s+1,t}^{(y)}$ ($t=1,\ldots,\alpha_{s+1}$).

From \textbf{(i)} and \textbf{(ii)} and by induction, we have
\[
i_{r,1}^{(y-1)}\leq i_{r,1}^{(y)}=l_{k}-\sum_{i=1}^{r}\alpha_{i}-\sum_{i=0}^{r-1}\tau_{i}.
\]

\textbf{(iii).}
We claim that $i_{0}^{(y-1)}\leq l_{k}^{\ast}$.
If this is not true, then
\[
l_{k}-\sum_{i=1}^{r}\alpha_{i}-\sum_{i=0}^{r}\tau_{i}(=l_{k}^{\ast}+1) \leq i_{0}^{(y-1)} \leq i_{r,1}^{(y-1)}-1\leq
l_{k}-\sum_{i=1}^{r}\alpha_{i}-\sum_{i=0}^{r-1}\tau_{i}-1.
\]
Suppose $i_{0}^{(y-1)}=l_{k}-\sum_{i=1}^{r}\alpha_{i}-\sum_{i=0}^{r-1}\tau_{i}-t=j_{r,\tau_{r}-t+1}^{(x)}$  
($t=1,\ldots,\tau_{r}$), then the tableau $\Tilde{T}^{\prime}$ has the following configuration.

\setlength{\unitlength}{15pt}
\begin{center}
\begin{picture}(14,8)
\put(2,0){\line(0,1){7}}
\put(4,0){\line(0,1){7}}
\put(7,2){\line(0,1){5}}
\put(9,2){\line(0,1){5}}
\put(10,2){\line(0,1){5}}
\put(2,0.9){\line(1,0){2}}
\put(2,2){\line(1,0){2}}
\put(2,3){\line(1,0){2}}
\put(2,5){\line(1,0){2}}
\put(2,6){\line(1,0){2}}
\put(7,5){\line(1,0){2}}
\put(7,6){\line(1,0){2}}
\put(1,7){\line(1,0){2}}
\put(6,7){\line(1,0){2}}
\put(7,3){\line(1,0){3}}
\put(9,4){\line(1,0){1}}
\put(1,7){\line(1,0){4}}
\put(6,7){\line(1,0){5}}
\put(2,0.9){\makebox(2,1){$\overline{i_{0}^{(y-1)}}$}}
\put(2,2){\makebox(2,1){$\vdots$}}
\put(2,3){\makebox(2,2){$\overline{C_{r-1}}$}}
\put(2,5){\makebox(2,1){$\overline{l_{k}}$}}
\put(7,3){\makebox(2,2){$B_{r}$}}
\put(7,5){\makebox(2,1){$i_{0}^{(y-1)}$}}
\put(9,3){\makebox(1,1){$l_{k}$}}
\put(0,1){\makebox(2,1){$s^{\prime} \rightarrow$}}
\put(0,5){\makebox(2,1){$r^{\prime} \rightarrow$}}
\put(5,5){\makebox(2,1){$p^{\prime} \rightarrow$}}
\put(10,3){\makebox(4,1){$\leftarrow q^{\prime}(=q_{k}^{\prime})$}}
\put(2,7,10){\makebox(2,1){$x$}}
\put(7,7){\makebox(2,1){${\mathstrut y-1}$}}
\put(9.25,7){\makebox(1,1){${\mathstrut y}$}}
\end{picture}.
\end{center}
The same argument as in \textbf{(ii)} leads to 
that this configuration contradicts the assertion of Lemma~\ref{lem:kink1}.
Hence we have $i_{0}^{(y-1)}\leq l_{k}^{\ast}$.

\textbf{(iv).}
When the operation (B) for $l_{k}\rightarrow l_{k}^{\ast}$
is finished, the updated tableau has the following configuration.
From the $p_{k}^{\prime}$-th position to the $q_{k}^{\prime}$-th position in the $y$-th column is the block 
$\Delta_{k}(C_{y}^{0})$.

\setlength{\unitlength}{15pt}
\begin{center}
\begin{picture}(7,6)
\put(2,0){\line(0,1){5}}
\put(4,0){\line(0,1){5}}
\put(5,0){\line(0,1){5}}
\put(2,1){\line(1,0){3}}
\put(2,3){\line(1,0){3}}
\put(2,4){\line(1,0){3}}
\put(1,5){\line(1,0){5}}
\put(2,1){\makebox(2,2){$B_{r}$}}
\put(2,3){\makebox(2,1){$i_{0}^{(y-1)}$}}
\put(4,1){\makebox(1,2){$A_{I}$}}
\put(4,3){\makebox(1,1){$l_{k}^{\ast}$}}
\put(5,3){\makebox(2,1){$\leftarrow p_{k}^{\prime}$}}
\put(5,1){\makebox(2,1){$\leftarrow q_{k}^{\prime}$}}
\put(2,5){\makebox(2,1){${\mathstrut y-1}$}}
\put(4,5){\makebox(1,1){${\mathstrut y}$}}
\end{picture},
\end{center}
where $A_{I}$ stands for the stack of the sequence of blocks $I_{r}^{(y)},I_{r-1}^{(y)},\ldots,I_{r}^{(y)}$ in this order (from top to bottom).
Here, $I_{i}^{(y-1)}\preceq I_{i}^{(y)}$ ($i=1,\ldots,r$) so that $B_{r} \preceq A_{I}$ 
and $i_{0}^{(y-1)}\leq l_{k}^{\ast}$.
Therefore, we have 
$\Delta C_{y-1}[p_{k}^{\prime},q_{k}^{\prime}]\preceq\Delta_{k}(C_{y}^{0})$.
The position of $l_{k}^{\ast}$ and those of entries in 
$I_{i}^{(y)}$($i=1,\ldots,r$)
do not change under subsequent operations for 
$l_{k-1}\rightarrow l_{k-1}^{\ast},\ldots,l_{1}\rightarrow l_{1}^{\ast}$.
Thus, the proof of \textbf{(II)} has been completed.

\textbf{(v).}
By the same argument as in \textbf{(i)}, \textbf{(ii)}, and \textbf{(iii)}, it is not hard to show 
$\Delta C_{y-1}[p_{c}^{\prime},q_{c}]\preceq\Delta_{k=c}(C_{y}^{0})$, 
where $p_{c}^{\prime}$ (resp. $q_{c}$) is the position of the top (resp. bottom) box of $\Delta_{k=c}(C_{y}^{0})$.
Note that $q_{c}$ is the initial position of $l_{c}$ in $C_{y}^{0}$.
This completes the proof of \textbf{(I)}.
\end{proof}

The following result may be proven in much the same way as in Lemma~\ref{lem:kink1}.

\begin{lem} \label{lem:kink2}
Suppose that $T=C_{1}C_{2}\cdots C_{n_{c}} \in C_{n}\text{-}\mathrm{SST}_{\mathrm{KN}}$.
Let us set
\begin{align*}
\Tilde{T}  & :=\left(  \phi^{(x+1,y)}\circ\phi^{(x,y-1)}\right)  \circ
\cdots\circ\left(  \phi^{(x+1,x+1)}\circ\phi^{(x,x)}\right)  \\
& \circ(\Phi^{(x+1)})^{-1}\circ\overline{\Phi^{(x+1)}}(T) \quad (2\leq x+1\leq y\leq n_{c}).
\end{align*}
Here, we assume that $\Tilde{T}\neq \emptyset$ and that in the updating process of the tableau from $T$ to $\Tilde{T}$ the semistandardness of the $\mathscr{C}_{n}^{(-)}$-letters part of the tableau is preserved.

\begin{itemize}
\item[(1).]
Suppose that the tableau $\Tilde{T}$ has the following configuration, 
where the left (resp. right) part is the $\mathscr{C}_{n}^{(-)}$ (resp. $\mathscr{C}_{n}^{(+)}$)-letters one $(p\leq q < r\leq s)$.

\setlength{\unitlength}{12pt}
\begin{center}
\begin{picture}(10,6)
\put(2,0){\line(0,1){5}}
\put(3,0){\line(0,1){5}}
\put(4,0){\line(0,1){5}}
\put(3,1){\line(1,0){1}}
\put(3,2){\line(1,0){1}}
\put(2,3){\line(1,0){1}}
\put(2,4){\line(1,0){1}}
\put(1,5){\line(1,0){4}}
\put(7,0){\line(0,1){5}}
\put(8,0){\line(0,1){5}}
\put(7,1){\line(1,0){1}}
\put(7,2){\line(1,0){1}}
\put(7,3){\line(1,0){1}}
\put(7,4){\line(1,0){1}}
\put(6,5){\line(1,0){3}}
\put(2,3){\makebox(1,1){$\Bar{b}$}}
\put(3,1){\makebox(1,1){$\overline{a_{2}}$}}
\put(0,1){\makebox(1,1){$s \rightarrow$}}
\put(0,3){\makebox(1,1){$r \rightarrow$}}
\put(1.75,5){\makebox(1,1){${\mathstrut x}$}}
\put(3,5){\makebox(1.5,1){${\mathstrut x+1}$}}
\put(7,1){\makebox(1,1){$b$}}
\put(7,3){\makebox(1,1){$a_{1}$}}
\put(8,1){\makebox(2,1){$\leftarrow q$}}
\put(8,3){\makebox(2,1){$\leftarrow p$}}
\put(7,5){\makebox(1,1){$y$}}
\end{picture}.
\end{center}
Then we have 
\[(q-p)+(s-r)<b-\min(a_{1},a_{2}).
\]

\item[(2).]
Let $\mathscr{J}^{(x)}$ be the set of $\mathscr{J}$-letters in the $x$-th column and 
$\mathscr{I}^{(y)}$ be the set of $\mathscr{I}$-letters in the the $y$-th column and set 
$\mathscr{L}^{(x,y)}:=\mathscr{J}^{(x)}\cap \mathscr{I}^{(y)}$.
If $\sharp\left\{  l\in \mathscr{L}^{(x,y)} \relmiddle| l^{\ast}<b<l\right\}  =\delta$ in $\phi^{(x,y)}(\Tilde{T})$, 
then we have 
\[
(q-p)+(s-r)<b-\min(a_{1},a_{2})-\delta
\]
in the above configuration in $\Tilde{T}$.
\end{itemize}
\end{lem}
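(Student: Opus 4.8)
The plan is to transcribe the proof of Lemma~\ref{lem:kink1} under the transpose that interchanges the $\mathscr{C}_{n}^{(+)}$- and $\mathscr{C}_{n}^{(-)}$-letters parts. In Lemma~\ref{lem:kink1} the negative letters $\Bar{a},\Bar{b}$ sit in the single column $x$ while the positive letters $a,b$ are spread over the adjacent columns $y-1$ and $y$, the last map forming $\Tilde{T}$ being $\phi^{(x,y-1)}$; here the picture is mirrored, with $a,b$ lying in the single column $y$ and $\Bar{a},\Bar{b}$ in the adjacent columns $x$ and $x+1$, the last map being $\phi^{(x+1,y)}$. Throughout, wherever Lemma~\ref{lem:kink1} undoes $\phi^{(x,y-1)}$ by $\psi^{(x,y-1)}$ and reasons about the $\mathscr{C}_{n}^{(+)}$-letters with positions counted from the top, I would undo $\phi^{(x+1,y)}$ by $\psi^{(x+1,y)}$ and reason about the $\mathscr{C}_{n}^{(-)}$-letters with positions counted from the bottom; the same $\psi$-side tools (Lemma~\ref{lem:col_sst2}, Lemma~\ref{lem:position2}, Lemma~\ref{lem:KN_upper}) apply, only their orientation being reflected.

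For part (1) I would first exclude the degenerate configuration in which $\Tilde{T}$ carries two equal entries $\Bar{b}$ occupying the same row in the adjacent columns $x$ and $x+1$: since $b\in\mathscr{L}^{(x+1,y)}:=\mathscr{J}^{(x+1)}\cap\mathscr{I}^{(y)}$, Lemma~\ref{lem:col_sst2} applied to $T^{\prime}:=\psi^{(x+1,y)}(\Tilde{T})$ would make the entry of column $x+1$ in that row strictly $\prec\Bar{b}$, contradicting the semistandardness of the $\mathscr{C}_{n}^{(-)}$-letters part of $T^{\prime}$. The exclusions carried out in Lemma~\ref{lem:kink1}, read now for the negative part, give $a<b$. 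Writing $\mathscr{L}^{(x+1,y)}=\{l_{1},\ldots,l_{c}\}$, letting $l_{k_{0}}$ be the largest of these letters with $l_{k_{0}}<b$ and setting $a=l_{k_{0}-k+1}$, I would run the induction on $k$ exactly as in Lemma~\ref{lem:kink1}, with a base case and an inductive step and, inside each, the two sub-cases mirroring $p_{1}<q$ versus $p_{1}=q$ (now phrased through the bottom-anchored positions, $s_{1}>r$ versus $s_{1}=r$). In each branch I would assume equality in the asserted inequality, follow the first-kind algorithm for $\psi^{(x+1,y)}$, track through the filling diagram the displacement of the relevant $\mathscr{I}^{(y)}$-letter after the $\mathscr{L}^{(x+1,y)\dag}$-letters are relocated, and obtain a contradiction with the semistandardness of the $\mathscr{C}_{n}^{(-)}$-letters part of $T^{\prime}$; this forces $(q-p)+(s-r)<b-a$.

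Part (2) is then a verbatim transcription of the corresponding part of Lemma~\ref{lem:kink1}. Starting from the strict inequality just proved and using that $\phi^{(x,y)}$ is well-defined on $\Tilde{T}$, I would apply the second-kind algorithm for $\phi^{(x,y)}$, record how a single operation $l_{k+1}\to l_{k+1}^{\ast}$ alters the positions of $l_{k}$ and $\overline{l_{k}}$ together with the count $\delta_{k}=\sharp\{l\in\mathscr{L}^{(x,y)}\mid l^{\ast}<l_{k}<l\}$, substitute the resulting filling-diagram identities into the inductive hypothesis, and conclude $(q-p)+(s-r)<b-a-\delta$ by induction. I expect no new idea to be required; the only genuine work---and hence the main obstacle---is to carry out the transposition faithfully, verifying that every position relation, sub-case split and filling-diagram configuration of Lemma~\ref{lem:kink1} has an exact counterpart under the exchange of the positive and negative parts, so that each contradiction to semistandardness continues to close.
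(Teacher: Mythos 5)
Your proposal is correct and matches the paper exactly: the paper gives no separate argument for Lemma~\ref{lem:kink2}, stating only that it ``may be proven in much the same way as in Lemma~\ref{lem:kink1},'' and your transposition --- undoing $\phi^{(x+1,y)}$ by $\psi^{(x+1,y)}$, working with $\mathscr{L}^{(x+1,y)}=\mathscr{J}^{(x+1)}\cap\mathscr{I}^{(y)}$, and replacing contradictions with the semistandardness of the $\mathscr{C}_{n}^{(+)}$-letters part by the mirrored ones for the $\mathscr{C}_{n}^{(-)}$-letters part --- is precisely the intended transcription. Your identification of the correct sub-case splits ($s_{1}>r$ versus $s_{1}=r$) and of part (2) as a verbatim repetition of the second-kind-algorithm induction confirms the mirroring closes at every step.
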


\begin{lem} \label{lem:st2}
Suppose that  
$T=C_{1}C_{2}\cdots C_{n_{c}}\in C_{n}\text{-}\mathrm{SST}_{\mathrm{KN}}$.
Let us set 
\[
\Tilde{T}:=\left(  \phi^{(x+1,y)}\circ\phi^{(x,y-1)}\right)  \circ\cdots
\circ\left(  \phi^{(x+1,x+1)}\circ\phi^{(x,x)}\right)  \circ(\Phi^{(x+1)})^{-1}\circ\overline{\Phi^{(x+1)}}(T)
\]
($2\leq x+1\leq y \leq n_{c}$). 
Here, we assume that $\Tilde{T}\neq \emptyset$ and that in the updating process of the tableau from $T$ to $\Tilde{T}$ the semistandardness of the $\mathscr{C}_{n}^{(-)}$-letters part of the tableau is preserved.
Then the $\mathscr{C}_{n}^{(-)}$-letters part of $\phi^{(x,y)}(\Tilde{T})$ is semistandard and 
if $y\leq n_{c}-1$ the $\mathscr{C}_{n}^{(-)}$-letters part of $\left(\phi^{(x+1,y+1)}\circ\phi^{(x,y)}\right)(\Tilde{T})$ is also semistandard.
\end{lem}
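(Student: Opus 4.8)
The plan is to repeat the proof of Lemma~\ref{lem:st1} with the roles of the $\mathscr{C}_n^{(+)}$- and $\mathscr{C}_n^{(-)}$-parts interchanged, the reference column pair $(C_{y-1},C_y)$ replaced by $(C_x,C_{x+1})$, and every appeal to Lemma~\ref{lem:kink1} replaced by one to Lemma~\ref{lem:kink2}, which is stated precisely for the configuration and the $\tilde T$ relevant here. First I would record that $\phi^{(x,y)}$ is well-defined on $\tilde T$, and $\phi^{(x+1,y+1)}$ on $\phi^{(x,y)}(\tilde T)$, by the same reasoning as in Lemma~\ref{lem:welldf1}, and that both maps preserve the length of the $\mathscr{C}_n^{(-)}$-part of every column since $\phi$ is weight-preserving. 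Because $\phi^{(x,y)}$ alters only the $\mathscr{C}_n^{(-)}$-part of column $x$ and $\phi^{(x+1,y+1)}$ only that of column $x+1$, the sole adjacent pairs whose $\mathscr{C}_n^{(-)}$-parts can lose semistandardness are $(C_{x-1},C_x)$, $(C_x,C_{x+1})$ and $(C_{x+1},C_{x+2})$.

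By Lemma~\ref{lem:col_sst1} the $\mathscr{C}_n^{(-)}$-part of a column only grows under $\phi$, i.e.\ $C_-^{(x)}\preceq C_-^{(x)\ast}$. For the first assertion, in $(C_{x-1},C_x)$ the \emph{right} column grows, so $C_-^{(x-1)}\preceq C_-^{(x)}\preceq C_-^{(x)\ast}$ keeps the pair semistandard; only the left-grown pair $(C_x,C_{x+1})$ requires work. For the second assertion, once $\phi^{(x+1,y+1)}$ has been applied the right column of $(C_x,C_{x+1})$ grows, so that pair survives by the same monotonicity, and the only remaining work is the left-grown pair $(C_{x+1},C_{x+2})$.

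The substance is therefore the hard pair, which I would handle by the dual of the block-domination induction of Lemma~\ref{lem:st1}: inside $\phi^{(x,y)}$ I would run the second kind algorithm on the $\mathscr{C}_n^{(-)}$-part (column $x$) and the first kind on the $\mathscr{C}_n^{(+)}$-part (column $y$), and prove by downward induction on $k=c,\dots,1$ that $\overline{\Delta_k}(C_x^0)\preceq \Delta C_{x+1}[r_k^{\prime},s_k^{\prime}]$, where $\overline{\Delta_k}(C_x^0)$ is the $\mathscr{C}_n^{(-)}$-block produced at step $k$ and $[r_k^{\prime},s_k^{\prime}]$ are the rows it occupies. Slot by slot in the filling diagram this reduces to excluding the same forbidden configurations as in Lemma~\ref{lem:st1}, each exclusion being furnished verbatim by the inequalities $(q-p)+(s-r)<b-a$ and $(q-p)+(s-r)<b-a-\delta$ of Lemma~\ref{lem:kink2}, parts (1) and (2). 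The pair $(C_{x+1},C_{x+2})$ occurring in the second assertion is the identical statement with $(x,y)$ replaced by $(x+1,y+1)$; here I would note that the intervening $\phi^{(x,y)}$ touches neither column $x+2$ nor the $\mathscr{C}_n^{(-)}$-entries of column $x+1$, so the hypotheses of Lemma~\ref{lem:kink2} hold at the shifted indices and the same induction applies.

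The main obstacle is the block-domination induction itself, the dual of steps~(I)--(II) (with sub-steps (i)--(v)) in the proof of Lemma~\ref{lem:st1}: there one shows entry by entry that each $\mathscr{C}_n^{(+)}$-letter of column $y-1$ is dominated by the matching letter of $\Delta_k(C_y^0)$, by assuming the contrary and producing a configuration that violates Lemma~\ref{lem:kink1}; dually one must show that each $\mathscr{C}_n^{(-)}$-letter of $\overline{\Delta_k}(C_x^0)$ is dominated by the matching letter of column $x+1$, the offending configurations now contradicting Lemma~\ref{lem:kink2}. Once the dictionary $\mathscr{C}_n^{(+)}\leftrightarrow\mathscr{C}_n^{(-)}$, first\,$\leftrightarrow$\,second kind algorithm, Lemma~\ref{lem:kink1}\,$\leftrightarrow$\,Lemma~\ref{lem:kink2}, and upward\,$\leftrightarrow$\,downward relocation is fixed, I expect no essentially new difficulty; the labour is entirely in transcribing the voluminous but routine filling-diagram bookkeeping with the $\prec$-orientation reversed.
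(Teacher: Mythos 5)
Your treatment of the first assertion is essentially the paper's own proof: reduce to the left-grown pair $(C_{x},C_{x+1})$, run the second kind algorithm on the $\mathscr{C}_{n}^{(-)}$-part of column $x$ and the first kind on the $\mathscr{C}_{n}^{(+)}$-part of column $y$, and prove $\overline{\Delta_{k}}(C_{x}^{0})\preceq\Delta C_{x+1}[p_{k}^{\prime},q_{k}^{\prime}]$ by downward induction on $k$, with Lemma~\ref{lem:kink2} supplying the contradictions. That part is fine.

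The gap is in the second assertion, where you dispose of the pair $(C_{x+1},C_{x+2})$ by calling it ``the identical statement with $(x,y)$ replaced by $(x+1,y+1)$'' and asserting that the hypotheses of Lemma~\ref{lem:kink2} hold at the shifted indices. They do not. Lemma~\ref{lem:kink2} at indices $(x+1,y+1)$ is a statement about the tableau
\[
\left(\phi^{(x+2,y+1)}\circ\phi^{(x+1,y)}\right)\circ\cdots\circ\left(\phi^{(x+2,x+2)}\circ\phi^{(x+1,x+1)}\right)\circ\Phi^{(x+2)-1}\circ\overline{\Phi^{(x+2)}}(T),
\]
in which the $\mathscr{C}_{n}^{(-)}$-part of column $x+2$ has been updated only through $\phi^{(x+2,y+1)}$. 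Your tableau $\phi^{(x,y)}(\Tilde{T})$, by contrast, sits on top of $\Phi^{(x+1)-1}\circ\overline{\Phi^{(x+1)}}(T)=\overline{\Phi^{(x+2)}}(T)$, where column $x+2$'s $\mathscr{C}_{n}^{(-)}$-part is already fully updated by $\Phi^{(x+2)}$, i.e.\ by all of $\phi^{(x+2,x+2)},\ldots,\phi^{(x+2,n_{c})}$. Your remark that the intervening $\phi^{(x,y)}$ touches neither column $x+2$ nor the $\mathscr{C}_{n}^{(-)}$-entries of column $x+1$ compares $\phi^{(x,y)}(\Tilde{T})$ with $\Tilde{T}$, not with the shifted hypothesis tableau; the two genuinely differ in column $x+2$ whenever $y\leq n_{c}-2$. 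Since the configurations of Lemma~\ref{lem:kink2} require the same letter $a$ to occur unbarred in column $y+1$ and as $\bar{a}$ in column $x+2$, the inequalities do not transfer between tableaux whose column-$(x+2)$ entries differ, so your shifted induction has no kink lemma to appeal to. The paper closes this step with no second induction at all: by Lemma~\ref{lem:col_sst1} one has $C_{-}^{(x+1)}\preceq C_{-}^{(x+1)\prime}\preceq C_{-}^{(x+1)\prime\prime}$, where $C_{-}^{(x+1)\prime\prime}$ is the fully updated $\mathscr{C}_{n}^{(-)}$-part of column $x+1$ in $\overline{\Phi^{(x+1)}}(T)$; the $\mathscr{C}_{n}^{(-)}$-part of $\overline{\Phi^{(x+1)}}(T)$ is semistandard (this is exactly what the standing assumption provides where the lemma is invoked in the proof of Lemma~\ref{lem:Phi}), and column $x+2$'s $\mathscr{C}_{n}^{(-)}$-part is the same there as in $\left(\phi^{(x+1,y+1)}\circ\phi^{(x,y)}\right)(\Tilde{T})$, so the pair $(C_{x+1}^{\prime},C_{x+2})$ inherits semistandardness by this sandwich. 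You should replace your shifted-index reduction with this monotonicity argument, or else formulate and prove a new kink-type lemma adapted to the tableau you actually have.
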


\begin{proof}
Let $C_{x}$ (resp. $C_{x}^{0}$) be the $x$-th column of $\phi^{(x,y)}(\Tilde{T})$ (resp. $\Tilde{T}$) and $C_{x+1}$ be the $(x+1)$-st column of $\Tilde{T}$.
In what follows, we show that the $\mathscr{C}_{n}^{(-)}$-letters part of the two-column tableau $C_{x}C_{x+1}$ is semistandard.
If this is true, the claim of Lemma~\ref{lem:st2} is immediate by Lemma~\ref{lem:col_sst1}.

Let us denote by $\mathscr{I}^{(y)}$ the set of $\mathscr{I}$-letters in the $y$-th column of $\Tilde{T}$ 
and by $\mathscr{J}^{(x)}$ the set of $\mathscr{J}$-letters in the $x$-th column of $\Tilde{T}$ and set 
$\mathscr{L}^{(x,y)}:=\mathscr{J}^{(x)}\cap \mathscr{I}^{(y)}=:\left\{  l_{1},\ldots,l_{c}\right\}$.
We adopt the second kind algorithm for $\phi^{(x,y)}$ when we treat the $x$-th column, 
while we adopt the first kind one when we treat the $y$-th column.
We claim that $\overline{\Delta_{k}}(C_{x}^{0})\preceq\Delta C_{x+1}[p_{k}^{\prime},q_{k}^{\prime}]$ 
for all $k=c,c-1,\ldots,1$ so that the $\mathscr{C}_{n}^{(-)}$-letters part of $C_{x}C_{x+1}$ is semistandard, 
where $p_{k}^{\prime}$ (resp. $q_{k}^{\prime}$) is the position of the top (resp. bottom) box of $\overline{\Delta_{k}}(C_{x}^{0})$.
The proof is by induction on $k$.
Namely, we prove

\textbf{(I).}
$\overline{\Delta_{c}}(C_{x}^{0})\preceq\Delta C_{x+1}[p_{c}^{\prime},q_{c}^{\prime}]$.

\textbf{(II).}
$\overline{\Delta_{k}}(C_{x}^{0})\preceq\Delta C_{x+1}[p_{k}^{\prime},q_{k}^{\prime}]$ under the assumption that 
$\overline{\Delta_{k+1}}(C_{x}^{0})\preceq\Delta C_{x+1}[p_{k+1}^{\prime},q_{k+1}^{\prime}]$ 
($k=c-1,\ldots,1$).

We first prove \textbf{(II)}.
Suppose that 
$\left\{  l\in \mathscr{L}^{(x,y)} \relmiddle| l^{\ast}<l_{k}<l\right\}  =:\left\{  l_{k+1},\ldots,l_{k+\delta}\right\} $.
Let $C_{+}^{(y)}$ (resp. $C_{-}^{(x)}$) be the $\mathscr{C}_{n}^{(+)}$ (resp. $\mathscr{C}_{n}^{(-)})$-letters part of 
the $y$-th (resp. $x$-th) column of $\Tilde{T}$ 
and let $C^{(x,y)}$ be the column whose $\mathscr{C}_{n}^{(+)}$ (resp. $\mathscr{C}_{n}^{(-)})$-letters part is $C_{+}^{(y)}$ (resp. $C_{-}^{(x)}$).
Suppose that the operation for $l_{k+1}\rightarrow l_{k+1}^{\ast}$ is finished ($k\leq c-1$).
Let $\Tilde{T}^{\prime}$ be the updated tableau and $C^{(x,y)\prime}$ be the resulting column.
Let us assume that  
$\overline{\Delta_{k+1}}(C_{x})\preceq\Delta C_{x+1}[p_{k+1}^{\prime},q_{k+1}^{\prime}]$.
The filling diagram of the column $C^{(x,y)\prime}$ has the following configuration.
Here, we assume that $r\geq 1$.
The proof for the case when $r=0$ is similar and much simpler.

\setlength{\unitlength}{15pt}
\begin{center}
\begin{picture}(23,5)
\put(1,1){\line(0,1){2}}
\put(2,1){\line(0,1){2}}
\put(4,1){\line(0,1){2}}
\put(5,1){\line(0,1){2}}
\put(7,1){\line(0,1){2}}
\put(8,1){\line(0,1){2}}
\put(12,1){\line(0,1){2}}
\put(13,1){\line(0,1){2}}
\put(15,1){\line(0,1){2}}
\put(16,1){\line(0,1){2}}
\put(18,1){\line(0,1){2}}
\put(19,1){\line(0,1){2}}
\put(21,1){\line(0,1){2}}
\put(22,1){\line(0,1){2}}
\put(0,1){\line(1,0){23}}
\put(0,2){\line(1,0){2}}
\put(4,2){\line(1,0){1}}
\put(7,2){\line(1,0){1}}
\put(12,2){\line(1,0){1}}
\put(15,2){\line(1,0){1}}
\put(18,2){\line(1,0){1}}
\put(21,2){\line(1,0){2}}
\put(0,3){\line(1,0){23}}
\put(1,1){\makebox(1,1){$\circ$}}
\put(1,2){\makebox(1,1){$\circ$}}
\put(4,1){\makebox(1,1){$\bullet$}}
\put(4,2){\makebox(1,1){$\circ$}}
\put(7,1){\makebox(1,1){$\bullet$}}
\put(7,2){\makebox(1,1){$\circ$}}
\put(12,1){\makebox(1,1){$\bullet$}}
\put(12,2){\makebox(1,1){$\circ$}}
\put(15,1){\makebox(1,1){$\bullet$}}
\put(15,2){\makebox(1,1){$\circ$}}
\put(18,1){\makebox(1,1){$\bullet$}}
\put(18,2){\makebox(1,1){$\circ$}}
\put(21,1){\makebox(1,1){$\bullet$}}
\put(21,2){\makebox(1,1){$\bullet$}}
\put(2,1){\makebox(2,2){$(r)$}}
\put(5,1){\makebox(2,2){$\cdots$}}
\put(8,1){\makebox(4,2){$(r-1)\;\cdots$}}
\put(13,1){\makebox(2,2){$(1)$}}
\put(16,1){\makebox(2,2){$\cdots$}}
\put(19,1){\makebox(2,2){$(0)$}}
\put(1,0){\makebox(1,1){$l_{k}^{\ast}$}}
\put(4,0){\makebox(1,1){$j_{r,1}^{(x)}$}}
\put(7,0){\makebox(1,1){$j_{r,\beta_{r}}^{(x)}$}}
\put(12,0){\makebox(1,1){$j_{2,\beta_{2}}^{(x)}$}}
\put(15,0){\makebox(1,1){$j_{1,1}^{(x)}$}}
\put(18,0){\makebox(1,1){$j_{1,\beta_{1}}^{(x)}$}}
\put(21,0){\makebox(1,1){$l_{k}$}}
\put(4,2.5){\makebox(4,1){$
\overbrace{
\begin{array}
[c]{cccccc}
& & & & &
\end{array}
}$
}}
\put(5,3.5){\makebox(2,1){$\beta _{r}$}}
\put(15,2.5){\makebox(4,1){$
\overbrace{
\begin{array}
[c]{cccccc}
& & & & &
\end{array}
}$
}}
\put(16,3.5){\makebox(2,1){$\beta _{1}$}}
\end{picture}.
\end{center}
Region $(s)$ contains  $(+)$-slots, $(\pm)$-slots, and $(\times)$-slots.
Let us assume that the numbers of $(+)$-slots, $(\pm)$-slots, and $(\times)$-slots in this region are  
$\alpha_{s}$, $\gamma_{s}$, and $\delta_{s}$, respectively 
and that the position of $(\times)$-slots in region $(s)$ are 
$l_{s,1}^{\ast},\ldots$, and $l_{s,\delta_{s}}^{\ast}$ ($s=0,1,\ldots,r$);
$\left\{  l_{k+1}^{\ast},\ldots,l_{k+\delta}^{\ast}\right\}  =
\left\{l_{0,1}^{\ast},\ldots,l_{0,\delta_{0}}^{\ast},\ldots,l_{r,1}^{\ast},\ldots,l_{r,\delta_{r}}^{\ast}\right\}  $.
Between two regions $(s-1)$ and $(s)$, $\beta_{s}$ $(-)$-slots lie consecutively.

\setlength{\unitlength}{15pt}
\begin{center}
\begin{picture}(9,3)
\put(2,1){\line(0,1){2}}
\put(3,1){\line(0,1){2}}
\put(5,1){\line(0,1){2}}
\put(6,1){\line(0,1){2}}
\put(0,1){\line(1,0){9}}
\put(2,2){\line(1,0){1}}
\put(5,2){\line(1,0){1}}
\put(0,3){\line(1,0){9}}
\put(0,1){\makebox(2,2){$(s)$}}
\put(3,1){\makebox(2,2){$\cdots$}}
\put(6,1){\makebox(3,2){$(s-1)$}}
\put(2,1){\makebox(1,1){$\bullet$}}
\put(2,2){\makebox(1,1){$\circ$}}
\put(5,1){\makebox(1,1){$\bullet$}}
\put(5,2){\makebox(1,1){$\circ$}}
\put(2,0){\makebox(1,1){$j_{s,\beta_{1}}^{(x)}$}}
\put(5,0){\makebox(1,1){$j_{s,\beta_{s}}^{(x)}$}}
\end{picture}.
\end{center}
The updated tableau $\Tilde{T}^{\prime}$ has the following configuration.
There are no $\overline{\mathscr{L}^{(x,y)\ast}}$-letters below the box containing $\overline{l_{k}}$ in the $x$-th column 
because we adopt the second kind algorithm for $\phi^{(x,y)}$ in the $x$-th column, 
while $\mathscr{L}^{(x,y)\ast}$-letters may exist above the box containing $l_{k}$ in the $y$-th column.

\setlength{\unitlength}{12pt}
\begin{center}
\begin{picture}(7,9)
\put(2,0){\line(0,1){8}}
\put(3,0){\line(0,1){8}}
\put(5,3){\line(0,1){5}}
\put(6,3){\line(0,1){5}}
\put(2,1){\line(1,0){1}}
\put(2,3){\line(1,0){1}}
\put(2,4){\line(1,0){1}}
\put(5,4){\line(1,0){1}}
\put(5,5){\line(1,0){1}}
\put(5,7){\line(1,0){1}}
\put(0,8){\line(1,0){7}}
\put(3,3){\makebox(2,2){$\cdots$}}
\put(2,1){\makebox(1,2){$\overline{C}$}}
\put(2,3){\makebox(1,1){$\overline{l_{k}}$}}
\put(0,3){\makebox(2,1)[l]{$p_{k}^{\prime} \rightarrow$}}
\put(5,4){\makebox(1,1){$l_{k}$}}
\put(5,5){\makebox(1,2){$A_{r}$}}
\put(2,8){\makebox(1,1){$x$}}
\put(5,8){\makebox(1,1){$y$}}
\end{picture},
\end{center}
where $A_{r}$ is the stack of the sequence of blocks $I_{r}^{(y)},I_{r-1}^{(y)},\ldots,I_{0}^{(y)}$ in this order (from top to bottom) and 
$\overline{C}$ is the stack of the sequence of blocks 
$\overline{L_{0}^{(x)}},\overline{J_{1}^{(x)}},\ldots,\overline{J_{r}^{(x)}},\overline{L_{r}^{(x)}}$ 
in this order (from top to bottom).
The block $\overline{J_{s}^{(x)}}$ consists of consecutive 
$\overline{\mathscr{J}^{(x)}\backslash \mathscr{L}^{(x,y)}}$-letters 
$\{\overline{j_{s,\beta_{s}}^{(x)}},\ldots,\overline{j_{s,1}^{(x)}}\}$,
where
\[
j_{s,\beta_{s}-t+1}^{(x)}=l_{k}-\sum_{i=1}^{s-1}\beta_{i}-\sum_{i=0}^{s-1}\tau_{i}-t \quad (t=1,\ldots,\beta_{s}) 
\]
with $\tau_{i}:=\alpha_{i}+\gamma_{i}+\delta_{i}$ 
and $\overline{L_{s}^{(x)}}$ is the block of $\gamma_{s}$ $\overline{\mathscr{L}^{(x,y)}}$-letters
($s=0,1,\ldots,r$).
The block $I_{s}^{(y)}$ consists of consecutive $\tau_{s}$ $\mathscr{C}_{n}^{(+)}$-letters 
$\{i_{s,1}^{(y)},\ldots,i_{s,\tau_{s}}^{(y)}\}$,
where
\[
i_{s,\tau_{s}-t+1}^{(y)}=l_{k}-\sum_{i=1}^{s}\beta_{i}-\sum_{i=0}^{s-1}
\tau_{i}-t \quad (t=1,\ldots,\tau_{s};s=0,\ldots,r).
\]
Let us assume that the $(x+1)$-st column has the following configuration.

\setlength{\unitlength}{12pt}
\begin{center}
\begin{picture}(4.25,5)
\put(2,0){\line(0,1){5}}
\put(4.25,0){\line(0,1){5}}
\put(2,0,75){\line(1,0){2.25}}
\put(2,2){\line(1,0){2.25}}
\put(2,4){\line(1,0){2.25}}
\put(2,0.75){\makebox(2.25,1.25){\small $\overline{j_{0}^{(x+1)}}$}}
\put(2,2){\makebox(2.25,2){$\overline{B_{r}}$}}
\put(0,3){\makebox(2,1){$p_{k}^{\prime} \rightarrow$}}
\end{picture},
\end{center}
where $\overline{B_{r}}$ is the stack of the sequence of blocks 
$\overline{J_{1}^{(x+1)}},\overline{J_{2}^{(x+1)}},\ldots,\overline{J_{r}^{(x+1)}}$ in this order (from top to bottom) 
and the position of the top box in $\overline{B_{r}}$ is $p_{k}^{\prime}$ 
(the block $\overline{B_{r}}$ is not empty because of the assumption of $r\geq 1$).
The block $\overline{J_{s}^{(x+1)}}$ consists of $\beta_{s}$ $\mathscr{C}_{n}^{(-)}$-letters 
$\left\{  \overline{j_{s,\beta_{s}}^{(x+1)}},\ldots,\overline{j_{s,1}^{(x+1)}}\right\}  $ 
so that $\left\vert \overline{J_{s}^{(x+1)}}\right\vert =\left\vert \overline{J_{s}^{(x)}}\right\vert $ 
($s=1,\ldots,r$).

\textbf{(i).}
We claim that 
$\overline{j_{1,\beta_{1}}^{(x)}}\preceq\overline{j_{1,\beta_{1}}^{(x+1)}}$, i.e., 
$j_{1,\beta_{1}}^{(x+1)}\leq j_{1,\beta_{1}}^{(x)}=l_{k}-\tau_{0}-1$.
If this is not true, 
$j_{1,\beta_{1}}^{(x+1)}\in\{l_{k}-\tau_{0},l_{k}-\tau_{0}+1,\ldots,l_{k}\}$ 
$(\overline{l_{k}}\preceq\overline{j_{1,\beta_{1}}^{(x+1)}})$, i.e., 
$j_{1,\beta_{1}}^{(x+1)}$ is in the block $I_{0}^{(y)}$ or $j_{1,\beta_{1}}^{(x+1)}=l_{k}$.
Suppose $j_{1,\beta_{1}}^{(x+1)}=l_{k}-t$
($t=0,1,\ldots,\tau_{0}$).
The updated tableau $\Tilde{T}^{\prime}$ has the following configuration.

\setlength{\unitlength}{15pt}
\begin{center}
\begin{picture}(15,6)
\put(4,2){\line(0,1){3}}
\put(5,2){\line(0,1){3}}
\put(7,2){\line(0,1){3}}
\put(11,0){\line(0,1){5}}
\put(13,0){\line(0,1){5}}
\put(4,2.9){\line(1,0){3}}
\put(4,4){\line(1,0){3}}
\put(11,1){\line(1,0){2}}
\put(11,2){\line(1,0){2}}
\put(11,3){\line(1,0){2}}
\put(11,4){\line(1,0){2}}
\put(3,5){\line(1,0){5}}
\put(10,5){\line(1,0){4}}
\put(4,2.9){\makebox(1,1){$\overline{l_{k}}$}}
\put(5,2.9){\makebox(2,1){$\overline{j_{1,\beta _{1}}^{(x+1)}}$}}
\put(11,1){\makebox(2,1){$l_{k}$}}
\put(11,3){\makebox(2,1){$j_{1,\beta _{1}}^{(x+1)}$}}
\put(4,5){\makebox(1,1){${\mathstrut x}$}}
\put(5,5){\makebox(2,1){${\mathstrut x+1}$}}
\put(11,5){\makebox(2,1){$y$}}
\put(0,3){\makebox(4,1){$r^{\prime}(=p_{k}^{\prime}) \rightarrow$}}
\put(7,3){\makebox(2,1){$\leftarrow s^{\prime}$}}
\put(13,1){\makebox(2,1){$\leftarrow q^{\prime}$}}
\put(13,3){\makebox(2,1){$\leftarrow p^{\prime}$}}
\end{picture}.
\end{center}
Let $r_{k}$ and $s_{k}$ be the initial position of $\overline{l_{k}}$ in the $x$-th column and 
that of $\overline{j_{1,\beta_{1}}^{(x+1)}}$ in the $(x+1)$-st column of $\Tilde{T}$, respectively.
We consider the following two cases separately:
\begin{description}
\item[(a)]
$j_{1,\beta_{1}}^{(x+1)}\notin \mathscr{L}^{(x,y)\ast}$.
\item[(b)]
$j_{1,\beta_{1}}^{(x+1)}\in \mathscr{L}^{(x,y)\ast}$.
\end{description}

\textbf{Case (a).}
The entry $j_{1,\beta_{1}}^{(x+1)}$ exists initially in the $y$-th column of $\Tilde{T}$.
Let $p_{k}$ and $q_{k}$ be the initial position of $j_{1,\beta_{1}}^{(x+1)}$ and that of $l_{k}$ 
in the $y$-th column of $\Tilde{T}$, respectively.
Then $s_{k}=s^{\prime}$ and $r_{k} \leq r^{\prime}$ because $\overline{l_{k}}$ is relocated downward by previous operations for 
$l_{c}\rightarrow l_{c}^{\ast},\ldots,l_{k+1}\rightarrow l_{k+1}^{\ast}$ or still lies at the initial position.
Suppose that $\delta^{\prime}$ $\mathscr{L}^{(x,y)\ast}$-letters appear between the $p^{\prime}$-th box and the $q^{\prime}$-th box 
in the $y$-th column ($\delta^{\prime}\leq\delta$).
Then $q^{\prime}-p^{\prime}=q_{k}-p_{k}+\delta^{\prime}$ so that
\begin{align} \label{eq:ineq3}
q_{k}-p_{k}+s_{k}-r_{k}  & \geq q^{\prime}-p^{\prime}+s^{\prime}-r^{\prime}-\delta^{\prime}=t-\delta^{\prime}\\
& \geq l_{k}-j_{1,\beta_{1}}^{(x+1)}-\delta \nonumber,
\end{align}
which contradicts the assertion of Lemma~\ref{lem:kink2}.

\textbf{Case (b).} 
We can write $j_{1,\beta_{1}}^{(x+1)}=a^{\ast}$ ($a\in \mathscr{L}^{(x,y)}$).
Let $q_{k}$ be the initial position of $l_{k}$ in the $y$-th column of $\Tilde{T}$.
Furthermore, let us suppose that the initial entry at the $p_{k}$-th position ($p_{k}\leq q_{k}$) in the $y$-th column of $\Tilde{T}$ is $b$ and that the operation $a \rightarrow a^{\ast}$ replaces the entry $b$ by $a^{\ast}$.

\setlength{\unitlength}{12pt}
\begin{center}
\begin{picture}(11,7)
\put(3,0){\line(0,1){6}}
\put(4,0){\line(0,1){6}}
\put(3,1){\line(1,0){1}}
\put(3,2){\line(1,0){1}}
\put(3,4){\line(1,0){1}}
\put(3,5){\line(1,0){1}}
\put(3,1){\makebox(1,1){$a$}}
\put(3,4){\makebox(1,1){$b$}}
\put(1,4){\makebox(2,1){$p_{k} \rightarrow$}}
\put(5,2){\makebox(2,1){$\longrightarrow$}}
\put(8,0){\line(0,1){6}}
\put(9,0){\line(0,1){6}}
\put(8,3){\line(1,0){1}}
\put(8,4){\line(1,0){1}}
\put(8,5){\line(1,0){1}}
\put(8,3){\makebox(1,1){$b$}}
\put(8,4){\makebox(1,1){$a^{\ast}$}}

\put(9,4){\makebox(2,1){$\leftarrow p_{k}$}}
\put(2,6){\line(1,0){3}}
\put(7,6){\line(1,0){3}}
\put(3,6){\makebox(1,1){$y$}}
\put(8,6){\makebox(1,1){$y$}}
\end{picture},
\end{center}
so that $b>j_{1,\beta_{1}}^{(x+1)}$.
The initial tableau $\Tilde{T}$ has the following configuration.

\setlength{\unitlength}{15pt}
\begin{center}
\begin{picture}(11,6)
\put(2,0){\line(0,1){5}}
\put(3,0){\line(0,1){5}}
\put(5,0){\line(0,1){5}}
\put(8,0){\line(0,1){5}}
\put(9,0){\line(0,1){5}}
\put(2,3){\line(1,0){1}}
\put(2,4){\line(1,0){1}}
\put(3,0.9){\line(1,0){2}}
\put(3,2.1){\line(1,0){2}}
\put(8,1){\line(1,0){1}}
\put(8,2){\line(1,0){1}}
\put(8,3){\line(1,0){1}}
\put(8,4){\line(1,0){1}}
\put(1,5){\line(1,0){5}}
\put(7,5){\line(1,0){3}}
\put(2,2.9){\makebox(1,1){$\overline{l_{k}}$}}
\put(3,1){\makebox(2,1){$\overline{j_{1,\beta _{1}}^{(x+1)}}$}}
\put(8,1){\makebox(1,1){$l_{k}$}}
\put(8,3){\makebox(1,1){$b$}}
\put(2,5){\makebox(1,1){${\mathstrut x}$}}
\put(3,5){\makebox(2,1){${\mathstrut x+1}$}}
\put(8,5){\makebox(1,1){$y$}}
\put(0,1){\makebox(2,1){$s_{k} \rightarrow$}}
\put(0,3){\makebox(2,1){$r_{k} \rightarrow$}}
\put(9,1){\makebox(2,1){$\leftarrow q_{k}$}}
\put(9,3){\makebox(2,1){$\leftarrow p_{k}$}}
\end{picture}.
\end{center}
Inequality~\eqref{eq:ineq3} still holds in this case and this contradicts the assertion of Lemma~\ref{lem:kink2}.

In both cases, we have
$j_{1,\beta_{1}}^{(x+1)}\leq j_{1,\beta_{1}}^{(x)}=l_{k}-\tau_{0}-1$
and
\[
j_{1,\beta_{1}-t+1}^{(x+1)}\leq j_{1,\beta_{1}-t+1}^{(x)}=l_{k}-\tau_{0}-t \quad (t=1,\ldots,\beta_{1}).
\]

\textbf{(ii).}
Suppose that
\[
j_{s,1}^{(x+1)}\leq j_{s,1}^{(x)}=l_{k}-\sum_{i=1}^{s}\beta_{i}-\sum_{i=0}^{s-1}\tau_{i} \quad (s=1,\ldots,r-1).
\]
This is satisfied for $s=1$.
Under these assumptions, let us show that
\[
j_{s+1,\beta_{s+1}}^{(x+1)}\leq j_{s+1,\beta_{s+1}}^{(x)}=l_{k}-\sum_{i=1}^{s}\beta_{i}-\sum_{i=0}^{s}\tau_{i}-1.
\]
If this is not true,
\[
l_{k}-\sum_{i=1}^{s}\beta_{i}-\sum_{i=0}^{s}\tau_{i}  \leq
j_{s+1,\beta_{s+1}}^{(x+1)}\leq j_{s,1}^{(x+1)}-1 \leq l_{k}-\sum_{i=1}^{s}\beta_{i}-\sum_{i=0}^{s-1}\tau_{i}-1.
\]
Suppose
$j_{s+1,\beta_{s+1}}^{(x+1)}=l_{k}-\sum_{i=1}^{s}\beta_{i}-\sum_{i=0}^{s-1}\tau_{i}-t=
i_{s,\tau_{s}-t+1}^{(x)}$ ($t=1,\ldots,\tau_{s}$). 
Then the tableau $\Tilde{T}^{\prime}$ has the following configuration. 

\setlength{\unitlength}{15pt}
\begin{center}
\begin{picture}(15.5,8)
\put(4,2){\line(0,1){5}}
\put(5,2){\line(0,1){5}}
\put(6,2){\line(0,1){5}}
\put(11,0){\line(0,1){7}}
\put(12.5,0){\line(0,1){7}}
\put(5,3){\line(1,0){1}}
\put(5,4){\line(1,0){1}}
\put(4,5){\line(1,0){1}}
\put(4,6){\line(1,0){2}}
\put(3,7){\line(1,0){4}}
\put(11,1){\line(1,0){1.5}}
\put(11,2){\line(1,0){1.5}}
\put(11,4){\line(1,0){1.5}}
\put(11,5){\line(1,0){1.5}}
\put(11,6){\line(1,0){1.5}}
\put(10,7){\line(1,0){3.5}}
\put(3.75,7){\makebox(1,1){${\mathstrut x}$}}
\put(4.75,7){\makebox(1.5,1){${\mathstrut x+1}$}}
\put(11,7){\makebox(1.5,1){$y$}}
\put(4,5){\makebox(1,1){$\overline{l_{k}}$}}
\put(5,4){\makebox(1,2){$\overline{B_{s}}$}}
\put(11,1){\makebox(1.5,1){$l_{k}$}}
\put(11,2){\makebox(1.5,2){$A_{s-1}$}}
\put(11,4){\makebox(1.5,1){$\vdots$}}
\put(0,5){\makebox(4,1){$r^{\prime}(=p_{k}^{\prime}) \rightarrow$}}
\put(2,3){\makebox(2,1){$s^{\prime} \rightarrow$}}
\put(12.5,5){\makebox(2,1){$\leftarrow p^{\prime}$}}
\put(12.5,1){\makebox(2,1){$\leftarrow q^{\prime}$}}

\put(7,2){\vector(-1,1){1.5}}
\put(7.5,1){\makebox(2.5,1){$\overline{j_{s+1,\beta _{s+1}}^{(x+1)}}$}}

\put(10,4){\vector(1,1){1.5}}
\put(8,3){\makebox(2.5,1){$j_{s+1,\beta _{s+1}}^{(x+1)}$}}

\end{picture},
\end{center}
where $A_{s-1}$ denotes the stack of blocks $I_{s-1}^{(y)},\ldots,I_{0}^{(y)}$ in this order (from top to bottom) 
and $\overline{B_{s}}$ denotes the stack of blocks $\overline{J_{1}^{(x+1)}},\ldots,\overline{J_{s}^{(x+1)}}$ 
in this order (from top to bottom).
We consider the following two cases separately:
\begin{description}
\item[(a)]
$j_{s+1,\beta_{s+1}}^{(x+1)}\notin \mathscr{L}^{(x,y)\ast}$.
\item[(b)]
$j_{s+1,\beta_{s+1}}^{(x+1)}\in \mathscr{L}^{(x,y)\ast}$.
\end{description}

\textbf{Case (a).}
The entry $j_{s+1,\beta_{s+1}}^{(x+1)}$ exists initially in the $y$-th column of $\Tilde{T}$.
Let $r_{k}$ and $s_{k}$ be the initial position of $\overline{l_{k}}$ in the $x$-th column and 
that of $\overline{j_{s+1,\beta_{s+1}}^{(x+1)}}$ in the $(x+1)$-st column of $\Tilde{T}$, respectively.
Let $p_{k}$ and $q_{k}$ be the initial position of $j_{s+1,\beta_{s+1}}^{(x+1)}$ 
and that of $l_{k}$ in the $y$-th column of $\Tilde{T}$, respectively.
Then $s_{k}=s^{\prime}$ and $r_{k} \leq r^{\prime}$.
Suppose that $\delta^{\prime}$ $\mathscr{L}^{(x,y)\ast}$-letters appear between the $p^{\prime}$-th box and the $r^{\prime}$-th box 
in the $y$-th column ($\delta^{\prime}\leq\delta$).
Then $q^{\prime}-p^{\prime}=q_{k}-p_{k}+\delta^{\prime}$ so that
$q_{k}-p_{k}+s_{k}-r_{k}\geq q^{\prime}-p^{\prime}+s^{\prime}-r^{\prime}-\delta$.
Here, 
$s^{\prime}-r^{\prime}=\sum_{i=1}^{s}\left\vert J_{i}^{(x+1)}\right\vert =\sum_{i=1}^{s}\beta_{i}$ 
and 
$q^{\prime}-p^{\prime}=\sum_{i=0}^{s-1}\left\vert I_{i}^{(y)}\right\vert +t=\sum_{s=0}^{s-1}\tau_{i}+t$.
Therefore, we have 
\begin{equation} \label{eq:ineq4}
q_{k}-p_{k}+s_{k}-r_{k}\geq\sum_{i=1}^{s}\beta_{i}+\sum_{s=0}^{s-1}\tau_{i}+t-\delta=l_{k}-j_{s+1,\beta_{s+1}}^{(x+1)}-\delta.
\end{equation}
This contradicts the assertion of Lemma~\ref{lem:kink2}.

\textbf{Case (b).}
We can write $j_{s+1,\beta_{s+1}}^{(x+1)}=a^{\ast}$ ($a\in \mathscr{L}^{(x,y)}$).  
Let $q_{k}$ be the initial position of $l_{k}$ in the $y$-th column of $\Tilde{T}$.
Furthermore, let us suppose that the initial entry at the $p_{k}$-th position ($p_{k}\leq q_{k}$) in the $y$-th column of $\Tilde{T}$ is $b$ and that the operation $a \rightarrow a^{\ast}$ replaces the entry $b$ by $a^{\ast}$ so that $b>j_{s+1,\beta_{s+1}}^{(x+1)}$.
The initial tableau $\Tilde{T}$ has the following configuration.

\setlength{\unitlength}{15pt}
\begin{center}
\begin{picture}(12,7)
\put(2,1){\line(0,1){5}}
\put(3,1){\line(0,1){5}}
\put(4,1){\line(0,1){5}}
\put(9,1){\line(0,1){5}}
\put(10,1){\line(0,1){5}}
\put(2,4){\line(1,0){1}}
\put(2,5){\line(1,0){1}}
\put(3,2){\line(1,0){1}}
\put(3,3){\line(1,0){1}}
\put(9,2){\line(1,0){1}}
\put(9,3){\line(1,0){1}}
\put(9,4){\line(1,0){1}}
\put(9,5){\line(1,0){1}}
\put(1,6){\line(1,0){4}}
\put(8,6){\line(1,0){3}}
\put(2,4){\makebox(1,1){$\overline{l_{k}}$}}
\put(9,2){\makebox(1,1){$l_{k}$}}
\put(9,4){\makebox(1,1){$b$}}
\put(1.75,6){\makebox(1,1){${\mathstrut x}$}}
\put(2.75,6){\makebox(1.5,1){${\mathstrut x+1}$}}
\put(9,6){\makebox(1,1){$y$}}
\put(0,2){\makebox(2,1){$s_{k} \rightarrow$}}
\put(0,4){\makebox(2,1){$r_{k} \rightarrow$}}
\put(10,2){\makebox(2,1){$\leftarrow q_{k}$}}
\put(10,4){\makebox(2,1){$\leftarrow p_{k}$}}

\put(5,1){\vector(-1,1){1.5}}
\put(5.5,0){\makebox(2.5,1){$\overline{j_{s+1,\beta _{s+1}}^{(x+1)}}$}}
\end{picture}.
\end{center}
Inequality~\eqref{eq:ineq4} still holds in this case and this contradicts the assertion of Lemma~\ref{lem:kink2}.

In both cases, we have $j_{s+1,t}^{(x+1)}\leq j_{s+1,t}^{(x)}$ ($t=1,\ldots,\beta_{s+1}$).
From \textbf{(i)} and \textbf{(ii)} and by induction, we have
\[
j_{r,1}^{(x+1)}\leq j_{r,1}^{(x)}=l_{k}-\sum_{i=1}^{r}\beta_{i}-\sum_{i=0}^{r-1}\tau_{i}.
\]

\textbf{(iii).}
We claim that
$\overline{l_{k}^{\ast}}\preceq\overline{j_{0}^{(x+1)}}$ ($j_{0}^{(x+1)}\leq l_{k}^{\ast}$).
If this is not true, then
\[
l_{k}-\sum_{i=1}^{r}\beta_{i}-\sum_{i=0}^{r}\tau_{i}(=l_{k}^{\ast}+1)\leq j_{0}^{(x+1)}\leq j_{r,1}^{(x)}-1
\leq l_{k}-\sum_{i=1}^{r}\beta_{i}-\sum_{i=0}^{r-1}\tau_{i}-1.
\]
Suppose 
$j_{0}^{(x+1)}=l_{k}-\sum_{i=1}^{r}\beta_{i}-\sum_{i=0}^{r-1}\tau_{i}-t=i_{r,\tau_{r}-t+1}^{(y)}$ ($t=1,\ldots,\tau_{r}$), 
the updated tableau $\Tilde{T}^{\prime}$  has the following configuration.

\setlength{\unitlength}{15pt}
\begin{center}
\begin{picture}(15.5,8)
\put(4,2){\line(0,1){5}}
\put(5,2){\line(0,1){5}}
\put(7,2){\line(0,1){5}}
\put(11,0){\line(0,1){7}}
\put(13,0){\line(0,1){7}}
\put(5,2.9){\line(1,0){2}}
\put(5,4){\line(1,0){2}}
\put(4,5){\line(1,0){1}}
\put(4,6){\line(1,0){3}}
\put(3,7){\line(1,0){5}}
\put(11,1){\line(1,0){2}}
\put(11,2){\line(1,0){2}}
\put(11,4){\line(1,0){2}}
\put(11,5){\line(1,0){2}}
\put(11,6){\line(1,0){2}}
\put(10,7){\line(1,0){4}}
\put(4,7){\makebox(1,1){${\mathstrut x}$}}
\put(5,7){\makebox(2,1){${\mathstrut x+1}$}}
\put(11,7){\makebox(2,1){$y$}}
\put(4,5){\makebox(1,1){$\overline{l_{k}}$}}
\put(5,2.9){\makebox(2,1){$\overline{j_{0}^{(x+1)}}$}}
\put(5,4){\makebox(2,2){$\overline{B_{r}}$}}
\put(11,1){\makebox(2,1){$l_{k}$}}
\put(11,2){\makebox(2,2){$A_{r-1}$}}
\put(11,4){\makebox(2,1){$\vdots$}}
\put(11,5){\makebox(2,1){$j_{0}^{(x+1)}$}}
\put(0,5){\makebox(4,1){$r^{\prime}(=p_{k}^{\prime}) \rightarrow$}}
\put(7,3){\makebox(2,1){$\leftarrow s^{\prime}$}}
\put(13,5){\makebox(2,1){$\leftarrow p^{\prime}$}}
\put(13,1){\makebox(2,1){$\leftarrow q^{\prime}$}}
\end{picture}.
\end{center}
The same argument as in \textbf{(ii)} leads to a contradiction.
Hence we have $j_{0}^{(x+1)}\leq l_{k}^{\ast}$.

\textbf{(iv).}
When the operation (B) for $l_{k}\rightarrow l_{k}^{\ast}$ is finished, the updated tableau has the following configuration.

\setlength{\unitlength}{12pt}
\begin{center}
\begin{picture}(6,8)
\put(2,0){\line(0,1){5}}
\put(3.2,0){\line(0,1){5}}
\put(5.4,0){\line(0,1){5}}
\put(2,0.7){\line(1,0){3.4}}
\put(2,2.2){\line(1,0){3.4}}
\put(2,4){\line(1,0){3.4}}
\put(1,5){\line(1,0){5}}
\put(2.2,5){\makebox(1,1){${\mathstrut x}$}}
\put(3.4,5){\makebox(2,1){${\mathstrut x+1}$}}
\put(2.1,1){\makebox(1,1){$\overline{l_{k}^{\ast}}$}}
\put(2.1,2){\makebox(1,2){$\overline{C_{J}}$}}
\put(3.3,1){\makebox(2,1){$\overline{j_{0}^{(x+1)}}$}}
\put(3.3,2){\makebox(2,2){$\overline{B_{r}}$}}
\put(0,0.8){\makebox(2,1){$q_{k}^{\prime} \rightarrow$}}
\put(0,3){\makebox(2,1){$p_{k}^{\prime} \rightarrow$}}
\end{picture},
\end{center}
where $\overline{C_{J}}$ stands for the stack of the sequence of blocks $\overline{J_{1}^{(x)}},\overline{J_{2}^{(x)}},\ldots,\overline{J_{r}^{(x)}}$ in this order (from top to bottom)
Here, $\overline{J_{i}^{(x)}}\preceq\overline{J_{i}^{(x+1)}}$ ($i=1,\ldots,r$) so that 
$\overline{C_{J}}\preceq\overline{B_{r}}$ and 
$\overline{l_{k}^{\ast}}\preceq\overline{j_{0}^{(x+1)}}$.
Therefore,  $\overline{\Delta_{k}}(C_{x}^{0})\preceq\Delta C_{x+1}[p_{k}^{\prime},q_{k}^{\prime}]$.
The position of $\overline{l_{k}^{\ast}}$ and those of entries in $\overline{J_{i}^{(x)}}$ ($i=1,\ldots,r$) 
do not change under subsequent operations for 
$l_{k-1}\rightarrow l_{k-1}^{\ast},\ldots,l_{1}\rightarrow l_{1}^{\ast}$.
Thus, the proof of \textbf{(II)} has been completed.

\textbf{(v).}
By the same argument as in \textbf{(i)}, \textbf{(ii)}, and \textbf{(iii)}, it is not hard to show 
$\overline{\Delta_{k=c}}(C_{x}^{0})\preceq\Delta C_{x+1}[p_{c},q_{c}^{\prime}]$, 
where $p_{c}$ (resp. $q_{c}^{\prime}$) is the position of the top (resp. bottom) box of 
$\overline{\Delta_{k=c}}(C_{x}^{0})$.
Note that $p_{c}$ is the initial position of $\overline{l_{c}}$ in $C_{x}^{0}$.
This completes the proof of \textbf{(I)}.
\end{proof}

We can prove the following Lemma~\ref{lem:st3} and Lemma~\ref{lem:st4} 
in the similar manner of the proof of Lemma~\ref{lem:st1} and Lemma~\ref{lem:st2}.
The proof of Lemma~\ref{lem:st3} uses Lemma~\ref{lem:kink3} instead of Lemma~\ref{lem:kink1} and 
that of Lemma~\ref{lem:st4} uses Lemma~\ref{lem:kink4} instead of Lemma~\ref{lem:kink2}.
Lemma~\ref{lem:kink3} and Lemma~\ref{lem:kink4} can be also proven by the similar manner of the proof of Lemma~\ref{lem:kink1} (2).

\begin{lem} \label{lem:st3}
Suppose that $T=C_{1}C_{2}\cdots C_{n_{c}} \in C_{n}\text{-}\mathrm{SST}_{\mathrm{KN}}$.
Let us set
\[
\Tilde{T}:=\begin{cases}
\overline{\Phi^{(x+1)}}(T) & (1\leq x\leq n_{c}-1), \\
T & (x=n_{c}).
\end{cases}
\]
Here, we assume that $\Tilde{T}\neq \emptyset$ and that in the updating process of the tableau from $T$ to $\Tilde{T}$ the semistandardness of the $\mathscr{C}_{n}^{(+)}$-letters part of the tableau is preserved.
Then the $\mathscr{C}_{n}^{(+)}$-letters part of $\phi^{(x,x)}(\Tilde{T})$ is semistandard.
\end{lem}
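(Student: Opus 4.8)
The plan is to imitate the proof of Lemma~\ref{lem:st1} in the degenerate case $y=x$, replacing every appeal to Lemma~\ref{lem:kink1} by the corresponding statement Lemma~\ref{lem:kink3}. First I would check that $\phi^{(x,x)}$ is well-defined. Since each map composing $\overline{\Phi^{(x+1)}}$ carries a left and a right index $\geq x+1$, it alters only columns of index $\geq x+1$; hence the $x$-th column of $\Tilde{T}$ is the original column $C_{x}$ of $T$, which is KN-admissible because $T\in C_{n}\text{-}\mathrm{SST}_{\mathrm{KN}}$, so $C^{(x,x)}=C_{x}$ can be split. Applying $\phi^{(x,x)}$ modifies only the $\mathscr{C}_{n}^{(+)}$-letters part of the $x$-th column, replacing $C_{+}^{(x)}$ by $C_{+}^{(x)\ast}$; therefore it suffices to verify semistandardness of the two adjacent pairs of columns $(x-1,x)$ and $(x,x+1)$.

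The pair $(x,x+1)$, present only when $x\leq n_{c}-1$, is disposed of exactly as in Lemma~\ref{lem:st1}: Lemma~\ref{lem:col_sst1} gives $C_{+}^{(x)\ast}\preceq C_{+}^{(x)}$, while the hypothesis that the $\mathscr{C}_{n}^{(+)}$-letters part of $\Tilde{T}$ stays semistandard gives $C_{+}^{(x)}\preceq C_{+}^{(x+1)}$, and these combine to show that the new $x$-th column and the unchanged $(x+1)$-th column are compatible. The real content is the left pair $(x-1,x)$, i.e. the inequality $C_{+}^{(x-1)}\preceq C_{+}^{(x)\ast}$.

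To prove the left pair I would set $\mathscr{L}^{(x,x)}=\mathscr{J}^{(x)}\cap\mathscr{I}^{(x)}=\{l_{1},\ldots,l_{c}\}$ and run the second-kind algorithm for $\phi^{(x,x)}$, which rebuilds the $\mathscr{C}_{n}^{(+)}$-letters part of the $x$-th column as the stacked blocks $\Delta_{c}(C_{x}^{0}),\ldots,\Delta_{1}(C_{x}^{0})$. The assertion to establish by downward induction on $k=c,c-1,\ldots,1$ is
\[
\Delta C_{x-1}[p_{k}',q_{k}']\preceq\Delta_{k}(C_{x}^{0}),
\]
with $p_{k}'$ and $q_{k}'$ the positions of the top and bottom boxes of $\Delta_{k}(C_{x}^{0})$. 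Following steps (i)--(iii) of the proof of Lemma~\ref{lem:st1}, the inductive step proceeds by contradiction: assuming an entry of the $(x-1)$-th column is too large, I would exhibit in $\Tilde{T}$ the configuration consisting of that entry together with $l_{k}$ and their barred copies, and read off horizontal and vertical gaps that violate the bound $(q-p)+(s-r)<b-a-\delta$ furnished by Lemma~\ref{lem:kink3}; this forces the entries of the $(x-1)$-th column to satisfy $i_{s,t}^{(x-1)}\leq i_{s,t}^{(x)}$ and $i_{0}^{(x-1)}\leq l_{k}^{\ast}$, and since every $\mathscr{L}^{(x,x)}$-letter relocated above $l_{k}^{\ast}$ exceeds it, the block inequality follows (step (iv)). The base case $k=c$ is the verbatim analogue of step (v).

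The principal difference from Lemma~\ref{lem:st1}, and the step I expect to be the main obstacle, is that here the left column is the \emph{unprocessed} column $C_{x-1}$ of $\Tilde{T}$ rather than one already altered by a previous operation $\phi^{(x,\cdot)}$; accordingly the contradiction must be driven by Lemma~\ref{lem:kink3} in place of Lemma~\ref{lem:kink1}. I would therefore first establish Lemma~\ref{lem:kink3} by the same slot-counting induction on the filling diagram of $C^{(x,x)}$ used for Lemma~\ref{lem:kink1}, its base configuration being controlled by the KN-admissibility of $C_{x}$; once this is in hand, the block induction above transcribes from Lemma~\ref{lem:st1} with no further changes.
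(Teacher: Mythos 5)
Your proposal is correct and is essentially the paper's own proof: the paper establishes Lemma~\ref{lem:st3} precisely by transcribing the proof of Lemma~\ref{lem:st1} with Lemma~\ref{lem:kink3} in place of Lemma~\ref{lem:kink1}, exactly as you describe, with well-definedness of $\phi^{(x,x)}$ and the pair $(x,x+1)$ handled via the unchanged original column $C_{x}$ and Lemma~\ref{lem:col_sst1} as in the proof of Lemma~\ref{lem:Phi}. Two harmless imprecisions in your write-up: $\phi^{(x,x)}$ also modifies the $\mathscr{C}_{n}^{(-)}$-letters part of column $x$ (irrelevant for this lemma), and the base inequality of Lemma~\ref{lem:kink3} comes from condition (C2) applied to the KN-admissible two-column tableau $C_{x-1}C_{x}$, not from admissibility of the single column $C_{x}$.
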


\begin{lem} \label{lem:st4}
Suppose that $T=C_{1}C_{2}\cdots C_{n_{c}} \in C_{n}\text{-}\mathrm{SST}_{\mathrm{KN}}$.
Let us set 
\[
\Tilde{T}:=(\Phi^{(x+1)})^{-1}\circ\overline{\Phi^{(x+1)}}(T) \quad (1\leq x\leq n_{c}-1). 
\]
Here, we assume that $\Tilde{T}\neq \emptyset$ and that in the updating process of the tableau from $T$ to $\Tilde{T}$ the semistandardness of the $\mathscr{C}_{n}^{(-)}$-letters part of the tableau is preserved.
Then the $\mathscr{C}_{n}^{(-)}$-letters part of $\phi^{(x,x)}(\Tilde{T})$ and that  of 
$\left(\phi^{(x+1,x+1)}\circ\phi^{(x,x)}\right)(\Tilde{T})$ are semistandard.
\end{lem}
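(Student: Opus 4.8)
The plan is to transcribe the proof of Lemma~\ref{lem:st2} to the base-case situation, replacing the chained tableau by $\Tilde{T} = \Phi^{(x+1)-1} \circ \overline{\Phi^{(x+1)}}(T)$ and invoking Lemma~\ref{lem:kink4} wherever that proof invoked Lemma~\ref{lem:kink2}. First I would reduce both assertions to a single two-column statement. Let $C_{x}$ (resp. $C_{x}^{0}$) be the $x$-th column of $\phi^{(x,x)}(\Tilde{T})$ (resp. $\Tilde{T}$) and let $C_{x+1}$ be the $(x+1)$-th column of $\Tilde{T}$. It suffices to show that the $\mathscr{C}_{n}^{(-)}$-letters part of the two-column tableau $C_{x}C_{x+1}$ is semistandard. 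Indeed, the adjacency of the new $x$-th column with the $(x-1)$-th column is immediate from Lemma~\ref{lem:col_sst1}: the map $\phi^{(x,x)}$ only enlarges the $\mathscr{C}_{n}^{(-)}$-part of column $x$ (so $C_{-}^{(x)} \preceq C_{-}^{(x)\ast}$) while leaving column $x-1$ untouched. For the second assertion, concerning $\left(\phi^{(x+1,x+1)} \circ \phi^{(x,x)}\right)(\Tilde{T})$, I would run the monotonicity argument used in Lemma~\ref{lem:st2}: writing $C_{-}^{(x+1)}$, $C_{-}^{(x+1)\prime}$, and $C_{-}^{(x+1)\prime\prime}$ for the $\mathscr{C}_{n}^{(-)}$-parts of the $(x+1)$-th column of $\phi^{(x,x)}(\Tilde{T})$, of $\left(\phi^{(x+1,x+1)} \circ \phi^{(x,x)}\right)(\Tilde{T})$, and of $\overline{\Phi^{(x+1)}}(T)$, Lemma~\ref{lem:col_sst1} gives $C_{-}^{(x+1)} \preceq C_{-}^{(x+1)\prime} \preceq C_{-}^{(x+1)\prime\prime}$, and combined with the first assertion $C_{-}^{(x)\ast} \preceq C_{-}^{(x+1)}$ and the semistandardness of the $\mathscr{C}_{n}^{(-)}$-part of $\overline{\Phi^{(x+1)}}(T)$ this yields the claim by transitivity of $\preceq$.

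For the core two-column statement I would set $\mathscr{L}^{(x,x)} := \mathscr{J}^{(x)} \cap \mathscr{I}^{(x)} =: \{l_{1}, \ldots, l_{c}\}$, adopt the second-kind algorithm for $\phi^{(x,x)}$ on the $\mathscr{C}_{n}^{(-)}$-part of column $x$ and the first-kind algorithm on its $\mathscr{C}_{n}^{(+)}$-part, so that the new $\mathscr{C}_{n}^{(-)}$-part of column $x$ is assembled from the relocation blocks $\overline{\Delta_{k}}(C_{x}^{0})$. The assertion to be proven by descending induction on $k = c, c-1, \ldots, 1$ is $\overline{\Delta_{k}}(C_{x}^{0}) \preceq \Delta C_{x+1}[p_{k}^{\prime}, q_{k}^{\prime}]$, where $p_{k}^{\prime}$ and $q_{k}^{\prime}$ are the top and bottom positions of $\overline{\Delta_{k}}(C_{x}^{0})$. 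Reading off the entries of the relocated $\overline{\mathscr{J}^{(x)}}$-blocks from the filling diagram and matching them against the corresponding entries of column $x+1$, each inductive step comes down to ruling out a configuration in which some entry of $C_{x+1}$ is too small; that exclusion is exactly the inequality $(q-p) + (s-r) < b - a - \delta$ furnished by Lemma~\ref{lem:kink4}, applied in the configuration produced after the partial operation for $l_{k+1} \rightarrow l_{k+1}^{\ast}$. Steps (i)--(iv) of the Lemma~\ref{lem:st2} argument then transcribe verbatim, with the base step (I) established by the variant (v) used there.

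The main obstacle I anticipate is ensuring that Lemma~\ref{lem:kink4} really is available in this setting and that the position bookkeeping goes through unchanged. In Lemma~\ref{lem:st2} the tableau $\Tilde{T}$ already carried the structure created by the preceding pairs $\phi^{(x+1,\cdot)} \circ \phi^{(x,\cdot)}$, whereas here $\Tilde{T} = \Phi^{(x+1)-1} \circ \overline{\Phi^{(x+1)}}(T)$ is the tableau entering the $x$-th round of $\Phi$, so I would first verify that the hypotheses feeding Lemma~\ref{lem:kink4}---namely $\Tilde{T} \neq \emptyset$ and preservation of semistandardness of the $\mathscr{C}_{n}^{(-)}$-part in passing from $T$ to $\Tilde{T}$---are precisely those assumed in the statement of Lemma~\ref{lem:st4}. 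Once this compatibility is confirmed, the remaining identifications of the block entries, such as $j_{s,\beta_{s}-t+1}^{(x)} = l_{k} - \sum_{i=1}^{s-1}\beta_{i} - \sum_{i=0}^{s-1}\tau_{i} - t$ and their column-$(x+1)$ counterparts, are routine and parallel to the computation already carried out in the proof of Lemma~\ref{lem:st2}.
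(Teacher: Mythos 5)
Your proposal is correct and coincides with the paper's own treatment: the paper does not write out a separate proof of Lemma~\ref{lem:st4}, but states that it is proven in the same manner as Lemma~\ref{lem:st2} with Lemma~\ref{lem:kink4} replacing Lemma~\ref{lem:kink2}, which is precisely the specialization (to the pair $\phi^{(x+1,x+1)}\circ\phi^{(x,x)}$ acting on $\Phi^{(x+1)-1}\circ\overline{\Phi^{(x+1)}}(T)$) that you carry out, including the reduction to the two-column claim $\overline{\Delta_{k}}(C_{x}^{0})\preceq\Delta C_{x+1}[p_{k}^{\prime},q_{k}^{\prime}]$, the monotonicity argument via Lemma~\ref{lem:col_sst1}, and the verification that the hypotheses of Lemma~\ref{lem:kink4} are supplied by the assumptions of the statement.
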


\begin{lem} \label{lem:kink3}
Suppose that $T=C_{1}C_{2}\cdots C_{n_{c}} \in C_{n}\text{-}\mathrm{SST}_{\mathrm{KN}}$.
Let us set
\[
\Tilde{T}:=\begin{cases}
\overline{\Phi^{(x+1)}}(T) & (1\leq x\leq n_{c}-1), \\
T & (x=n_{c}).
\end{cases}
\]
Here, we assume that $\overline{\Phi^{(x+1)}}$ is well-defined on $T$ when $1\leq x\leq n_{c}-1$.
Suppose that $\Tilde{T}$ has the following configuration ($p\leq q<r \leq s$).

\setlength{\unitlength}{12pt}
\begin{center}
\begin{picture}(6,10)
\put(2,0){\line(0,1){9}}
\put(3,0){\line(0,1){9}}
\put(4,0){\line(0,1){9}}
\put(2,7){\line(1,0){1}}
\put(2,8){\line(1,0){1}}
\put(3,1){\line(1,0){1}}
\put(3,2){\line(1,0){1}}
\put(3,3){\line(1,0){1}}
\put(3,4){\line(1,0){1}}
\put(3,5){\line(1,0){1}}
\put(3,6){\line(1,0){1}}
\put(1,9){\line(1,0){4}}
\put(0,7){\makebox(2,1){$p\rightarrow$}}
\put(4,1){\makebox(2,1){$\leftarrow s$}}
\put(4,3){\makebox(2,1){$\leftarrow r$}}
\put(4,5){\makebox(2,1){$\leftarrow q$}}
\put(1.5,9){\makebox(1.5,1){${\mathstrut x-1}$}}
\put(3.25,9){\makebox(1,1){${\mathstrut x}$}}
\put(2,7){\makebox(1,1){$a_{1}$}}
\put(3,1){\makebox(1,1){$\overline{a_{2}}$}}
\put(3,3){\makebox(1,1){$\Bar{b}$}}
\put(3,5){\makebox(1,1){$b$}}
\end{picture}.
\end{center}
Then we have $(q-p)+(s-r)<b-\min(a_{1},a_{2})$ because the two-column tableau $C_{x-1}C_{x}$ is 
KN-admissible (Definition~\ref{df:KN_C} (C2)).
Let $\mathscr{J}^{(x)}$($\mathscr{I}^{(x)}$) be the set of 
$\mathscr{J}$($\mathscr{I}$)-letters 
in the $x$-th column and set $\mathscr{L}^{(x,x)}:=\mathscr{J}^{(x)}\cap \mathscr{I}^{(x)}$.
If $\sharp\left\{  l\in \mathscr{L}^{(x,x)} \relmiddle| l^{\ast}<b<l\right\} =\delta$, then we have 
$(q-p)+(s-r)<b-\min(a_{1},a_{2})-\delta$ in the above configuration.
\end{lem}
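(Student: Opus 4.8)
The plan is to establish the two assertions separately: I would derive the unconditional inequality by a direct appeal to the admissibility of the two relevant columns, which here are pristine, and then obtain the refined $\delta$-inequality by a downward induction that copies the proof of Lemma~\ref{lem:kink1}, part~(2), essentially verbatim.

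First I would observe that $\overline{\Phi^{(x+1)}}$ rewrites only the columns of index $\geq x+1$: each constituent map $\phi^{(x',y')}$ with $x'\geq x+1$ alters the $\mathscr{C}_{n}^{(-)}$-part of column $x'$ and the $\mathscr{C}_{n}^{(+)}$-part of column $y'$, and both $x'$ and $y'\geq x'$ exceed $x$. Hence the columns $C_{x-1}$ and $C_{x}$ of $\Tilde{T}$ coincide with those of the KN-admissible tableau $T$ (this also holds trivially in the case $x=n_{c}$, where $\Tilde{T}=T$). Reading off the displayed configuration, $b$ sits at position $q$ and $\Bar{b}$ at position $r>q$ of column $x$, while $\Bar{a}$ sits below at position $s>r$; since column $x$ is semistandard and $\Bar{b}\prec\Bar{a}$ we get $a<b$. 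The configuration is then exactly the instance $a_{1}=a_{2}=a$, $b_{1}=b_{2}=b$ of the first diagram of condition (C2) in Definition~\ref{df:KN_C} for the adjacent pair $C_{x-1}C_{x}$, and (C2), which the pristine pair inherits from $T$, gives $(q-p)+(s-r)<\max(b_{1},b_{2})-\min(a_{1},a_{2})=b-a$ — the asserted bound (the statement phrases this as the KN-coadmissibility of $C_{x-1}C_{x}$).

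For the sharpened inequality I would set $\mathscr{L}^{(x,y)}=\{l_{1}<\cdots<l_{c}\}$ and write $b=l_{k_{0}'}$, noting $b\in\mathscr{L}^{(x,y)}$ because both $b$ and $\Bar{b}$ occur in column $x$. When $b=l_{c}$ one has $\delta=0$ and nothing beyond the unconditional bound is needed, so this is the base case of a downward induction on $k=c-1,\ldots,k_{0}'$. For the inductive step I would run the splitting $\phi^{(x,x)}$ of column $x$ through the operation for $l_{k+1}\to l_{k+1}^{\ast}$ and read off its filling diagram: in the region $(0)$ between the $l_{k}$-th and the $l_{k+1}$-th slots (which contains no $\emptyset$- or $(\pm)$-slots), let $\alpha,\beta,\varepsilon$ be the numbers of $(+)$-, $(-)$-, and $(\times)$-slots. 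Tracking the relocations (as in Lemma~\ref{lem:position1}) yields $q_{k+1}=q_{k}+\alpha+1$, $r_{k+1}=r_{k}-\beta-1$, $l_{k+1}=l_{k}+(\alpha+\beta+\varepsilon)+1$, and $\delta_{k+1}=(\delta_{k}-1)+\varepsilon$, where $q_{k}$ (resp. $r_{k}$) is the position of $l_{k}$ (resp. $\overline{l_{k}}$) in column $x$ and $\delta_{k}=\sharp\{l\in\mathscr{L}^{(x,y)}\mid l^{\ast}<l_{k}<l\}$. Substituting these four relations into the inductive hypothesis $(q_{k+1}-p)+(s-r_{k+1})<l_{k+1}-a-\delta_{k+1}$, the common term $\alpha+\beta+2$ cancels on both sides and leaves $(q_{k}-p)+(s-r_{k})<l_{k}-a-\delta_{k}$; taking $k=k_{0}'$ recovers $(q-p)+(s-r)<b-a-\delta$.

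The only genuinely delicate point I anticipate is the bookkeeping in the inductive step: one must check that as the splitting relocates the $\mathscr{L}^{\ast}$-letters the positions $q_{k},r_{k}$ shift by exactly the claimed slot counts, and that $\overline{l_{k}}$ and $\overline{l_{k+1}}$ — which now both live inside the single column $x$, rather than in two different columns as in Lemma~\ref{lem:kink1} — stay above $\Bar{a}$, so that $p$ and $s$ remain fixed throughout the induction. Once these slot identities are verified exactly as in Lemma~\ref{lem:kink1}, part~(2), the substitution is purely formal; in particular, the long case analysis required for part~(1) of Lemma~\ref{lem:kink1} is avoided here precisely because $C_{x-1}C_{x}$ is pristine, so I expect no obstacle beyond this careful tracking of positions.
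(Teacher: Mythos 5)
Your proposal is correct and takes essentially the same route as the paper: the paper likewise disposes of the unconditional inequality by citing the untouched adjacent pair $C_{x-1}C_{x}$ (its phrase ``KN-coadmissible'' is evidently a slip for the KN-admissibility condition (C2) of Definition~\ref{df:KN_C}, which is exactly what you invoke, since $\overline{\Phi^{(x+1)}}$ leaves columns $1,\ldots,x$ intact), and it obtains the $\delta$-refined bound by precisely the downward induction of part (2) of the proof of Lemma~\ref{lem:kink1}, with the same slot counts $\alpha,\beta,\varepsilon$, the same relations $q_{k+1}=q_{k}+\alpha+1$, $r_{k+1}=r_{k}-\beta-1$, $l_{k+1}=l_{k}+\alpha+\beta+\varepsilon+1$, $\delta_{k+1}=\delta_{k}-1+\varepsilon$, and the same cancellation of $\alpha+\beta+2$. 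Your bookkeeping caveat (that $p$, $s$ stay fixed and the intermediate configurations with $l_{k}$, $\overline{l_{k}}$ remain valid instances of the part-(1) configuration) is exactly the point the paper leaves implicit, so there is nothing missing relative to the paper's own level of detail.
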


\begin{lem} \label{lem:kink4}
Suppose that $T=C_{1}C_{2}\cdots C_{n_{c}} \in C_{n}\text{-}\mathrm{SST}_{\mathrm{KN}}$.
Let us set
\[
\Tilde{T}:=(\Phi^{(x+1)})^{-1}\circ\overline{\Phi^{(x+1)}}(T) \quad (1\leq x\leq n_{c}-1).
\]
Here, we assume that $\overline{\Phi^{(x+1)}}$ is well-defined on $T$.
Suppose that $\Tilde{T}$ has the following configuration ($p\leq q<r \leq s$).

\setlength{\unitlength}{12pt}
\begin{center}
\begin{picture}(6,10)
\put(2,0){\line(0,1){9}}
\put(3,0){\line(0,1){9}}
\put(4,0){\line(0,1){9}}
\put(3,1){\line(1,0){1}}
\put(3,2){\line(1,0){1}}
\put(2,3){\line(1,0){1}}
\put(2,4){\line(1,0){1}}
\put(2,5){\line(1,0){1}}
\put(2,6){\line(1,0){1}}
\put(2,7){\line(1,0){1}}
\put(2,8){\line(1,0){1}}
\put(1,9){\line(1,0){4}}
\put(0,3){\makebox(2,1){$r\rightarrow$}}
\put(0,5){\makebox(2,1){$q\rightarrow$}}
\put(0,7){\makebox(2,1){$p\rightarrow$}}
\put(4,1){\makebox(2,1){$\leftarrow s$}}
\put(1.75,9){\makebox(1,1){${\mathstrut x}$}}
\put(3,9){\makebox(1.5,1){${\mathstrut x+1}$}}
\put(2,3){\makebox(1,1){$\Bar{b}$}}
\put(2,5){\makebox(1,1){$b$}}
\put(2,7){\makebox(1,1){$a_{1}$}}
\put(3,1){\makebox(1,1){$\overline{a_{2}}$}}
\end{picture}.
\end{center}
Then we have $(q-p)+(s-r)<b-\min(a_{1},a_{2})$ because the two-column tableau $C_{x}C_{x+1}$ is KN-admissible.
Let $\mathscr{J}^{(x)}$($\mathscr{I}^{(x)}$) be the set of $\mathscr{J}$($\mathscr{I}$)-letters 
in the $x$-th column and set $\mathscr{L}^{(x,x)}:=\mathscr{J}^{(x)}\cap \mathscr{I}^{(x)}$.
If $\sharp\left\{  l\in \mathscr{L}^{(x,x)} \relmiddle| l^{\ast}<b<l\right\}  =\delta$, then we have 
$(q-p)+(s-r)<b-\min(a_{1},a_{2})-\delta$ in the above configuration.
\end{lem}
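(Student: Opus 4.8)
The plan is to follow the two-part structure of the proof of Lemma~\ref{lem:kink1}, taking advantage of the fact that the two columns involved are untouched originals of $T$. First I would record that, by Definition~\ref{df:Phi}, $\Tilde{T}=\Phi^{(x+1)-1}\circ\overline{\Phi^{(x+1)}}(T)=\overline{\Phi^{(x+2)}}(T)$, and that $\overline{\Phi^{(x+2)}}$ is a composite of maps $\phi^{(z,y)}$ with $z,y\geq x+2$, each of which alters only the $\mathscr{C}_{n}^{(-)}$-part of a column of index $\geq x+2$ and the $\mathscr{C}_{n}^{(+)}$-part of a column of index $\geq x+2$. Hence the $x$-th and $(x+1)$-th columns of $\Tilde{T}$ coincide with those of $T$, so $C_{x}C_{x+1}$ is a KN-admissible adjacent pair of $T\in C_{n}\text{-}\mathrm{SST}_{\mathrm{KN}}$. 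The displayed configuration is exactly the configuration of condition (C2) in Definition~\ref{df:KN_C} in which three entries lie in the left column, specialized to $a_{1}=a_{2}=a$ and $b_{1}=b_{2}=b$ (with $a<b$ strictly, since an SST column has strictly increasing entries). Thus (C2) immediately gives $(q-p)+(s-r)<\max(b,b)-\min(a,a)=b-a$, which settles the first assertion.

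For the refined inequality I would argue by downward induction over the $\mathscr{L}^{(x,x)}$-letters exceeding $b$, mirroring part (2) of Lemma~\ref{lem:kink1}. Write $\mathscr{L}^{(x,x)}=\{l_{1}<\cdots<l_{c}\}$; since $b$ and $\Bar{b}$ both occur in $C_{x}$, we have $b\in\mathscr{L}^{(x,x)}$, say $b=l_{k_{0}^{\prime}}$. If $b=l_{c}$ then $\delta=0$ and nothing beyond part (1) is needed. Otherwise, for $k_{0}^{\prime}\leq k\leq c$ I would let $q_{k}$ (resp. $r_{k}$) be the position of $l_{k}$ (resp. $\Bar{l_{k}}$) in the $x$-th column, put $\delta_{k}:=\sharp\{l\in\mathscr{L}^{(x,x)}\mid l^{\ast}<l_{k}<l\}$, and establish $(q_{k}-p)+(s-r_{k})<l_{k}-a-\delta_{k}$. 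The base case $k=c$ is precisely part (1) applied with $l_{c},\Bar{l_{c}}$ in the roles of $b,\Bar{b}$ (legitimate because $l_{c},\Bar{l_{c}}\in C_{x}$, $a<l_{c}$, and $\delta_{c}=0$); the statement of the lemma is the instance $k=k_{0}^{\prime}$.

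The inductive step carries the bookkeeping, and I would run it through the filling diagram of $C^{(x,x)}=C_{x}$ split by $\phi^{(x,x)}$ (well-defined since $C_{x}$ is KN-admissible), reading off the position relations just as in Lemma~\ref{lem:kink1}. After the operation for $l_{k+1}\rightarrow l_{k+1}^{\ast}$, the slots of region $(0)$ between the $l_{k}$-th and $l_{k+1}$-th slots contain only $(+)$-, $(-)$-, and $(\times)$-slots, say $\alpha$, $\beta$, and $\varepsilon$ of them; these give relations of the same shape as in Lemma~\ref{lem:kink1}, namely $q_{k+1}=q_{k}+\alpha+1$, $r_{k+1}=r_{k}-\beta-1$, $l_{k+1}=l_{k}+(\alpha+\beta+\varepsilon)+1$, and $\delta_{k+1}=(\delta_{k}-1)+\varepsilon$. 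Substituting these four identities into the inductive hypothesis $(q_{k+1}-p)+(s-r_{k+1})<l_{k+1}-a-\delta_{k+1}$ collapses it to $(q_{k}-p)+(s-r_{k})<l_{k}-a-\delta_{k}$, completing the induction.

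The main obstacle I anticipate is not the algebra, which is routine once the four relations are in hand, but verifying those relations with the correct signs. Relative to Lemma~\ref{lem:kink1} the roles are permuted: here both positive letters $a,b$ sit in the single column $x$ while $\Bar{a}$ has been pushed into the neighbouring column $x+1$, so I must check that the directions of the vertical shifts of $l_{k},\Bar{l_{k}}$ and the counting of the newly created $(\times)$-slots (that is, the $\mathscr{L}^{\ast}$-pairs) reflect the \emph{internal} split of $C_{x}$ rather than an inter-column interaction. Confirming that no $\emptyset$- or $(\pm)$-slots intrude into region $(0)$ — which follows from the maximality built into the choice of $l_{k}^{\ast}$ — is the one point requiring genuine care; everything else transfers essentially verbatim from the proofs of Lemma~\ref{lem:kink1} and Lemma~\ref{lem:kink2}.
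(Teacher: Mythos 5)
Your proposal is correct and follows essentially the route the paper intends: the paper proves this lemma only by appeal to analogy ("in the similar manner of the proof of Lemma~\ref{lem:kink1}"), with part (1) justified exactly as you do — the $x$-th and $(x+1)$-th columns of $\Tilde{T}=\overline{\Phi^{(x+2)}}(T)$ are untouched columns of $T$, so condition (C2) of Definition~\ref{df:KN_C} applies directly — and part (2) by the same downward induction on $\mathscr{L}$-letters with the four slot-counting relations, now read off from the internal split of $C_{x}=C^{(x,x)}$. Your explicit verification that $\overline{\Phi^{(x+2)}}$ only alters columns of index $\geq x+2$, and your flagging of the absence of $\emptyset$- and $(\pm)$-slots in region $(0)$ (which, when $\delta_{k}\geq 1$, follows from the maximality in the choice of $l_{k+1}^{\ast}$; when $\delta_{k}=0$ part (1) already suffices), fill in the details at the same level of rigor as the paper's proof of Lemma~\ref{lem:kink1}(2).
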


\begin{lem} \label{lem:Phi}
Suppose that $T=C_{1}C_{2}\cdots C_{n_{c}} \in C_{n}\text{-}\mathrm{SST}_{\mathrm{KN}}(\lambda)$.
Then $\Phi$ is well-defined on $T$ and $\Phi(T) \in C_{n}\text{-}\mathrm{SST}(\lambda)$. 
\end{lem}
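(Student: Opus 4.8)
The plan is to run a downward induction on $x=n_{c},n_{c}-1,\dots,1$ that simultaneously controls three properties of $\overline{\Phi^{(x)}}(T)$: that it is nonempty (so $\overline{\Phi^{(x)}}$ is well-defined on $T$), that its $\mathscr{C}_{n}^{(+)}$-letters part is semistandard, and that its $\mathscr{C}_{n}^{(-)}$-letters part is semistandard. The base case $x=n_{c}+1$ is the hypothesis $T\in C_{n}\text{-}\mathrm{SST}_{\mathrm{KN}}(\lambda)$ itself, with the convention $\overline{\Phi^{(n_{c}+1)}}=\mathrm{id}$. Since $\Phi=\overline{\Phi^{(1)}}$ preserves the shape by construction (Definition~\ref{df:Phi}), reaching $x=1$ yields $\Phi(T)\neq\emptyset$ and, with both letter-parts semistandard, $\Phi(T)\in C_{n}\text{-}\mathrm{SST}(\lambda)$. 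The organizing idea is that the $\mathscr{C}_{n}^{(+)}$-part and the well-definedness are tracked along the \emph{direct} order in which $\Phi^{(x)}=\phi^{(x,n_{c})}\circ\cdots\circ\phi^{(x,x)}$ is defined, whereas the $\mathscr{C}_{n}^{(-)}$-part is tracked along the \emph{reordered} expression for $\Phi^{(x)}$ furnished by Lemma~\ref{lem:equiv1}; the two computations share the checkpoints $\overline{\Phi^{(x)}}(T)$.

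For well-definedness and the $\mathscr{C}_{n}^{(+)}$-part, assume inductively that $\overline{\Phi^{(x+1)}}(T)$ is nonempty with semistandard $\mathscr{C}_{n}^{(+)}$-part. I would then run an inner induction on $y=x,x+1,\dots,n_{c}$ over $\tilde{T}_{y}:=\phi^{(x,y)}\circ\cdots\circ\phi^{(x,x)}\circ\overline{\Phi^{(x+1)}}(T)$. For $y=x$ the relevant column $C^{(x,x)}$ is just the $x$-th column of $\overline{\Phi^{(x+1)}}(T)$, which is untouched during stages $n_{c},\dots,x+1$ (those apply $\phi^{(x',y')}$ with $x'\geq x+1$ and $y'\geq x'>x$) and hence still equals the KN-admissible $x$-th column of $T$; so $\phi^{(x,x)}$ is well-defined and Lemma~\ref{lem:st3} gives semistandardness of the $\mathscr{C}_{n}^{(+)}$-part of $\tilde{T}_{x}$. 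For $y>x$, the inner hypothesis (nonemptiness of $\tilde{T}_{y-1}$ and preservation of $\mathscr{C}_{n}^{(+)}$-semistandardness up to it) is precisely what Lemma~\ref{lem:welldf1} needs to conclude that $\phi^{(x,y)}$ is well-defined, after which Lemma~\ref{lem:st1} yields semistandardness of the $\mathscr{C}_{n}^{(+)}$-part of $\tilde{T}_{y}$. At $y=n_{c}$ this shows $\overline{\Phi^{(x)}}(T)=\tilde{T}_{n_{c}}$ is nonempty with semistandard $\mathscr{C}_{n}^{(+)}$-part; in particular $\Phi(T)\neq\emptyset$.

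For the $\mathscr{C}_{n}^{(-)}$-part I would invoke Lemma~\ref{lem:equiv1} to rewrite the stage map as
\[
\overline{\Phi^{(x)}}=\phi^{(x,n_{c})}\circ\bigl(\phi^{(x+1,n_{c})}\circ\phi^{(x,n_{c}-1)}\bigr)\circ\cdots\circ\bigl(\phi^{(x+1,x+1)}\circ\phi^{(x,x)}\bigr)\circ\overline{\Phi^{(x+2)}},
\]
using $\Phi^{(x+1)-1}\circ\overline{\Phi^{(x+1)}}=\overline{\Phi^{(x+2)}}$. Along this interleaved sequence the $\mathscr{C}_{n}^{(-)}$-semistandardness is propagated by Lemma~\ref{lem:st4} for the first two operations $\phi^{(x,x)}$ and $\phi^{(x+1,x+1)}\circ\phi^{(x,x)}$, and by Lemma~\ref{lem:st2} (applied successively for $y=x+1,\dots,n_{c}$) for each pair $\phi^{(x+1,y+1)}\circ\phi^{(x,y)}$ and for the final lone $\phi^{(x,n_{c})}$; the edge stages $x=n_{c}$ and $x=n_{c}-1$ (where $\overline{\Phi^{(x+2)}}$ degenerates to $\mathrm{id}$) are read off directly from Lemma~\ref{lem:st4}. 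The nonemptiness hypotheses required by these lemmas are not a fresh burden: each $\phi^{(a,b)}$ is well-defined exactly when the mixed column $C^{(a,b)}$ is KN-admissible, and, as in the proof of Lemma~\ref{lem:equiv1}, the reordered and direct computations apply the same column maps to the same columns (the $x$- and $(x+1)$-operations touch disjoint columns), so the well-definedness already secured transfers verbatim. This closes the induction with the $\mathscr{C}_{n}^{(-)}$-part of $\overline{\Phi^{(x)}}(T)$ semistandard.

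At $x=1$ the tableau $\Phi(T)$ is nonempty, has shape $\lambda$, and has both $\mathscr{C}_{n}^{(+)}$- and $\mathscr{C}_{n}^{(-)}$-parts semistandard. Since each column of $\Phi(T)$ is produced by the column map $\phi$ and is therefore a genuine (strictly $\prec$-increasing) $C_{n}$-column, and since the inter-column comparisons supplied by Lemmas~\ref{lem:st1}--\ref{lem:st4} together with $\mathscr{C}_{n}^{(+)}\prec\mathscr{C}_{n}^{(-)}$ forbid any $\mathscr{C}_{n}^{(-)}$-letter from occurring to the left of a $\mathscr{C}_{n}^{(+)}$-letter in a common row, the tableau $\Phi(T)$ is $C_{n}$-semistandard, that is, $\Phi(T)\in C_{n}\text{-}\mathrm{SST}(\lambda)$. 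I expect the main obstacle to be organizational rather than analytic: the genuine combinatorial content already lives in the kink estimates and block comparisons inside Lemmas~\ref{lem:welldf1} and \ref{lem:st1}--\ref{lem:st4}, so the care here is in keeping the two orderings (direct for the positive part, reordered for the negative part) synchronized at the checkpoints $\overline{\Phi^{(x)}}(T)$ and in checking that well-definedness established in the direct ordering legitimately feeds the semistandardness lemmas stated in the reordered ordering.
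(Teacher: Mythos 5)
Your proposal follows essentially the same route as the paper's proof: a downward induction on $x$ with an inner induction on $y$, using Lemma~\ref{lem:welldf1} together with Lemmas~\ref{lem:st1} and \ref{lem:st3} to propagate well-definedness and $\mathscr{C}_{n}^{(+)}$-semistandardness along the direct order, and Lemma~\ref{lem:equiv1} together with Lemmas~\ref{lem:st2} and \ref{lem:st4} to propagate $\mathscr{C}_{n}^{(-)}$-semistandardness along the reordered chain, synchronized at the checkpoints $\overline{\Phi^{(x)}}(T)$. The one small correction: the base case of the negative-part induction, namely semistandardness of the $\mathscr{C}_{n}^{(-)}$-part of $\overline{\Phi^{(n_{c})}}(T)=\phi^{(n_{c},n_{c})}(T)$, is not "read off from Lemma~\ref{lem:st4}" (which is stated only for $1\leq x\leq n_{c}-1$ and whose conclusions concern different tableaux) but follows from Lemma~\ref{lem:col_sst1}, which is exactly what the paper invokes at this step.
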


\begin{proof}
\textbf{(I).}
We first prove that $\Phi$ is well-defined on $T$ and that the $\mathscr{C}_{n}^{(+)}$-letters part of $\Phi(T)$ is semistandard.
The map $\overline{\Phi^{(n_{c})}}=\Phi^{(n_{c})}=\phi^{(n_{c},n_{c})}$ is well-defined on $T$ 
because the $n_{c}$-th column of $T$ is KN-admissible and the $\mathscr{C}_{n}^{(+)}$-letters part of $\overline{\Phi^{(n_{c})}}(T)$ 
is semistandard by Lemma~\ref{lem:st3}.

\textbf{(II).}
Suppose that $\overline{\Phi^{(x+1)}}$ is well-defined on $T$, i.e., $\overline{\Phi^{(x+1)}}(T)\neq \emptyset$ 
and the $\mathscr{C}_{n}^{(+)}$-letters part of $\overline{\Phi^{(x+1)}}(T)$ is semistandard 
($x=n_{c}-1,\ldots,1$).
This assumption is satisfied for $x=n_{c}-1$.
\textbf{(i).}
The map $\phi^{(x,x)}$ is well-defined on $\overline{\Phi^{(x+1)}}(T)$ because the $x$-th column of $\overline{\Phi^{(x+1)}}(T)$, i.e., the $x$-th column of $T$ is KN-admissible and 
the $\mathscr{C}_{n}^{(+)}$-letters part of $\phi^{(x,x)}\circ\overline{\Phi^{(x+1)}}(T)$ is semistandard 
by Lemma~\ref{lem:st3}.
\textbf{(ii).}
Let us set $\Tilde{T}=\phi^{(x,y-1)}\circ\cdots\circ\phi^{(x,x)}\circ\overline{\Phi^{(x+1)}}(T)$ ($x+1\leq y\leq n_{c}$).
Suppose that $\Tilde{T}\neq \emptyset$ and that in the updating process of the tableau from $T$ to $\Tilde{T}$ the semistandardness of the $\mathscr{C}_{n}^{(+)}$-letters part of the tableau is preserved.
This assumption is satisfied for $y-1=x$.
Then $\phi^{(x,y)}$ is well-defined on $\Tilde{T}$ by Lemma~\ref{lem:welldf1} and the $\mathscr{C}_{n}^{(+)}$-letters part of 
$\phi^{(x,y)}(\Tilde{T})$ is semistandard by Lemma~\ref{lem:st1}.
From \textbf{(i)} and \textbf{(ii)} and by induction, we have that $\overline{\Phi^{(x)}}=\Phi^{(x)}\circ\overline{\Phi^{(x+1)}}$ 
is well-defined on $T$ and the $\mathscr{C}_{n}^{(+)}$-letters part of $\overline{\Phi^{(x)}}(T)$ is semistandard.
From \textbf{(I)} and \textbf{(II)} and by induction, we conclude that $\Phi$ is well-defined on $T$ and that 
the $\mathscr{C}_{n}^{(+)}$-letters part of $\Phi(T)$ is semistandard.

Now let us show that the $\mathscr{C}_{n}^{(-)}$-letters part of $\Phi(T)$ is semistandard.
Note that all the maps $\phi^{(i,j)}$ ($1\leq i\leq j\leq n_{c}$) are well-defined by the above argument.

\textbf{(I').}
The $\mathscr{C}_{n}^{(-)}$-letters part of $\overline{\Phi^{(n_{c})}}(T)$ is semistandard by Lemma~\ref{lem:col_sst1}.

\textbf{(II').}
Suppose that the $\mathscr{C}_{n}^{(-)}$-letters part of $\overline{\Phi^{(x+1)}}(T)$ and that of $(\Phi^{(x+1)})^{-1}\overline{\Phi^{(x+1)}}(T)$ is semistandard ($x=n_{c}-1,\ldots,1$).
This assumption is satisfied for $x=n_{c}-1$.
\textbf{(i').}
The $\mathscr{C}_{n}^{(-)}$-letters part of $\phi^{(x,x)}\circ(\Phi^{(x+1)})^{-1}\circ\overline{\Phi^{(x+1)}}(T)$ and 
that of $(\phi^{(x+1,x+1)}\circ\phi^{(x,x)})\circ(\Phi^{(x+1)})^{-1}\circ\overline{\Phi^{(x+1)}}(T)$ are semistandard 
by Lemma~\ref{lem:st4}.
\textbf{(ii').}
Let us set 
\[
\Tilde{T}=(\phi^{(x+1,y)}\circ\phi^{(x,y-1)})\circ\cdots\circ
(\phi^{(x+1,x+1)}\circ\phi^{(x,x)})\circ(\Phi^{(x+1)})^{-1}\circ\overline
{\Phi^{(x+1)}}(T)
\]
($x+1\leq y\leq n_{c}$).
Suppose that $\Tilde{T}\neq \emptyset$ and that in the updating process of the tableau from $T$ to $\Tilde{T}$ the semistandardness of the $\mathscr{C}_{n}^{(-)}$-letters part of the tableau is preserved.
Then the $\mathscr{C}_{n}^{(-)}$-letters part of $\phi^{(x,y)}(\Tilde{T})$ and that of 
$(\phi^{(x+1,y+1)}\circ\phi^{(x,y)})(\Tilde{T})$ ($y\leq n_{c}-1$) are seminstandard by Lemma~\ref{lem:st2}.
From \textbf{(i')} and \textbf{(ii')} and by induction, we have that the $\mathscr{C}_{n}^{(-)}$-letters part of 
$\overline{\Phi^{(x)}}(T)=\Phi^{(x)}\circ\overline{\Phi^{(x+1)}}(T)$ is semistandard.
From \textbf{(I')} and \textbf{(II')} and by induction, we conclude that the $\mathscr{C}_{n}^{(-)}$-letters part of $\Phi(T)$ is semistandard.

Since $\Phi$ is well-defined on $T$ so that it preserves the shape of $T$, we have that 
$\Phi(T) \in C_{n}\text{-}\mathrm{SST}(\lambda)$ for 
all $T \in C_{n}\text{-}\mathrm{SST}_{\mathrm{KN}}(\lambda)$.
\end{proof}

\section{Proof of Proposition~\ref{prp:main11}} \label{sec:main11}

In this section, we provide the proof of Proposition~\ref{prp:main11}.
\begin{prp} \label{prp:skew}
Let $W$ be a skew semistandard tableau with entries $\{\Bar{1},\Bar{2},\ldots,\Bar{n}\}$.
We apply the jeu de taquin to $W$ (starting from any inside corner of $W$) to obtain 
a rectification of $W$ denoted by $\mathrm{Rect}(W)$.
The process consists of several steps of Schutzenberger's sliding.
Let us write the whole process as
\[
W=S_{0}\rightarrow S_{1}\rightarrow\cdots\rightarrow S_{m}=\mathrm{Rect}(W).
\]
If $\mathrm{FE}(S_{k})$ is smooth on a Young diagram $\mu$, 
then  $\mathrm{FE}(S_{k+1})$ is also smooth on $\mu$ ($k=0,1,\ldots,m-1$),
where $\mathrm{FE}(S)$ is the far-eastern reading of $S$ neglecting the empty box of $S$.
Therefore, if $\mathrm{FE}(W)$ is smooth on $\mu$, then $\mathrm{FE}(\mathrm{Rect}(W))$ is also smooth on $\mu$.
Conversely, suppose that the far-eastern reading of a semistandard Young tableau $T$ 
filled with $\mathscr{C}_{n}^{(-)}$-letters is smooth on a Young diagram $\mu$, then 
the far-eastern readings of any skew semistandard tableaux whose rectification is $T$ are also smooth on $\mu$.
\end{prp}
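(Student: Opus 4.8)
The plan is to localize everything to a single elementary jeu de taquin move and to run the comparison through a prefix reformulation of smoothness. Any skew $W$ with $\mathrm{Rect}(W)=T$ is carried to $T$ by a chain of slides $W=S_0\to S_1\to\cdots\to S_m=T$; whereas the first part of the Proposition propagates smoothness \emph{downstream} ($\mathrm{FE}(S_k)$ smooth $\Rightarrow\mathrm{FE}(S_{k+1})$ smooth), the converse asks me to propagate it \emph{upstream}, so the real task is the reverse implication ``$\mathrm{FE}(S_{k+1})$ smooth on $\mu$ $\Rightarrow\mathrm{FE}(S_k)$ smooth on $\mu$'' for one elementary move. Decomposing each sliding into elementary moves, a move is either vertical (the hole descends) or horizontal (the hole moves right). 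A vertical move leaves $\mathrm{FE}$ unchanged, since the skipped empty box occupies the same slot among the genuine entries of its column; hence smoothness is trivially equivalent across it, and the whole problem reduces to a single horizontal move.

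First I would record a convenient restatement of smoothness. Writing $\mathrm{FE}(S)=\overline{i_1}\cdots\overline{i_N}$ and letting $m_i(k)$ be the number of $\overline{i}$ among the first $k$ letters, the word is smooth on $\mu$ iff every intermediate shape $\mu[\overline{i_1},\dots,\overline{i_k}]$ is a partition, i.e.
\[
m_i(k)-m_{i+1}(k)\le \mu_i-\mu_{i+1}\quad\text{for all }k\text{ and all }i .
\]
This is the form I will test, as it depends only on prefixes and on two adjacent rows $i,i+1$. Now analyze the horizontal move: let the hole sit at $(r,c)$ and let $\bar a=(r,c{+}1)$ slide left into it. Denote by $D$ the entries of column $c{+}1$ below row $r$ and by $U$ the entries of column $c$ above row $r$; column strictness forces every letter of $D$ to have index $<a$ and every letter of $U$ to have index $>a$, and passing from $\mathrm{FE}(S_{k+1})$ to $\mathrm{FE}(S_k)$ amounts to sliding the single letter $\bar a$ leftward (earlier) past $U$ and then past $D$. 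Because the rows appearing in $D$ (all $<a$) and in $U$ (all $>a$) differ by at least two, the box‑removals coming from $D$ and from $U$ never interfere with each other, and reinstating $\bar a$ earlier only raises $m_a-m_{a+1}$, by one, precisely on the prefixes that already contain $\bar a$ but not yet the letter $\overline{a+1}$ of $U$.

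Comparing the two words prefix by prefix through the displayed inequality, every condition then matches except the one at index $i=a$, and this condition is binding only on the earliest such prefix. Thus the reverse implication for a horizontal move reduces to the \emph{single strict inequality}
\[
\sigma_a>\sigma_{a+1},
\]
where $\sigma$ is the partition obtained from $\mu$ by removing the boxes prescribed by all letters preceding the pivot $\bar a$ in the reading. The smoothness of $\mathrm{FE}(S_{k+1})$ gives $\sigma_a\ge\sigma_{a+1}$ for free, and it already gives the strict inequality unless $\overline{a+1}$ actually occurs in $U$, i.e. unless $\overline{a+1}$ occupies the box $(r{-}1,c)$ directly above the hole.

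The hard part will be exactly this boundary case. Here I would use that $W$ is a genuine skew tableau, so both its inner and outer shapes are partitions: with $(r{-}1,c)=\overline{a+1}$ filled, the inner shape cannot reach column $c$ in any row $\ge r$, and the outer shape forces $(r{-}1,c{+}1)=\overline{a+1}$ as well; these constraints pin down where the remaining $\bar a$'s and $\overline{a+1}$'s can lie relative to the pivot and hence control the numbers of boxes removed from rows $a$ and $a+1$ before $\bar a$ is read, yielding $\sigma_a>\sigma_{a+1}$. Concretely the plan is to show that the box $(r{-}1,c)=\overline{a+1}$, being read strictly after the pivot in $\mathrm{FE}(S_{k+1})$, cannot be offset by an $\bar a$ read before it, so row $a$ stays strictly longer than row $a+1$ at the moment $\sigma$; this is the same $\prec$‑bookkeeping as in the kink Lemmas (cf.\ Lemma~\ref{lem:kink2}), which I expect to phrase through the filling‑diagram slot count. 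Granting this strict inequality, the prefix comparison closes the reverse implication for a horizontal move, and composing the trivial vertical implications with it along the rectification chain $S_m=T\to\cdots\to S_0=W$ shows that $\mathrm{FE}(W)$ is smooth on $\mu$, as claimed.
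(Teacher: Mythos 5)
Your converse analysis is essentially sound and takes a route genuinely different from the paper's. You reduce to a single horizontal elementary move, recast smoothness as the prefix inequalities $m_i(k)-m_{i+1}(k)\le\mu_i-\mu_{i+1}$, and correctly isolate the only binding case: strictness $\sigma_a>\sigma_{a+1}$ is needed exactly when $\overline{a+1}$ occupies $(r-1,c)$. Your key observation that semistandardness then forces $(r-1,c+1)=\overline{a+1}$ is the right ingredient, and it closes the case in one line which you should state explicitly instead of deferring to ``kink-lemma bookkeeping'': that forced $\overline{a+1}$ is the \emph{last letter of the common prefix} $B$ (column $c+1$ is read before the slide region), so writing $B=B'\cup\{\overline{a+1}\}$ one gets $\mu[B]_{a+1}=\mu[B']_{a+1}-1\le\mu[B']_a-1=\mu[B]_a-1$, which is the desired strict inequality. (Two small slips: that box is read \emph{before} the pivot in $\mathrm{FE}(S_{k+1})$ and \emph{after} it in $\mathrm{FE}(S_k)$, not as you wrote; and your appeal to ``$W$ is a genuine skew tableau'' must be made for the intermediate configuration $S_k$, which carries a migrating hole and the vacated inside corner --- the forced-cell argument survives this, but it needs saying.) For comparison, the paper does the opposite of what you do: it proves the \emph{forward} direction in full by direct Young-diagram manipulation of the local configuration $j_1\le j_2<j_3$, and dismisses the converse in one sentence (``sliding is reversible''). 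Your proposal fleshes out precisely the half the paper waves through.

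The genuine gap is that you never prove the forward half of the Proposition --- that $\mathrm{FE}(S_k)$ smooth implies $\mathrm{FE}(S_{k+1})$ smooth, hence $\mathrm{FE}(W)$ smooth implies $\mathrm{FE}(\mathrm{Rect}(W))$ smooth --- and it does \emph{not} follow from your converse argument by symmetry or by reversibility of sliding. In your notation, the forward direction requires each $\mu[Q_j]$ (prefixes \emph{without} the pivot $\bar a$) to be a Young diagram given that the $\mu[P_j]$ are, and its binding case is different: it is the strictness $\sigma_{a-1}>\sigma_a$, which becomes an issue exactly when $\overline{a-1}$ sits at $(r+1,c+1)$, directly below the pivot. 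That strict inequality is \emph{not} implied by any prefix of $\mathrm{FE}(S_k)$ ending inside the slide segment (those only give $\mu[B]_{a-1}\ge\mu[B]_a$); one must invoke a different forced entry, namely $(r+1,c)=\overline{a-1}$ --- forced by the horizontal-move rule $\bar a\prec(r+1,c)$ together with weak row increase --- which is read \emph{after} the entire segment $\bar a\,D\,U$, and whose prefix condition in $\mathrm{FE}(S_k)$ yields $\mu[B]_{a-1}\ge\mu[B]_a+1$. So the downstream implication needs its own (asymmetric) argument, and as written your proof establishes only the converse clause of the Proposition.
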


\begin{proof}
Let us show that the smoothness is preserved by the jeu de taquin.
Suppose that $S_{k}$ consists of $n_{c}$ columns and 
let the set of letters ($\mathscr{C}_{n}^{(-)}$-letters) in the $x$-th column be $\overline{\mathscr{J}^{(x)}}$ 
($1\leq x \leq n_{c}$).  
It suffices to consider the following case.

\setlength{\unitlength}{12pt}
\begin{center}
\begin{picture}(11,8)
\put(1,1){\line(0,1){5}}
\put(2,0){\line(0,1){7}}
\put(3,0){\line(0,1){7}}
\put(2,0){\line(1,0){1}}
\put(1,2){\line(1,0){2}}
\put(1,3){\line(1,0){2}}
\put(1,4){\line(1,0){2}}
\put(1,6){\line(1,0){1}}
\put(2,7){\line(1,0){1}}
\put(1,2){\makebox(1,1){$\overline{j_{2}}$}}
\put(2,2){\makebox(1,1){$\overline{j_{1}}$}}
\put(2,3){\makebox(1,1){$\overline{j_{3}}$}}
\put(1,4){\makebox(1,2){$\overline{J_{3}}$}}
\put(2,0){\makebox(1,2){$\overline{J_{2}}$}}
\put(2,4){\makebox(1,2){$\overline{J_{1}}$}}
\put(0,3){\makebox(1,1){$\cdots$}}
\put(3,3){\makebox(1,1){$\cdots$}}
\put(0,7){\makebox(1,1){$S_{k}$}}
\put(5,3){\makebox(1,1){$\longrightarrow$}}
\put(8,1){\line(0,1){5}}
\put(9,0){\line(0,1){7}}
\put(10,0){\line(0,1){7}}
\put(9,0){\line(1,0){1}}
\put(8,2){\line(1,0){2}}
\put(8,3){\line(1,0){2}}
\put(8,4){\line(1,0){2}}
\put(8,6){\line(1,0){1}}
\put(9,7){\line(1,0){1}}
\put(8,2){\makebox(1,1){$\overline{j_{2}}$}}
\put(8,3){\makebox(1,1){$\overline{j_{3}}$}}
\put(9,2){\makebox(1,1){$\overline{j_{1}}$}}
\put(8,4){\makebox(1,2){$\overline{J_{3}}$}}
\put(9,0){\makebox(1,2){$\overline{J_{2}}$}}
\put(9,4){\makebox(1,2){$\overline{J_{1}}$}}
\put(7,3){\makebox(1,1){$\cdots$}}
\put(10,3){\makebox(1,1){$\cdots$}}
\put(7,7){\makebox(1,1){$S_{k+1}$}}
\end{picture},
\end{center}
where $\overline{J_{1}}$ and $\overline{J_{2}}$ are blocks of the $(l+1)$-st column of $S_{k}$ and $S_{k+1}$ and 
$\overline{J_{3}}$ is a block of the $l$-th column of $S_{k}$ and $S_{k+1}$.
In this case, we slide the box containing $\overline{j_{3}}$ in the $(l+1)$-st column into the $l$-th column horizontally. 
Note that 
\[
\max (J_{2})\leq j_{1}-1
\quad \mbox{and} \quad
\min (J_{3})\geq j_{3}+1.
\]
By the rule of Schutzenberger's sliding, we have $\overline{j_{3}}\prec\overline{j_{2}}$ so that
$j_{1}\leq j_{2}<j_{3}$.
Let us set
$\mu^{\prime}:=\mu\lbrack {\mathstrut \overline{\mathscr{J}^{(n_{c})}}},
\ldots,{\mathstrut \overline{\mathscr{J}^{(l+2)}}},{\mathstrut \overline{J_{1}}}]$.
This is a Young diagram by the assumption of Proposition~\ref{prp:skew}.
Let us assume that $j_{1}<j_{2}$.
The proof for the case when $j_{1}=j_{2}$ is similar.
Since $\overline{j_{3}},\overline{j_{1}}$ is smooth on $\mu^{\prime}$,
\[
\mu^{\prime}[\overline{j_{3}}]=(\ldots,\mu_{j_{1}}^{\prime},\mu_{j_{1}%
+1}^{\prime},\ldots,\mu_{j_{2}}^{\prime},\ldots,\mu_{j_{3}}^{\prime}-1,\ldots)
\]
and
\[
\mu^{\prime}[\overline{j_{3}},\overline{j_{1}}]=
(\ldots,\mu_{j_{1}}^{\prime}-1,\mu_{j_{1}+1}^{\prime},\ldots,\mu_{j_{2}}^{\prime},\ldots,\mu_{j_{3}}^{\prime}-1,\ldots)
\]
are Young diagrams so that $\mu_{j_{1}}^{\prime}-1\geq\mu_{j_{1}+1}^{\prime}$.
Since $\overline{j_{3}},\overline{j_{1}},\overline{J_{2}},\overline{J_{3}}$
is smooth on $\mu^{\prime}$,
$\overline{J_{2}}$ is smooth on 
$(\mu_{1}^{\prime},\ldots,\mu_{j_{1}-1}^{\prime},\mu_{j_{1}}^{\prime}-1)$
and $\overline{J_{3}}$ is smooth on
$(\mu_{j_{3}}^{\prime}-1,\ldots)$ and therefore on
$(\mu_{j_{3}}^{\prime},\ldots)$.
Now
\[
\mu^{\prime}[\overline{j_{1}}]=(\ldots,\mu_{j_{1}}^{\prime}-1,\mu_{j_{1}+1}^{\prime},
\ldots,\mu_{j_{2}}^{\prime},\ldots,\mu_{j_{3}}^{\prime},\ldots)
\]
is a Young diagram because
$\mu_{j_{1}}^{\prime}-1\geq\mu_{j_{1}+1}^{\prime}$.
Since $\overline{J_{2}}$ is smooth on 
$(\mu_{1}^{\prime},\ldots,\mu_{j_{1}-1}^{\prime},\mu_{j_{1}}^{\prime}-1)$, 
$\overline{j_{1}},\overline{J_{2}}$ is smooth on $\mu^{\prime}$.
Since $\overline{J_{3}}$ is smooth on $(\mu_{j_{3}}^{\prime},\ldots)$, 
$\overline{j_{1}},\overline{J_{2}},\overline{J_{3}}$
is smooth on $\mu^{\prime}$.
That is, 
$\mu^{\prime}\left[  \overline{j_{1}}\right]  $, 
$\mu^{\prime}\left[  \overline{j_{1}},\overline{J_{2}}\right]  $, and 
$\mu^{\prime}\left[  \overline{j_{1}},\overline{J_{2}},\overline{J_{3}}\right]  $ are all Young diagrams.
\begin{align*}
\mu^{\prime}[\overline{j_{1}},\overline{J_{2}},\overline{J_{3}},\overline{j_{3}}]
& =\mu^{\prime}[\overline{j_{3}},\overline{j_{1}},\overline{J_{2}},\overline{J_{3}}]\\
& =(\ldots,\mu_{j_{1}-1}^{\prime},\mu_{j_{1}}^{\prime}-1,\mu_{j_{1}+1}^{\prime},
\ldots,\mu_{j_{2}}^{\prime},\ldots,\mu_{j_{3}}^{\prime}-1,\ldots)
\end{align*}
and
\begin{align*}
\mu^{\prime}[\overline{j_{1}},\overline{J_{2}},\overline{J_{3}},
\overline{j_{3}},\overline{j_{2}}]  & =\mu^{\prime}[\overline{j_{3}},
\overline{j_{1}},\overline{J_{2}},\overline{J_{3}},\overline{j_{2}}]\\ 
& =(\ldots,\mu_{j_{1}-1}^{\prime},\mu_{j_{1}}^{\prime}-1,\mu_{j_{1}+1}^{\prime},
\ldots,\mu_{j_{2}}^{\prime}-1,\ldots,\mu_{j_{3}}^{\prime}-1,\ldots)
\end{align*}
are Young diagrams because 
$\overline{j_{3}},\overline{j_{1}},\overline{J_{2}},\overline{J_{3}},\overline{j_{2}}$ 
is smooth on $\mu^{\prime}$.
Hence,
$\overline{j_{1}},\overline{J_{2}},\overline{J_{3}},\overline{j_{3}},\overline{j_{2}}$
is smooth on $\mu^{\prime}$.

The ``converse'' part follows from the fact that Schutzenberger's sliding is reversible.
\end{proof}

\begin{ex}
Let $\mu=(3,2,2)$.
The far-eastern reading of the skew semistandard tableau
\setlength{\unitlength}{12pt}
\begin{center}
\begin{picture}(6.5,3)
\put(2.5,0){\line(0,1){1}}
\put(3.5,0){\line(0,1){2}}
\put(4.5,0){\line(0,1){3}}
\put(5.5,1){\line(0,1){2}}
\put(2.5,0){\line(1,0){2}}
\put(2.5,1){\line(1,0){3}}
\put(3.5,2){\line(1,0){2}}
\put(4.5,3){\line(1,0){1}}
\put(2.5,0){\makebox(1,1){$\Bar{3}$}}
\put(3.5,0){\makebox(1,1){$\Bar{1}$}}
\put(3.5,1){\makebox(1,1){$\Bar{2}$}}
\put(4.5,1){\makebox(1,1){$\Bar{1}$}}
\put(4.5,2){\makebox(1,1){$\Bar{3}$}}
\put(0,1){\makebox(2,1){$W=$}}
\end{picture}
\end{center}
is smooth on $\mu$ as we can see the process 
$\mu \rightarrow \mu\lbrack \mathrm{FE}(W)]=\mu\lbrack \Bar{3},\Bar{1},\Bar{2},\Bar{1},\Bar{3}]$ is 
\setlength{\unitlength}{10pt}
\begin{center}
\begin{picture}(22,3)
\put(0,0){\line(0,1){3}}
\put(1,0){\line(0,1){3}}
\put(2,0){\line(0,1){3}}
\put(3,2){\line(0,1){1}}
\put(0,0){\line(1,0){2}}
\put(0,1){\line(1,0){2}}
\put(0,2){\line(1,0){3}}
\put(0,3){\line(1,0){3}}
\put(3,1){\makebox(2,1){$\rightarrow$}}
\put(5,0){\line(0,1){3}}
\put(6,0){\line(0,1){3}}
\put(7,1){\line(0,1){2}}
\put(8,2){\line(0,1){1}}
\put(5,0){\line(1,0){1}}
\put(5,1){\line(1,0){2}}
\put(5,2){\line(1,0){3}}
\put(5,3){\line(1,0){3}}
\put(8,1){\makebox(2,1){$\rightarrow$}}
\put(10,0){\line(0,1){3}}
\put(11,0){\line(0,1){3}}
\put(12,1){\line(0,1){2}}
\put(10,0){\line(1,0){1}}
\put(10,1){\line(1,0){2}}
\put(10,2){\line(1,0){2}}
\put(10,3){\line(1,0){2}}
\put(12,1){\makebox(2,1){$\rightarrow$}}
\put(14,0){\line(0,1){3}}
\put(15,0){\line(0,1){3}}
\put(16,2){\line(0,1){1}}
\put(14,0){\line(1,0){1}}
\put(14,1){\line(1,0){1}}
\put(14,2){\line(1,0){2}}
\put(14,3){\line(1,0){2}}
\put(16,1){\makebox(2,1){$\rightarrow$}}
\put(18,0){\line(0,1){3}}
\put(19,0){\line(0,1){3}}
\put(18,0){\line(1,0){1}}
\put(18,1){\line(1,0){1}}
\put(18,2){\line(1,0){1}}
\put(18,3){\line(1,0){1}}
\put(19,1){\makebox(2,1){$\rightarrow$}}
\put(21,1){\line(0,1){2}}
\put(22,1){\line(0,1){2}}
\put(21,1){\line(1,0){1}}
\put(21,2){\line(1,0){1}}
\put(21,3){\line(1,0){1}}
\end{picture}.
\end{center}
The rectification of $W$ is
\setlength{\unitlength}{12pt}
\begin{center}
\begin{picture}(7.5,3)
\put(4.5,0){\line(0,1){3}}
\put(5.5,0){\line(0,1){3}}
\put(6.5,2){\line(0,1){1}}
\put(7.5,2){\line(0,1){1}}
\put(4.5,0){\line(1,0){1}}
\put(4.5,1){\line(1,0){1}}
\put(4.5,2){\line(1,0){3}}
\put(4.5,3){\line(1,0){3}}
\put(4.5,0){\makebox(1,1){$\Bar{1}$}}
\put(4.5,1){\makebox(1,1){$\Bar{2}$}}
\put(4.5,2){\makebox(1,1){$\Bar{3}$}}
\put(5.5,2){\makebox(1,1){$\Bar{3}$}}
\put(6.5,2){\makebox(1,1){$\Bar{1}$}}
\put(0,1){\makebox(4,1){$\mathrm{Rect}(W)=$}}
\end{picture}
\end{center}
and the far-eastern reading is also smooth on $\mu$ as we can see the process 
$\mu \rightarrow \mu\lbrack \mathrm{FE}(\mathrm{Rect}(W))]=\mu\lbrack \Bar{1},\Bar{3},\Bar{3},\Bar{2},\Bar{1}]$ is 
\setlength{\unitlength}{10pt}
\begin{center}
\begin{picture}(22,3)
\put(0,0){\line(0,1){3}}
\put(1,0){\line(0,1){3}}
\put(2,0){\line(0,1){3}}
\put(3,2){\line(0,1){1}}
\put(0,0){\line(1,0){2}}
\put(0,1){\line(1,0){2}}
\put(0,2){\line(1,0){3}}
\put(0,3){\line(1,0){3}}
\put(3,1){\makebox(2,1){$\rightarrow$}}
\put(5,0){\line(0,1){3}}
\put(6,0){\line(0,1){3}}
\put(7,0){\line(0,1){3}}
\put(5,0){\line(1,0){2}}
\put(5,1){\line(1,0){2}}
\put(5,2){\line(1,0){2}}
\put(5,3){\line(1,0){2}}
\put(7,1){\makebox(2,1){$\rightarrow$}}
\put(9,0){\line(0,1){3}}
\put(10,0){\line(0,1){3}}
\put(11,1){\line(0,1){2}}
\put(9,0){\line(1,0){1}}
\put(9,1){\line(1,0){2}}
\put(9,2){\line(1,0){2}}
\put(9,3){\line(1,0){2}}
\put(11,1){\makebox(2,1){$\rightarrow$}}
\put(13,1){\line(0,1){2}}
\put(14,1){\line(0,1){2}}
\put(15,1){\line(0,1){2}}
\put(13,1){\line(1,0){2}}
\put(13,2){\line(1,0){2}}
\put(13,3){\line(1,0){2}}
\put(15,1){\makebox(2,1){$\rightarrow$}}
\put(17,1){\line(0,1){2}}
\put(18,1){\line(0,1){2}}
\put(19,2){\line(0,1){1}}
\put(17,1){\line(1,0){1}}
\put(17,2){\line(1,0){2}}
\put(17,3){\line(1,0){2}}
\put(19,1){\makebox(2,1){$\rightarrow$}}
\put(21,1){\line(0,1){2}}
\put(22,1){\line(0,1){2}}
\put(21,1){\line(1,0){1}}
\put(21,2){\line(1,0){1}}
\put(21,3){\line(1,0){1}}
\end{picture}.
\end{center}
The rectification of 
\setlength{\unitlength}{12pt}
\begin{center}
\begin{picture}(7.5,3)
\put(2.5,0){\line(0,1){1}}
\put(3.5,0){\line(0,1){1}}
\put(4.5,0){\line(0,1){3}}
\put(5.5,1){\line(0,1){2}}
\put(6.5,2){\line(0,1){1}}
\put(2.5,0){\line(1,0){2}}
\put(2.5,1){\line(1,0){3}}
\put(4.5,2){\line(1,0){2}}
\put(4.5,3){\line(1,0){2}}
\put(2.5,0){\makebox(1,1){$\Bar{3}$}}
\put(3.5,0){\makebox(1,1){$\Bar{1}$}}
\put(4.5,1){\makebox(1,1){$\Bar{2}$}}
\put(4.5,2){\makebox(1,1){$\Bar{3}$}}
\put(5.5,2){\makebox(1,1){$\Bar{1}$}}
\put(0,1){\makebox(2,1){$W^{\prime}=$}}
\end{picture}
\end{center}
is the same as $\mathrm{Rect}(W)$ and $\mathrm{FE}(W^{\prime})$ is smooth on $\mu$ as we can see the process 
$\mu \rightarrow \mu\lbrack \mathrm{FE}(W^{\prime})]=\mu\lbrack \Bar{1},\Bar{3},\Bar{2},\Bar{1},\Bar{3}]$ is 
\setlength{\unitlength}{10pt}
\begin{center}
\begin{picture}(21,3)
\put(0,0){\line(0,1){3}}
\put(1,0){\line(0,1){3}}
\put(2,0){\line(0,1){3}}
\put(3,2){\line(0,1){1}}
\put(0,0){\line(1,0){2}}
\put(0,1){\line(1,0){2}}
\put(0,2){\line(1,0){3}}
\put(0,3){\line(1,0){3}}
\put(3,1){\makebox(2,1){$\rightarrow$}}
\put(5,0){\line(0,1){3}}
\put(6,0){\line(0,1){3}}
\put(7,0){\line(0,1){3}}
\put(5,0){\line(1,0){2}}
\put(5,1){\line(1,0){2}}
\put(5,2){\line(1,0){2}}
\put(5,3){\line(1,0){2}}
\put(7,1){\makebox(2,1){$\rightarrow$}}
\put(9,0){\line(0,1){3}}
\put(10,0){\line(0,1){3}}
\put(11,1){\line(0,1){2}}
\put(9,0){\line(1,0){1}}
\put(9,1){\line(1,0){2}}
\put(9,2){\line(1,0){2}}
\put(9,3){\line(1,0){2}}
\put(11,1){\makebox(2,1){$\rightarrow$}}
\put(13,0){\line(0,1){3}}
\put(14,0){\line(0,1){3}}
\put(15,2){\line(0,1){1}}
\put(13,0){\line(1,0){1}}
\put(13,1){\line(1,0){1}}
\put(13,2){\line(1,0){2}}
\put(13,3){\line(1,0){2}}
\put(15,1){\makebox(2,1){$\rightarrow$}}
\put(17,0){\line(0,1){3}}
\put(18,0){\line(0,1){3}}
\put(17,0){\line(1,0){1}}
\put(17,1){\line(1,0){1}}
\put(17,2){\line(1,0){1}}
\put(17,3){\line(1,0){1}}
\put(18,1){\makebox(2,1){$\rightarrow$}}
\put(20,1){\line(0,1){2}}
\put(21,1){\line(0,1){2}}
\put(20,1){\line(1,0){1}}
\put(20,2){\line(1,0){1}}
\put(20,3){\line(1,0){1}}
\end{picture}.
\end{center}
\end{ex}

Suppose that $T \in C_{n}\text{-}\mathrm{SST}_{\mathrm{KN}}$ 
and $T$ consists of $n_{c}$ columns.
To compute $\Phi(T)$, we apply the map of the form $\phi ^{(\centerdot,\centerdot)}$ successively 
to the updated tableau whose entries are updated by preceding application of the map 
of the form $\phi ^{(\centerdot,\centerdot)}$.
To keep track of the updating stage in $\Phi(T)$, let us introduce new notation.
Initially, the set of $\mathscr{I}$ (resp. $\mathscr{J}$)-letters in the $x$-th column of $T$ is written as 
$\mathscr{I}^{(x,i)}$ (resp. $\mathscr{J}^{(x,i)}$) with $i=0$ ($1\leq x \leq n_{c}$).
Whenever the map $\phi^{(x,y)}$ is applied to the updated tableau 
whose entries are updated by preceding application of the map of the form 
$\phi ^{(\centerdot,\centerdot)}$, 
the counter $i$ in $\mathscr{I}^{(y,i)}$ 
is increased by one; $\mathscr{I}^{(y,i)}\rightarrow \mathscr{I}^{(y,i+1)}$ and 
the counter $j$ in $\mathscr{J}^{(x,j)}$ is increased by one; 
$\mathscr{J}^{(x,j)}\rightarrow \mathscr{J}^{(x,j+1)}$.
At the end, i.e., in $\Phi(T)$, 
the set of $\mathscr{I}$ (resp. $\mathscr{J}$)-letters in the $x$-th column is 
$\mathscr{I}^{(x,x)}$ (resp. $\mathscr{J}^{(x,n_{c}-x+1)}$) ($1\leq x\leq n_{c}$).
The letters in $\mathscr{I}^{(x,i)}$ (resp. $\mathscr{J}^{(x,i)}$) are called 
$\mathscr{I}^{(x,i)}$ (resp. $\mathscr{J}^{(x,i)}$)-letters and those in 
$\overline{\mathscr{I}^{(x,i)}}$ (resp. $\overline{\mathscr{J}^{(x,i)}}$) are called 
$\overline{\mathscr{I}^{(x,i)}}$ (resp. $\overline{\mathscr{J}^{(x,i)}}$)-letters.

When a sequence of $\mathscr{C}_{n}^{(+)}$-letters $I$ is smooth on a Young diagram $\lambda$, 
we write $\lambda\left[  \underrightarrow{I}\right]$.
Likewise, when a sequence of $\mathscr{C}_{n}^{(-)}$-letters $\Bar{J}$ is smooth on a Young diagram $\lambda$, 
we write $\lambda\left[  \underrightarrow{\Bar{J}}\right]$.
For example, $\lambda\left[ \underrightarrow{I},\underrightarrow{\Bar{J}}\right]$ implies that 
the sequence of $\mathscr{C}_{n}^{(+)}$-letters $I$ is smooth on $\lambda$ and 
the sequence of  $\mathscr{C}_{n}^{(-)}$-letters $\Bar{J}$ is smooth on the Young diagram 
$\lambda \left[ I \right]$.
We also write $\lambda\left[ \underrightarrow{\mathrm{FE}(T)} \right]$ if $\mathrm{FE}(T)$ is smooth 
on $\lambda$, where $T$ is a semistandard Young or skew tableau.
In this case, we write $\mu\left[ \underrightarrow{\overline{\mathrm{FE}(T)}}\right]=\lambda$, 
where $\mu=\lambda\left[ \underrightarrow{\mathrm{FE}(T)} \right]$ and 
$\overline{\mathrm{FE}(T)}$ is given by changing the unbarred (barred) letters to the corresponding barred (unbarred) letters in $\mathrm{FE}(T)$ and reversing the order of the sequence.

\begin{lem} \label{lem:smooth0}
Let $\lambda$ and $\mu$ be Young diagrams.
If $\lambda\left[  I\right]  =\mu$, where $I$ is the sequence of $\mathscr{C}_{n}^{(+)}$-letters 
$i_{1},i_{2},\ldots,i_{a}$ ($i_{1}<i_{2}<\ldots<i_{a}$), then $I$ is smooth on $\lambda$; 
$\lambda\left[  \underrightarrow{I}\right]  =\mu$.
Similarly, if $\lambda\left[  \Bar{J}\right]  =\mu$, 
where $\Bar{J}$ is the sequence of $\mathscr{C}_{n}^{(-)}$-letters 
$\overline{j_{b}},\ldots,\overline{j_{2}},\overline{j_{1}}$ 
($\overline{j_{b}}\prec\ldots\prec\overline{j_{2}}\prec\overline{j_{1}}$), 
then $\Bar{J}$ is smooth on $\lambda$; $\lambda\left[  \underrightarrow{\overline{J}}\right]  =\mu$.
\end{lem}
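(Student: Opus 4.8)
The plan is to unwind the notation of Definition~\ref{df:addition} and reduce the claim to an elementary monotonicity statement about adding (resp.\ removing) boxes one row at a time. Write $\lambda=(\lambda_1,\ldots,\lambda_n)$ and $\mu=(\mu_1,\ldots,\mu_n)$ and set $\lambda^{(k)}:=\lambda[i_1,\ldots,i_k]$ for $k=0,1,\ldots,a$, so that $\lambda^{(0)}=\lambda$ and $\lambda^{(a)}=\lambda[I]=\mu$. Since the $i_k$ are pairwise distinct (being strictly increasing), each $\lambda^{(k)}$ is obtained from $\lambda$ by raising the parts indexed by $\{i_1,\ldots,i_k\}$ by one:
\[
\lambda^{(k)}_j=\begin{cases}\lambda_j+1=\mu_j & j\in\{i_1,\ldots,i_k\},\\ \lambda_j & j\notin\{i_1,\ldots,i_k\}.\end{cases}
\]
Proving that $I$ is smooth on $\lambda$ then amounts to showing that every $\lambda^{(k)}$ is a Young diagram, i.e.\ that $\lambda^{(k)}_j\ge\lambda^{(k)}_{j+1}$ for all $j$ (nonnegativity is clear, as $\lambda^{(k)}_j\ge\lambda_j\ge0$), after which the identity $\lambda[\underrightarrow{I}]=\mu$ is immediate from the hypothesis $\lambda[I]=\mu$.

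First I would verify the inequality $\lambda^{(k)}_j\ge\lambda^{(k)}_{j+1}$ by splitting into the four cases according to whether $j$ and $j+1$ belong to $\{i_1,\ldots,i_k\}$. If both belong, the inequality reads $\mu_j\ge\mu_{j+1}$ and follows from $\mu$ being a Young diagram; if neither belongs it reads $\lambda_j\ge\lambda_{j+1}$ and follows from $\lambda$ being a Young diagram; if $j$ belongs and $j+1$ does not, then $\lambda^{(k)}_j=\lambda_j+1>\lambda_j\ge\lambda_{j+1}=\lambda^{(k)}_{j+1}$. The only genuine case is $j\notin\{i_1,\ldots,i_k\}$ but $j+1\in\{i_1,\ldots,i_k\}$, where I must produce the strict gap $\lambda_j\ge\lambda_{j+1}+1$.

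The hard part will be exactly this last case, and it is where the hypothesis that $I$ is \emph{increasing} enters. The point I would make is that $j$ cannot lie in $I$ at all: if $j=i_m$ for some $m$, then $i_m=j<j+1=i_{m'}$ with $i_{m'}\in\{i_1,\ldots,i_k\}$ forces $m<m'\le k$ by strict monotonicity, whence $j\in\{i_1,\ldots,i_k\}$, a contradiction. Therefore $j\notin\{i_1,\ldots,i_a\}$, so $\mu_j=\lambda_j$, while $\mu_{j+1}=\lambda_{j+1}+1$. The Young-diagram condition $\mu_j\ge\mu_{j+1}$ then yields $\lambda_j=\mu_j\ge\mu_{j+1}=\lambda_{j+1}+1$, the required strict gap. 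This settles the first assertion.

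For the second assertion I would run the same argument after translating the order $\prec$ on $\mathscr{C}_n^{(-)}$: the condition $\overline{j_b}\prec\ldots\prec\overline{j_1}$ means $j_1<j_2<\ldots<j_b$, and $\lambda[\bar J]$ removes one box from each of the rows $j_b,j_{b-1},\ldots,j_1$ in that order. Setting $\lambda^{(k)}:=\lambda[\overline{j_b},\ldots,\overline{j_{b-k+1}}]$, the modified rows after $k$ steps are the $k$ largest indices $\{j_{b-k+1},\ldots,j_b\}$, on which $\lambda^{(k)}_j=\lambda_j-1=\mu_j$, so nonnegativity of each part follows from $\mu_j\ge0$. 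The four-case check applies with the roles interchanged: the only nontrivial adjacency is now $j$ processed and $j+1$ not processed, requiring $\lambda_j>\lambda_{j+1}$. Here the increasing order of the $j_m$ together with the fact that the largest indices are removed first prevents $j+1$ from lying in $\{j_1,\ldots,j_b\}$ at all (it would otherwise be among the $k$ largest and hence processed), so $\mu_{j+1}=\lambda_{j+1}$ and $\mu_j=\lambda_j-1$, and $\mu_j\ge\mu_{j+1}$ gives $\lambda_j-1\ge\lambda_{j+1}$. Hence $\bar J$ is smooth on $\lambda$ and $\lambda[\underrightarrow{\bar J}]=\mu$.
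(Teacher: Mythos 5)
Your proof is correct and follows essentially the same route as the paper's: both track the intermediate shapes interpolating between $\lambda$ and $\mu$ and reduce the only nontrivial adjacency check to the monotonicity $\mu_j\geq\mu_{j+1}$, using the strictly increasing (resp.\ decreasing) order of the letters to ensure that the row above a newly modified row already carries its value in $\mu$. The paper phrases this via the identity $\lambda\left[i_{1},\ldots,i_{p-1}\right]=\mu\left[\overline{i_{a}},\ldots,\overline{i_{p}}\right]$ and checks only the single modified adjacency at each step, whereas you write the explicit row-by-row formula for each intermediate shape and run a four-case check, but the substance is identical.
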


\begin{proof}
For $p=2,\ldots,a$, 
$\lambda\left[  i_{1},\ldots,i_{p-1}\right]  =\mu\left[  \overline{i_{a}},\ldots,\overline{i_{p}}\right]  $.
Here, 
$\lambda\left[  i_{1},\ldots,i_{p-1}\right]  _{i_{p}-1}=
\mu\left[ \overline{i_{a}},\ldots,\overline{i_{p}}\right]  _{i_{p}-1}=\mu_{i_{p}-1}$ and 
$\lambda\left[  i_{1},\ldots,i_{p-1}\right]  _{i_{p}}=
\mu\left[ \overline{i_{a}},\ldots,\overline{i_{p}}\right]  _{i_{p}}=\mu_{i_{p}}-1$.
Since $\mu$ is a Young diagram, i.e., $\mu_{i_{p}-1}\geq\mu_{i_{p}}$, we have 
$\lambda\left[  i_{1},\ldots,i_{p-1}\right]  _{i_{p}-1}>
\lambda\left[ i_{1},\ldots,i_{p-1}\right]  _{i_{p}}$.
That is, $\lambda\left[  i_{1},\ldots,i_{p-1}\right]  \left[  i_{p}\right]  $ is a Young diagram.
The proof of the second part is analogous.
\end{proof}

For all $T\in \mathbf{B}_{n}^{\mathfrak{sp}_{2n}}(\nu)_{\mu}^{\lambda}$ with $\nu_{1}=n_{c}$, 
\begin{equation} \label{eq:mu2lambda}
\mu\left[ \underrightarrow{\mathrm{FE}(T)} \right]=
\mu\left[  \underrightarrow{\mathscr{I}^{(n_{c},0)}\vphantom{\overline{\mathscr{J}^{(1,0)}}}},
\underrightarrow{\overline{\mathscr{J}^{(n_{c},0)}}},\ldots,
\underrightarrow{\mathscr{I}^{(1,0)}\vphantom{\overline{\mathscr{J}^{(1,0)}}}},
\underrightarrow{\overline{\mathscr{J}^{(1,0)}}} \right]  =\lambda
\end{equation}
by definition.
Under this condition and the notation introduced above, 
we have the following two lemmas (Lemma~\ref{lem:smooth1} and Lemma~\ref{lem:smooth2}).

 \begin{lem} \label{lem:smooth1}
(1).
Let us define
\[
\lambda^{(x-1)}:=
\begin{cases}
\lambda\left[  \overline{\mathscr{I}^{(1,1)}},\ldots,\overline{\mathscr{I}^{(x-1,x-1)}},
\mathscr{J}^{(1,x-1)},\ldots,\mathscr{J}^{(x-1,1)}\right]  & (2\leq x\leq n_{c}), \\
\lambda & (x=1).
\end{cases}
\]
Then $\lambda^{(x-1)}$ is a Young diagram on which $\overline{\mathscr{I}^{(x,1)}}$ is smooth ($1 \leq x \leq n_{c}$). 

(2).
For $2\leq x\leq n_{c}$, let us assume that
\[
\lambda^{(x-1,i)}:=
\begin{cases}
\lambda\left[  \overline{\mathscr{I}^{(1,1)}},\ldots,
\overline{\mathscr{I}^{(x-1,x-1)}},\mathscr{J}^{(1,x-1)},\ldots,
\mathscr{J}^{(x-i,i)}\right]  & (1\leq i\leq x-1), \\
\lambda\left[  \overline{\mathscr{I}^{(1,1)}},\ldots,
\overline{\mathscr{I}^{(x-1,x-1)}}\right]  & (i=x)
\end{cases}
\]
are all Young diagrams.
Suppose that $\overline{\mathscr{I}^{(x,i)}}$ is smooth on $\lambda^{(x-1,i)}$.
Then we have that $\overline{\mathscr{I}^{(x,i+1)}}$ is smooth on $\lambda^{(x-1,i+1)}$
 ($1\leq i\leq x-1$).

(3).
$\lambda\left[ 
\underrightarrow{\overline{\mathscr{I}^{(1,1)}}},
\underrightarrow{\overline{\mathscr{I}^{(2,2)}}},\ldots,
\underrightarrow{\overline{\mathscr{I}^{(n_{c},n_{c})}}}\right]  $.
\end{lem}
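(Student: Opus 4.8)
The plan is to derive part~(3) from parts~(1) and~(2) by a two-level induction, after unwinding the chained-smoothness notation. By the meaning of $\lambda[\underrightarrow{\cdot},\ldots,\underrightarrow{\cdot}]$, statement~(3) is equivalent to the assertion that, for every $x$ with $1\le x\le n_{c}$, the shape
\[
\lambda^{(x-1,x)}=\lambda\Bigl[\underrightarrow{\overline{\mathscr{I}^{(1,1)}}},\ldots,\underrightarrow{\overline{\mathscr{I}^{(x-1,x-1)}}}\Bigr]
\]
is a Young diagram and $\overline{\mathscr{I}^{(x,x)}}$ is smooth on it. I would prove this by an outer induction on $x$; the conjunction over all $x$ is exactly~(3).

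The base case $x=1$ is immediate: $\lambda^{(x-1,x)}$ reduces to $\lambda$, and part~(1) at $x=1$ states precisely that $\overline{\mathscr{I}^{(1,1)}}$ is smooth on $\lambda$. For the inductive step, suppose the claim holds for all indices below $x$. Chaining those statements shows that $\overline{\mathscr{I}^{(1,1)}},\ldots,\overline{\mathscr{I}^{(x-1,x-1)}}$ is smooth on $\lambda$, which is exactly the statement that $\lambda^{(x-1,x)}$ is a well-defined Young diagram; it remains to prove $\overline{\mathscr{I}^{(x,x)}}$ is smooth on it. Here I would run an inner induction on the counter $i=1,2,\ldots,x$ establishing $R(i)$: $\overline{\mathscr{I}^{(x,i)}}$ is smooth on $\lambda^{(x-1,i)}$. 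The base $R(1)$ is part~(1), since $\lambda^{(x-1,1)}=\lambda^{(x-1)}$. For the step, I would use the telescoping identity $\lambda^{(x-1,i+1)}=\lambda^{(x-1,i)}[\overline{\mathscr{J}^{(x-i,i)}}]$, which holds because the defining sequence of $\lambda^{(x-1,i+1)}$ is that of $\lambda^{(x-1,i)}$ with its final $\mathscr{J}$-block $\mathscr{J}^{(x-i,i)}$ deleted; granting $R(i)$, part~(2) then yields $\lambda^{(x-1,i)}[\overline{\mathscr{J}^{(x-i,i)}}][\underrightarrow{\overline{\mathscr{I}^{(x,i+1)}}}]=\lambda^{(x-1,i+1)}[\underrightarrow{\overline{\mathscr{I}^{(x,i+1)}}}]$, that is, $R(i+1)$. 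As part~(2) ranges over $1\le i\le x-1$, iteration reaches $R(x)$, the sought smoothness of $\overline{\mathscr{I}^{(x,x)}}$ on $\lambda^{(x-1,x)}$; this simultaneously certifies $\lambda^{(x,x+1)}=\lambda^{(x-1,x)}[\overline{\mathscr{I}^{(x,x)}}]$ as a Young diagram, closing the outer induction.

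The inductive skeleton above is routine; the point that will require genuine care is discharging the standing hypothesis of part~(2) that \emph{all} the intermediate shapes $\lambda^{(x-1,i)}$ ($1\le i\le x$) are Young diagrams. Part~(1) supplies this only at $i=1$ (the fully $\mathscr{J}$-augmented shape $\lambda^{(x-1)}$), and the outer induction supplies it only at $i=x$ (the shape $\lambda^{(x-1,x)}$ carrying no $\mathscr{J}$-blocks); the intermediate values, which adjoin the proper initial segment $\mathscr{J}^{(1,x-1)},\ldots,\mathscr{J}^{(x-i,i)}$, must be controlled separately. I would establish these as a preliminary sub-claim, namely that the block sequence $\mathscr{J}^{(1,x-1)},\ldots,\mathscr{J}^{(x-1,1)}$ is smooth on $\lambda^{(x-1,x)}$, so that every partial sum $\lambda^{(x-1,i)}$ is automatically a Young diagram. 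This smoothness is the $\mathscr{C}_{n}^{(-)}$-side counterpart of the present lemma and is the content I would draw from Eq.~\eqref{eq:mu2lambda} together with the companion Lemma~\ref{lem:smooth2}; once the relevant endpoints are known to be Young diagrams, Lemma~\ref{lem:smooth0} upgrades each single sorted $\mathscr{J}$-block to full smoothness. With this sub-claim in place, every appeal to part~(2) is legitimate and the nested induction closes.
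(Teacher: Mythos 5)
Your inductive skeleton coincides with the paper's: an outer induction on $x$, an inner induction on the counter $i$ with base case from part~(1) (since $\lambda^{(x-1,1)}=\lambda^{(x-1)}$), step from part~(2) via the telescoping identity $\lambda^{(x-1,i+1)}=\lambda^{(x-1,i)}\left[\overline{\mathscr{J}^{(x-i,i)}}\right]$, and closure of the outer step by $R(x)$. You also correctly isolate the crux: part~(2) may only be invoked once \emph{all} the shapes $\lambda^{(x-1,i)}$, $1\leq i\leq x$, are known to be Young diagrams, and part~(1) supplies this only at $i=1$ while the outer induction supplies it only at $i=x$, leaving $2\leq i\leq x-1$ open whenever $x\geq 3$.

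The gap is in how you discharge that hypothesis. Lemma~\ref{lem:smooth2} does not contain, and does not imply, your sub-claim. Its shapes $\mu^{(x+1,i)}$ are built from $\mu$ by deleting the blocks $\overline{\mathscr{J}^{(n_{c},1)}},\ldots,\overline{\mathscr{J}^{(x+1,n_{c}-x)}}$ (column $z$ at its \emph{final} counter $n_{c}-z+1$) and adjoining certain $\mathscr{I}$-blocks, whereas your sub-claim concerns the shapes $\lambda^{(x-1,i)}$, built from $\lambda^{(x-1,x)}$ by adjoining the \emph{intermediate}-counter blocks $\mathscr{J}^{(1,x-1)},\ldots,\mathscr{J}^{(x-i,i)}$ (column $z$ at counter $x-z$). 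These are different shapes carrying different letter sets, and no identity links them; Eq.~\eqref{eq:mu2lambda} and Lemma~\ref{lem:smooth0} cannot bridge this, since Lemma~\ref{lem:smooth0} only upgrades a single sorted block once \emph{both} endpoint shapes are already known to be Young diagrams, which is exactly what is at issue. Worse, Lemma~\ref{lem:smooth2} is itself proved in the paper by mirroring the proof of the present lemma, so an argument in which each lemma's part~(3) invokes the other is circular. The mechanism your proposal is missing is the paper's: strengthen the outer induction hypothesis to carry precisely the statement ``all $\lambda^{(x-1,i)}$ ($1\leq i\leq x$) are Young diagrams,'' and propagate it to the next level using the smoothness statements just established at level $x$ together with the weight-preservation (pairing) identity
\begin{equation*}
\lambda^{(x-1,i)}\left[\overline{\mathscr{I}^{(x,i)}}\right]=\lambda^{(x,i+1)}
\quad (1\leq i\leq x),
\end{equation*}
which follows from the chained pairs $\left\langle \overline{\mathscr{I}^{(x,j)}},\mathscr{J}^{(x-j,j)}\right\rangle_{\mathrm{pair}}$ and the fact that each $\phi^{(\centerdot,\centerdot)}$ is weight-preserving. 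Since $R(i)$ asserts that $\lambda^{(x-1,i)}\left[\overline{\mathscr{I}^{(x,i)}}\right]$ is reached smoothly, each $\lambda^{(x,i+1)}$ is a Young diagram, and part~(1) supplies the remaining shape $\lambda^{(x,1)}=\lambda^{(x)}$; this closes the outer induction internally, with no appeal to Lemma~\ref{lem:smooth2}.
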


\begin{proof}
Let us begin by giving the proof of (2).
Note that a pair of $\overline{\mathscr{I}^{(x,i+1)}}$ and $\mathscr{J}^{(x-i,i+1)}$ are generated from 
a pair of $\overline{\mathscr{I}^{(x,i)}}$ and $\mathscr{J}^{(x-i,i)}$ by applying $\phi^{(x-i,x)}$ to the updated tableau whose entries are updated by preceding application of the map of the form $\phi^{(\centerdot,\centerdot)}$.
Let us call such sets $\overline{\mathscr{I}^{(x,i)}}$ and $\mathscr{J}^{(x-i,i)}$ to be updated are paired 
and write $\left\langle \overline{\mathscr{I}^{(x,i)}},\mathscr{J}^{(x-i,i)}\right\rangle_{\mathrm{pair}}$ 
($0\leq i\leq x-1; 1\leq x \leq n_{c}$).
Let us set $\mathscr{I}^{(x,i)}=\{i_{1},i_{2},\ldots,i_{a}\}$, 
$\mathscr{J}^{(x-i,i)}=\{j_{1},j_{2},\ldots,j_{b}\}$, 
$\mathscr{I}^{(x,i+1)}=\{i_{1}^{\prime},i_{2}^{\prime},\ldots,i_{a}^{\prime}\}$,  
$\mathscr{J}^{(x-i,i+1)}=\{j_{1}^{\prime},j_{2}^{\prime},\ldots,j_{b}^{\prime}\}$, 
$\mathscr{L}:=\mathscr{I}^{(x,i)}\cap \mathscr{J}^{(x-i,i)}=\{l_{1},l_{2},\ldots,l_{c}\}$, and 
$\mathscr{L}^{\ast}:=\mathscr{I}^{(x,i+1)}\cap \mathscr{J}^{(x-i,i+1)}=\{l_{1}^{\ast},l_{2}^{\ast},\ldots,l_{c}^{\ast}\}$.
Recall that these are ordered sets and are also considered as the sequences of letters.
We write $\Tilde{\lambda}=\lambda^{(x-1,i)}\left[\overline{\mathscr{J}^{(x-1,i)}}\right]=\lambda^{(x-1,i+1)}$ for brevity.

\textbf{(I).}
Let us consider the following three cases separately:
\begin{description}
\item[(a)]
$i_{a}^{\prime}=l_{c}^{\ast}$.
\item[(b)]
$i_{a}^{\prime}\neq l_{c}^{\ast}$ and $i_{a}=l_{c}$.
\item[(c)]
$i_{a}^{\prime}\neq l_{c}^{\ast}$ and $i_{a}\neq l_{c}$.
\end{description}

\textbf{Case (a).}
In this case, $\mathscr{L} \neq \emptyset$ and $l_{c}=i_{a}$.
Indeed, if $l_{c}=i_{p}(\in \mathscr{I}^{(x,i)})$ ($p<a$), then $i_{a} \notin \mathscr{L}$ because 
$i_{a}$ is larger than $l_{c}$ that is the largest letter in $\mathscr{L}$.
This implies that $i_{a}^{\prime}=i_{a}$.
However, this also implies $l_{c}^{\ast}=i_{a}\in \mathscr{I}^{(x,i)}$ due to the assumption of \textbf{(a)}, 
which contradicts the fact that  $l_{c}^{\ast}$ is not an $\mathscr{I}^{(x,i)}$-letter.
To proceed, let us divide this case further into the following two cases:
\begin{description}
\item[(a-1)]
All $\mathscr{I}^{(x,i)}$-letters $i_{1},i_{2},\ldots,i_{a}$ are also $\mathscr{J}^{(x-i,i)}$-letters.
\item[(a-2)]
There exist non-$\mathscr{J}^{(x-i,i)}$-letters in the sequence of $\mathscr{I}^{(x,i)}$-letters $i_{1},i_{2},\ldots,i_{a}$
(That is, there exist letters belonging to $\mathscr{I}^{(x,i)}\backslash \mathscr{L}$ 
in the set $\left\{i_{1},i_{2},\ldots,i_{a}\right\}$).
\end{description}
In case \textbf{(a-1)}, $a=c$.
Then $i_{a=c}^{\prime}=l_{c}^{\ast}$.
According to the algorithm in Definition~\ref{df:split} or Remark~\ref{rem:algorithm1}, 
we can write $l_{c}^{\ast}=j_{r}-1\; (\exists j_{r}\in \mathscr{J}^{(x-i,i)})$.
In case \textbf{(a-2)}, let us choose the largest letter $i_{p}\; (p<a)$ 
from the set of $\mathscr{I}^{(x,i)}$-letters $\{i_{1},i_{2},\ldots,i_{a}\}$ such that 
$i_{p}$ is not a $\mathscr{J}^{(x-i,i)}$-letter (i.e., $i_{p} \in \mathscr{I}^{(x,i)}\backslash \mathscr{L}$).
Now consider the increasing (just by one) sequence of $\mathscr{C}_{n}^{(+)}$-letters
\begin{equation} \label{eq:smooth1_seq}
i_{p}+1,i_{p}+2,\ldots,i_{a}-1.
\end{equation}
By the maximality of $i_{p}$, any letter belonging to $\mathscr{I}^{(x,i)}\backslash \mathscr{L}$ cannot appear in \eqref{eq:smooth1_seq}.
If all of the letters in \eqref{eq:smooth1_seq} are $\mathscr{J}^{(x-i,i)}$-letters, then 
$l_{c}^{\ast}<i_{p}$ so that $i_{a}^{\prime}=i_{p}$, which contradicts the assumption of \textbf{(a)}.
Consequently, there must exist some letters that are not $\mathscr{I}^{(x,i)}$-letters nor $\mathscr{J}^{(x-i,i)}$-letters 
in \eqref{eq:smooth1_seq}.
Denote by $i_{a}-q\; (\exists q\geq1)$ the largest letter among them.
Since $l_{c}=i_{a}$, we have $l_{c}^{\ast}=i_{a}-q$.
By the maximality of $i_{a}-q$, $i_{a}-q+1$ is a $\mathscr{J}^{(x-i,i)}$-letter 
(when $q=1$, $i_{a}=l_{c}$ is a $\mathscr{J}^{(x-i,i)}$-letter).
Therefore, we can write $i_{a}-q+1=j_{r}\;(\exists j_{r}\in \mathscr{J}^{(x-i,i)})$ so that we have $l_{c}^{\ast}=j_{r}-1$.
In both cases \textbf{(a-1)} and \textbf{(a-2)}, we can write 
$i_{a}^{\prime}=l_{c}^{\ast}=j_{r}-1\;(\exists j_{r}\in \mathscr{J}^{(x-i,i)})$.
Since $i_{a}^{\prime}=l_{c}^{\ast}\in \mathscr{I}^{(x,i+1)}$ is the letter generated by $\phi^{(x-i,x)}$, 
$i_{a}^{\prime}\notin \mathscr{J}^{(x-i,i)}$.
By the assumption of (2) of Lemma~\ref{lem:smooth1}, 
$\Tilde{\lambda}\left[ \mathscr{J}^{(x-i,i)}\right]=\lambda^{(x-1,i)}$ is a Young diagram so that 
\[
\Tilde{\lambda}\left[ \mathscr{J}^{(x-i,i)}\right]  _{i_{a}^{\prime}}\geq
\Tilde{\lambda}\left[  \mathscr{J}^{(x-i,i)}\right]  _{i_{a}^{\prime}+1}.
\]
The left-hand side of this inequality is $\Tilde{\lambda}_{i_{a}^{\prime}}$ because $i_{a}^{\prime}\notin \mathscr{J}^{(x-i,i)}$, 
while the right-hand side is $\Tilde{\lambda}_{i_{a}^{\prime}+1}+1$ because $i_{a}^{\prime}+1=j_{r}\in \mathscr{J}^{(x-i,i)}$ 
and thereby $\Tilde{\lambda}_{i_{a}^{\prime}}>\Tilde{\lambda}_{i_{a}^{\prime}+1}$.

\textbf{Case (b).}
Firstly, let us show that 
we can write $i_{a}^{\prime}=i_{p}\; (\exists i_{p}\in \mathscr{I}^{(x,i)}\backslash \mathscr{L})$.
Since $i_{a}^{\prime}\notin \mathscr{L}^{\ast}$ 
so that we can write $i_{a}^{\prime}=i_{p}\; (\exists i_{p}\in \mathscr{I}^{(x,i)}\backslash \mathscr{L})$ 
because $i_{a}^{\prime}\in(\mathscr{I}^{(x,i)}\backslash \mathscr{L})\sqcup \mathscr{L}^{\ast}$.
In this case, $p\leq a-1$. Otherwise, $i_{a}^{\prime}=i_{a}=l_{c}$, which is a contradiction.
To proceed, let us consider the following three cases separately:
\begin{description}
\item[(b-1)]
$p=a-1$ and $i_{a}^{\prime}=i_{p=a-1}=i_{a}-1=l_{c}-1$.
\item[(b-2)]
$p\leq a-1$ and $i_{p}<i_{a}-1$.
\item[(b-3)]
$p<a-1$ and $i_{p}=i_{a}-1$.
\end{description}
In case \textbf{(b-1)}, we can write $l_{c}=j_{r}\; (\exists j_{r}\in \mathscr{J}^{(x-i,i)})$ so that 
$i_{a}^{\prime}=j_{r}-1$.
In case \textbf{(b-2)}, there must exist a sequence of $\mathscr{J}$-letters 
$j_{q},\ldots,j_{q+m}$ such that 
$i_{p}<j_{q+k}<i_{a}\; (k=0,1,\ldots,m)$ and
\begin{align*}
j_{q}-i_{p}  &  =1,\\
j_{q+k}-j_{q+k-1}  &  =1\quad (k=1,\ldots,m),\\
i_{a}-j_{q+m}  &  =1.
\end{align*}
Otherwise, $l_{c}^{\ast}$ cannot be smaller than $i_{a}^{\prime}=i_{p}(\in \mathscr{I}^{(x,i)}\backslash \mathscr{L})$.
The existence of such a sequence implies $i_{a}^{\prime}=i_{p}=j_{q}-1$.
Case \textbf{(b-3)} must be excluded because the inequalities 
$i_{p}<\cdots<i_{a-1}<i_{a}$ are not satisfied.
In both cases \textbf{(b-1)} and \textbf{(b-2)}, we can write $i_{a}^{\prime}=j_{r}-1\; (\exists j_{r}\in \mathscr{J}^{(x-i,i)})$.
Now since $\Tilde{\lambda}\left[ \mathscr{J}^{(x-i,i)}\right]$ is a Young diagram, 
\[
\Tilde{\lambda}\left[ \mathscr{J}^{(x-i,i)}\right]  _{i_{a}^{\prime}}\geq
\Tilde{\lambda}\left[\mathscr{J}^{(x-i,i)}\right]  _{i_{a}^{\prime}+1}.
\]
The left-hand side of this inequality is 
$\Tilde{\lambda}\left[ \mathscr{J}^{(x-i,i)}\right]  _{i_{p}}=\Tilde{\lambda}_{i_{p}}=\Tilde{\lambda}_{i_{a}^{\prime}}$ 
because $i_{p}\in \mathscr{I}^{(x,i)}\backslash \mathscr{L}$, i.e., $i_{p}\notin \mathscr{J}^{(x-i,i)}$, 
while the right-hand side is 
$\Tilde{\lambda}_{i_{a}^{\prime}+1}+1$ because $i_{a}^{\prime}+1=j_{r}\in \mathscr{J}^{(x-i,i)}$ so that 
$\Tilde{\lambda}_{i_{a}^{\prime}}>\Tilde{\lambda}_{i_{a}^{\prime}+1}$.

\textbf{Case (c).}
Let us show that $i_{a}^{\prime}=i_{a}$.
If $\mathscr{L}=\emptyset$, this is obvious.
If $\mathscr{L}\neq \emptyset$, the $\mathscr{I}^{(x,i)}$-letter $i_{a}$ is larger than $l_{c}$ 
that is the largest letter in $\mathscr{L}$ so that 
the $\mathscr{I}^{(x,i)}$-letter $i_{a}$ is not a $\mathscr{J}^{(x-i,i)}$-letter, 
which implies $i_{a}^{\prime}=i_{a}$.
By the assumption of (2) of Lemma~\ref{lem:smooth1}, 
$\overline{\mathscr{I}^{(x,i)}}$ is smooth on 
$\Tilde{\lambda}\left[\mathscr{J}^{(x-i,i)}\right]$ so that 
\[
\Tilde{\lambda}\left[ \mathscr{J}^{(x-i,i)}\right]  _{i_{a}^{\prime}}>
\Tilde{\lambda}\left[ \mathscr{J}^{(x-i,i)}\right]  _{i_{a}^{\prime}+1}.
\]
The left-hand side of this inequality is $\Tilde{\lambda}_{i_{a}^{\prime}}$ because 
$i_{a}^{\prime}=i_{a}\notin \mathscr{J}^{(x-i,i)}$, while the right-hand side is $\Tilde{\lambda}_{i_{a}^{\prime}+1}+\delta$ 
($\delta\in\{0,1\}$).
Therefore, we have $\Tilde{\lambda}_{i_{a}^{\prime}}>\Tilde{\lambda}_{i_{a}^{\prime}+1}$.
In \textbf{(I)}, we have verified that 
$\Tilde{\lambda}_{i_{a}^{\prime}}>\Tilde{\lambda}_{i_{a}^{\prime}+1}$, that is, 
$\Tilde{\lambda}\lbrack\overline{i_{a}^{\prime}}]$ is a Young diagram for all possible cases.

\textbf{(II).}
Let us suppose that 
$\Tilde{\lambda}^{\ast(k+1)}:=
\Tilde{\lambda}\lbrack\overline{i_{a}^{\prime}},\ldots,\overline{i_{k+1}^{\prime}}]$
is a Young diagram ($k+1\leq a$).
In what follows, we prove 
$\Tilde{\lambda}^{\ast(k+1)}[\overline{i_{k}^{\prime}}]=\Tilde{\lambda}^{\ast(k)}$
is also a Young diagram, i.e., 
$\Tilde{\lambda}_{i_{k}^{\prime}}^{\ast(k+1)}>\Tilde{\lambda}_{i_{k}^{\prime}+1}^{\ast(k+1)}$.
Note that $\mathscr{J}^{(x-i,i)}$ is smooth on $\Tilde{\lambda}$ 
by the assumption of (2) of Lemma~\ref{lem:smooth1} and by Lemma~\ref{lem:smooth0}.
Let us consider the following three cases separately:
\begin{description}
\item[(a)]
$i_{k}^{\prime}\in \mathscr{I}^{(x,i+1)}\backslash \mathscr{L}^{\ast}(=\mathscr{I}^{(x,i)}\backslash \mathscr{L})$.
\item[(b)]
$i_{k+1}^{\prime}\in \mathscr{I}^{(x,i+1)}\backslash \mathscr{L}^{\ast}$ 
and $i_{k}^{\prime}\in \mathscr{L}^{\ast}$.
\item[(c)]
$i_{k+1}^{\prime}\in \mathscr{L}^{\ast}$ 
and $i_{k}^{\prime}\in \mathscr{L}^{\ast}$.
\end{description}

\textbf{Case (a).}
We can write 
$i_{k}^{\prime}=i_{p}\; (\exists i_{p}\in \mathscr{I}^{(x,i)}\backslash \mathscr{L})$ and 
\[
\Tilde{\lambda}_{i_{k}^{\prime}}^{\ast(k+1)}=
\Tilde{\lambda}\lbrack\overline{i_{a}^{\prime}},\ldots,\overline{i_{k+1}^{\prime}}]_{i_{p}}=
\Tilde{\lambda}_{i_{p}},
\]
where we have used the fact that 
$i_{p}\notin\{i_{a}^{\prime},\ldots,i_{k+1}^{\prime}\}$ ($\because i_{k}^{\prime}=i_{p}$).
In order to compute $\Tilde{\lambda}_{i_{k}^{\prime}+1}^{\ast(k+1)}=\Tilde{\lambda}_{i_{p}+1}^{\ast(k+1)}$, 
we divide this case further into the following three cases:
\begin{description}
\item[(a-1)]
$i_{p}+1\in \mathscr{I}^{(x,i+1)}$.
\item[(a-2)]
$i_{p}+1\notin \mathscr{I}^{(x,i+1)}$ and $i_{p}+1\in \mathscr{L}$.
\item[(a-3)]
$i_{p}+1\notin \mathscr{I}^{(x,i+1)}$ and $i_{p}+1\notin \mathscr{L}$.
\end{description}
In case \textbf{(a-1)}, by noting $i_{p},i_{p}+1\in \mathscr{I}^{(x,i+1)}$, we have 
$i_{k+1}^{\prime}=i_{k}^{\prime}+1=i_{p}+1$.
Then 
\[
\Tilde{\lambda}_{i_{k}^{\prime}+1}^{\ast(k+1)}=
\Tilde{\lambda}\lbrack\overline{i_{a}^{\prime}},\ldots,\overline{i_{k+1}^{\prime}}=
\overline{i_{p}+1}]_{i_{p}+1}=\Tilde{\lambda}_{i_{p}+1}-1
\]
so that we obtain
\[
\Tilde{\lambda}_{i_{k}^{\prime}}^{\ast(k+1)}=\Tilde{\lambda}_{i_{p}}>
\Tilde{\lambda}_{i_{p}+1}-1=
\Tilde{\lambda}_{i_{k}^{\prime}+1}^{\ast(k+1)}.
\]
In both cases \textbf{(a-2)} and \textbf{(a-3)}, 
$\Tilde{\lambda}_{i_{k}^{\prime}+1}^{\ast(k+1)}=\Tilde{\lambda}_{i_{p}+1}^{\ast (k+1)}=\Tilde{\lambda}_{i_{p}+1}$ 
because $i_{p}+1\notin \mathscr{I}^{(x,i+1)}$.
Since $\overline{\mathscr{I}^{(x,i)}}$ is smooth on 
$\Tilde{\lambda}\left[ \mathscr{J}^{(x-i,i)}\right]$ 
by the assumption of (2), 
\begin{equation} \label{eq:smooth1_1}
\Tilde{\lambda}\left[  \mathscr{J}^{(x-i,i)},\overline{i_{a}},\ldots,\overline{i_{p+1}}\right]  _{i_{p}}>
\Tilde{\lambda}\left[  \mathscr{J}^{(x-i,i)},\overline{i_{a}},\ldots,\overline{i_{p+1}}\right]  _{i_{p}+1}.
\end{equation}
In case \textbf{(a-2)}, the left-hand side of Eq.~\eqref{eq:smooth1_1} is $\Tilde{\lambda}_{i_{p}}$ 
because $i_{p}\in \mathscr{I}^{(x,i)}\backslash \mathscr{L}$, i.e, $i_{p}\notin \mathscr{J}^{(x-i,i)}$.
The right-hand side is $\Tilde{\lambda}_{i_{p}+1}$ because $i_{p}+1\in \mathscr{L}$ 
($\overline{i_{p}+1}$ appears once in $\{\overline{i_{a}},\ldots,\overline{i_{p+1}}\}$ and 
$i_{p}+1$ appears once in $\mathscr{J}^{(x-i,i)}$).
Therefore, $\Tilde{\lambda}_{i_{p}}>\Tilde{\lambda}_{i_{p}+1}$ so that we have 
$\Tilde{\lambda}_{i_{k}^{\prime}}^{\ast(k+1)}=\Tilde{\lambda}_{i_{p}}>
\Tilde{\lambda}_{i_{p}+1}=\Tilde{\lambda}_{i_{k}^{\prime}+1}^{\ast(k+1)}$.
In case \textbf{(a-3)}, $i_{p}+1\notin \mathscr{I}^{(x,i)}$ because 
$i_{p}+1\notin(\mathscr{I}^{(x,i)}\backslash \mathscr{L})\sqcup \mathscr{L}^{\ast}$ and 
$i_{p}+1\notin \mathscr{L}$. The left-hand side of Eq.~\eqref{eq:smooth1_1} is $\Tilde{\lambda}_{i_{p}}$ 
because $i_{p}\in \mathscr{I}^{(x,i)}\backslash \mathscr{L}$, i.e., $i_{p}\notin \mathscr{J}^{(x-i,i)}$, 
while the right-hand side is $\Tilde{\lambda}_{i_{p}+1}+\delta$ ($\delta\in\{0,1\}$) because $i_{p}+1\notin \mathscr{I}^{(x,i)}$.
Therefore, $\Tilde{\lambda}_{i_{p}}>\Tilde{\lambda}_{i_{p}+1}+\delta
\geq\Tilde{\lambda}_{i_{p}+1}$ so that we have 
$\Tilde{\lambda}_{i_{k}^{\prime}}^{\ast(k+1)}=\Tilde{\lambda}_{i_{p}}>
\Tilde{\lambda}_{i_{p}+1}=\Tilde{\lambda}_{i_{k}^{\prime}+1}^{\ast(k+1)}$.

\textbf{Case (b).}
In this case, $\mathscr{L}^{\ast}\neq \emptyset$ and we can write 
$i_{k}^{\prime}=l_{r}^{\ast}\; (\exists l_{r}^{\ast}\in \mathscr{L}^{\ast})$.
We divide this case further into the following two cases 
according to the algorithm in Definition~\ref{df:split} or Remark~\ref{rem:algorithm1}:
\begin{description}
\item[(b-1)]
$l_{r}^{\ast}=i_{p}-1\quad (\exists i_{p}\in \mathscr{I}^{(x,i)}\backslash \mathscr{L})$.
\item[(b-2)]
$l_{r}^{\ast}=j_{q}-1\quad (\exists j_{q}\in \mathscr{J}^{(x-i,i)})$.
\end{description}
Note that the situation that $l_{r}^{\ast}=l_{r+1}^{\ast}-1\; (r\neq c)$ cannot happen.
Indeed, if $l_{r}^{\ast}=l_{r+1}^{\ast}-1\; (r\neq c)$, then 
$i_{k}^{\prime}=l_{r+1}^{\ast}-1$.
Since $l_{r+1}^{\ast} \in \mathscr{I}^{(x,i+1)}$, this implies $i_{k+1}^{\prime}=l_{r+1}^{\ast}$, 
which contradicts the assumption of \textbf{(b)}. 
In case \textbf{(b-1)}, $i_{k+1}^{\prime}=i_{p}$ because $i_{p} \in \mathscr{I}^{(x,i+1)}$ and $i_{k}^{\prime}=i_{p}-1$.
Then
\[
\Tilde{\lambda}_{i_{k}^{\prime}}^{\ast(k+1)}=
\Tilde{\lambda}\lbrack\overline{i_{a}^{\prime}},\ldots,\overline{i_{k+1}^{\prime}}=
\overline{i_{p}}]_{i_{p}-1}=\Tilde{\lambda}_{i_{p}-1},
\]
and
\[
\Tilde{\lambda}_{i_{k}^{\prime}+1}^{\ast(k+1)}=
\Tilde{\lambda}\lbrack\overline{i_{a}^{\prime}},\ldots,\overline{i_{k+1}^{\prime}}=
\overline{i_{p}}]_{i_{p}}=\Tilde{\lambda}_{i_{p}}-1.
\]
From these two equations, we have 
$\Tilde{\lambda}_{i_{k}^{\prime}}^{\ast(k+1)}>\Tilde{\lambda}_{i_{k}^{\prime}+1}^{\ast(k+1)}$.
In case \textbf{(b-2)},
\[
\Tilde{\lambda}_{i_{k}^{\prime}}^{\ast(k+1)}=
\Tilde{\lambda}\lbrack\overline{i_{a}^{\prime}},\ldots,\overline{i_{k+1}^{\prime}}]_{i_{k}^{\prime}}=
\Tilde{\lambda}_{i_{k}^{\prime}}=\Tilde{\lambda}_{j_{q}-1}.
\]
On the other hand,
\[
\Tilde{\lambda}_{i_{k}^{\prime}+1}^{\ast(k+1)}=
\Tilde{\lambda}\lbrack\overline{i_{a}^{\prime}},\ldots,\overline{i_{k+1}^{\prime}}]_{i_{k}^{\prime}+1}=
\Tilde{\lambda}_{i_{k}^{\prime}+1}=\Tilde{\lambda}_{j_{q}},
\]
where we have used the fact that $i_{k}^{\prime}+1<i_{k+1}^{\prime}$.
This is shown as follows.
If $i_{k}^{\prime}+1=i_{k+1}^{\prime}$, 
then 
$j_{q}=i_{k}^{\prime}+1=i_{k+1}^{\prime}$.
This implies that $j_{q}$ is an $\mathscr{I}^{(x,i)}$-letter that is not a $\mathscr{J}^{(x-i,i)}$-letter 
due to the assumption of \textbf{(b)}, which is a contradiction.
Now since $\mathscr{J}^{(x-i,i)}$ is smooth on $\Tilde{\lambda}$, 
we have
\[
\Tilde{\lambda}\lbrack j_{1},\ldots,j_{q-1}]_{j_{q}-1}>
\Tilde{\lambda}\lbrack j_{1},\ldots,j_{q-1}]_{j_{q}}.
\]
By noting $j_{q}-1=l_{r}^{\ast}\notin\{j_{1},\ldots,j_{q-1}\}$, 
the left-hand side of this inequality is found to be $\Tilde{\lambda}_{j_{q}-1}$, 
while the right-hand side is clearly $\Tilde{\lambda}_{j_{q}}$.
Hence, we have 
$\Tilde{\lambda}_{i_{k}^{\prime}}^{\ast(k+1)}>\Tilde{\lambda}_{i_{k}^{\prime}+1}^{\ast(k+1)}$.

\textbf{Case (c).}
In this case, $\mathscr{L}^{\ast}\neq \emptyset$ and we can write $i_{k}^{\prime}=l_{r}^{\ast}$
and $i_{k+1}^{\prime}=l_{r+1}^{\ast}\;(\exists r\in\{1,\ldots,c-1\})$.
According to the algorithm in Definition~\ref{df:split} or Remark~\ref{rem:algorithm1},
let us consider the following three cases separately:
\begin{description}
\item[(c-1)]
$l_{r}^{\ast}=i_{p}-1\quad (\exists i_{p}\in \mathscr{I}^{(x,i)}\backslash \mathscr{L})$.
\item[(c-2)]
$l_{r}^{\ast}=j_{q}-1\quad (\exists j_{q}\in \mathscr{J}^{(x-i,i)})$.
\item[(c-3)]
$l_{r}^{\ast}=l_{r+1}^{\ast}-1\quad (r\neq c)$.
\end{description}
In case \textbf{(c-1)}, we have $i_{p}\in \mathscr{I}^{(x,i+1)}$ and $i_{k}^{\prime}=i_{p}-1$.
This implies $i_{k+1}^{\prime}=i_{p}$.
However, this also implies $l_{r+1}^{\ast}=i_{p}\in \mathscr{I}^{(x,i)}\backslash \mathscr{L}=
\mathscr{I}^{(x,i+1)}\backslash \mathscr{L}^{\ast}$, 
which is clearly a contradiction, and thereby this case must be excluded.
In case \textbf{(c-2)},
\[
\Tilde{\lambda}_{i_{k}^{\prime}}^{\ast(k+1)}=
\Tilde{\lambda}\lbrack\overline{i_{a}^{\prime}},\ldots,\overline{i_{k+1}^{\prime}}]_{i_{k}^{\prime}}=
\Tilde{\lambda}_{i_{k}^{\prime}}=\Tilde{\lambda}_{j_{q}-1}.
\]
On the other hand,
\[
\Tilde{\lambda}_{i_{k}^{\prime}+1}^{\ast(k+1)}=
\Tilde{\lambda}\lbrack\overline{i_{a}^{\prime}},\ldots,\overline{i_{k+1}^{\prime}}]_{i_{k}^{\prime}+1}=
\Tilde{\lambda}_{i_{k}^{\prime}+1}=\Tilde{\lambda}_{j_{q}},
\]
where we have used the fact that $i_{k+1}^{\prime}>i_{k}^{\prime}+1$.
This is shown as follows. 
If $i_{k+1}^{\prime}=i_{k}^{\prime}+1$, 
then we have $l_{r+1}^{\ast}=i_{k+1}^{\prime}=i_{k}^{\prime}+1=l_{r}^{\ast}+1=j_{q}$, 
which contradicts the fact that $l_{r+1}^{\ast}$ is not a $\mathscr{J}^{(x-i,i)}$-letter.
Now since $\mathscr{J}^{(x-i,i)}$ is smooth on $\Tilde{\lambda}$, 
we have
\[
\Tilde{\lambda}\lbrack j_{1},\ldots,j_{q-1}]_{j_{q}-1}>
\Tilde{\lambda}\lbrack j_{1},\ldots,j_{q-1}]_{j_{q}}.
\]
The left-hand side of this inequality is $\Tilde{\lambda}_{j_{q}-1}$ 
because $j_{q}-1=l_{r}^{\ast}\notin\{j_{1},\ldots,j_{q-1}\}$, 
while the right-hand side is clearly $\Tilde{\lambda}_{j_{q}}$.
Hence, we have 
$\Tilde{\lambda}_{i_{k}^{\prime}}^{\ast(k+1)}>\Tilde{\lambda}_{i_{k}^{\prime}+1}^{\ast(k+1)}$.
In case \textbf{(c-3)}, by noting $i^{\prime}_{k+1}=i^{\prime}_{k}+1$, we have
\[
\Tilde{\lambda}_{i_{k}^{\prime}+1}^{\ast(k+1)} =
\Tilde{\lambda}\lbrack\overline{i_{a}^{\prime}},\ldots,\overline{i_{k+1}^{\prime}}]_{i_{k}^{\prime}+1}=
\Tilde{\lambda}_{i_{k}^{\prime}+1}-1,
\]
while
\[
\Tilde{\lambda}_{i_{k}^{\prime}}^{\ast(k+1)}=
\Tilde{\lambda}\lbrack\overline{i_{a}^{\prime}},\ldots,\overline{i_{k+1}^{\prime}}]_{i_{k}^{\prime}}=
\Tilde{\lambda}_{i_{k}^{\prime}}.
\]
Hence, we have
$\Tilde{\lambda}_{i_{k}^{\prime}}^{\ast(k+1)}>\Tilde{\lambda}_{i_{k}^{\prime}+1}^{\ast(k+1)}$.
In \textbf{(II)}, we have verified that 
$\Tilde{\lambda}_{i_{k}^{\prime}}^{\ast(k+1)}>\Tilde{\lambda}_{i_{k}^{\prime}+1}^{\ast(k+1)}$, that is, 
$\Tilde{\lambda}^{\ast(k+1)}[\overline{i_{k}^{\prime}}]$ 
is a Young diagram for all possible cases.
From \textbf{(I)} and \textbf{(II)} and by induction, 
we have completed the proof of (2) of Lemma~\ref{lem:smooth1}.

The proof of (1) is as follows.
We proceed by induction on $x$.
Since the sequence of letters $\mathscr{J}^{(1,0)},\overline{\mathscr{I}^{(1,0)}}$ is smooth on $\lambda$, it is not hard to show that $\overline{\mathscr{I}^{(1,1)}}$ is smooth on 
$\lambda$ by using the same argument as in (2); 
$\lambda \left[ \mathscr{J}^{(1,0)}\right]$ is a Young diagram on which  
$\overline{\mathscr{I}^{(1,0)}}$ is smooth by Eq.\eqref{eq:mu2lambda} so that 
$\overline{\mathscr{I}^{(1,1)}}$ is smooth on $\lambda$ ($x=1$). 
For $2\leq x\leq n_{c}$,
\[
\lambda^{(x-1)}  =\lambda\left[  \overline{\mathscr{I}^{(1,1)}},\ldots,\overline{\mathscr{I}^{(x-1,x-1)}},
\mathscr{J}^{(1,x-1)},\ldots,\mathscr{J}^{(x-1,1)}\right]
\]
is written as 
\begin{equation} \label{eq:paired}
\lambda\left[  \overline{\mathscr{I}^{(1,0)}},\ldots,\overline{\mathscr{I}^{(x-1,0)}},
\mathscr{J}^{(1,0)},\ldots,\mathscr{J}^{(x-1,0)}\right].
\end{equation}
This is shown as follows.
Since
\begin{align*}
&  \left\langle \overline{\mathscr{I}^{(1,0)}},\mathscr{J}^{(1,0)}\right\rangle_{\mathrm{pair}}, \\
&  \left\langle \overline{\mathscr{I}^{(2,0)}},\mathscr{J}^{(2,0)}\right\rangle_{\mathrm{pair}} ,
\left\langle \overline{\mathscr{I}^{(2,1)}},J^{(1,1)}\right\rangle_{\mathrm{pair}} ,\\
&  \qquad \qquad \qquad ,\ldots,\\
&  \left\langle \overline{\mathscr{I}^{(x-1,0)}},\mathscr{J}^{(x-1,0)}\right\rangle_{\mathrm{pair}} ,
\ldots,
\left\langle \overline{\mathscr{I}^{(x-1,x-2)}},\mathscr{J}^{(1,x-2)}\right\rangle_{\mathrm{pair}},
\end{align*}
we can increase the counter in the paired sets appeared in Eq.~\eqref{eq:paired} by one successively 
keeping the shape of Eq.~\eqref{eq:paired} because the corresponding map 
$\phi^{(\centerdot,\centerdot)}$ is weight-preserving.
At the end,  Eq.~\eqref{eq:paired} turns out to be 
\[
\lambda\left[  \overline{\mathscr{I}^{(1,1)}},\ldots,\overline{\mathscr{I}^{(x-1,x-1)}},
\mathscr{J}^{(1,x-1)},\ldots,\mathscr{J}^{(x-1,1)}\right]  =\lambda^{(x-1)}.
\]
Thus, 
\[
\lambda^{(x-1)}=
\lambda\left[ \mathscr{J}^{(1,0)},\overline{\mathscr{I}^{(1,0)}},\ldots,
\mathscr{J}^{(x-1,0)},\overline{\mathscr{I}^{(x-1,0)}}\right]
\]
is a Young diagram on which the sequence of letters 
$\mathscr{J}^{(x,0)},\overline{\mathscr{I}^{(x,0)}}$ is smooth by Eq.~\eqref{eq:mu2lambda}.
Hence, we can show that $\overline{\mathscr{I}^{(x,1)}}$ is smooth on $\lambda^{(x-1)}$ 
by using the same argument as in (2).

The proof of (3) is as follows.
We proceed by induction on $x$ and $i$.

\textbf{(I).}
We have that $\overline{\mathscr{I}^{(1,1)}}$ is smooth on $\lambda$ 
by (1) ($x=1$) and 
$\lambda\left[  \overline{\mathscr{I}^{(1,1)}},\mathscr{J}^{(1,1)}\right]$ is 
a Young diagram by (1) ($x=2$).

\textbf{(II).}
For $2\leq x\leq n_{c}$, let us assume that
\[
\lambda^{(x-1,i)}=
\begin{cases}
\lambda \left[ \overline{\mathscr{I}^{(1,1)}},\ldots,\overline{\mathscr{I}^{(x-1,x-1)}},
\mathscr{J}^{(1,x-1)},\ldots,\mathscr{J}^{(x-i,i)}\right]  & (1\leq i\leq x-1),\\
\lambda\left[ \overline{\mathscr{I}^{(1,1)}},\ldots,\overline{\mathscr{I}^{(x-1,x-1)}}\right] & (i=x)
\end{cases}
\]
are all Young diagrams (for $x=2$ this assumption is satisfied by \textbf{(I)}).
\textbf{(i).}
By (1), $\overline{\mathscr{I}^{(x,1)}}$ is smooth on 
$\lambda^{(x-1)}=\lambda^{(x-1,1)}$.
\textbf{(ii).}
For $1\leq i\leq x-1$, suppose that $\overline{\mathscr{I}^{(x,i)}}$ is smooth on 
$\lambda^{(x-1,i)}$ (for $i=1$ this is satisfied).
Thus, $\overline{\mathscr{I}^{(x,i+1)}}$ is smooth on $\lambda^{(x-1,i+1)}$ 
by the claim of (2).
From \textbf{(i)} and \textbf{(ii)} and by induction, we have that 
$\overline{\mathscr{I}^{(x,i)}}$ is smooth on $\lambda^{(x-1,i)}$ ($1\leq i\leq x$).
For $1 \leq i \leq x-1$, since
\begin{multline*}
\left\langle \overline{\mathscr{I}^{(x,i)}},\mathscr{J}^{(x-i,i)}\right\rangle_{\mathrm{pair}},
\left\langle \overline{\mathscr{I}^{(x,i+1)}},\mathscr{J}^{(x-i-1,i+1)}\right\rangle_{\mathrm{pair}},\\
\ldots, 
\left\langle \overline{\mathscr{I}^{(x,x-1)}},\mathscr{J}^{(1,x-1)}\right\rangle_{\mathrm{pair}},
\end{multline*} 
we have
\begin{align*}
\lambda^{(x-1,i)}\left[ \overline{\mathscr{I}^{(x,i)}}\right]
& =\lambda\left[  \overline{\mathscr{I}^{(1,1)}},\ldots,
\overline{\mathscr{I}^{(x-1,x-1)}},\overline{\mathscr{I}^{(x,i)}},\mathscr{J}^{(1,x-1)},
\ldots,\mathscr{J}^{(x-i,i)}\right]  \\
& =\lambda\left[ \overline{\mathscr{I}^{(1,1)}},\ldots,
\overline{\mathscr{I}^{(x-1,x-1)}},\overline{\mathscr{I}^{(x,x)}},\mathscr{J}^{(1,x)},
\ldots,\mathscr{J}^{(x-i,i+1)}\right]  \\
& =\lambda^{(x,i+1)} \quad (1\leq i \leq x-1),
\end{align*}
and 
\[
\lambda^{(x-1,x)}\left[  \overline{\mathscr{I}^{(x,x)}}\right]
=\lambda\left[  \overline{\mathscr{I}^{(1,1)}},\ldots,
\overline{\mathscr{I}^{(x,x)}}\right]  =\lambda^{(x,x+1)}.
\]
Namely, $\lambda^{(x,i+1)}$ ($1\leq i \leq x$) are all Young diagrams.
By (1), $\lambda^{(x,1)}=\lambda^{(x)}$ is a Young diagram.
That is,
\[
\lambda^{(x,i)}=
\begin{cases}
\lambda\left[  \overline{\mathscr{I}^{(1,1)}},\ldots,
\overline{\mathscr{I}^{(x,x)}},\mathscr{J}^{(1,x)},\ldots,\mathscr{J}^{(x+1-i,i)}\right]  & 
(1\leq i\leq x), \\
\lambda\left[  \overline{\mathscr{I}^{(1,1)}},\ldots,
\overline{\mathscr{I}^{(x,x)}}\right]  & (i=x+1)
\end{cases}
\]
are all Young diagrams.
The claim follows from \textbf{(I)} and \textbf{(II)} and by induction on $x$.
\end{proof}

\begin{lem} \label{lem:smooth2}
(1).
Let us define
\[
\mu^{(x+1)}:=\begin{cases}
\mu\left[  \overline{\mathscr{J}^{(n_{c},1)}},\ldots,\overline{\mathscr{J}^{(x+1,n_{c}-x)}},
\mathscr{I}^{(n_{c},n_{c}-x)},\ldots,\mathscr{I}^{(x+1,1)}\right]  
& (1\leq x\leq n_{c}-1), \\
\mu & (x=n_{c}).
\end{cases}
\]
Then $\mu^{(x+1)}$ is a Young diagram on which $\overline{\mathscr{J}^{(x,1)}}$ is smooth.

(2).
For $1\leq x\leq n_{c}-1$, let us assume that
\[
\mu^{(x+1,i)}:=
\begin{cases}
\Tilde{\mu}^{(x+1)}\left[\mathscr{I}^{(n_{c},n_{c}-x)},\ldots,
\mathscr{I}^{(x+i,i)}\right] & (1\leq i\leq n_{c}-x), \\
\Tilde{\mu}^{(x+1)} & (i=n_{c}-x+1)
\end{cases}
\]
are all Young diagrams, where 
$\Tilde{\mu}^{(x+1)}:=\mu\left[  \underrightarrow{\overline{\mathscr{J}^{(n_{c},1)}}},\ldots,
\underrightarrow{\overline{\mathscr{J}^{(x+1,n_{c}-x)}}}\right]$.
Suppose that $\overline{\mathscr{J}^{(x,i)}}$ is smooth on $\mu^{(x+1,i)}$.
Then we have that $\overline{\mathscr{J}^{(x,i+1)}}$ is smooth on 
$\mu^{(x+1,i+1)}$ ($1\leq i\leq n_{c}-x$).

(3).
$\mu\left[ 
\underrightarrow{\overline{\mathscr{J}^{(n_{c},1)}}},\ldots,
\underrightarrow{\overline{\mathscr{J}^{(2,n_{c}-1)}}},
\underrightarrow{\overline{\mathscr{J}^{(1,n_{c})}}}\right]  $.
\end{lem}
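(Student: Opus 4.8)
The plan is to treat Lemma~\ref{lem:smooth2} as the mirror image of Lemma~\ref{lem:smooth1} under the involution that exchanges $\lambda$ with $\mu$, interchanges the roles of $\mathscr{I}$- and $\mathscr{J}$-letters, and reverses the order in which the columns of $T$ are processed (so that the left-to-right sweep building $\lambda$ up from $\mu$ is replaced by the right-to-left sweep that the definition of $\overline{\Phi^{(1)}}$ dictates). The reason this symmetry is available is that the $\mathscr{L}^{\ast}$-letters of Definition~\ref{df:split} and Remark~\ref{rem:algorithm1} are constructed symmetrically in $\mathscr{I}$ and $\mathscr{J}$: after applying $\phi^{(x,x+i)}$ to the paired sets $\left\langle\overline{\mathscr{I}^{(x+i,i)}},\mathscr{J}^{(x,i)}\right\rangle_{\mathrm{pair}}$ one has both $\mathscr{I}^{(x+i,i+1)}=(\mathscr{I}^{(x+i,i)}\backslash\mathscr{L})\cup\mathscr{L}^{\ast}$ and $\mathscr{J}^{(x,i+1)}=(\mathscr{J}^{(x,i)}\backslash\mathscr{L})\cup\mathscr{L}^{\ast}$ with the \emph{same} $\mathscr{L}^{\ast}$. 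Consequently the new $\mathscr{J}^{(x,i+1)}$ here plays exactly the structural role that the new $\mathscr{I}^{(x,i+1)}$ played in Lemma~\ref{lem:smooth1}, and I would prove the three parts in the same order: (2), then (1), then (3).

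For part (2), I would set $\Tilde{\mu}:=\mu^{(x+1,i)}\left[\overline{\mathscr{I}^{(x+i,i)}}\right]=\mu^{(x+1,i+1)}$ (the shape equality being forced by the weight-preservation of $\phi^{(x,x+i)}$) and induct on $k=b,b-1,\ldots,1$ over the new barred letters $\overline{j_{b}'},\ldots,\overline{j_{1}'}$ of $\mathscr{J}^{(x,i+1)}$, showing that each successive box-removal keeps a Young diagram, i.e.\ that $\Tilde{\mu}_{j_{k}'}^{\ast(k+1)}>\Tilde{\mu}_{j_{k}'+1}^{\ast(k+1)}$. The base case classifies $j_{b}'$ into the three cases $j_{b}'=l_{c}^{\ast}$, or $j_{b}'\neq l_{c}^{\ast}$ with $j_{b}=l_{c}$, or neither, mirroring cases (a), (b), (c) of Lemma~\ref{lem:smooth1}; in each case I would use Remark~\ref{rem:algorithm1} to write the relevant $\mathscr{L}^{\ast}$-letter as $i_{q}-1$ or $j_{p}-1$ and then feed in the Young-diagram hypothesis on $\mu^{(x+1,i)}$ together with the assumed smoothness $\mu^{(x+1,i)}\left[\underrightarrow{\overline{\mathscr{J}^{(x,i)}}}\right]$ (and $\Tilde{\mu}^{(x+1)}\left[\underrightarrow{\mathscr{I}^{(\cdots)}}\right]$, which Lemma~\ref{lem:smooth0} supplies from the Young-diagram-ness of $\mu^{(x+1,i)}$) to extract the required strict inequality. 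The inductive step reuses the same trichotomy, now with subcases for the position of $j_{k}'$ relative to $\mathscr{L}^{\ast}$ and $\mathscr{J}^{(x,i)}\backslash\mathscr{L}$, mirroring part (II) of the proof of Lemma~\ref{lem:smooth1} line by line.

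Part (1) I would obtain by the paired-sets rewriting: since every $\phi^{(\centerdot,\centerdot)}$ is weight-preserving, the counters in the paired sets making up $\mu^{(x+1)}$ can be lowered back to $0$ without changing the shape, reducing $\mu^{(x+1)}$ to its initial-counter form, at which point the required smoothness is the one recorded in Eq.~\eqref{eq:mu2lambda}; the argument of part (2) then upgrades this to $\mu^{(x+1)}\left[\underrightarrow{\overline{\mathscr{J}^{(x,1)}}}\right]$. Part (3) follows by the same double induction on $x$ and $i$ as in Lemma~\ref{lem:smooth1}(3), using (1) to start each row and (2) to pass from $i$ to $i+1$, and this is what ultimately certifies the $\overline{\mathscr{J}}$-smoothness needed in Proposition~\ref{prp:main11}.

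The main obstacle I anticipate is not conceptual but bookkeeping: one must verify that the $\mathscr{I}\leftrightarrow\mathscr{J}$ exchange really is an involution of all the combinatorial data entering the case analysis, and in particular that the orientation ``smooth on $\mu$'' and the directions of the inequalities are preserved under the mirror. Because $\phi$ acts asymmetrically on positions --- the positive part is relocated upward and the negative part downward (Lemma~\ref{lem:col_sst1}) --- I would need to check carefully that tracking $\overline{\mathscr{J}^{(x,i+1)}}$ invokes the $l^{\ast}=j_{p}-1$ and $l^{\ast}=i_{q}-1$ clauses of Remark~\ref{rem:algorithm1} in the correctly swapped manner, and that in the analogues of cases (b) and (c) the hypotheses of (2) genuinely deliver strictness rather than mere weak inequality. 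This verification, rather than any new idea, is where the care must go.
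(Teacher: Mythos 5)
Your proposal is correct and follows essentially the same route as the paper: the paper's own proof of this lemma simply rewrites $\mu^{(x+1)}$ via the paired-sets/weight-preservation identity so that Eq.~\eqref{eq:mu2lambda} applies, and then defers everything else to ``the same argument as in Lemma~\ref{lem:smooth1} (2) and (3)'', which is precisely the $\mathscr{I}\leftrightarrow\mathscr{J}$ mirroring you spell out. Your observation that the updated sets $\mathscr{I}^{(x+i,i+1)}$ and $\mathscr{J}^{(x,i+1)}$ share the same $\mathscr{L}^{\ast}$ is exactly the structural fact that makes this deferral legitimate.
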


\begin{proof}
The proof of (1) of Lemma~\ref{lem:smooth2} is as follows.
Since the sequence of letters $\mathscr{I}^{(n_{c},0)},\overline{\mathscr{J}^{(n_{c},0)}}$ is 
smooth on $\mu$, 
it is not hard to show that $\overline{\mathscr{J}^{(n_{c},1)}}$ is smooth on $\mu$ 
by using the same argument as in Lemma~\ref{lem:smooth1} (2).
For $1\leq x\leq n_{c}-1$,
\begin{align*}
\mu^{(x+1)}  & =\mu\left[  \overline{\mathscr{J}^{(n_{c},1)}},\ldots,\overline{\mathscr{J}^{(x+1,n_{c}-x)}},
\mathscr{I}^{(n_{c},n_{c}-x)},\ldots,\mathscr{I}^{(x+1,1)}\right]  \\
& =\mu\left[  \mathscr{I}^{(n_{c},0)},\overline{\mathscr{J}^{(n_{c},0)}},\ldots
\mathscr{I}^{(x+1,0)},\overline{\mathscr{J}^{(x+1,0)}}\right]
\end{align*}
is a Young diagram on which the sequence of letters $\mathscr{I}^{(x,0)},\overline{\mathscr{J}^{(x,0)}}$ is smooth by Eq.~\eqref{eq:mu2lambda}.
We can show that $\overline{\mathscr{J}^{(x,1)}}$ is smooth on $\mu^{(x+1)}$ 
by using the same argument as in Lemma~\ref{lem:smooth1} (2).
The proof of the rest part runs as in Lemma~\ref{lem:smooth1} (2) and (3).
\end{proof}

\begin{proof}[Proof of Proposition~\ref{prp:main11}]
Let $T\in \mathbf{B}_{n}^{\mathfrak{sp}_{2n}}(\nu)_{\mu}^{\lambda}$ and suppose that $T$ consists of $n_{c}$ columns.
By Lemma~\ref{lem:Phi}, we have $\Phi(T)\in C_{n}\text{-}\mathrm{SST}(\nu)$.
By Lemma~\ref{lem:smooth1} and Lemma~\ref{lem:smooth2}, we have 
$\lambda\left[ 
\underrightarrow{\overline{\mathscr{I}^{(1,1)}}},\ldots,
\underrightarrow{\overline{\mathscr{I}^{(n_{c},n_{c})}}}  \right]$ 
and 
$\mu\left[ 
\underrightarrow{\overline{\mathscr{J}^{(n_{c},1)}}},\ldots,
\underrightarrow{\overline{\mathscr{J}^{(1,n_{c})}}}  \right]$. 
Let us set 
$\zeta:=\lambda\left[  \underrightarrow{\overline{\mathscr{I}^{(1,1)}}},\ldots,
\underrightarrow{\overline{\mathscr{I}^{(n_{c},n_{c})}}}\right]  $, i.e., 
$\zeta\left[  \underrightarrow{\mathrm{FE}(\Phi(T)^{(+)})}\right]$, 
which is $\mu$ by Eq.~\eqref{eq:mu2lambda}.
Here,
\begin{align*}
\zeta\left[  \mathscr{J}^{(1,n_{c})},\ldots,\mathscr{J}^{(n_{c},1)}\right] &=
\lambda\left[
\mathscr{J}^{(1,n_{c})},\overline{\mathscr{I}^{(1,1)}},\ldots,
\mathscr{J}^{(n_{c},1)},\overline{\mathscr{I}^{(n_{c},n_{c})}}\right] \\
&=
\lambda\left[ \overline{\mathrm{FE}\left(\Phi(T)\right)}\right].
\end{align*}
Since $\Phi$ is weight-preserving, 
$\lambda\left[ \overline{\mathrm{FE}\left(\Phi(T)\right)}\right]=
\lambda\left[ \overline{\mathrm{FE}(T)}\right]=\mu$.
Combining these, we have  
$\mu\left[ 
\underrightarrow{\overline{\mathscr{J}^{(n_{c},1)}}},\ldots,
\underrightarrow{\overline{\mathscr{J}^{(1,n_{c})}}}  \right]=\zeta$, i.e.,  
$\mu\left[  \underrightarrow{\mathrm{FE}(\Phi(T)^{(-)})}\right]  =\zeta$ and therefore 
$\mu\left[  \underrightarrow{\mathrm{FE}(\mathrm{Rect}(\Phi(T)^{(-)}))}\right]
=\zeta$ by Proposition~\ref{prp:skew}.
Hence, we have $\Phi(T)^{(+)}\in \mathbf{B}_{n}^{(+)}(\xi)_{\zeta}^{\lambda}$ and 
$\mathrm{Rect}(\Phi(T)^{(-)})\in \mathbf{B}_{n}^{(-)}(\eta)_{\zeta}^{\mu}$, where 
$\xi$ and $\eta$ are the shapes of $\Phi(T)^{(+)}$ and $\mathrm{Rect}(\Phi(T)^{(-)})$, respectively.
\end{proof}

\section{Properties of $\Psi$} \label{sec:Psi}

Throughout this section, the tableau $T$ is that described in Proposition~\ref{prp:main12}.
The purpose of this section is to show that the map $\Psi$ is well-defined 
and $\Psi(T)\in C_{n}\text{-}\mathrm{SST}_{\mathrm{KN}}(\nu)$.

\begin{lem}
The map $\psi^{(x,n_{c})}$ is well-defined on
\[
\Tilde{T}:=\begin{cases}
\overline{\Psi^{(x-1)}}(T) & (2\leq x\leq n_{c}), \\
T & (x=1).
\end{cases}
\]
Here we assume $\Tilde{T}\neq \emptyset$.
\end{lem}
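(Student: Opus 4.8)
The plan is to show that the column $C^{(x,n_{c})}$ built by $\psi^{(x,n_{c})}$ from $\tilde{T}$ — the one whose $\mathscr{C}_{n}^{(-)}$-part is the $\mathscr{C}_{n}^{(-)}$-part of the $x$-th column and whose $\mathscr{C}_{n}^{(+)}$-part is the $\mathscr{C}_{n}^{(+)}$-part of the $n_{c}$-th column — lies in $C_{n}\text{-}\mathrm{Col}_{\overline{\mathrm{KN}}}$; by Definition~\ref{df:Psi} this is exactly the assertion that $\psi^{(x,n_{c})}(\tilde{T})\neq\emptyset$. I would verify the defining condition of Definition~\ref{df:coad} directly. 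Writing $\{l_{1},\ldots,l_{c}\}$ for the $\mathscr{L}$-letters of $C^{(x,n_{c})}$ and $p_{l},q_{l}$ for the positions of $l$ and $\overline{l}$, strict increase of a single column along $\prec$ gives $N^{\ast}(l)=q_{l}-p_{l}+1$, so the required $N^{\ast}(l)\le n-l+1$ is equivalent to $(q_{l}-p_{l})+l\le n$.

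First I would pin down the two halves of $C^{(x,n_{c})}$. Each factor of $\overline{\Psi^{(x-1)}}=\Psi^{(x-1)}\circ\cdots\circ\Psi^{(1)}$ is a map $\psi^{(i,y)}$ with $i\le x-1$, and by Definition~\ref{df:Psi} such a map touches only the $\mathscr{C}_{n}^{(-)}$-part of the $i$-th column and the $\mathscr{C}_{n}^{(+)}$-part of the $y$-th column; since $i<x$ throughout, the $\mathscr{C}_{n}^{(-)}$-part of the $x$-th column of $\tilde{T}$ is the same as in $T$. By the standing hypothesis (Proposition~\ref{prp:main12}), $\mathrm{Rect}(T^{(-)})=T_{2}\in\mathbf{B}_{n}^{(-)}(\eta)_{\zeta}^{\mu}$, so $\mu[\mathrm{FE}(T_{2})]=\zeta$ is smooth; as a barred letter $\overline{i}$ decreases the $i$-th part and the parts only ever decrease, no $\overline{i}$ with $i>l(\mu)$ can occur, whence every $\mathscr{C}_{n}^{(-)}$-entry of $T_{2}$, and (rectification preserving content) of $T$, is some $\overline{i}$ with $i\le l(\mu)$. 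For the other half, since $\psi$ exchanges the positive letters $\mathscr{L}$ for the equally numerous positive letters $\mathscr{L}^{\dag}$ it preserves the number of positive letters of a column, so every $\psi^{(i,n_{c})}$ leaves the length of the $\mathscr{C}_{n}^{(+)}$-part of the $n_{c}$-th column unchanged; that length, call it $\alpha$, therefore equals its value in $T$ and is bounded by the length of the $n_{c}$-th column of $T$, hence $\alpha\le l(\nu)$.

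The inequality then follows by counting. Fix $l\in\mathscr{L}$; because $\overline{l}$ occurs in the $\mathscr{C}_{n}^{(-)}$-part we have $l\le l(\mu)$, and $l$ sits in the $\mathscr{C}_{n}^{(+)}$-part at position $p_{l}\ge1$ while $\overline{l}$ sits in the $\mathscr{C}_{n}^{(-)}$-part at position $q_{l}$. The entries strictly above $\overline{l}$ within the $\mathscr{C}_{n}^{(-)}$-part are letters $\overline{i}$ with $l<i\le l(\mu)$, all distinct, hence at most $l(\mu)-l$ of them; since they fill the positions $\alpha+1,\ldots,q_{l}-1$ this gives $q_{l}\le\alpha+l(\mu)-l+1$. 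Combining with $p_{l}\ge1$,
\[
(q_{l}-p_{l})+l\le\bigl(\alpha+l(\mu)-l+1-1\bigr)+l=\alpha+l(\mu)\le l(\nu)+l(\mu)\le n,
\]
the final step being the stable-region hypothesis $l(\mu)+l(\nu)\le n$. This verifies $N^{\ast}(l)\le n-l+1$ for every $l\in\mathscr{L}$ (and there is nothing to check if $\mathscr{L}=\emptyset$), so $C^{(x,n_{c})}$ is KN-coadmissible; the case $x=1$, where $\tilde{T}=T$, is the same argument without the first reduction.

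The work is essentially all in the second paragraph: the delicate points are that the $\mathscr{C}_{n}^{(-)}$-part of the $x$-th column survives $\overline{\Psi^{(x-1)}}$ untouched and so retains the entry bound $i\le l(\mu)$ inherited from $T_{2}$, and that count preservation under $\psi$ keeps the positive part of the last column short, $\alpha\le l(\nu)$. Once both bounds are in place, it is their sum against the stable-region inequality that produces coadmissibility, so the only real risk is mishandling which parts of $\tilde{T}$ the preceding maps have already altered.
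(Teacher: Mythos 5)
Your proof is correct and follows essentially the same route as the paper's: both arguments rest on the facts that $\overline{\Psi^{(x-1)}}$ leaves the $\mathscr{C}_{n}^{(-)}$-letters part of the $x$-th column untouched (so its entries, inherited from $T_{2}\in\mathbf{B}_{n}^{(-)}(\eta)_{\zeta}^{\mu}$, are bounded by $l(\mu)$), that the relevant length in the $n_{c}$-th column is bounded by $l(\nu)$, and that these two bounds add up against the stable-region hypothesis $l(\mu)+l(\nu)\leq n$. The only difference is bookkeeping: you verify Definition~\ref{df:coad} directly by counting the distinct barred letters above $\overline{l}$, which lets you bypass the paper's offset $\Delta q$ and its appeal to Lemma~\ref{lem:col_sst2} (the comparison $m^{\prime}\leq m$) in the case $2\leq x\leq n_{c}$.
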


\begin{proof} 
When $x=1$, let $\Delta q$ be the offset given by the difference between 
the length of the $\mathscr{C}_{n}^{(+)}$-letters part of the first column of $T$ and 
that of the $n_{c}$-th column of $T$.
Suppose that the tableau $\Tilde{T}$ has the following configuration.

\setlength{\unitlength}{12pt}
\begin{center}
\begin{picture}(9,6)
\put(2,0){\line(0,1){5}}
\put(3,0){\line(0,1){5}}
\put(6,2){\line(0,1){3}}
\put(7,2){\line(0,1){3}}
\put(2,1){\line(1,0){1}}\put(2,2){\line(1,0){1}}
\put(6,3){\line(1,0){1}}\put(6,4){\line(1,0){1}}
\put(2,5){\line(1,0){5}}
\put(4,2){\makebox(1,1){$\cdots$}}
\put(0,1){\makebox(2,1){$q\rightarrow$}}
\put(7,3){\makebox(2,1){$\leftarrow p$}}
\put(2,5){\makebox(1,1){$1$}}
\put(6,5){\makebox(1,1){$n_{c}$}}
\put(2,1){\makebox(1,1){$\Bar{m}$}}
\put(6,3){\makebox(1,1){$m$}}
\end{picture}.
\end{center}
Since $\Bar{m}$ appears in $T_{2}\in \mathbf{B}_{n}^{(-)}(\eta)_{\zeta}^{\mu}$, $m\leq l(\mu)$.
Furthermore, it is obvious that $(q-\Delta q-p)\leq l(\nu)$.
Recall that we assume that $l(\mu)+l(\nu)\leq n$ in Theorem~\ref{thm:main1}.
Hence, $n-m\geq l(\mu)+l(\nu)-m\geq(q-\Delta q-p)$ so that $\psi^{(x,n_{c})}$ is well-defined on $\Tilde{T}$ (Definition~\ref{df:coad}).

When $2\leq x\leq n_{c}$, let $\Delta q$ be the offset given by the difference between 
the length of the $\mathscr{C}_{n}^{(+)}$-letters part of the $x$-th column of $T$ and 
that of the $n_{c}$-th column of $\Tilde{T}$.
Suppose that the tableau $\Tilde{T}$ has the following configuration.

\setlength{\unitlength}{12pt}
\begin{center}
\begin{picture}(8,6)
\put(2,0){\line(0,1){5}}
\put(3,0){\line(0,1){5}}
\put(5,2){\line(0,1){3}}
\put(6,2){\line(0,1){3}}
\put(2,1){\line(1,0){1}}\put(2,2){\line(1,0){1}}
\put(5,3){\line(1,0){1}}\put(5,4){\line(1,0){1}}
\put(1,5){\line(1,0){5}}
\put(3,2){\makebox(2,1){$\cdots$}}
\put(0,1){\makebox(2,1){$q\rightarrow$}}
\put(6,3){\makebox(2,1){$\leftarrow p$}}
\put(2,5){\makebox(1,1){$x$}}
\put(5,5){\makebox(1,1){$n_{c}$}}
\put(2,1){\makebox(1,1){$\Bar{m}$}}
\put(5,3){\makebox(1,1){$m$}}
\end{picture}.
\end{center}
Note that the $\mathscr{C}_{n}^{(-)}$-letters part of the $x$-th column is unchanged under 
application of $\overline{\Psi^{(x-1)}}$ so that $\Bar{m}$ in the $x$-th column in $\Tilde{T}$ 
lies at the original position of $T$, and thereby $m\leq l(\mu)$.
Let $m^{\prime}$ be the entry at the $p$-th position of the $n_{c}$-th column of the original tableau $T$.
Then $m^{\prime}\leq m$ by Lemma~\ref{lem:col_sst2} so that $\min(m,m^{\prime})\leq l(\mu)$.
Hence, we have $n-\min(m,m^{\prime})\geq l(\mu)+l(\nu)-\min(m,m^{\prime})\geq(q-\Delta q-p)$.
That is, $\psi^{(x,n_{c})}$ is well-defined on $\Tilde{T}$.
\end{proof}

\begin{lem} \label{lem:welldf2}
The map $\psi^{(x,y)}$ is well-defined on
\[
\Tilde{T}:=\psi ^{(x,y+1)}\circ \cdots \circ \psi ^{(x,n_{c})}\circ \overline{\Psi^{(x-1)}}(T)
\quad (1\leq x\leq y\leq n_{c}).
\]
Here, we assume that $\Tilde{T}\neq \emptyset$ and that in the updating process of the tableau from $T$ to $\Tilde{T}$ the semistandardness of the $\mathscr{C}_{n}^{(+)}$-letters part of the tableau is preserved.
\end{lem}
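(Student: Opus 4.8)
The plan is to establish well-definedness of $\psi^{(x,y)}$ on $\Tilde{T}$ by verifying that the relevant $C_{n}$-column is KN-coadmissible, exactly paralleling the structure of the proof of Lemma~\ref{lem:welldf1}, but with the roles of admissibility/coadmissibility, $\phi/\psi$, and the directions of the inequalities all reversed. Concretely, writing $C_{-}^{(x)}$ (resp.\ $C_{+}^{(y)}$) for the $\mathscr{C}_{n}^{(-)}$ (resp.\ $\mathscr{C}_{n}^{(+)}$)-letters part of the $x$-th (resp.\ $y$-th) column of $\Tilde{T}$ and $C^{(x,y)}$ for the column they assemble into, the task is to show $C^{(x,y)}\in C_{n}\text{-}\mathrm{Col}_{\overline{\mathrm{KN}}}$. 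By Lemma~\ref{lem:KN_coad}, for a configuration with $m$ at position $p$ and $\overline{m}$ at position $q$ this amounts to the inequality $(q-p)+\min(\text{entries})\le n$, so I would recast the goal as a bound of the form $(q-p)+m\le n$ for the newly assembled column, using that $\psi^{(x,y+1)},\ldots$ have already been applied.

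First I would fix the configuration of $\Tilde{T}$ as in Lemma~\ref{lem:welldf1}, with $m$ in the $y$-th column and $\overline{m}$ in the $x$-th column, and split into the two cases according to whether $\overline{m}$ already appears in the $x$-th column of $T^{\prime}:=\phi^{(x,y+1)}(\Tilde{T})$ (the analogue of Case~(a)) or is created by the action of $\psi^{(x,y+1)}$ (the analogue of Case~(b)). In the first case I would further subdivide by the relative order of the relevant entry and the $\mathscr{L}^{\dag}$-letter $l_{\min}^{\dag}$, tracking positions through the operations (A) and (B) of the second-kind algorithm for $\psi$, now invoking Lemma~\ref{lem:position2} and Lemma~\ref{lem:KN_upper} in place of Lemma~\ref{lem:position1} and Lemma~\ref{lem:coKN_lower}. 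The bookkeeping of how $p_{k}^{\dag}$, $q_{k}^{\dag}$ shift under relocation of $\mathscr{L}^{\dag}$-letters, and how the counts $\alpha,\beta,\gamma,\delta$ of the various slot types enter, mirrors the computations culminating in the analogues of Eqs.~\eqref{eq:phi_xy_5} and the final estimate $(q-\Delta q-p)+m\le n$. In the second case, where $\overline{m}=\overline{l_i^{\dag}}$ is generated, I would use the first-kind algorithm for $\psi^{(x,y+1)}$ together with the semistandardness hypothesis on the $\mathscr{C}_{n}^{(+)}$-letters part (via Lemma~\ref{lem:col_sst2}) to pin down the position of $m$ and then apply Lemma~\ref{lem:KN_upper}.

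The main obstacle I anticipate is the same as in Lemma~\ref{lem:welldf1}: carefully controlling the positional shifts $p\mapsto p-s$, $q\mapsto q+s$, etc., caused by the relocation of $\mathscr{L}^{\dag}$-letters during the intervening operations, and ensuring the slot-counting identities (relating $\gamma_k^{\dag}$ to the local $(\pm)$- and $(\times)$-slot counts) close up under induction. Since $\psi$ is the inverse algorithm to $\phi$, I expect every inequality to flip direction and every ``largest/smallest'' choice in the $\mathscr{L}^{\ast}$ construction to be replaced by its dual in the $\mathscr{L}^{\dag}$ construction; the delicate point is confirming that the hypothesis $l(\mu)+l(\nu)\le n$ — which entered the preceding lemma through the bound $n-\min(m,m^{\prime})\ge l(\mu)+l(\nu)-\min(m,m^{\prime})\ge (q-\Delta q-p)$ — continues to supply exactly the slack needed for KN-coadmissibility at each stage. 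Given the explicit remark that ``the second kind algorithm for $\psi$ is prescribed similarly'' and the repeated phrase ``may be proven in much the same way,'' I would present the argument by indicating the dual setup and the substituted lemmas, then state that the remaining estimates follow verbatim from the corresponding steps of the proof of Lemma~\ref{lem:welldf1}.
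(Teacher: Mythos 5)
Your proposal is correct and follows essentially the same route as the paper: the paper's proof of this lemma is precisely the dual of the proof of Lemma~\ref{lem:welldf1}, with the same case split --- (a) $\overline{m}$ already present in the $x$-th column of $T^{\prime}=\phi^{(x,y+1)}(\Tilde{T})$ with $m\notin\mathscr{L}^{(x,y+1)}$, versus (b) $\overline{m}$ generated by $\psi^{(x,y+1)}$ --- using Lemma~\ref{lem:KN_upper} in place of Lemma~\ref{lem:coKN_lower}, Lemma~\ref{lem:col_sst2} in place of Lemma~\ref{lem:col_sst1}, the second-kind algorithm for $\psi$ in case (a) and the first-kind in case (b), the offset $\Delta q$, and the reversed inequalities culminating in $(q-\Delta q-p)+m\leq n$ for KN-coadmissibility. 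The only minor difference is one of emphasis: in the paper the stable-region hypothesis $l(\mu)+l(\nu)\leq n$ enters only through the base case (the preceding lemma on $\psi^{(x,n_{c})}$), while within this lemma the needed slack comes entirely from the assumption $\Tilde{T}\neq\emptyset$ (i.e., KN-coadmissibility of $C^{(x,y+1)}$) together with Lemma~\ref{lem:KN_upper}, exactly as your estimates would show.
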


\begin{proof}
Let $C_{-}^{(x)}$ (resp. $C_{+}^{(y)}$) be the $\mathscr{C}_{n}^{(-)}$ (resp. $\mathscr{C}_{n}^{(+)}$)-letters part of 
the $x$-th (resp. $y$-th) column of $\Tilde{T}$.
Let $C^{(x,y)}$ be the column whose  $\mathscr{C}_{n}^{(+)}$ (resp. $\mathscr{C}_{n}^{(-)}$)-letters part is  
$C_{+}^{(y)}$ (resp. $C_{-}^{(x)}$).
If $C^{(x,y)}$ is KN-coadmissible, then we can apply $\psi^{(x,y)}$ to $\Tilde{T}$.
Suppose that $\Tilde{T}$ has the following configuration.

\setlength{\unitlength}{12pt}
\begin{center}
\begin{picture}(8,6)
\put(2,0){\line(0,1){5}}
\put(3,0){\line(0,1){5}}
\put(3,2){\makebox(2,1){$\cdots$}}
\put(5,2){\line(0,1){3}}
\put(6,2){\line(0,1){3}}
\put(2,1){\line(1,0){1}}
\put(2,2){\line(1,0){1}}
\put(5,3){\line(1,0){1}}
\put(5,4){\line(1,0){1}}
\put(1,5){\line(1,0){6}}
\put(2,1){\makebox(1,1){$\Bar{m}$}}
\put(5,3){\makebox(1,1){$m$}}
\put(0,1){\makebox(2,1){$\Tilde{q} \rightarrow$}}
\put(6,3){\makebox(2,1){$\leftarrow \Tilde{p}$}}
\put(2,5){\makebox(1,1){$x$}}
\put(5,5){\makebox(1,1){$y$}}
\end{picture}.
\end{center}
If $(\Tilde{q}-\Delta q-\Tilde{p})+m \leq n$, then $C^{(x,y)}$ is KN-coadmissible, 
where $\Delta q (\geq 0)$ is the offset given by the difference between the length of the  $\mathscr{C}_{n}^{(+)}$-letters part of the $x$-th column and that of the $y$-th column of $\Tilde{T}$.
Let $C_{-}^{(x)\prime}$ be the $\mathscr{C}_{n}^{(-)}$-letters part of 
the $x$-th column of $T^{\prime}:=\phi^{(x,y+1)}(\Tilde{T})$ and $C_{+}^{(y+1)\prime}$ be the $\mathscr{C}_{n}^{(+)}$-letters part of the $(y+1)$-st column of $T^{\prime}$.
Let $C^{(x,y+1)}$ be the column whose $\mathscr{C}_{n}^{(+)}$ (resp. $\mathscr{C}_{n}^{(-)}$)-letters part is 
$C_{+}^{(y+1)\prime}$ (resp. $C_{-}^{(x)\prime}$) and $\mathscr{L}^{(x,y+1)}$ be the set of $\mathscr{L}$-letters of  $C^{(x,y+1)}$.
We consider the following two cases separately:

\begin{description}
\item[(a)]
$\overline{m}$ appears in the $x$-th column of $T^{\prime}$ and $m\notin \mathscr{L}^{(x,y+1)}$.
\item[(b)]
$\overline{m}$ in the $x$-th column of $\Tilde{T}$ is generated when $\psi ^{(x,y+1)}$ is applied to $T^{\prime}$.
\end{description}

\textbf{Case (a).}
Suppose that the tableau $T^{\prime}$ has the following configuration.

\setlength{\unitlength}{12pt}
\begin{center}
\begin{picture}(9,6)
\put(2,0){\line(0,1){5}}
\put(3,0){\line(0,1){5}}
\put(5,2){\line(0,1){3}}
\put(6,2){\line(0,1){3}}
\put(7,2){\line(0,1){3}}
\put(2,1){\line(1,0){1}}
\put(2,2){\line(1,0){1}}
\put(5,3){\line(1,0){2}}
\put(5,4){\line(1,0){2}}
\put(1,5){\line(1,0){7}}
\put(2,5){\makebox(1,1){$x$}}
\put(4.75,5){\makebox(1,1){${\mathstrut y}$}}
\put(6,5){\makebox(1.5,1){${\mathstrut y+1}$}}
\put(3,2){\makebox(2,1){$\cdots$}}
\put(2,1){\makebox(1,1){$\Bar{m}$}}
\put(5,3){\makebox(1,1){$m$}}
\put(6,3){\makebox(1,1){$i$}}
\put(0,1){\makebox(2,1){$q \rightarrow$}}
\put(7,3){\makebox(2,1){$\leftarrow p$}}
\end{picture}.
\end{center}
By the assumption of \textbf{(a)}, $m \notin \mathscr{L}^{(x,y+1)}$ so that $m<i$ 
(if $m \in \mathscr{L}^{(x,y+1)}$, then $\Bar{m}$ in the $x$-th column of $T^{\prime}$ disappear by $\psi^{(x,y+1)}$).
Let us set $\left\{  l\in \mathscr{L}^{(x,y+1)} \relmiddle| \overline{l^{\dag}}\prec\Bar{m}\prec \Bar{l}\right\}
 =:\{l_{r+1}=l_{min},\ldots,l_{r+s}\}$.
If this set is empty ($s=0$), then the position of $\Bar{m}$ does not change 
when $\psi^{(x,y+1)}$ is applied to $T^{\prime}$.
In this case, we, we have
$(q-\Delta q -p)+\min(m,i)=(q-\Delta q -p)+m \leq n$ by Lemma~\ref{lem:KN_coad} because $C^{(x,y+1)}$ is KN-coadmissible ($\Tilde{T}\neq \emptyset$).
This inequality still holds when $\psi ^{(x,y+1)}$ is applied to $T^{\prime}$ so that 
$C^{(x,y)}$ is KN-coadmissible.
Now suppose that the above set is not empty ($s\geq 1$).
We adopt the second kind algorithm for $\psi^{(x,y+1)}$ here.
Let us assume that $\sharp\left\{ l\in \mathscr{L}^{(x,y+1)} \relmiddle| m< l < l_{min}^{\dag} \right\}=t $.
Since the number of $l$'s such that $l_{min}<l<l_{min}^{\dag}$ is $s+t-1$, we have
\begin{equation} \label{eq:psi_xy_1}
q_{min}^{\dag}-\Delta q -p_{min}^{\dag}+l_{min}^{\dag}\leq n+(s+t-1)+1
\end{equation}
by Lemma~\ref{lem:KN_upper}, where $p_{min}^{\dag}$ is the position of $l_{min}^{\dag}$ in the $(y+1)$-st column and $q_{min}^{\dag}$ is the position of $\overline{l_{min}^{\dag}}$ in the $x$-th column of $\psi^{(x,y+1)}(T^{\prime})=\Tilde{T}$.
Initially, the tableau $T^{\prime}$ has the following configuration, 
where the left (resp. right) part is the $\mathscr{C}_{n}^{(-)}$ (resp. $\mathscr{C}_{n}^{(+)}$)-letters one 
($l_{r+1}=l_{min}<\ldots < l_{r+s}<m<i$).

\setlength{\unitlength}{15pt}
\begin{center}
\begin{picture}(10.5,8)
\put(2,0){\line(0,1){7}}
\put(3.5,0){\line(0,1){7}}
\put(2,1){\line(1,0){1.5}}
\put(2,2){\line(1,0){1.5}}
\put(2,3){\line(1,0){1.5}}
\put(2,4){\line(1,0){1.5}}
\put(2,5){\line(1,0){1.5}}
\put(2,6){\line(1,0){1.5}}
\put(1,7){\line(1,0){3.5}}
\put(2,1){\makebox(1.5,1){$\overline{l_{min}}$}}
\put(2,3){\makebox(1.5,1){$\overline{l_{r+s}}$}}
\put(2,5){\makebox(1.5,1){$\Bar{m}$}}
\put(2,7){\makebox(1.5,1){$x$}}
\put(0,5){\makebox(2,1){$q \rightarrow$}}
\put(7,0){\line(0,1){7}}
\put(8.5,0){\line(0,1){7}}
\put(7,1){\line(1,0){1.5}}
\put(7,2){\line(1,0){1.5}}
\put(7,3){\line(1,0){1.5}}
\put(7,4){\line(1,0){1.5}}
\put(7,5){\line(1,0){1.5}}
\put(7,6){\line(1,0){1.5}}
\put(6,7){\line(1,0){3.5}}
\put(7,1){\makebox(1.5,1){$i$}}
\put(7,3){\makebox(1.5,1){$l_{r+s}$}}
\put(7,5){\makebox(1.5,1){$l_{min}$}}
\put(7,7){\makebox(1.5,1){$y+1$}}
\put(8.5,1){\makebox(2,1){$\leftarrow p$}}
\end{picture}.
\end{center}
Let us divide this case further into the following two cases:

\begin{description}
\item[(a-1)]
$i>l_{min}^{\dag}$.
\item[(a-2)]
$l_{min}^{\dag}>i$.
\end{description}
Note that $i\neq l_{min}^{\dag}$ because $i\in C^{(x,y+1)}$ and $l_{min}^{\dag}\notin C^{(x,y+1)}$.

\textbf{Case (a-1).}
The filling diagram of the $C^{(x,y+1)}$ has the following configuration 
before the operation for $l_{min}\rightarrow l_{min}^{\dag}$.

\setlength{\unitlength}{15pt}
\begin{center}
\begin{picture}(9,3)
\put(1,1){\line(0,1){2}}
\put(2,1){\line(0,1){2}}
\put(4,1){\line(0,1){2}}
\put(5,1){\line(0,1){2}}
\put(7,1){\line(0,1){2}}
\put(8,1){\line(0,1){2}}
\put(0,1){\line(1,0){9}}
\put(0,2){\line(1,0){2}}
\put(4,2){\line(1,0){1}}
\put(7,2){\line(1,0){2}}
\put(0,3){\line(1,0){9}}
\put(1,1){\makebox(1,1){$\bullet$}}
\put(1,2){\makebox(1,1){$\bullet$}}
\put(4,1){\makebox(1,1){$\bullet$}}
\put(4,2){\makebox(1,1){$\circ$}}
\put(7,1){\makebox(1,1){$\circ$}}
\put(7,2){\makebox(1,1){$\circ$}}
\put(5,1){\makebox(2,2){$(0)$}}
\put(1,0){\makebox(1,1){$l_{min}$}}
\put(4,0){\makebox(1,1){$m$}}
\put(7,0){\makebox(1,1){$l_{min}^{\dag}$}}
\end{picture}.
\end{center}
Here, the number of $(\pm)$-slots in region $(0)$ is $t$.
There are no $\emptyset$-slots in this region.
Also, there are no $(\times)$-slots in this region.
Otherwise, it would contradict the minimality of $l_{min}$ in 
$\left\{  l\in \mathscr{L}^{(x,y+1)} \relmiddle| \overline{l^{\dag}}\prec\Bar{m}\prec \Bar{l}\right\}$.
Let us assume that the number of $(+)$-slots and that of $(-)$-slots in region $(0)$ are 
$\alpha$ and $\beta$, respectively.
Then we have
\begin{equation} \label{eq:psi_xy_2}
l_{min}^{\dag}=m+(\alpha+\beta+t)+1.
\end{equation}
When the operation (A) for $l_{min}\rightarrow l_{min}^{\dag}$ is finished, 
the $(y+1)$-st column of the updated tableau has the left configuration in the figure below.

\setlength{\unitlength}{15pt}
\begin{center}
\begin{picture}(12,7)
\put(3,0){\line(0,1){7}}
\put(4.5,0){\line(0,1){7}}
\put(3,1){\line(1,0){1.5}}
\put(3,2){\line(1,0){1.5}}
\put(3,3){\line(1,0){1.5}}
\put(3,4){\line(1,0){1.5}}
\put(3,5){\line(1,0){1.5}}
\put(3,6){\line(1,0){1.5}}
\put(3,1){\makebox(1.5,1){$i$}}
\put(3,3){\makebox(1.5,1){$l_{min}^{\dag}$}}
\put(3,5){\makebox(1.5,1){$l_{r+2}$}}
\put(1,1){\makebox(3,1)[l]{$p \rightarrow$}}
\put(0,6){\makebox(1.5,1){$(A)$}}
\put(10,0){\line(0,1){7}}
\put(11.5,0){\line(0,1){7}}
\put(10,1){\line(1,0){1.5}}
\put(10,2){\line(1,0){1.5}}
\put(10,3){\line(1,0){1.5}}
\put(10,5){\line(1,0){1.5}}
\put(10,6){\line(1,0){1.5}}
\put(10,1){\makebox(1.5,1){$i$}}
\put(10,2){\makebox(1.5,1){$\vdots$}}
\put(10,3){\makebox(1.5,2){$A$}}
\put(10,5){\makebox(1.5,1){$l_{min}^{\dag}$}}
\put(8.5,1){\makebox(3,1)[l]{$p \rightarrow$}}
\put(7.5,5){\makebox(2,1)[l]{$p_{min}^{\dag} \rightarrow$}}
\put(7,6){\makebox(1.5,1){$(B)$}}
\end{picture}.
\end{center}
In the operation (B), $s-1$ $\mathscr{L}^{(x,y+1)}$-letters, $l_{r+2},\ldots,l_{r+s}$ 
together with $t$ $\mathscr{L}^{(x,y+1)}$-letters are 
relocated just below the box containing $l_{min}^{\dag}$ 
so that the $(y+1)$-st column of the updated tableau has the right configuration, 
Hence, we have
\begin{equation} \label{eq:psi_xy_3}
p_{min}^{\dag}\leq p-s-t.
\end{equation}
Note that $p_{min}^{\dag}$ does not change under subsequent operations for 
$l_{r+2}\rightarrow l_{r+2}^{\dag},\ldots,l_{c}\rightarrow l_{c}^{\dag}$.
The $x$-th column of $T^{\prime}$ has the left configuration (A) in the figure below  
when the operation (A) for $l_{min}\rightarrow l_{min}^{\dag}$ is finished.
When the entry $\overline{l_{min}^{\dag}}$ appears above $\Bar{m}$, 
the position of the box containing $\Bar{m}$ is changed from $q$ to $q+1$.
Since there are $\beta + t$ boxes with $\overline{\mathscr{J}^{(x)}}$-letters between the box containing 
$\overline{l_{min}^{\dag}}$ and that containing $\Bar{m}$, 
the position of the box containing $\overline{l_{min}^{\dag}}$ is $q-\beta -t$.

\setlength{\unitlength}{15pt}
\begin{center}
\begin{picture}(12.5,5)
\put(4,0){\line(0,1){5}}
\put(5.5,0){\line(0,1){5}}
\put(4,1){\line(1,0){1.5}}
\put(4,2){\line(1,0){1.5}}
\put(4,3){\line(1,0){1.5}}
\put(4,4){\line(1,0){1.5}}
\put(4,1){\makebox(1.5,1){$\Bar{m}$}}
\put(4,3){\makebox(1.5,1){\small $\overline{l_{min}^{\dag}}$}}
\put(1,1){\makebox(3,1){$q+1 \rightarrow$}}
\put(0,3){\makebox(4,1){$q-\beta-t \rightarrow$}}
\put(2,4){\makebox(1,1){$(A)$}}
\put(8,4){\makebox(1,1){$(B)$}}
\put(11,0){\line(0,1){5}}
\put(12.5,0){\line(0,1){5}}
\put(11,1){\line(1,0){1.5}}
\put(11,2){\line(1,0){1.5}}
\put(11,3){\line(1,0){1.5}}
\put(11,4){\line(1,0){1.5}}
\put(11,1){\makebox(1.5,1){$\Bar{m}$}}
\put(11,3){\makebox(1.5,1){\small $\overline{l_{min}^{\dag}}$}}
\put(8,1){\makebox(3,1){$q+s \rightarrow$}}
\put(8,3){\makebox(3,1){$q_{min}^{\dag} \rightarrow$}}
\end{picture}.
\end{center}
When the operation (B) for $l_{min}\rightarrow l_{min}^{\dag}$ is finished, 
the $x$-th column of the updated tableau has the right configuration (B) in the above figure. 
Since $s-1$ $\overline{\mathscr{L}^{(x,y+1)}}$-letters $\overline{l_{r+s}},\ldots,\overline{l_{r+2}}$ lying  
above the box containing $\Bar{m}$ before the operation (B) for $l_{min}\rightarrow l_{min}^{\dag}$ are relocated above  
$\overline{l_{min}^{\dag}}$, the position of $\Bar{m}$ is changed from $q+1$ to $q+1+(s-1)=q+s$.
Likewise, the position of the box containing $\overline{l_{min}^{\dag}}$ is changed from $q-\beta -t$ to
\begin{equation} \label{eq:psi_xy_4}
q_{min}^{\dag}=q-\beta-t+(s+t-1)=q-\beta+s-1,
\end{equation}  
which does not change under subsequent operations for  
$l_{r+2}\rightarrow l_{r+2}^{\dag},\ldots,l_{c}\rightarrow l_{c}^{\dag}$.
From Eqs.~\eqref{eq:psi_xy_1}, \eqref{eq:psi_xy_2}, and \eqref{eq:psi_xy_4}, we have
\begin{equation} \label{eq:psi_xy_5}
(q+s)-\Delta q -p_{min}^{\dag}+m  =q_{min}^{\dag}-\Delta q -p_{min}^{\dag}+l_{min}^{\dag}-\alpha-t
\leq n+s-\alpha.
\end{equation}
Combining Eqs.~\eqref{eq:psi_xy_3} and \eqref{eq:psi_xy_5}, we have
$(q+s)-\Delta q -p+m \leq n-\alpha -t \leq n$.
Here the position of $m$ in the $y$-th column of $\Tilde{T}$ is $p$ and 
that of $\Bar{m}$ in the $x$-th column is $q+s$.
Therefore, $C^{(x,y)}$ is KN-coadmissible.

\textbf{Case (a-2).}
Let us assume that $i\notin \mathscr{L}^{(x,y+1)}$.
The proof for the case when $i\in \mathscr{L}^{(x,y+1)}$is similar.
The filling diagram of the column $C^{(x,y+1)}$ has the following configuration 
before the operation for  $l_{min}\rightarrow l_{min}^{\dag}$.

\setlength{\unitlength}{15pt}
\begin{center}
\begin{picture}(12,3)
\put(1,1){\line(0,1){2}}
\put(2,1){\line(0,1){2}}
\put(4,1){\line(0,1){2}}
\put(5,1){\line(0,1){2}}
\put(7,1){\line(0,1){2}}
\put(8,1){\line(0,1){2}}
\put(10,1){\line(0,1){2}}
\put(11,1){\line(0,1){2}}
\put(0,1){\line(1,0){12}}
\put(0,2){\line(1,0){2}}
\put(4,2){\line(1,0){1}}
\put(7,2){\line(1,0){1}}
\put(10,2){\line(1,0){2}}
\put(0,3){\line(1,0){12}}
\put(1,1){\makebox(1,1){$\bullet$}}
\put(1,2){\makebox(1,1){$\bullet$}}
\put(4,1){\makebox(1,1){$\bullet$}}
\put(4,2){\makebox(1,1){$\circ$}}
\put(7,1){\makebox(1,1){$\circ$}}
\put(7,2){\makebox(1,1){$\bullet$}}
\put(10,1){\makebox(1,1){$\circ$}}
\put(10,2){\makebox(1,1){$\circ$}}
\put(5,1){\makebox(2,2){$(1)$}}
\put(8,1){\makebox(2,2){$(2)$}}
\put(1,0){\makebox(1,1){$l_{min}$}}
\put(4,0){\makebox(1,1){$m$}}
\put(7,0){\makebox(1,1){$i$}}
\put(10,0){\makebox(1,1){$l_{min}^{\dag}$}}
\end{picture}.
\end{center}
The total number of $(\pm)$-slots in regions $(1)$ and $(2)$ is $t$.
Let us assume that the number of $(\pm)$-slots in region $(1)$ is $t_{1}$.
There are no $\emptyset$-slots in both regions.
Also, there are no $(\times)$-slots in both regions as in \textbf{(a-1)}.
Let us assume that the number of $(+)$-slots and that of $(-)$-slots in region $(j)$ are 
$\alpha_{j}$ and $\beta_{j}$, respectively ($j=1,2$).
Then
\begin{equation} \label{eq:psi_xy_6}
l_{min}^{\dag}=m+\sum_{i=1}^{2}(\alpha_{i}+\beta_{i})+t+2.
\end{equation}
The updated tableau has the following configuration 
when the operation (A) for $l_{min}\rightarrow l_{min}^{\dag}$ is finished.

\setlength{\unitlength}{15pt}
\begin{center}
\begin{picture}(14.5,6)
\put(5.5,0){\line(0,1){5}}
\put(7,0){\line(0,1){5}}
\put(5.5,1){\line(1,0){1.5}}
\put(5.5,2){\line(1,0){1.5}}
\put(5.5,3){\line(1,0){1.5}}
\put(5.5,4){\line(1,0){1.5}}
\put(4.5,5){\line(1,0){3.5}}
\put(5.5,1){\makebox(1.5,1){$\Bar{m}$}}
\put(5.5,3){\makebox(1.5,1){$\overline{l_{min}^{\dag}}$}}
\put(5.5,5){\makebox(1.5,1){$x$}}
\put(3,1){\makebox(2.5,1)[l]{$q+1 \rightarrow$}}
\put(0,3){\makebox(5.5,1)[l]{$q-\sum_{i=1}^{2}\beta_{i}-t \rightarrow$}}
\put(10,0){\line(0,1){5}}
\put(11.5,0){\line(0,1){5}}
\put(10,1){\line(1,0){1.5}}
\put(10,2){\line(1,0){1.5}}
\put(10,3){\line(1,0){1.5}}
\put(10,4){\line(1,0){1.5}}
\put(9,5){\line(1,0){3.5}}
\put(10,1){\makebox(1.5,1){$l_{min}^{\dag}$}}
\put(10,3){\makebox(1.5,1){$i$}}
\put(10,5){\makebox(1.5,1){$y+1$}}
\put(12,3){\makebox(2.5,1)[l]{$\leftarrow p-1$}}
\end{picture}.
\end{center}
When the operation (B) for $l_{min}\rightarrow l_{min}^{\dag}$ is finished,
the updated tableau has the following configuration.

\setlength{\unitlength}{15pt}
\begin{center}
\begin{picture}(13.5,6)
\put(3,0){\line(0,1){5}}
\put(4.5,0){\line(0,1){5}}
\put(3,1){\line(1,0){1.5}}
\put(3,2){\line(1,0){1.5}}
\put(3,3){\line(1,0){1.5}}
\put(3,4){\line(1,0){1.5}}
\put(2,5){\line(1,0){3.5}}
\put(3,1){\makebox(1.5,1){$\Bar{m}$}}
\put(3,3){\makebox(1.5,1){$\overline{l_{min}^{\dag}}$}}
\put(3,5){\makebox(1.5,1){$x$}}
\put(0,1){\makebox(2.5,1)[r]{$q+s \rightarrow$}}
\put(0,3){\makebox(2.5,1)[r]{$q_{min}^{\dag} \rightarrow$}}
\put(7.5,0){\line(0,1){5}}
\put(9,0){\line(0,1){5}}
\put(7.5,1){\line(1,0){1.5}}
\put(7.5,2){\line(1,0){1.5}}
\put(7.5,3){\line(1,0){1.5}}
\put(7.5,4){\line(1,0){1.5}}
\put(6.5,5){\line(1,0){3.5}}
\put(7.5,1){\makebox(1.5,1){$l_{min}^{\dag}$}}
\put(7.5,3){\makebox(1.5,1){$i$}}
\put(7.5,5){\makebox(1.5,1){$y+1$}}
\put(9.5,1){\makebox(4,1)[l]{$\leftarrow p_{min}^{\dag}$}}
\put(9.5,3){\makebox(4,1)[l]{$\leftarrow p-s-t_{1}$}}
\end{picture},
\end{center}
where
\begin{equation} \label{eq:psi_xy_7}
q_{min}^{\dag}=q-\sum_{i=1}^{2}\beta_{i}-t+(s+t-1)=(q+s)-\sum_{i=1}^{2}\beta_{i}-1.
\end{equation}
Since $\alpha_{2}$ $\mathscr{I}^{(y+1)}$-letters exist between the box containing $i$ and 
that containing $l_{min}^{\dag}$,
\begin{equation} \label{eq:psi_xy_8}
p_{min}^{\dag}-\alpha_{2}-1=p-s-t_{1}.
\end{equation}
Note that $p_{min}^{\dag}$ and $q_{min}^{\dag}$ do not change under subsequent operations for 
$l_{r+2}\rightarrow l_{r+2}^{\ast},\ldots,l_{c}\rightarrow l_{c}^{\dag}$.
From Eqs.~\eqref{eq:psi_xy_1}, \eqref{eq:psi_xy_6}, \eqref{eq:psi_xy_7}, and \eqref{eq:psi_xy_8}, we have
\begin{align*}
(q+s)-\Delta q -p+m  & =q_{min}^{\dag}-\Delta q-p_{min}^{\dag}+l_{min}^{\dag}-\alpha_{1}-s-t-t_{1}\\
& \leq n-\alpha_{1}-t_{1} \leq n.
\end{align*}
Here, the position of the box containing $m$ in the $y$-th column of $\Tilde{T}$ is $p$ and 
that of $\Bar{m}$ in the $x$-th column of $\Tilde{T}$ is $q+s$.
Therefore, $C^{(x,y)}$ is KN-coadmissible.

\textbf{Case (b).}
In this case, we can write 
$m=l_{i}^{\dag}\in \mathscr{L}^{(x,y+1)\dag}=\{l_{1}^{\dag},l_{2}^{\dag},\ldots,l_{c}^{\dag}\}$.
Let us set
$\{l_{p+1},\ldots,l_{p+r}\}:=\left\{l \in \mathscr{L}^{(x,y+1)} \relmiddle| l_{i} < l < l_{i}^{\dag}\right\}$ 
(if $r=0$, then this set is considered to be empty).
We adopt the first kind algorithm for $\psi^{(x,y+1)}$ here.
When the operation for $l_{i} \rightarrow l_{i}^{\dag}=m$ is finished, 
the updated tableau has the left configuration in the figure below, 
where $A$ is the block consisting of $s$ boxes ($s\geq 1$).

\setlength{\unitlength}{12pt}
\begin{center}
\begin{picture}(17,6)
\put(2,0){\line(0,1){5}}
\put(3,0){\line(0,1){5}}
\put(4,0){\line(0,1){5}}
\put(2,1){\line(1,0){2}}
\put(2,2){\line(1,0){1}}
\put(3,3){\line(1,0){1}}
\put(3,4){\line(1,0){1}}
\put(1,5){\line(1,0){4}}
\put(1.75,5){\makebox(1,1){${\mathstrut y}$}}
\put(3,5){\makebox(1.5,1){${\mathstrut y+1}$}}
\put(2,1){\makebox(1,1){$m$}}
\put(3,1){\makebox(1,2){$A$}}
\put(3,3){\makebox(1,1){$m$}}
\put(0,1){\makebox(2,1){$p \rightarrow$}}
\put(4,3){\makebox(2,1){$\leftarrow p_{1}$}}
\put(5,1){\makebox(3,1){$p\geq p_{1}+1$}}
\put(13,0){\line(0,1){5}}
\put(14,0){\line(0,1){5}}
\put(15,0){\line(0,1){5}}
\put(14,1){\line(1,0){1}}
\put(13,3){\line(1,0){1}}
\put(13,4){\line(1,0){2}}
\put(14,2){\line(1,0){1}}
\put(12,5){\line(1,0){4}}
\put(12.75,5){\makebox(1,1){${\mathstrut y}$}}
\put(14,5){\makebox(1.5,1){${\mathstrut y+1}$}}
\put(14,2){\makebox(1,2){$A^{\prime}$}}
\put(13,3){\makebox(1,1){$m$}}
\put(14,1){\makebox(1,1){$m$}}
\put(11,3){\makebox(2,1){$p \rightarrow$}}
\put(15,1){\makebox(2,1){$\leftarrow p_{1}$}}
\end{picture}.
\end{center}
The right configuration is not allowed, where $A^{\prime}$ is the block consisting of $s^{\prime}$ boxes ($s^{\prime} \geq 0$).
This can be seen as follows. 
Suppose that the entry in the $p_{1}$-th box in the $(y+1)$-st column is $j$ in the initial tableau $T^{\prime}$.
When the operations for $l_{i-1}\rightarrow l_{i-1}^{\dag}$ is finished, 
$l_{1}^{\dag},\ldots,l_{i-1}^{\dag}$ lie above the box containing $j$ in the $(y+1)$-st column 
so that the $p_{1}$-th box in the $(y+1)$-st column still has the entry $j$.
The operation for $l_{i}\rightarrow l_{i}^{\dag}$ replaces the entry $j$ with $l_{i}^{\dag}=m$.
This implies that $j<l_{i}^{\dag}=m$ by Lemma~\ref{lem:col_sst2}, 
which contradicts the semistandardness of the $\mathscr{C}_{n}^{(+)}$-letters part of $T^{\prime}$ 
so that the right configuration cannot happen.
When a sequence of operations for $l_{p+1}\rightarrow l_{p+1}^{\dag},\ldots,l_{p+r}\rightarrow l_{p+r}^{\dag}$ is finished, 
the position of $m=l_{i}^{\dag}$ in the $(y+1)$-st column becomes to be $p^{\prime}=p_{1}-r$, 
which does not change under subsequent operations.
Since $p\geq p_{1}+1$, we have $p^{\prime}\leq p-r-1$.
On the other hand, by Lemma~\ref{lem:KN_upper}, we have 
$(q-\Delta q -p^{\prime})+m\leq n+r+1$, 
where $q$ is the position of $\Bar{m}=\overline{l_{i}^{\dag}}$ in the $x$-th column.
Combining these, we have that $(q-\Delta q-p)+m \leq n$, i.e., $C^{(x,y)}$ is KN-coadmissible.
\end{proof}

The following four lemmas may be proven in the similar manner of the proof of Lemma~\ref{lem:kink1} (Lemma~\ref{lem:kink2}), 
Lemma~\ref{lem:st1}, and Lemma~\ref{lem:st2}.

\begin{lem} \label{lem:kink5}
Let us set 
\[
\Tilde{T}:=\psi^{(x,y+1)}\circ\cdots\circ\psi^{(x,n_{c})}\circ
\overline{\Psi^{(x-1)}}(T) \quad (1\leq x\leq y \leq n_{c}-1).
\]
Here, we assume that $\Tilde{T}\neq \emptyset$ and that in the updating process of the tableau from $T$ to $\Tilde{T}$ the semistandardness of the $\mathscr{C}_{n}^{(+)}$-letters part of the tableau is preserved 
($\overline{\Psi^{(0)}}(T)=T$).

\begin{itemize}
\item[(1).]
Suppose that $\Tilde{T}$ has the following configuration, 
where the left (resp. right) part is the $\mathscr{C}_{n}^{(-)}$ (resp. $\mathscr{C}_{n}^{(+)}$)-letters one $(p\leq q< r\leq s)$.

\setlength{\unitlength}{12pt}
\begin{center}
\begin{picture}(10,6)
\put(2,0){\line(0,1){5}}
\put(3,0){\line(0,1){5}}
\put(6,0){\line(0,1){5}}
\put(7,0){\line(0,1){5}}
\put(8,0){\line(0,1){5}}
\put(2,1){\line(1,0){1}}
\put(2,2){\line(1,0){1}}
\put(2,3){\line(1,0){1}}
\put(2,4){\line(1,0){1}}
\put(6,3){\line(1,0){1}}
\put(6,4){\line(1,0){1}}
\put(7,1){\line(1,0){1}}
\put(7,2){\line(1,0){1}}
\put(1,5){\line(1,0){3}}
\put(5,5){\line(1,0){4}}
\put(2,5){\makebox(1,1){$x$}}
\put(5.75,5){\makebox(1,1){${\mathstrut y}$}}
\put(7,5){\makebox(1.5,1){${\mathstrut y+1}$}}
\put(2,1){\makebox(1,1){$\Bar{a}$}}
\put(2,3){\makebox(1,1){$\overline{b_{2}}$}}
\put(6,3){\makebox(1,1){$a$}}
\put(7,1){\makebox(1,1){$b_{1}$}}
\put(0,1){\makebox(2,1){$s \rightarrow$}}
\put(0,3){\makebox(2,1){$r \rightarrow$}}
\put(8,1){\makebox(2,1){$\leftarrow q$}}
\put(8,3){\makebox(2,1){$\leftarrow p$}}
\end{picture}.
\end{center}
Then we have \[
(q-p)+(s-r)<\max(b_{1},b_{2})-a.
\] 

\item[(2).]
Let $\mathscr{J}^{(x)}$ be the set of $\mathscr{J}$-letters in the $x$-th column 
and $\mathscr{I}^{(y)}$ be the set of $\mathscr{I}$-letters in the $y$-th column and set 
$\mathscr{L}^{(x,y)}:=\mathscr{J}^{(x)}\cap \mathscr{I}^{(y)}$.
If $\sharp\left\{  l\in \mathscr{L}^{(x,y)} \relmiddle| l<a<l^{\dag} \right\}  =\delta$ in $\psi^{(x,y)}(\Tilde{T})$, 
then we have \[
(q-p)+(s-r)<\max(b_{1},b_{2})-a-\delta
\]
in the above configuration in $\Tilde{T}$.
\end{itemize}
\end{lem}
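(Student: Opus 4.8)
The plan is to establish Lemma~\ref{lem:kink5} as the $\Psi$-side mirror of Lemma~\ref{lem:kink1}, transporting the entire argument under the correspondence $\phi\leftrightarrow\psi$, $\mathscr{L}^{\ast}\leftrightarrow\mathscr{L}^{\dag}$, and ``relocate upward'' $\leftrightarrow$ ``relocate downward,'' with the position bounds of Lemma~\ref{lem:coKN_lower} replaced by those of Lemma~\ref{lem:KN_upper}, the displacement formulas of Lemma~\ref{lem:position1} replaced by Lemma~\ref{lem:position2}, and the order relations of Lemma~\ref{lem:col_sst2} replaced by Lemma~\ref{lem:col_sst1}. First I would record the elementary consequences of semistandardness of $\Tilde{T}$: in column $x$ one has $\Bar{b}\prec\Bar{a}$, hence $a<b$, and the degenerate cases $r=s$ (which forces $a=b$) are excluded exactly as at the opening of the proof of Lemma~\ref{lem:kink1}. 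Since $\Tilde{T}$ was produced by the maps $\psi^{(x,n_{c})},\ldots,\psi^{(x,y+1)}$, the column $C^{(x,y+1)}$ pairing the $\mathscr{C}_{n}^{(-)}$-part of column $x$ with the $\mathscr{C}_{n}^{(+)}$-part of column $y+1$ is KN-admissible, so $T^{\prime}:=\phi^{(x,y+1)}(\Tilde{T})$ is defined (it is the tableau just before $\psi^{(x,y+1)}$ was applied). The strategy is to assume the equality $(q-p)+(s-r)=b-a$ and derive a violation of the semistandardness of the $\mathscr{C}_{n}^{(+)}$-part of $T^{\prime}$.

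For part (1) I would run an induction paralleling cases \textbf{(I)} and \textbf{(II)} of Lemma~\ref{lem:kink1}, indexing the $\mathscr{L}^{(x,y+1)}$-letters $l_{1}<\cdots<l_{c}$ and peeling off those lying in the relevant range one at a time (beginning, as in the second-kind algorithm for $\psi$, from $l_{\min}$). At each stage I would draw the filling diagram of $C^{(x,y+1)}$, split the range into regions, and count the $(+)$-, $(-)$-, $(\pm)$-, $(\times)$- and $\emptyset$-slots of each region; the equality hypothesis forces the vanishing of the $\emptyset$-slot counts and of one of the off-diagonal slot types, exactly as the relations $\alpha_{2}=\varepsilon_{1}=\varepsilon_{2}=0$ arose in Lemma~\ref{lem:kink1}. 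Tracking the displacement of the offending box through the operations for $l_{1}\to l_{1}^{\dag},\ldots$ (using Lemma~\ref{lem:position2} for the exact shifts and Lemma~\ref{lem:KN_upper} for the bound $q_{k}^{\dag}-p_{k}^{\dag}+l_{k}^{\dag}\le n+\gamma_{k}^{\dag}+1$), I would show that under equality the entry created directly above or below the displaced box in $T^{\prime}$ must exceed the $\mathscr{C}_{n}^{(+)}$-letter forced by Lemma~\ref{lem:col_sst1}, contradicting column-strictness. Combining the base case and the inductive step yields the strict inequality $(q-p)+(s-r)<b-a$.

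For part (2) I would induct on the $\mathscr{L}^{(x,y)}$-letters $l$ with $l<a<l^{\dag}$, taking part (1) as the base. Immediately after the operation for $l_{k+1}\to l_{k+1}^{\dag}$ I would read off the filling-diagram configuration between the $l_{k}$-th and $l_{k+1}$-th slots, express $q_{k+1}$, $p_{k+1}$, $l_{k+1}$ and $\delta_{k+1}$ in terms of the slot counts $\alpha,\beta,\varepsilon$ of the intervening region together with the data at stage $k$, and substitute into the inductive hypothesis to recover the sharpened bound with $\delta$ in place of the previous count; this is the verbatim mirror of the final paragraph of the proof of Lemma~\ref{lem:kink1}.

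The main obstacle, as in Lemma~\ref{lem:kink1}, is the equality-case rigidity in part (1): one must show that saturating $(q-p)+(s-r)=b-a$ pins the slot pattern down so tightly (no $\emptyset$-slots, no spurious off-diagonal slots in the lower region, and the newly generated $\mathscr{L}^{\dag}$-letters all landing on one prescribed side of the offending box) that a single entry of the $\mathscr{C}_{n}^{(+)}$-part of $T^{\prime}$ is forced out of semistandard order. Getting the position bookkeeping right across the whole sequence $l_{1}\to l_{1}^{\dag},\ldots,l_{c}\to l_{c}^{\dag}$—in particular verifying that $p_{\min}^{\dag}$ and $q_{\min}^{\dag}$ stabilize under the subsequent operations, as in the analysis underlying Lemma~\ref{lem:welldf2}—is where the care is needed; everything else is routine transcription of the $\Phi$-side argument.
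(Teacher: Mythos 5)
Your proposal is correct and follows exactly the route the paper intends: the paper's own ``proof'' of Lemma~\ref{lem:kink5} is literally the remark that it ``may be proven in the similar manner of the proof of Lemma~\ref{lem:kink1}'' (and Lemma~\ref{lem:kink2}), and your plan --- undo the last map via $T^{\prime}=\phi^{(x,y+1)}(\Tilde{T})$, assume equality, use the filling-diagram slot counts to force rigidity, and contradict the semistandardness of the $\mathscr{C}_{n}^{(+)}$-part of the intermediate tableau $T^{\prime}$, with the dictionary $\phi\leftrightarrow\psi$, $\mathscr{L}^{\ast}\leftrightarrow\mathscr{L}^{\dag}$, Lemma~\ref{lem:coKN_lower}$\leftrightarrow$Lemma~\ref{lem:KN_upper}, Lemma~\ref{lem:position1}$\leftrightarrow$Lemma~\ref{lem:position2}, Lemma~\ref{lem:col_sst2}$\leftrightarrow$Lemma~\ref{lem:col_sst1} --- is precisely that mirroring, including the correct treatment of part (2) as the verbatim analogue of the inductive slot-count reduction at the end of the proof of Lemma~\ref{lem:kink1}.
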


\begin{lem} \label{lem:kink6}
Let us set 
\begin{align*}
\Tilde{T}  :=
\left(  \psi^{(x-1,y)}\circ\psi^{(x,y+1)}\right)  &\circ\cdots\circ
\left(\psi^{(x-1,n_{c}-1)}\circ\psi^{(x,n_{c})}\right)  \circ\psi^{(x-1,n_{c})} \\
&\circ(\Psi^{(x-1)})^{-1}\circ\overline{\Psi^{(x-1)}}(T)
\quad (2\leq x\leq y+1 \leq n_{c}).
\end{align*}
Here, we assume that $\Tilde{T}\neq \emptyset$ and that in the updating process of the tableau from $T$ to $\Tilde{T}$ the semistandardness of the $\mathscr{C}_{n}^{(-)}$-letters part of the tableau is preserved.

\begin{itemize}
\item[(1).]
Suppose that the tableau $\Tilde{T}$ has the following configuration, 
where the left (resp. right) part is the $\mathscr{C}_{n}^{(-)}$ (resp. $\mathscr{C}_{n}^{(+)}$)-letters one $(p\leq q < r\leq s)$.

\setlength{\unitlength}{12pt}
\begin{center}
\begin{picture}(10,6)
\put(2,0){\line(0,1){5}}
\put(3,0){\line(0,1){5}}
\put(4,0){\line(0,1){5}}
\put(3,1){\line(1,0){1}}
\put(3,2){\line(1,0){1}}
\put(2,3){\line(1,0){1}}
\put(2,4){\line(1,0){1}}
\put(1,5){\line(1,0){4}}
\put(7,0){\line(0,1){5}}
\put(8,0){\line(0,1){5}}
\put(7,1){\line(1,0){1}}
\put(7,2){\line(1,0){1}}
\put(7,3){\line(1,0){1}}
\put(7,4){\line(1,0){1}}
\put(6,5){\line(1,0){3}}
\put(2,3){\makebox(1,1){$\overline{b_{2}}$}}
\put(3,1){\makebox(1,1){$\Bar{a}$}}
\put(0,1){\makebox(1,1){$s \rightarrow$}}
\put(0,3){\makebox(1,1){$r \rightarrow$}}
\put(1.5,5){\makebox(1.5,1){${\mathstrut x-1}$}}
\put(3.25,5){\makebox(1,1){${\mathstrut x}$}}
\put(7,1){\makebox(1,1){$b_{1}$}}
\put(7,3){\makebox(1,1){$a$}}
\put(8,1){\makebox(2,1){$\leftarrow q$}}
\put(8,3){\makebox(2,1){$\leftarrow p$}}
\put(7,5){\makebox(1,1){$y$}}
\end{picture}.
\end{center}
Then we have \[
(q-p)+(s-r)<\max(b_{1},b_{2})-a.
\]

\item[(2).]
Let $\mathscr{J}^{(x)}$ be the set of $\mathscr{J}$-letters in the $x$-th column 
and $\mathscr{I}^{(y)}$ be the $\mathscr{I}$-letters part of the $y$-th column and set 
$\mathscr{L}^{(x,y)}:=\mathscr{J}^{(x)}\cap \mathscr{I}^{(y)}$.
If $\sharp\left\{  l\in \mathscr{L}^{(x,y)} \relmiddle| l<a<l^{\dag} \right\}  =\delta$ in $\psi^{(x,y)}(\Tilde{T})$, 
then we have 
\[(q-p)+(s-r)<\max(b_{1},b_{2})-a-\delta
\]
in the above configuration in $\Tilde{T}$.
\end{itemize}
\end{lem}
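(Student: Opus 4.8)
The plan is to prove Lemma~\ref{lem:kink6} as the downward mirror image of Lemma~\ref{lem:kink2}, which was itself obtained by dualizing Lemma~\ref{lem:kink1}. Every ingredient of that proof has an exact counterpart once one systematically replaces $\phi$ by $\psi$, KN-admissibility by KN-coadmissibility, the $\mathscr{L}^{\ast}$-letters (which satisfy $l^{\ast}<l$) by the $\mathscr{L}^{\dag}$-letters (which satisfy $l^{\dag}>l$), Lemma~\ref{lem:col_sst1} by Lemma~\ref{lem:col_sst2}, Lemma~\ref{lem:position1} by Lemma~\ref{lem:position2}, and Lemma~\ref{lem:coKN_lower} by Lemma~\ref{lem:KN_upper}. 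The grouping of the two interleaved maps $\psi^{(x-1,y)}$ and $\psi^{(x,y+1)}$ is legitimate by Lemma~\ref{lem:equiv2}, exactly as Lemma~\ref{lem:equiv1} justifies the grouping in Lemma~\ref{lem:kink2}; since the columns touched by $\psi^{(x-1,\cdot)}$ and those touched by $\psi^{(x,\cdot)}$ are disjoint, the two maps do not interfere. Because $\Psi=\Phi^{-1}$, the inverse of the $\psi$-step being analyzed is a $\phi$-step, so the semistandardness that each contradiction violates is that of the $\mathscr{C}_{n}^{(-)}$-letters part, governed by Lemma~\ref{lem:col_sst2}.

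For part (1) I would first exclude the degenerate configurations. Mimicking the opening of the proof of Lemma~\ref{lem:kink1}, I would argue that $\Tilde{T}$ cannot contain the forbidden pattern (the coadmissible analogue of the one excluded there); were it present, undoing the relevant $\psi$-step would force, by Lemma~\ref{lem:col_sst2}, an entry that breaks the semistandardness of the $\mathscr{C}_{n}^{(-)}$-part. This rules out $p=q,\ r=s$ and $r=s,\ p<q$, and yields $a<b$. I would then run the induction of Lemma~\ref{lem:kink1} in dual form: with $\mathscr{L}^{(x,y)}=\{l_{1},\ldots,l_{c}\}$, I select the chain of $\mathscr{L}$-letters lying strictly between $a$ and $b$ and induct on its length, splitting both the base case and the inductive step according to whether the pivot barred entry in column $x$ lies strictly below, or exactly at, its partner position. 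In each case I assume the claimed inequality holds with equality, pass to the filling diagram, follow the second-kind algorithm for $\psi$ to relocate the $\mathscr{L}$-letters upward, and read off a position relation forcing a $\mathscr{C}_{n}^{(-)}$-entry out of semistandard order.

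For part (2) I would carry the refinement by $\delta=\sharp\{l\in\mathscr{L}^{(x,y)}\mid l<a<l^{\dag}\}$ exactly as in part (2) of Lemma~\ref{lem:kink2}: following the operations $l_{k}\to l_{k}^{\dag}$ one at a time, I use the filling-diagram bookkeeping together with Lemma~\ref{lem:position2} to express the updated positions $p,q,r,s$ and the values $a,b$ in terms of the numbers of $(+)$-, $(-)$-, and $(\times)$-slots in the intervening region, and apply Lemma~\ref{lem:KN_upper} to propagate the strengthened inequality by induction, each step decreasing the slack by one unit of $\delta$. The main obstacle, and the only place requiring genuine work beyond transcription, is the position bookkeeping under the $\psi$-algorithm: one must track how the boxes containing $a,b,\overline{a},\overline{b}$ and the intervening $\mathscr{L}$-letters move as letters are relocated upward, and must reverse every inequality and every $\emptyset$-slot and $(\times)$-slot count relative to the $\phi$ case, while checking that the degenerate patterns excluded in the first step remain the correct ones in the coadmissible setting.
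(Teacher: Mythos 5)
Your proposal is correct and is essentially the paper's own approach: the paper gives no separate argument for Lemma~\ref{lem:kink6}, stating only that it may be proven ``in the similar manner'' as Lemma~\ref{lem:kink1} (Lemma~\ref{lem:kink2}), which is exactly the systematic dualization you outline ($\phi\to\psi$, KN-admissible $\to$ KN-coadmissible, $\mathscr{L}^{\ast}\to\mathscr{L}^{\dag}$, Lemma~\ref{lem:position1} $\to$ Lemma~\ref{lem:position2}, Lemma~\ref{lem:coKN_lower} $\to$ Lemma~\ref{lem:KN_upper}), including the appeal to Lemma~\ref{lem:equiv2} to justify the interleaved grouping. One bookkeeping caveat: since undoing a $\psi$-step means applying $\phi$, the entry-comparison statement that forces the $\mathscr{C}_{n}^{(-)}$-part violation in each contradiction is Lemma~\ref{lem:col_sst1} rather than Lemma~\ref{lem:col_sst2} --- the two lemmas swap roles relative to their use in the proof of Lemma~\ref{lem:kink1}, where Lemma~\ref{lem:col_sst2} handled the undo step and Lemma~\ref{lem:col_sst1} handled the forward algorithm.
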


\begin{lem}
Let us set
\[
\Tilde{T}:=\psi^{(x,y+1)}\circ \cdots \circ \psi^{(x,n_{c})}\circ \overline{\Psi^{(x-1)}}(T) \quad (1\leq x \leq y \leq n_{c}-1).
\]
Here, we assume that $\Tilde{T}\neq \emptyset$ and that in the updating process of the tableau from $T$ to $\Tilde{T}$ the semistandardness of the $\mathscr{C}_{n}^{(+)}$-letters part of the tableau is preserved 
($\overline{\Psi^{(0)}}(T)=T$).
Then the $\mathscr{C}_{n}^{(+)}$-letters part of $\psi^{(x,y)}(\Tilde{T})$ is semistandard.
\end{lem}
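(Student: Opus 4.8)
The plan is to prove this statement as the $\psi$-counterpart of Lemma~\ref{lem:st1}, obtained by systematically replacing $\phi^{(\cdot,\cdot)}$ by $\psi^{(\cdot,\cdot)}$, Lemma~\ref{lem:col_sst1} by Lemma~\ref{lem:col_sst2}, and the kink estimate Lemma~\ref{lem:kink1} by its $\psi$-version Lemma~\ref{lem:kink5}. First I would invoke Lemma~\ref{lem:welldf2} to know that $\psi^{(x,y)}$ is well-defined on $\Tilde{T}$, so that $\psi^{(x,y)}(\Tilde{T})\neq\emptyset$ and its columns are meaningful. Since $\psi^{(x,y)}$ alters only the $\mathscr{C}_{n}^{(-)}$-letters part of the $x$-th column and the $\mathscr{C}_{n}^{(+)}$-letters part of the $y$-th column, the only adjacent column pairs whose $\mathscr{C}_{n}^{(+)}$-letters parts could fail to be semistandard are $(y-1,y)$ and $(y,y+1)$; all other pairs are inherited unchanged from $\Tilde{T}$, whose $\mathscr{C}_{n}^{(+)}$-letters part is semistandard by the standing hypothesis. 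The internal (top-to-bottom strict) order of the updated $y$-th column is automatic because $\psi(C^{(x,y)})$ is itself a semistandard $C_{n}$-column. Thus the whole statement reduces to the two adjacency claims.

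For the pair $(y-1,y)$ the argument is immediate. Let $C_{y-1}$ be the $(y-1)$-th column, whose $\mathscr{C}_{n}^{(+)}$-letters part is untouched by $\psi^{(x,y)}$. By the standing hypothesis $C_{y-1}^{(+)}\preceq C_{y}^{(+)}$ in $\Tilde{T}$, and by Lemma~\ref{lem:col_sst2} the update enlarges the $\mathscr{C}_{n}^{(+)}$-letters part, $C_{y}^{(+)}\preceq C_{y}^{(+)\dag}$; transitivity of $\preceq$ (used in the same way as in the reduction of Lemma~\ref{lem:st2}) then yields $C_{y-1}^{(+)}\preceq C_{y}^{(+)\dag}$. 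Hence everything comes down to showing that the $\mathscr{C}_{n}^{(+)}$-letters part of the two-column tableau $C_{y}C_{y+1}$ in $\psi^{(x,y)}(\Tilde{T})$ is semistandard, where $C_{y+1}$ is the already-updated right neighbor.

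To establish the hard pair I would run the second kind algorithm for $\psi^{(x,y)}$ on the $y$-th column and the first kind algorithm on the $x$-th column, exactly paralleling Lemma~\ref{lem:st1}. Writing $\mathscr{L}^{(x,y)}=\mathscr{J}^{(x)}\cap\mathscr{I}^{(y)}=\{l_{1},\ldots,l_{c}\}$, letting $C_{y}^{0}$ denote the $y$-th column of $\Tilde{T}$, and denoting by $\Delta_{k}^{\dag}(C_{y}^{0})$ the block of the $y$-th column produced by the operation for $l_{k}\to l_{k}^{\dag}$ (the $\psi$-analog of $\Delta_{k}(C_{y}^{0})$), the target is the chain of block inequalities $\Delta_{k}^{\dag}(C_{y}^{0})\preceq\Delta C_{y+1}[p_{k}^{\prime},q_{k}^{\prime}]$ for all $k=c,c-1,\ldots,1$, where $p_{k}^{\prime}$ and $q_{k}^{\prime}$ locate the top and bottom boxes of $\Delta_{k}^{\dag}(C_{y}^{0})$. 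I would prove this by induction on $k$: decompose $C_{y+1}^{(+)}$ into blocks $I_{s}^{(y+1)}$ matched against the blocks $I_{s}^{(y)}$, $L_{s}^{(y)}$ and $\overline{J_{s}^{(x)}}$ read off the filling diagram of the combined column $C^{(x,y)\prime}$ after the operation for $l_{k+1}\to l_{k+1}^{\dag}$, and argue through the counterparts of cases (i)--(v) that any violation of the claimed inequality forces a configuration in $\Tilde{T}$ whose parameters $(q-p)+(s-r)$ are too large to satisfy the bound $(q-p)+(s-r)<b-a-\delta$ of Lemma~\ref{lem:kink5}, a contradiction. As in Lemma~\ref{lem:st1}, the case $r\geq 1$ carries the content and $r=0$ is an easier special case.

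The main obstacle will be the positional bookkeeping inside this inductive step: one must track precisely how the positions of the relocated $\mathscr{L}^{(x,y)\dag}$-letters and of the surviving $\mathscr{I}^{(y)}$- and $\mathscr{J}^{(x)}$-letters shift under the operations for $l_{c}\to l_{c}^{\dag},\ldots,l_{k+1}\to l_{k+1}^{\dag}$, using Lemma~\ref{lem:position2} to convert slot counts into displacements. The delicate point is orientation: because $\psi$ raises the $\mathscr{C}_{n}^{(+)}$-entries (whereas $\phi$ lowers them), the inequalities one sets up run opposite to those of Lemma~\ref{lem:st1}, so each reductio must be oriented so that the assumed failure of $\Delta_{k}^{\dag}(C_{y}^{0})\preceq\Delta C_{y+1}[p_{k}^{\prime},q_{k}^{\prime}]$ produces a column-$y$ entry that is strictly too \emph{large} relative to its partner in $C_{y+1}$. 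Once the correct orientation is fixed, the position estimates are formally identical to the $\phi$ case, and feeding them into Lemma~\ref{lem:kink5} closes the induction; combining the two adjacency claims completes the proof.
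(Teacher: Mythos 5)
Your proposal takes exactly the route the paper intends: the paper gives no separate proof of this lemma, stating only that it ``may be proven in the similar manner'' of Lemma~\ref{lem:st1} with Lemma~\ref{lem:kink5} in place of Lemma~\ref{lem:kink1}, and your sketch fills in that analogy correctly in its essentials. In particular, you reverse the orientation of the reduction in the right way: for $\phi$ the pair $(y,y+1)$ is free (Lemma~\ref{lem:col_sst1}) and $(y-1,y)$ is the hard one, whereas for $\psi$ the pair $(y-1,y)$ is free by Lemma~\ref{lem:col_sst2} plus transitivity, and the hard pair is $(y,y+1)$ against the already-updated right neighbour. You also correctly identify the well-definedness input (Lemma~\ref{lem:welldf2}), the block-inequality target, and the role of the bound in Lemma~\ref{lem:kink5} in the reductio.

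One concrete point must be fixed before the induction can be carried out: the processing order of the $\mathscr{L}^{(x,y)}$-letters under $\psi$ is the opposite of the one you use. For $\phi$ the operations run $l_{c}\rightarrow l_{c}^{\ast},\ldots,l_{1}\rightarrow l_{1}^{\ast}$, but for $\psi$ the letters $l_{k}^{\dag}$ are determined from the smallest upwards (Remark~\ref{rem:algorithm2}), and the operations run $l_{1}\rightarrow l_{1}^{\dag},\ldots,l_{c}\rightarrow l_{c}^{\dag}$; this is the convention in Lemma~\ref{lem:position2}, Lemma~\ref{lem:KN_upper}, and throughout the proof of Lemma~\ref{lem:welldf2}. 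Your induction is stated for $k=c,c-1,\ldots,1$, with the operations for $l_{c},\ldots,l_{k+1}$ assumed finished before the step for $l_{k}$: under $\psi$ those operations have not yet happened at that stage, the block $\Delta_{k}^{\dag}(C_{y}^{0})$ is not yet stable under what follows, and your appeal to Lemma~\ref{lem:position2} (whose hypothesis is that $l_{1}\rightarrow l_{1}^{\dag},\ldots,l_{k-1}\rightarrow l_{k-1}^{\dag}$ are finished) is inconsistent with your stated ordering. The fix is mechanical: run the induction upward from $k=1$, with the inductive hypothesis referring to the completed operation for $l_{k-1}\rightarrow l_{k-1}^{\dag}$; the blocks created for smaller $k$ are then unaffected by the subsequent operations for larger indices, exactly mirroring the stability statements exploited in Lemma~\ref{lem:st1}. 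With that correction, your plan is the intended proof.
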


\begin{lem}
Let us set 
\begin{align*}
\Tilde{T}  :=
\left(  \psi^{(x-1,y)}\circ\psi^{(x,y+1)}\right)  &\circ\cdots\circ
\left(\psi^{(x-1,n_{c}-1)}\circ\psi^{(x,n_{c})}\right)  \circ\psi^{(x-1,n_{c})} \\
&\circ(\Psi^{(x-1)})^{-1}\circ\overline{\Psi^{(x-1)}}(T)
\quad (2\leq x\leq y\leq n_{c}-1).
\end{align*}
Here, we assume that $\Tilde{T}\neq \emptyset$ and that in the updating process of the tableau from $T$ to $\Tilde{T}$ the semistandardness of the $\mathscr{C}_{n}^{(-)}$-letters part of the tableau is preserved.
Then the $\mathscr{C}_{n}^{(-)}$-letters part of $\psi^{(x,y)}(\Tilde{T})$ and that of 
$\left(\psi^{(x-1,y-1)}\circ\psi^{(x,y)}\right)(\Tilde{T})$ are semistandard.
\end{lem}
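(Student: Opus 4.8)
The plan is to prove this lemma as the $\psi$-analogue of Lemma~\ref{lem:st2}, carrying out the same argument with every ingredient dualized: $\phi$ is replaced by $\psi$, the $\mathscr{L}^{\ast}$-letters by the $\mathscr{L}^{\dag}$-letters, Lemma~\ref{lem:col_sst1} by Lemma~\ref{lem:col_sst2}, Lemma~\ref{lem:coKN_lower} by Lemma~\ref{lem:KN_upper}, and Lemma~\ref{lem:kink2} by Lemma~\ref{lem:kink6}. First I would reduce the two assertions to a single two-column statement. Writing $C_{x}$ (resp.\ $C_{x}^{0}$) for the $x$-th column of $\psi^{(x,y)}(\Tilde{T})$ (resp.\ $\Tilde{T}$) and $C_{x-1}$ for the $(x-1)$-th column of $\Tilde{T}$, it suffices to show that the $\mathscr{C}_{n}^{(-)}$-letters part of the two-column tableau $C_{x-1}C_{x}$ is semistandard. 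Indeed, since $\psi^{(x,y)}$ only shrinks the $\mathscr{C}_{n}^{(-)}$-letters part of the $x$-th column (Lemma~\ref{lem:col_sst2} gives $C_{-}^{(x)\dag}\preceq C_{-}^{(x)}$), the pair $(x,x+1)$ stays semistandard, and together with the two-column claim this yields the first assertion. For the second assertion, $\psi^{(x-1,y-1)}$ further shrinks the $\mathscr{C}_{n}^{(-)}$-letters part of the $(x-1)$-th column, and I would use the chain of inequalities supplied by Lemma~\ref{lem:col_sst2} between the successive $(x-1)$-th columns appearing in $\psi^{(x,y)}(\Tilde{T})$, in $\left(\psi^{(x-1,y-1)}\circ\psi^{(x,y)}\right)(\Tilde{T})$, and in $\Psi^{(x-1)-1}\circ\overline{\Psi^{(x-1)}}(T)$ (whose $\mathscr{C}_{n}^{(-)}$-letters part is semistandard by hypothesis) to conclude, exactly as in the second half of the reduction in Lemma~\ref{lem:st2}.

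The two-column claim I would establish by adopting the second kind algorithm for $\psi^{(x,y)}$ on the $x$-th column and the first kind algorithm on the $y$-th column, and setting $\mathscr{L}^{(x,y)}:=\mathscr{J}^{(x)}\cap\mathscr{I}^{(y)}=\{l_{1},\ldots,l_{c}\}$. The target is the dualized block inequality
\[
\Delta C_{x-1}[p_{k}^{\prime},q_{k}^{\prime}]\preceq\overline{\Delta_{k}}(C_{x}^{0})\qquad(k=c,c-1,\ldots,1),
\]
where $p_{k}^{\prime}$ (resp.\ $q_{k}^{\prime}$) is the position of the top (resp.\ bottom) box of the negative block $\overline{\Delta_{k}}(C_{x}^{0})$ produced in the $x$-th column by the second kind algorithm for $\psi$. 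I would prove it by downward induction on $k$, paralleling steps (I)--(v) of Lemma~\ref{lem:st2}: read off the filling diagram of the partially updated column $C^{(x,y)\prime}$, split it into the regions $(0),(1),\ldots,(r)$ separated by the $(-)$-slots, record the offsets $\tau_{i}=\alpha_{i}+\gamma_{i}+\delta_{i}$, and track the positions $p_{k}^{\dag},q_{k}^{\dag}$ of the relocated $\overline{l_{k}^{\dag}}$ and of the blocks $\overline{J_{s}^{(x)}}$ through operations (A) and (B).

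The main obstacle, as in Lemma~\ref{lem:st2}, will be the comparison steps (i)--(iii) of the induction. Assuming the desired entrywise inequality fails, one places the offending $\mathscr{C}_{n}^{(-)}$-letter of the $(x-1)$-th column inside the block $I_{0}^{(y)}$ (or at $l_{k}$) in the configuration of the updated tableau, and derives an estimate of the form $q_{k}-p_{k}+s_{k}-r_{k}\geq l_{k}-j^{(x-1)}-\delta$, which contradicts Lemma~\ref{lem:kink6}. The delicate point is purely bookkeeping: every $\preceq$, every slot count, and every position offset must be reversed relative to the $\phi$ computation so that the shrinking (rather than growing) of the negative part is reflected correctly, and one must verify that the positions of $\overline{l_{k}^{\dag}}$ and of the entries of $\overline{J_{i}^{(x)}}$ are frozen under the subsequent operations for $l_{k-1}\rightarrow l_{k-1}^{\dag},\ldots$. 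No new idea is required beyond this dualization, since Lemma~\ref{lem:kink6} provides precisely the kink estimate needed; once the block inequality holds for all $k$, assembling the blocks gives $C_{x-1}\preceq C_{x}$ on the $\mathscr{C}_{n}^{(-)}$-letters part, completing the proof.
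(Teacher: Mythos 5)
Your overall plan is the one the paper itself intends: the paper gives no separate proof of this lemma, saying only that it may be proven in the same manner as Lemma~\ref{lem:st2}, and your dictionary ($\phi\mapsto\psi$, $\mathscr{L}^{\ast}\mapsto\mathscr{L}^{\dag}$, Lemma~\ref{lem:col_sst1}$\mapsto$Lemma~\ref{lem:col_sst2}, Lemma~\ref{lem:coKN_lower}$\mapsto$Lemma~\ref{lem:KN_upper}, Lemma~\ref{lem:kink2}$\mapsto$Lemma~\ref{lem:kink6}) is the right one. However, two of your dualizations are wrong as written, and each breaks a step. The first is the reference tableau in the reduction of the second assertion. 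For $x\geq 3$ the column pair $(x-2,x-1)$ must be checked: its right column shrinks under $\psi^{(x-1,y-1)}$ while its left column is already in final form, so this pair is not automatic. The tableau you need is $\overline{\Psi^{(x-1)}}(T)$, in which the $(x-1)$-th column is \emph{fully} shrunk: semistandardness of its $\mathscr{C}_{n}^{(-)}$-letters part (a stage of the updating process, hence covered by the hypothesis) gives that the $(x-2)$-th column is $\preceq$ the final state of the $(x-1)$-th column, and Lemma~\ref{lem:col_sst2} applied to the remaining maps $\psi^{(x-1,y-2)},\ldots,\psi^{(x-1,x-1)}$ gives that this final state is $\preceq$ the current state, which is what you need. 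You instead invoke $\Psi^{(x-1)-1}\circ\overline{\Psi^{(x-1)}}(T)=\overline{\Psi^{(x-2)}}(T)$, in which the $(x-1)$-th column is in its \emph{original} (largest) state. Your chain then bounds the current $(x-1)$-th column from above by the original one, and semistandardness of $\overline{\Psi^{(x-2)}}(T)$ only bounds the $(x-2)$-th column by that original column; these inequalities point the wrong way and nothing follows for the pair $(x-2,x-1)$. (Your chain does suffice for the pair $(x-1,x)$, the other pair touched by $\psi^{(x-1,y-1)}$.) The correct dual of the paper's $\overline{\Phi^{(x+1)}}(T)$ is $\overline{\Psi^{(x-1)}}(T)$, with no extra $\Psi^{(x-1)-1}$.

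The second error is the induction direction in the two-column claim. For $\psi$ the $\mathscr{L}^{\dag}$-letters are produced bottom-up and the operations run $l_{1}\rightarrow l_{1}^{\dag},\ldots,l_{c}\rightarrow l_{c}^{\dag}$ (see Lemma~\ref{lem:position2}, and the frozen-position statements ``under subsequent operations for $l_{r+2}\rightarrow l_{r+2}^{\dag},\ldots,l_{c}\rightarrow l_{c}^{\dag}$'' in the proof of Lemma~\ref{lem:welldf2}). You run the induction downward, $k=c,c-1,\ldots,1$, and freeze positions ``under the subsequent operations for $l_{k-1}\rightarrow l_{k-1}^{\dag},\ldots$'' --- that is the $\phi$-ordering, not the $\psi$-ordering. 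With your ordering the base case concerns the block produced \emph{last}, and the blocks being compared are still disturbed by operations not yet performed, so the position bookkeeping collapses. The base case must be $k=1$, the induction must go upward, and positions must be frozen under the operations for $l_{k+1},\ldots,l_{c}$. Both errors are local and repairable within your own framework, but as written these two steps do not go through.
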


The following two lemmas (Lemma~\ref{lem:kink7} and Lemma~\ref{lem:kink8}), 
which may be proven in the similar manner of the proof of Lemma~\ref{lem:kink1} and Lemma~\ref{lem:kink2}, 
guarantee that $\Psi(T)$ satisfies the KN-admissible condition on adjacent columns (Definition~\ref{df:KN_C} (C2)). 

\begin{lem} \label{lem:kink7}
Let us set 
\[
\Tilde{T}=\psi^{(x,x)}\circ\cdots\circ\psi^{(x,n_{c})}\circ
\overline{\Psi^{(x-1)}}(T) \quad (2\leq x\leq n_{c}).
\] 
Here, we assume that $\Tilde{T}\neq \emptyset$ and that in the updating process of the tableau from $T$ to $\Tilde{T}$ the semistandardness of the $\mathscr{C}_{n}^{(+)}$-letters part of the tableau is preserved.
Suppose that $\Tilde{T}$ has the following configuration, 
where the left (resp. right) part is the $\mathscr{C}_{n}^{(-)}$ (resp. $\mathscr{C}_{n}^{(+)}$)-letters one ($p\leq q<r \leq s$).

\setlength{\unitlength}{12pt}
\begin{center}
\begin{picture}(10,6)
\put(2,0){\line(0,1){5}}
\put(3,0){\line(0,1){5}}
\put(6,0){\line(0,1){5}}
\put(7,0){\line(0,1){5}}
\put(8,0){\line(0,1){5}}
\put(2,1){\line(1,0){1}}
\put(2,2){\line(1,0){1}}
\put(2,3){\line(1,0){1}}
\put(2,4){\line(1,0){1}}
\put(6,3){\line(1,0){1}}
\put(6,4){\line(1,0){1}}
\put(7,1){\line(1,0){1}}
\put(7,2){\line(1,0){1}}
\put(1,5){\line(1,0){3}}
\put(5,5){\line(1,0){4}}
\put(2,5){\makebox(1,1){$x$}}
\put(5.5,5){\makebox(1.5,1){${\mathstrut x-1}$}}
\put(7.25,5){\makebox(1,1){${\mathstrut x}$}}
\put(2,1){\makebox(1,1){$\Bar{a}$}}
\put(2,3){\makebox(1,1){$\Bar{b}$}}
\put(6,3){\makebox(1,1){$a$}}
\put(7,1){\makebox(1,1){$b$}}
\put(0,1){\makebox(2,1){$s \rightarrow$}}
\put(0,3){\makebox(2,1){$r \rightarrow$}}
\put(8,1){\makebox(2,1){$\leftarrow q$}}
\put(8,3){\makebox(2,1){$\leftarrow p$}}
\end{picture}.
\end{center}
Then we have $(q-p)+(s-r)<b-a$.
\end{lem}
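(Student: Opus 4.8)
The plan is to follow, mutatis mutandis, the proof of Lemma~\ref{lem:kink1}, arguing by contradiction after two preliminary reductions. First I would record the two structural facts that drive the argument. By hypothesis the $\mathscr{C}_{n}^{(+)}$-letters part of every tableau in the chain from $T$ to $\tilde{T}=\overline{\Psi^{(x)}}(T)$ is semistandard; moreover every remaining map $\psi^{(y,\cdot)}$ with $y>x$ acts only on columns $\geq y$, so columns $1,\dots,x$ — in particular the displayed entries $a,b,\bar{b},\bar{a}$ — are already frozen in $\tilde{T}$, which is exactly what lets the inequality, once proved for $\tilde{T}$, pass to $\Psi(T)$ for every adjacent pair. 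As in Lemma~\ref{lem:kink1}, I would first exclude the degenerate sub-configurations (thereby forcing $a<b$): were two equal positive entries to sit in columns $x-1$ and $x$ above $\bar{b}$, then undoing the last single-column map by $\phi^{(x,x)}$ would, by Lemma~\ref{lem:col_sst2}, put an entry strictly smaller than the facing letter of column $x-1$, contradicting $C_{x-1}^{(+)}\preceq C_{x}^{(+)}$.

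The core is then an induction establishing strictness. Write $\hat{T}=\phi^{(x,x)}(\tilde{T})$ for the tableau one step earlier in the chain, i.e. the state just before $\psi^{(x,x)}$; its $\mathscr{C}_{n}^{(+)}$-part is semistandard by hypothesis, and since $\psi^{(x,x)}$ enlarges the positive part (Lemma~\ref{lem:col_sst2}), the constraint $C_{x-1}^{(+)}\preceq(C_{x}^{(+)}\text{ in }\hat{T})$ is the sharper one. Introducing the $\mathscr{L}$-letters $\{l_{1},\dots,l_{c}\}=\mathscr{J}^{(x)}\cap\mathscr{I}^{(x)}$, their $\mathscr{L}^{\dag}$-images, and the largest $\mathscr{L}$-letter $l_{k_{0}}$ below $b$, I would assume the equality $(q-p)+(s-r)=b-a$ at each inductive stage and track, via the filling diagram together with Lemma~\ref{lem:position2} and the bound of Lemma~\ref{lem:KN_upper} on $q_{k}^{\dag}-p_{k}^{\dag}+l_{k}^{\dag}$, how the box facing $a$ across the column boundary is relocated by $\psi^{(x,x)}$. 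Under the equality the slots separating the relevant letters contain no $\emptyset$-slots, which pins down $l_{k}^{\dag}$ exactly and forces the entry of column $x$ at the row of $a$ in $\hat{T}$ to fall below $a$, violating the semistandardness preserved along the chain; the strict inequality obtained at each stage, summed over the induction on $k$, then yields $(q-p)+(s-r)<b-a$.

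I expect the main obstacle to be precisely this slot bookkeeping, reproducing the case split of Lemma~\ref{lem:kink1} (step (I) with its $p_{1}<q$ and $p_{1}=q$ branches, and the inductive step (II)): one must count exactly the $(+)$-, $(-)$-, $(\pm)$- and $(\times)$-slots separating $l_{k}^{\dag}$, the tracked positive letter, and $l_{k}$ as successive operations shift the daggered letters, so that the position offsets $\alpha,\beta,\gamma,\delta$ combine correctly and the subcases (whether the facing entry lies inside a block of $\mathscr{J}^{(x)}$-letters, coincides with an $\mathscr{L}^{\dag}$-letter, or equals $l_{k}$) are exhaustive. No idea beyond Lemma~\ref{lem:kink1} is required; the difficulty is entirely in transcribing the position computations from the $\phi$-side to the $\psi$-side, where Lemma~\ref{lem:KN_upper} and Lemma~\ref{lem:position2} play the roles that Lemma~\ref{lem:coKN_lower} and Lemma~\ref{lem:position1} play there.
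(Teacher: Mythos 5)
Your proposal coincides with the paper's own treatment: the paper gives no separate argument for this lemma, asserting only that it ``may be proven in the similar manner of the proof of Lemma~\ref{lem:kink1}'', and your mutatis mutandis transcription---undoing the final map $\psi^{(x,x)}$ by $\phi^{(x,x)}$, excluding the degenerate configurations to force $a<b$, and running the slot-counting induction with Lemma~\ref{lem:KN_upper} and Lemma~\ref{lem:position2} replacing Lemma~\ref{lem:coKN_lower} and Lemma~\ref{lem:position1}---is exactly that adaptation. Two small transcription slips, neither fatal: the fact your exclusion step needs (an entry at an $\mathscr{L}$-letter position becomes strictly \emph{smaller} under $\phi$) is Lemma~\ref{lem:col_sst1}, not Lemma~\ref{lem:col_sst2}, and since here $b$ (not $a$) is the $\mathscr{L}^{(x,x)}$-letter of the undone pair while the contradiction is read at the row of $a$, the induction should ascend from the smallest $\mathscr{L}^{(x,x)}$-letter above $a$ up to $b$, mirroring rather than copying the descent from the largest $\mathscr{L}$-letter below $b$ used in Lemma~\ref{lem:kink1}.
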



\begin{lem} \label{lem:kink8}
Let us set 
\begin{align*}
\Tilde{T}  :=
\left(  \psi^{(x-1,x-1)}\circ\psi^{(x,x)}\right)  &\circ\cdots\circ
\left(\psi^{(x-1,n_{c}-1)}\circ\psi^{(x,n_{c})}\right)  \circ\psi^{(x-1,n_{c})} \\
&\circ(\Psi^{(x-1)})^{-1}\circ\overline{\Psi^{(x-1)}}(T)
\quad (2\leq x\leq n_{c}),
\end{align*}
Here, we assume that $\Tilde{T}\neq \emptyset$ and that in the updating process of the tableau from $T$ to $\Tilde{T}$ the semistandardness of the $\mathscr{C}_{n}^{(-)}$-letters part of the tableau is preserved.
Suppose that the tableau $\Tilde{T}$ has the following configuration, 
where the left (resp. right) part is the $\mathscr{C}_{n}^{(-)}$ (resp. $\mathscr{C}_{n}^{(+)}$)-letters one $(p\leq q < r\leq s)$.

\setlength{\unitlength}{12pt}
\begin{center}
\begin{picture}(10,6)
\put(2,0){\line(0,1){5}}
\put(3,0){\line(0,1){5}}
\put(4,0){\line(0,1){5}}
\put(3,1){\line(1,0){1}}
\put(3,2){\line(1,0){1}}
\put(2,3){\line(1,0){1}}
\put(2,4){\line(1,0){1}}
\put(1,5){\line(1,0){4}}
\put(7,0){\line(0,1){5}}
\put(8,0){\line(0,1){5}}
\put(7,1){\line(1,0){1}}
\put(7,2){\line(1,0){1}}
\put(7,3){\line(1,0){1}}
\put(7,4){\line(1,0){1}}
\put(6,5){\line(1,0){3}}
\put(2,3){\makebox(1,1){$\Bar{b}$}}
\put(3,1){\makebox(1,1){$\Bar{a}$}}
\put(0,1){\makebox(1,1){$s \rightarrow$}}
\put(0,3){\makebox(1,1){$r \rightarrow$}}
\put(1.5,5){\makebox(1.5,1){${\mathstrut x-1}$}}
\put(3.25,5){\makebox(1,1){${\mathstrut x}$}}
\put(7,1){\makebox(1,1){$b$}}
\put(7,3){\makebox(1,1){$a$}}
\put(8,1){\makebox(2,1){$\leftarrow q$}}
\put(8,3){\makebox(2,1){$\leftarrow p$}}
\put(6.5,5){\makebox(2,1){$x-1$}}
\end{picture}.
\end{center}
Then we have $(q-p)+(s-r)<b-a$.
\end{lem}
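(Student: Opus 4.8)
The plan is to prove this exactly as Lemma~\ref{lem:kink1} is proved, but in the KN-coadmissible world governed by $\psi$. The point is that the displayed inequality is precisely the second KN-admissibility condition (C2) of Definition~\ref{df:KN_C} for the adjacent pair formed by the $(x-1)$-th and $x$-th columns of $\Tilde{T}=\overline{\Psi^{(x)}}(T)$, in the shape where $a,b$ and $\Bar{b}$ all sit in the left column $C_{x-1}$ and only $\Bar{a}$ sits in the right column $C_{x}$, with $a_{1}=a_{2}=a$ and $b_{1}=b_{2}=b$. Together with Lemma~\ref{lem:kink7}, which treats the companion C2 shape, this yields that $\Psi(T)$ obeys (C2), hence that $\Psi(T)\in C_{n}\text{-}\mathrm{SST}_{\mathrm{KN}}(\nu)$. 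As a first reduction, since $a$ and $b$ both belong to the $\mathscr{C}_{n}^{(+)}$-part of the single column $C_{x-1}$ with $a$ above $b$, strict increase of a column gives $p<q$ and therefore $a<b$, which disposes of the degenerate case.

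The substance is the exclusion of the equality $(q-p)+(s-r)=b-a$, and here I would undo the last operation: because $\Tilde{T}=\psi^{(x-1,x-1)}(T')$ with $T'$ the tableau produced by the remaining composition, and because $\psi^{(x-1,x-1)}$ maps KN-coadmissible columns bijectively back to KN-admissible ones, column $C_{x-1}$ of $\Tilde{T}$ is KN-admissible and $T'=\phi^{(x-1,x-1)}(\Tilde{T})$. Note that $b$ appears both as $b$ and as $\Bar{b}$ in $C_{x-1}$, so $b\in\mathscr{L}^{(x-1,x-1)}:=\mathscr{J}^{(x-1)}\cap\mathscr{I}^{(x-1)}$; I would anchor the induction on the $\mathscr{L}^{(x-1,x-1)}$-letters lying at or below $b$, fix the positions $p,q,r,s$ and an auxiliary witness entry in $C_{x-1}$ as in Lemma~\ref{lem:kink1}, and induct, obtaining at each stage the weak bound from the disjointness of the positive and negative intervening letter-sets (both contained in $\{a+1,\dots,b-1\}$, with coincidences ruled out by the maximality of the anchoring $\mathscr{L}$-letter). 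Under the equality hypothesis the relevant filling diagram is forced into a rigid form, with no $\emptyset$-slots between the anchoring slots and an extremal distribution of the remaining slot types. Tracking the boxes through the operations $l\to l^{\ast}$ by means of the filling-diagram description, Lemma~\ref{lem:position1}, and the position estimate of Lemma~\ref{lem:coKN_lower}, I would then show that $\phi^{(x-1,x-1)}$ produces in $T'$ a $\mathscr{C}_{n}^{(-)}$-entry that is too small relative to its neighbour (via the $\Bar{l}$-statement of Lemma~\ref{lem:col_sst1}), contradicting the hypothesis that the $\mathscr{C}_{n}^{(-)}$-part stays semistandard throughout the construction of $\Tilde{T}$.

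The hard part will be the same as in Lemma~\ref{lem:kink1}: the bookkeeping of how $p,q,r,s$ and the position of the witness letter migrate as the successive operations $l\to l^{\ast}$ are carried out, organised into the two subcases according to whether the witness coincidence $p_{1}<q$ or $p_{1}=q$ occurs and whether the intermediate letters do or do not belong to $\mathscr{L}^{(x-1,x-1)}$. An extra point of care, peculiar to this terminal case $y=x-1$, is that the two positive witnesses $a,b$ and the negative witness $\Bar{b}$ all live in the one column $C_{x-1}$ while only $\Bar{a}$ lives in $C_{x}$; one must therefore localise the contradiction correctly across these two columns and keep straight that here it is $b$, rather than $a$, that serves as the anchoring $\mathscr{L}^{(x-1,x-1)}$-letter. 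With these adjustments the case analysis and the inductive step run parallel to those of Lemma~\ref{lem:kink1}.
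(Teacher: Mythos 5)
Your proposal follows the route the paper intends: the paper offers no argument for this lemma beyond the remark that it ``may be proven in the similar manner of the proof of Lemma~\ref{lem:kink1}'', and your dualizations are the right ones --- identifying $\Tilde{T}=\overline{\Psi^{(x)}}(T)$ via Lemma~\ref{lem:equiv2}, undoing the final operation through $T'=\phi^{(x-1,x-1)}(\Tilde{T})$, anchoring on $b\in\mathscr{L}^{(x-1,x-1)}$ (correctly observing that here $b$, not $a$, is the $\mathscr{L}$-letter of the last-processed pair, since the pair $(a,\Bar{a})$ spans columns $x-1$ and $x$ and is never acted on by any $\psi^{(\centerdot,\centerdot)}$), and locating the contradiction in the $\mathscr{C}_{n}^{(-)}$-letters part via Lemma~\ref{lem:col_sst1}, against the column $x$ that the undone operation leaves untouched.

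One step as written is wrong, however: the degenerate case. The configuration permits $p=q$, in which case $a=b$ and the asserted inequality reads $s-r<0$, which is false; so the proof must show that the configuration with $p=q$ cannot occur. Your justification --- ``strict increase of a column gives $p<q$'' --- is backwards: strictness yields $a<b$ once $p<q$ is known, but nothing forces the boxes of $a$ and $b$ to be distinct. The exclusion requires an argument, exactly as the proof of Lemma~\ref{lem:kink1} explicitly rules out its degenerate configurations rather than appealing to column strictness. The repair is available inside your own framework: if $p=q$ then $a=b\in\mathscr{L}^{(x-1,x-1)}$; semistandardness of the $\mathscr{C}_{n}^{(-)}$-part of $\Tilde{T}$ forces $r=s$ (if $r<s$, the entry of column $x-1$ in row $s$ lies below $\Bar{a}$ in its column, hence is $\succ\Bar{a}$, while column $x$ carries $\Bar{a}$ in that row); and then Lemma~\ref{lem:col_sst1} applied to $T'=\phi^{(x-1,x-1)}(\Tilde{T})$ makes the entry at position $r$ of column $x-1$ strictly larger than $\Bar{a}$ while column $x$ of $T'$ still has $\Bar{a}$ at row $s=r$, contradicting the hypothesis that the $\mathscr{C}_{n}^{(-)}$-part stays semistandard throughout the updating process. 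With this point repaired, your outline matches the intended proof.
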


\begin{lem} \label{lem:Psi}
Suppose that $\overline{\Psi^{(x-1)}}$ is well-defined on $T$ and $\overline{\Psi^{(x-1)}}(T)$ is semistandard 
($2\leq x \leq n_{c}$).
Then $\overline{\Psi^{(x)}}$ is well-defined on $T$ and $\overline{\Psi^{(x)}}(T)$ is semistandard.
Therefore, $\Psi$ is well-defined on $T$ by induction and 
$\Psi(T) \in C_{n}\text{-}\mathrm{SST}_{\mathrm{KN}}(\nu)$. 
\end{lem}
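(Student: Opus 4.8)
The plan is to prove the inductive step by imitating, mutatis mutandis, the proof of Lemma~\ref{lem:Phi}, with $\phi^{(\cdot,\cdot)}$ replaced throughout by $\psi^{(\cdot,\cdot)}$, KN-admissibility replaced by KN-coadmissibility, and Lemma~\ref{lem:welldf1} together with Lemma~\ref{lem:st1} and Lemma~\ref{lem:st2} replaced by their $\psi$-counterparts proved above. Assuming $\overline{\Psi^{(x-1)}}$ is well-defined on $T$ with $\overline{\Psi^{(x-1)}}(T)$ semistandard, I would show that $\Psi^{(x)}=\psi^{(x,x)}\circ\cdots\circ\psi^{(x,n_{c})}$ applies to $\overline{\Psi^{(x-1)}}(T)$ by an inner induction on $y$ descending from $n_{c}$ to $x$: the base $\psi^{(x,n_{c})}$ is well-defined by the (stable-region) lemma preceding Lemma~\ref{lem:welldf2}, and each subsequent $\psi^{(x,y)}$ is well-defined on $\Tilde{T}=\psi^{(x,y+1)}\circ\cdots\circ\psi^{(x,n_{c})}\circ\overline{\Psi^{(x-1)}}(T)$ by Lemma~\ref{lem:welldf2}, provided the semistandardness of the $\mathscr{C}_{n}^{(+)}$-letters part is maintained along the way.

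As in Lemma~\ref{lem:Phi}, I would carry out the semistandardness verification in two passes. First, the $\psi$-analogue of Lemma~\ref{lem:st1} (the preceding lemma asserting that $\psi^{(x,y)}$ preserves semistandardness of the $\mathscr{C}_{n}^{(+)}$-letters part) drives the inner induction and shows the $\mathscr{C}_{n}^{(+)}$-letters part of $\overline{\Psi^{(x)}}(T)$ to be semistandard. Second, the $\psi$-analogue of Lemma~\ref{lem:st2}, together with the monotonicity $C_{+}\preceq C_{+}^{\dag}$, $C_{-}^{\dag}\preceq C_{-}$ recorded in Lemma~\ref{lem:col_sst2}, shows the $\mathscr{C}_{n}^{(-)}$-letters part semistandard. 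The base case $x=1$ of the outer induction is identical, taking $\overline{\Psi^{(0)}}(T)=T$, which is semistandard since $T\in C_{n}\text{-}\mathrm{SST}(\nu)$; all the supporting lemmas admit $x=1$. This completes the outer induction and yields that $\Psi=\overline{\Psi^{(n_{c})}}$ is well-defined on $T$ and that $\Psi(T)$ is a $C_{n}$-semistandard tableau of shape $\nu$, the shape being preserved because each $\psi$ is shape-preserving (the remark following Definition~\ref{df:Psi}).

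It remains to verify that $\Psi(T)$ satisfies the KN-admissibility conditions (C1) and (C2) of Definition~\ref{df:KN_C}. For (C1) I would observe that the $x$-th column of $\Psi(T)$ is finalized as soon as $\psi^{(x,x)}$ is applied, since every later map $\psi^{(x',y')}$ has both indices $x',y'>x$ and hence leaves column $x$ untouched; at that moment column $x$ equals $\psi(C^{(x,x)})=C^{(x,x)\dag}$ in the notation of \eqref{eq:map2}, and because $\psi=\phi^{-1}$ carries $C_{n}\text{-}\mathrm{Col}_{\overline{\mathrm{KN}}}(N)$ into $C_{n}\text{-}\mathrm{Col}_{\mathrm{KN}}(N)$ while $C^{(x,x)}$ is KN-coadmissible by well-definedness, column $x$ is KN-admissible. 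For (C2) I would apply Lemma~\ref{lem:kink7} and Lemma~\ref{lem:kink8} to $\overline{\Psi^{(x)}}(T)$: at that stage columns $1,\ldots,x$ are all finalized, so the inequalities $(q-p)+(s-r)<b-a$ they furnish are exactly condition (C2) for the adjacent pair of columns $x-1$ and $x$. Letting $x$ range over $2,\ldots,n_{c}$ establishes (C2) for every adjacent pair, whence $\Psi(T)\in C_{n}\text{-}\mathrm{SST}_{\mathrm{KN}}(\nu)$.

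The imitation of Lemma~\ref{lem:Phi} is mechanical once the $\psi$-versions of the well-definedness and semistandardness lemmas are in hand, so the routine bulk of the argument is the double induction. The main obstacle is the KN-admissibility of the \emph{output}, which has no analogue in Lemma~\ref{lem:Phi}, where the image was required only to be semistandard: one must pin down precisely at which intermediate stage each column and each adjacent pair becomes final, so that Lemma~\ref{lem:kink7} and Lemma~\ref{lem:kink8} are invoked on the genuinely final configuration, and one must thread the stable-region hypothesis $l(\mu)+l(\nu)\leq n$ through the well-definedness step, since it is exactly this hypothesis (entering via the preceding lemma and Lemma~\ref{lem:welldf2}) that guarantees every intermediate column is KN-coadmissible.
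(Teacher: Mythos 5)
Your proposal is correct and takes essentially the same approach as the paper: the paper likewise proves the inductive step by declaring it analogous to Lemma~\ref{lem:Phi} (with the $\psi$-counterparts of the well-definedness and semistandardness lemmas), gets (C1) from the well-definedness of $\psi^{(x,x)}=\phi^{(x,x)-1}$, and gets (C2) for adjacent columns from Lemma~\ref{lem:kink7} and Lemma~\ref{lem:kink8}. Your write-up merely makes explicit the column-finalization bookkeeping and the role of the stable-region hypothesis that the paper's terse proof leaves implicit.
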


\begin{proof}
The proof is analogous to that of Lemma~\ref{lem:Phi}.
Each column of $\Psi(T)$ satisfies the KN-admissible condition (Definition~\ref{df:KN_C} (C1)) 
because $\psi^{(x,x)}=(\phi^{(x,x)})^{-1}$ is well-defined ($1\leq x \leq n_{c}$) and 
any pair of adjacent columns in $\Psi(T)$ satisfies the KN-admissible condition 
(Definition~\ref{df:KN_C} (C2)) by Lemma~\ref{lem:kink7} and Lemma~\ref{lem:kink8}.
Since $\Psi$ is well-defined on $T$ so that it preserves the shape of $T$, we have that 
 $\Psi(T) \in C_{n}\text{-}\mathrm{SST}_{\mathrm{KN}}(\nu)$.
\end{proof}

\section{Proof of Proposition~\ref{prp:main12}} \label{sec:main12}

In this section, we provide the proof of Proposition~\ref{prp:main12}.
Let $T\in C_{n}\text{-}\mathrm{SST}(\nu)$ be the tableau described in Proposition~\ref{prp:main12} 
with $n_{c}$ columns.
We use the same notation as in Section~\ref{sec:main11} to keep track of the updating stage in $\Psi(T)$.
Initially, the set of $\mathscr{I}$ (resp. $\mathscr{J}$)-letters in the $x$-th column of $T$ is written as 
$\mathscr{I}^{(x,i)}$ (resp. $\mathscr{J}^{(x,i)}$) with $i=0$ ($1\leq x \leq n_{c}$).
Whenever the map $\psi^{(x,y)}$ is applied to the updated tableau whose entries are updated by preceding application of the map of the form $\psi ^{(\centerdot,\centerdot)}$, 
the counter $i$ in $\mathscr{J}^{(x,i)}$ is increased by one; $\mathscr{J}^{(x,i)}\rightarrow \mathscr{J}^{(x,i+1)}$ and the counter $j$ in $\mathscr{I}^{(y,j)}$ 
is increased by one; $\mathscr{I}^{(y,j)}\rightarrow \mathscr{I}^{(y,j+1)}$.
At the end, i.e., in $\Psi(T)$ the set of $\mathscr{I}$ (resp. $\mathscr{J}$)-letters in the $x$-th column is 
$\mathscr{I}^{(x,x)}$ (resp. $\mathscr{J}^{(x,n_{c}-x+1)}$) ($1\leq x\leq n_{c}$).
The letters in $\mathscr{I}^{(x,i)}$ (resp. $\mathscr{J}^{(x,i)}$) are called 
$\mathscr{I}^{(x,i)}$ (resp. $\mathscr{J}^{(x,i)}$)-letters and those in 
$\overline{\mathscr{I}^{(x,i)}}$ (resp. $\overline{\mathscr{J}^{(x,i)}}$) are called 
$\overline{\mathscr{I}^{(x,i)}}$ (resp. $\overline{\mathscr{J}^{(x,i)}}$)-letters as in Section~\ref{sec:main11}.

For all $(T_{1},T_{2})\in \mathbf{B}_{n}^{(+)}(\xi)_{\zeta}^{\lambda}\times \mathbf{B}_{n}^{(-)}(\eta)_{\zeta}^{\mu}$, 
$\zeta\left[  \underrightarrow{\mathrm{FE}(T_{1})}\right]  =\lambda$ by definition, i.e., 
\begin{equation} \label{eq:zeta2lambda}
\zeta\left[  \underrightarrow{\mathscr{I}^{(n_{c},0)}},\ldots,
\underrightarrow{\mathscr{I}^{(1,0)}}\right]  =\lambda.
\end{equation}
Furthermore, $\mu\left[  \underrightarrow{\mathrm{FE}(T_{2})}\right]  =\zeta$ by definition 
and therefore  $\mu\left[  \underrightarrow{\mathrm{FE}(T^{(-)})}\right]  =\zeta$ by Proposition~\ref{prp:skew}, i.e., 
\begin{equation} \label{eq:mu2zeta}
\mu\left[  \underrightarrow{\overline{\mathscr{J}^{(n_{c},0)}}},\ldots,
\underrightarrow{\overline{\mathscr{J}^{(1,0)}}}\right]  =\zeta.
\end{equation}
Under these conditions and the notation introduced above, we have the following lemma.

\begin{lem} \label{lem:smooth3}
(1).
Let us define
\[
\mu^{(i)\prime}:=
\begin{cases}
\mu\left[  \overline{\mathscr{J}^{(n_{c},0)}},\ldots,\overline{\mathscr{J}^{(i+1,0)}}\right]
 & (0\leq i\leq n_{c}-1), \\
\mu & (i=n_{c}).
\end{cases}
\]
Then we have $\mathscr{I}^{(n_{c},i)}$ is smooth on $\mu^{(i)\prime}$ 
($1 \leq i \leq n_{c}$).

(2).
Let us define
\[
\Tilde{\mu}^{(x)}:=
\begin{cases}
\mu\left[ \mathscr{I}^{(n_{c},n_{c})},\overline{\mathscr{J}^{(n_{c},1)}},\ldots,
\mathscr{I}^{(x+1,x+1)},\overline{\mathscr{J}^{(x+1,n_{c}-x)}}\right]
& (1\leq x\leq n_{c}-1),\\
\mu & (x=n_{c})
\end{cases}
\]
and $\mu^{(x)}:=\Tilde{\mu}^{(x)}\left[ \mathscr{I}^{(x,x)}\right]$.
For $2\leq x \leq n_{c}$, let us assume that $\mu^{(x)}$ and   
\[
\mu^{(x,i)}:=\mu^{(x)}\left[  \overline{\mathscr{J}^{(x,n_{c}-x+1)}},\ldots,
\overline{\mathscr{J}^{(i,n_{c}-x+1)}}\right] \quad (1\leq i\leq x) 
\]
are all Young diagrams.
Suppose that $\mathscr{I}^{(x-1,i-1)}$ is smooth on $\mu^{(x,i)}$.
Then we have that $\mathscr{I}^{(x-1,i)}$ is smooth on $\mu^{(x,i+1)}$ 
($1 \leq i \leq x-1$).

(3).
$\mu\left[ \underrightarrow{\mathscr{I}^{(n_{c},n_{c})}\vphantom{\overline{\mathscr{J}^{(n_{c},1)}}}},
\underrightarrow{\overline{\mathscr{J}^{(n_{c},1)}}},\ldots,
\underrightarrow{\mathscr{I}^{(1,1)}\vphantom{\overline{\mathscr{J}^{(n_{c},1)}}}},
\underrightarrow{\overline{\mathscr{J}^{(1,n_{c})}}}\right]=\lambda$.
\end{lem}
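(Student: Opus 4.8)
The plan is to prove Lemma~\ref{lem:smooth3} as the $\psi$-theoretic mirror of Lemma~\ref{lem:smooth1} (and Lemma~\ref{lem:smooth2}), reading off the updating process of $\Psi(T)$ instead of $\Phi(T)$ and replacing the role played there by $\mathscr{L}^{\ast}$-letters with that of $\mathscr{L}^{\dag}$-letters. First I would record the pairing structure: each application of $\psi^{(x,y)}$ simultaneously advances the positive part of column $y$ and the negative part of column $x$, so that a paired set $\langle \mathscr{I}^{(y,j)},\overline{\mathscr{J}^{(x,i)}}\rangle_{\mathrm{pair}}$ becomes $\langle \mathscr{I}^{(y,j+1)},\overline{\mathscr{J}^{(x,i+1)}}\rangle$; since every $\psi^{(\centerdot,\centerdot)}$ is weight-preserving, the two counters attached to such paired sets may be advanced one at a time without changing the shape reached, exactly as in the proof of Lemma~\ref{lem:smooth1}~(3). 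I would also note at the outset that each negative block $\overline{\mathscr{J}^{(\centerdot,\centerdot)}}$ is a single decreasing barred sequence, so that, once the relevant intermediate shapes are known to be Young diagrams (which is part of the hypothesis of the lemma), the smoothness of every $\overline{\mathscr{J}}$-transition is automatic by the barred half of Lemma~\ref{lem:smooth0}. Consequently the only substantive content is the smoothness of the positive blocks $\mathscr{I}^{(\centerdot,\centerdot)}$, which is precisely what parts~(1) and~(2) assert.

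The core is part~(2). Writing $\mathscr{I}^{(x-1,i)}=\{i_{1}',\dots,i_{a}'\}$ with $i_{1}'<\dots<i_{a}'$, and abbreviating by $\Tilde{\mu}:=\mu^{(x,i+1)}$ the Young diagram reached from $\mu^{(x,i)}$ after adjoining the negative block, I would argue by induction on $k=1,2,\dots,a$ that $\Tilde{\mu}[i_{1}',\dots,i_{k}']$ is a Young diagram, i.e.\ that the increasing sequence $\mathscr{I}^{(x-1,i)}$ is smooth on $\Tilde{\mu}$. Since we now adjoin boxes rather than delete them, the positivity condition for attaching $i_{k}'$ is $\Tilde{\mu}^{\ast(k)}_{i_{k}'-1}>\Tilde{\mu}^{\ast(k)}_{i_{k}'}$, where $\Tilde{\mu}^{\ast(k)}$ denotes the shape after adjoining $i_{1}',\dots,i_{k-1}'$; this compares the row \emph{above} $i_{k}'$, the mirror of the row-below comparison in Lemma~\ref{lem:smooth1}~(II). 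I would pin $i_{k}'$ down by the three cases according to whether $i_{k}'\in \mathscr{I}^{(x-1,i-1)}\setminus\mathscr{L}$, whether $i_{k-1}'\in \mathscr{I}^{(x-1,i-1)}\setminus\mathscr{L}$ while $i_{k}'\in\mathscr{L}^{\dag}$, or both $i_{k-1}',i_{k}'\in\mathscr{L}^{\dag}$, with the further subcases dictated by Remark~\ref{rem:algorithm2}, which writes each $\mathscr{L}^{\dag}$-letter as $i_{p}+1$, $j_{q}+1$, or $l^{\dag}_{\mathrm{prev}}+1$. In each case the ``$+1$'' relation, combined with the hypothesis that $\Tilde{\mu}$ and the intermediate diagrams are Young diagrams and that $\mathscr{J}^{(i,\centerdot)}$ (resp.\ $\mathscr{I}^{(x-1,i-1)}$) is already smooth, forces the required strict inequality; all inequality directions simply flip relative to Lemma~\ref{lem:smooth1} because the diagram is grown upward from $\mu$ rather than shrunk downward from $\lambda$.

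Part~(1) is the analogous single-sequence statement for the rightmost column, proven by the same three-case analysis and serving as the base of the outer induction; its own starting smoothness of $\mathscr{I}^{(n_{c},0)}$ on $\mu^{(0)\prime}=\zeta$ comes directly from Eq.~\eqref{eq:zeta2lambda}, just as Eq.~\eqref{eq:mu2lambda} supplies the corresponding fact in Lemma~\ref{lem:smooth1}~(1). Part~(3) then follows by a double induction on $x$ and $i$: assuming $\mu^{(x,i)}$ is a Young diagram with $\mu^{(x,i)}[\underrightarrow{\mathscr{I}^{(x-1,i-1)}}]$, part~(2) promotes this to $\mu^{(x,i+1)}[\underrightarrow{\mathscr{I}^{(x-1,i)}}]$, while the paired-set reshuffle established at the outset rewrites the accumulated reading into the next stage, with the interleaved $\overline{\mathscr{J}}$-transitions supplied by Lemma~\ref{lem:smooth0}; exhausting all columns yields smoothness of the full far-eastern reading $\mu[\underrightarrow{\mathscr{I}^{(n_{c},n_{c})}},\underrightarrow{\overline{\mathscr{J}^{(n_{c},1)}}},\dots,\underrightarrow{\mathscr{I}^{(1,1)}},\underrightarrow{\overline{\mathscr{J}^{(1,n_{c})}}}]$, whose endpoint is $\lambda$ because $\Psi$ is weight-preserving and $\zeta[\mathscr{I}]=\lambda$ by Eq.~\eqref{eq:zeta2lambda}. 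I expect the main obstacle to be purely organizational rather than conceptual: faithfully carrying out the mirrored case analysis of part~(2) so that every ``$+1$'' relation from Remark~\ref{rem:algorithm2} is matched to the correct row comparison and every intermediate shape is verified to remain a Young diagram, all while keeping the two counter indices and the paired-set bookkeeping consistent across the nested induction.
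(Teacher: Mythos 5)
Your proposal follows essentially the same route as the paper's own proof: Lemma~\ref{lem:smooth3} is proven there precisely as the $\psi$/$\mathscr{L}^{\dag}$ mirror of Lemma~\ref{lem:smooth1}, with the same paired-set bookkeeping $\left\langle \mathscr{I}^{(x-1,i-1)},\overline{\mathscr{J}^{(i,n_{c}-x+1)}}\right\rangle_{\mathrm{pair}}$, the same ascending induction on $k$ through the three cases (letter in $\mathscr{I}\backslash\mathscr{L}$ versus $\mathscr{L}^{\dag}$) with subcases read off from Remark~\ref{rem:algorithm2}, the row-above comparisons $\Tilde{\mu}_{i_{k}^{\prime}-1}>\Tilde{\mu}_{i_{k}^{\prime}}$, the base cases supplied by Eqs.~\eqref{eq:zeta2lambda} and \eqref{eq:mu2zeta}, Lemma~\ref{lem:smooth0} handling the barred blocks, and the weight-preservation of $\Psi$ identifying the endpoint with $\lambda$. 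The plan is correct and matches the paper's argument in both structure and key ingredients.
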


\begin{proof}
Let us begin by giving the proof of (2).
Note that the pair of $\mathscr{I}^{(x-1,i)}$ and $\overline{\mathscr{J}^{(i,n_{c}-x+2)}}$ is generated from 
the pair of $\mathscr{I}^{(x-1,i-1)}$ and $\overline{\mathscr{J}^{(i,n_{c}-x+1)}}$ by applying $\psi^{(i,x-1)}$ 
to the updated tableau whose entries are updated by preceding application of the map 
of the form $\psi^{(\centerdot,\centerdot)}$.
Let us call such sets $\mathscr{I}^{(x-1,i-1)}$ and $\overline{\mathscr{J}^{(i,n_{c}-x+1)}}$ to be updated are paired and write 
$\left\langle \mathscr{I}^{(x-1,i-1)},\overline{\mathscr{J}^{(i,n_{c}-x+1)}}\right\rangle_{\mathrm{pair}}$ 
($1\leq i\leq x-1; 2\leq x\leq n_{c}+1$) as in Section~\ref{sec:main11}.
Let us set 
$\mathscr{I}^{(x-1,i-1)}=\{i_{1},i_{2},\ldots,i_{a}\}$, 
$\mathscr{J}^{(i,n_{c}-x+1)}=\{j_{1},j_{2},\ldots,j_{b}\}$, 
$\mathscr{I}^{(x-1,i)}=\{i_{1}^{\prime},i_{2}^{\prime},\ldots,i_{a}^{\prime}\}$, 
$\mathscr{J}^{(i,n_{c}-x+2)}=\{j_{1}^{\prime},j_{2}^{\prime},\ldots,j_{b}^{\prime}\}$, 
$\mathscr{L}:=\mathscr{I}^{(x-1,i-1)}\cap \mathscr{J}^{(i,n_{c}-x+1)}=\{l_{1},l_{2},\ldots,l_{c}\}$, and 
$\mathscr{L}^{\dag}:=\mathscr{I}^{(x-1,i)}\cap \mathscr{J}^{(i,n_{c}-x+2)}=\{l_{1}^{\dag},l_{2}^{\dag},\ldots,l_{c}^{\dag}\}$.
Recall that these are ordered sets and are also considered as the sequences of letters.
We write $\Tilde{\mu}=\mu^{(x,i)}\left[ \mathscr{J}^{(i,n_{c}-x+1)}\right]=\mu^{(x,i+1)}$ for brevity.

\textbf{(I).}
Let us consider the following three cases separately:
\begin{description}
\item[(a)]
$i_{1}^{\prime}=l_{1}^{\dag}$.
\item[(b)]
$i_{1}^{\prime}\neq l_{1}^{\dag}$ and $i_{1}=l_{1}$.
\item[(c)]
$i_{1}^{\prime}\neq l_{1}^{\dag}$ and $i_{1}\neq l_{1}$.
\end{description}

\textbf{Case (a).}
In this case, $\mathscr{L}^{\dag}\neq \emptyset$ and $i_{1}=l_{1}$.
Indeed, if $l_{1}=i_{p}$ ($p>1$), then $i_{1} \notin \mathscr{L}$ 
because $i_{1}$ is smaller than $l_{1}$ that is the smallest letter in $\mathscr{L}$.
This implies $i_{1}^{\prime}=i_{1}$.
However, this also implies $l_{1}^{\dag}=i_{1}\in \mathscr{I}^{(x-1,i-1)}$ due to the assumption of \textbf{(a)}, 
which contradicts the fact that $l_{1}^{\dag}$ is not an $\mathscr{I}^{(x-1,i-1)}$-letter.
To proceed, let us divide this case further into the following two cases:

\begin{description}
\item[(a-1)]
All $\mathscr{I}^{(x-1,i-1)}$-letters $i_{1},i_{2},\ldots,i_{a}$ are also $\mathscr{J}^{(i,n_{c}-x+1)}$-letters.
\item[(a-2)]
There exist non-$\mathscr{J}^{(i,n_{c}-x+1)}$-letters in the sequence of $\mathscr{I}^{(x-1,i-1)}$-letters 
$i_{1},i_{2},\ldots,i_{a}$
(That is, there exist some letters belonging to $\mathscr{I}^{(x-1,i-1)}\backslash \mathscr{L}$ in 
$\left\{i_{1},i_{2},\ldots,i_{a}\right\}$).
\end{description}
In case \textbf{(a-1)}, we have $i_{1}^{\prime}=l_{1}^{\dag}$.
According to the Remark~\ref{rem:algorithm2}, 
we can write $l_{1}^{\dag}=j_{r}+1\; (\exists j_{r}\in \mathscr{J}^{(i,n_{c}-x+1)})$.
In case \textbf{(a-2)}, let us choose the smallest letter $i_{p}$ ($p>1$) from the set of 
$\mathscr{I}^{(x-1,i-1)}$-letters $i_{1},i_{2},\ldots,i_{a}$ 
such that $i_{p}$ is not a $\mathscr{J}^{(i,n_{c}-x+1)}$-letter (i.e., $i_{p} \in \mathscr{I}^{(x-1,i-1)}\backslash \mathscr{L}$).
Now consider the increasing (just by one) sequence of $\mathscr{C}_{n}^{(+)}$-letters
\begin{equation} \label{eq:seq2}
i_{1}+1,i_{1}+2,\ldots,i_{p}-1
\end{equation}
By the minimality of $i_{p}$, any letter belonging to $\mathscr{I}^{(x-1,i-1)}\backslash \mathscr{L}$ 
cannot appear in \eqref{eq:seq2}.
If all of the letters in \eqref{eq:seq2} are $\mathscr{J}^{(i,n_{c}-x+1)}$-letters, then $l_{1}^{\dag}>i_{p}$ so that 
$i_{1}^{\prime}=i_{p}$, which contradicts the assumption of \textbf{(a)}.
Consequently, there must exist some letters that are not $\mathscr{I}^{(x-1,i-1)}$-letters 
nor $\mathscr{J}^{(i,n_{c}-x+1)}$-letters in the sequence~\eqref{eq:seq2}.
Denote by $i_{1}+q\; (\exists q\geq1)$ the smallest letter among them.
Since $l_{1}=i_{1}$, we have $l_{1}^{\dag}=i_{1}+q$.
By the minimality of $i_{1}+q$, $i_{1}+q-1$ is a $\mathscr{J}^{(i,n_{c}-x+1)}$-letter 
(when $q=1$, $i_{1}=l_{1}$ is a $\mathscr{J}^{(i,n_{c}-x+1)}$-letter).
Hence, we can write $i_{1}+q-1=j_{r}\; (\exists j_{r}\in \mathscr{J}^{(i,n_{c}-x+1)})$ so that 
$i_{1}^{\prime}=l_{1}^{\dag}=j_{r}+1$.
Since $i_{1}^{\prime}=l_{1}^{\dag}\in \mathscr{I}^{(x-1,i)}$ is the letter generated by $\psi^{(i,x-1)}$, 
$i_{1}^{\prime}\notin \mathscr{J}^{(i,n_{c}-x+1)}$.
By the assumption of (2) of Lemma~\ref{lem:smooth3}, 
$\Tilde{\mu}\left[ \overline{\mathscr{J}^{(i,n_{c}-x+1)}}\right]  =\mu^{(x,i)}$ is a Young diagram so that 
\[
\Tilde{\mu}\left[  \overline{\mathscr{J}^{(i,n_{c}-x+1)}}\right]  _{i_{1}^{\prime}-1}
\geq\Tilde{\mu}\left[  \overline{\mathscr{J}^{(i,n_{c}-x+1)}}\right]  _{i_{1}^{\prime}}.
\]
The left-hand side of this inequality is $\Tilde{\mu}_{i_{1}^{\prime}-1}-1$ 
because $i_{1}^{\prime}-1=j_{r}\in \mathscr{J}^{(i,n-x+1)}$, 
while the right-hand side is $\Tilde{\mu}_{i_{1}^{\prime}}$ because 
$i_{1}^{\prime}\notin \mathscr{J}^{(i,n_{c}-x+1)}$ and thereby $\Tilde{\mu}_{i_{1}^{\prime}-1}>\Tilde{\mu}_{i_{1}^{\prime}}$.

\textbf{Case (b).}
Firstly, let us show that we can write 
$i_{1}^{\prime}=i_{p}\; (\exists i_{p}\in \mathscr{I}^{(x-1,i-1)}\backslash \mathscr{L})$. 
Since $i_{1}^{\prime} \notin \mathscr{L}^{\dag}$ 
we can write $i_{1}^{\prime}=i_{p}\; (\exists i_{p}\in \mathscr{I}^{(x-1,i-1)}\backslash \mathscr{L})$, 
because $i_{1}^{\prime}\in(\mathscr{I}^{(x-1,i-1)}\backslash \mathscr{L})\sqcup \mathscr{L}^{\dag}$.
In this case, $p\geq 2$.
Otherwise $i_{1}^{\prime}=i_{1}=l_{1}$, which is a contradiction.
To proceed, let us consider the following three cases separately:
\begin{description}
\item[(b-1)]
$p=2$ and $i_{1}^{\prime}=i_{p=2}=i_{1}+1=l_{1}+1$.
\item[(b-2)]
$p\geq2$ and $i_{p}>i_{1}+1$.
\item[(b-3)]
$p>2$ and $i_{p}=i_{1}+1$.
\end{description}
In case \textbf{(b-1)}, we can write $l_{1}=j_{r}\; (\exists j_{r}\in \mathscr{J}^{(i,n_{c}-x+1)})$ and 
$i_{1}^{\prime}=j_{r}+1$.
In case \textbf{(b-2)}, there must exist a sequence of $\mathscr{J}^{(i,n_{c}-x+1)}$-letters $j_{q},\ldots,j_{q+m}$ 
such that $i_{1}<j_{q+k}<i_{p}\; (k=0,1,\ldots,m)$ and
\begin{align*}
j_{q}-i_{1}  & =1,\\
j_{q+k}-j_{q+k-1}  & =1\quad (k=1,\ldots,m),\\
i_{p}-j_{q+m}  & =1.
\end{align*}
Otherwise, $l_{1}^{\dag}$ cannot be larger than 
$i_{1}^{\prime}=i_{p}(\in \mathscr{I}^{(x-1,i-1)}\backslash \mathscr{L})$.
The existence of such a sequence implies $i_{1}^{\prime}=i_{p}=j_{q+m}+1$.
Case \textbf{(b-3)} must be excluded because the inequalities 
$i_{1}<i_{2}<\cdots<i_{p}$ do not hold.
In both cases \textbf{(b-1)} and \textbf{(b-2)}, 
we can write $i_{1}^{\prime}=j_{r}+1\; (\exists j_{r}\in \mathscr{J}^{(i,n_{c}-x+1)})$.
Now since $\Tilde{\mu}\left[ \overline{\mathscr{J}^{(i,n_{c}-x+1)}}\right]$ is a Young diagram,
\[
\Tilde{\mu}\left[  \overline{\mathscr{J}^{(i,n_{c}-x+1)}}\right]  _{i_{1}^{\prime}-1}
\geq\Tilde{\mu}\left[  \overline{\mathscr{J}^{(i,n_{c}-x+1)}}\right]  _{i_{1}^{\prime}}.
\]
The left-hand side of this inequality is $\Tilde{\mu}_{i_{1}^{\prime}-1}-1$ 
because $i_{1}^{\prime}-1=j_{r}\in \mathscr{J}^{(i,n_{c}-x+1)}$, 
while the right-hand side is $\Tilde{\mu}_{i_{p}}=\Tilde{\mu}_{i_{1}^{\prime}}$ 
because $i_{p}\in \mathscr{I}^{(x-1,i-1)}\backslash \mathscr{L}$, 
i.e., $i_{p}\notin \mathscr{J}^{(i,n_{c}-x+1)}$ so that 
$\Tilde{\mu}_{i_{1}^{\prime}-1}>\Tilde{\mu}_{i_{1}^{\prime}}$.

\textbf{Case (c).}
Let us show that $i_{1}^{\prime}=i_{1}$.
If $\mathscr{L}=\emptyset$, this is obvious.
If $\mathscr{L}\neq \emptyset$, the $\mathscr{I}^{(x-1,i-1)}$-letter $i_{1}$ is smaller than $l_{1}$ 
that is the smallest letter in $\mathscr{L}$ 
so that the $\mathscr{I}^{(x-1,i-1)}$-letter $i_{1}$ is not a $\mathscr{J}^{(i,n_{c}-x+1)}$-letter, 
which implies $i_{1}^{\prime}=i_{1}$.
By the assumption of (2) of Lemma~\ref{lem:smooth3}, 
$\mathscr{I}^{(x-1,i-1)}$ is smooth on 
$\mu^{(x,i)}=\Tilde{\mu}\left[ \overline{\mathscr{J}^{(i,n_{c}-x+1)}}\right]$ 
so that
\[
\Tilde{\mu}\left[  \overline{\mathscr{J}^{(i,n_{c}-x+1)}}\right]  _{i_{1}-1}
>\Tilde{\mu}\left[  \overline{\mathscr{J}^{(i,n_{c}-x+1)}}\right]  _{i_{1}}.
\]
The left-hand side of this inequality is $\Tilde{\mu}_{i_{1}-1}-\delta$ ($\delta\in\{0,1\}$), 
while the right-hand side is $\Tilde{\mu}_{i_{1}}$ because $i_{1}\notin \mathscr{J}^{(i,n_{c}-x)}$.
Therefore, we have 
$\Tilde{\mu}_{i_{1}^{\prime}-1}=\Tilde{\mu}_{i_{1}-1}>\Tilde{\mu}_{i_{1}}=\Tilde{\mu}_{i_{1}^{\prime}}$.
In \textbf{(I)}, we have verified that 
$\Tilde{\mu}_{i_{1}^{\prime}-1}>\Tilde{\mu}_{i_{1}^{\prime}}$, that is,  
$\Tilde{\mu}\lbrack i_{1}^{\prime}]$ is a Young diagram for all possible cases.

\textbf{(II).}
Let us suppose that  
$\Tilde{\mu}^{\dag(k-1)}=\Tilde{\mu}\lbrack i_{1}^{\prime},\ldots,i_{k-1}^{\prime}]$
is a Young diagram ($k-1 \geq 1$).
We prove that $\Tilde{\mu}^{\dag(k-1)}[i_{k}^{\prime}]$ 
is also a Young diagram.
Note that $\mathscr{J}^{(i,n_{c}-x+1)}$ is smooth on $\Tilde{\mu}$ 
by Lemma~\ref{lem:smooth0}.
Let us consider the following three cases separately:
\begin{description}
\item[(a)]
$i_{k}^{\prime}\in \mathscr{I}^{(x-1,i)}\backslash \mathscr{L}^{\dag}
(=\mathscr{I}^{(x-1,i-1)}\backslash \mathscr{L})$.
\item[(b)]
$i_{k-1}^{\prime}\in \mathscr{I}^{(x-1,i)}\backslash \mathscr{L}^{\dag}$ and 
$i_{k}^{\prime}\in \mathscr{L}^{\dag}$.
\item[(c)]
$i_{k-1}^{\prime}\in \mathscr{L}^{\dag}$ and $i_{k}^{\prime}\in \mathscr{L}^{\dag}$.
\end{description}

\textbf{Case (a).}
We can write $i_{k}^{\prime}=i_{p}\; (\exists i_{p}\in I\backslash L)$ and
\[
\Tilde{\mu}_{i_{k}^{\prime}}^{\dag(k-1)}=
\Tilde{\mu}\lbrack i_{1}^{\prime},\ldots,i_{k-1}^{\prime}]_{i_{k}^{\prime}}=\Tilde{\mu}_{i_{p}}.
\]
In order to compute 
$\Tilde{\mu}_{i_{p}-1}^{\dag(k-1)}=\Tilde{\mu}\lbrack i_{1}^{\prime},\ldots,i_{k-1}^{\prime}]_{i_{p}-1}$, 
we divide this case further into the following three cases:
\begin{description}
\item[(a-1)]
$i_{p}-1\in \mathscr{I}^{(x-1,i)}$.
\item[(a-2)]
$i_{p}-1\notin \mathscr{I}^{(x-1,i)}$ and $i_{p}-1\in \mathscr{L}$.
\item[(a-3)]
$i_{p}-1\notin \mathscr{I}^{(x-1,i)}$ and $i_{p}-1\notin \mathscr{L}$.
\end{description}
In case \textbf{(a-1)}, we have $i_{k-1}^{\prime}=i_{p}-1$ because $i_{k}^{\prime}=i_{p}$.
Then
\[
\Tilde{\mu}_{i_{k}^{\prime}-1}^{\dag(k-1)}=
\Tilde{\mu}\lbrack i_{1}^{\prime},\ldots,i_{k-1}^{\prime}=i_{p}-1]_{i_{p}-1}=\Tilde{\mu}_{i_{p}-1}+1
\]
so that we obtain
\[
\Tilde{\mu}_{i_{k}^{\prime}-1}^{\dag(k-1)}=\Tilde{\mu}_{i_{p}-1}+1>
\Tilde{\mu}_{i_{p}}=\mu_{i_{k}^{\prime}}^{\dag(k-1)}.
\]
In both cases \textbf{(a-2)} and \textbf{(a-3)}, we have
$\Tilde{\mu}_{i_{k}^{\prime}-1}^{\dag(k-1)}=\Tilde{\mu}_{i_{p}-1}$ because $i_{p}-1\notin \mathscr{I}^{(x-1,i)}$.
By the assumption of (2) of Lemma~\ref{lem:smooth3}, 
$\mathscr{I}^{(x-1,i-1)}$ is smooth on 
$\mu^{(x,i)}=\Tilde{\mu}\left[ \overline{\mathscr{J}^{(i,n_{c}-x+1)}}\right]$, 
\begin{equation} \label{eq:smooth3_1}
\Tilde{\mu}\left[  \overline{\mathscr{J}^{(i,n_{c}-x+1)}},i_{1},\ldots,i_{p-1}\right]_{i_{p}-1}>
\Tilde{\mu}\left[  \overline{\mathscr{J}^{(i,n_{c}-x+1)}},i_{1},\ldots,i_{p-1}\right]  _{i_{p}}.
\end{equation}
In case \textbf{(a-2)}, the left-hand side of Eq.\eqref{eq:smooth3_1} is $\Tilde{\mu}_{i_{p}-1}$ 
because $i_{p}-1\in \mathscr{L}$ ($i_{p}-1$ appears once in $\{i_{1},\ldots,i_{p-1}\}$ and $\overline{i_{p}-1}$ 
appears once in $\overline{\mathscr{J}^{(i,n_{c}-x+1)}}$).
The right-hand side is $\Tilde{\mu}_{i_{p}}$ because $i_{p}\in \mathscr{I}^{(x-1,i-1)}\backslash \mathscr{L}$, i.e., 
$i_{p}\notin \mathscr{J}^{(i,n_{c}-x+1)}$.
Therefore, we have $\Tilde{\mu}_{i_{p}-1}>\Tilde{\mu}_{i_{p}}$ so that 
$\Tilde{\mu}_{i_{k}^{\prime}-1}^{\dag(k-1)}=\Tilde{\mu}_{i_{p}-1}>
\Tilde{\mu}_{i_{p}}=\Tilde{\mu}_{i_{k}^{\prime}}^{\dag(k-1)}$.
In case \textbf{(a-3)}, $i_{p}-1\notin \mathscr{I}^{(x-1,i-1)}$ because 
$i_{p}-1\notin(\mathscr{I}^{(x-1,i-1)}\backslash \mathscr{L})\sqcup \mathscr{L}^{\dag}$ 
and $i_{p}-1\notin \mathscr{L}$. 
The left-hand side of Eq.~\eqref{eq:smooth3_1} is $\Tilde{\mu}_{i_{p}-1}-\delta$ ($\delta\in\{0,1\}$), 
while the right-hand side is $\Tilde{\mu}_{i_{p}-1}$ because 
$i_{p}\in \mathscr{I}^{(x-1,i-1)}\backslash \mathscr{L}$, i.e., $i_{p}\notin \mathscr{J}^{(i,n_{c}-x+1)}$.
Therefore, $\Tilde{\mu}_{i_{p}-1}-\delta>\Tilde{\mu}_{i_{p}}$ so that 
$\Tilde{\mu}_{i_{k}^{\prime}-1}^{\dag(k-1)}=\Tilde{\mu}_{i_{p}-1}>
\Tilde{\mu}_{i_{p}}=\Tilde{\mu}_{i_{k}^{\prime}}^{\dag(k-1)}$.

\textbf{Case (b).}
In this case, $\mathscr{L}^{\dag}\neq \emptyset$ and we can write 
$i_{k}^{\prime}=l_{r}^{\dag}$ ($\exists l_{r}^{\dag}\in \mathscr{L}^{\dag}$).
We divide this case further into the following two cases according to Remark~\ref{rem:algorithm2}:
\begin{description}
\item[(b-1)]
$l_{r}^{\dag}=i_{p}+1\quad (\exists i_{p}\in \mathscr{I}^{(x-1,i-1)}\backslash \mathscr{L})$.
\item[(b-2)]
$l_{r}^{\dag}=j_{q}+1\quad (\exists j_{q}\in \mathscr{J}^{(i,n_{c}-x+1)})$.
\end{description}
The situation that $l_{r}^{\dag}=l_{r-1}^{\dag}+1\;(r\neq1)$ cannot happen.
Indeed, if $l_{r}^{\dag}=l_{r-1}^{\dag}+1\; (r\neq1)$, then $i_{k}^{\prime}=l_{r-1}^{\dag}+1$.
Since $l_{r-1}^{\dag} \in \mathscr{I}^{(x-1,i)}$, this implies $i_{k-1}^{\prime}=l_{r-1}^{\dag}$, 
which contradicts the assumption of \textbf{(b)}.
In case \textbf{(b-1)}, $i_{k-1}^{\prime}=i_{p}$ because 
$i_{p}\in \mathscr{I}^{(x-1,i-1)}$ and $i_{k}^{\prime}=i_{p}+1$.
Then
\[
\Tilde{\mu}_{i_{k}^{\prime}}^{\dag(k-1)}=\Tilde{\mu}\lbrack i_{1}^{\prime},\ldots,i_{k-1}^{\prime}=
i_{p}]_{i_{p}+1}=\Tilde{\mu}_{i_{p}+1},
\]
and
\[
\Tilde{\mu}_{i_{k}^{\prime}-1}^{\dag(k-1)}=\Tilde{\mu}\lbrack i_{1}^{\prime},\ldots,i_{k-1}^{\prime}=
i_{p}]_{i_{p}}=\Tilde{\mu}_{i_{p}}+1.
\]
From these two equations, we have
$\Tilde{\mu}_{i_{k}^{\prime}-1}^{\dag(k-1)}>\Tilde{\mu}_{i_{k}^{\prime}}^{\dag(k-1)}$.
In case \textbf{(b-2)},
\[
\Tilde{\mu}_{i_{k}^{\prime}}^{\dag(k-1)}=
\Tilde{\mu}\lbrack i_{1}^{\prime},\ldots,i_{k-1}^{\prime}]_{i_{k}^{\prime}}=
\Tilde{\mu}_{i_{k}^{\prime}}=\Tilde{\mu}_{j_{q}+1}.
\]
On the other hand,
\[
\Tilde{\mu}_{i_{k}^{\prime}-1}^{\dag(k-1)}=
\Tilde{\mu}\lbrack i_{1}^{\prime},\ldots,i_{k-1}^{\prime}]_{i_{k}^{\prime}-1}=
\Tilde{\mu}_{i_{k}^{\prime}-1}=\Tilde{\mu}_{j_{q}},
\]
where we have used the fact that $i_{k}^{\prime}-1>i_{k-1}^{\prime}$.
This is shown as follows.
If $i_{k}^{\prime}-1=i_{k-1}^{\prime}$, then we have 
$j_{q}=i_{k}^{\prime}-1=i_{k-1}^{\prime}$.
This implies that 
$j_{q}$ is an $\mathscr{I}^{(x-1,i-1)}$-letter but is not a $\mathscr{J}^{(i,n_{c}-x+1)}$-letter 
due to the assumption of \textbf{(b)}, 
which is a contradiction. 
Now since $\overline{\mathscr{J}^{(i,n_{c}-x+1)}}$ is smooth on $\Tilde{\mu}$, 
we have
\[
\Tilde{\mu}\lbrack\overline{j_{b}},\ldots,\overline{j_{q+1}}]_{j_{q}}>
\Tilde{\mu}\lbrack\overline{j_{b}},\ldots,\overline{j_{q+1}}]_{j_{q}+1}.
\]
By noting that $j_{q}+1=l_{r}^{\dag}\notin\{j_{b},\ldots,j_{q+1}\}$, 
the right-hand side of the above inequality is found to be $\Tilde{\mu}_{j_{q}+1}$, 
while the left-hand side is clearly $\Tilde{\mu}_{j_{q}}$.
Hence, $\Tilde{\mu}_{i_{k}^{\prime}-1}^{\dag(k-1)}>\Tilde{\mu}_{i_{k}^{\prime}}^{\dag(k-1)}$.

\textbf{Case (c).}
In this case, $\mathscr{L}^{\dag}\neq\emptyset$ and we can write 
$i_{k-1}^{\prime}=l_{r-1}^{\dag}$ and 
$i_{k}^{\prime}=l_{r}^{\dag}\; (\exists r\in\{2,\ldots,c\})$.
According to the algorithm described above Eq.~\eqref{eq:map2} or Remark~\ref{rem:algorithm2}, 
let us consider the following three cases separately:
\begin{description}
\item[(c-1)]
$l_{r}^{\dag}=i_{p}+1\quad (\exists i_{p}\in \mathscr{I}^{(x-1,i-1)}\backslash \mathscr{L})$.
\item[(c-2)]
$l_{r}^{\dag}=j_{q}+1\quad (\exists j_{q}\in \mathscr{J}^{(i,n_{c}-x+1)})$.
\item[(c-3)]
$l_{r}^{\dag}=l_{r-1}^{\dag}+1\quad (r\neq1)$.
\end{description}
In case \textbf{(c-1)}, $i_{k-1}^{\prime}=i_{p}$ 
because $i_{p}\in \mathscr{I}^{(x-1,i)}$ and $i_{k}^{\prime}=i_{p}+1$.
Then 
$l_{r-1}^{\dag}=i_{p}\in \mathscr{I}^{(x-1,i-1)}\backslash \mathscr{L} =
\mathscr{I}^{(x-1,i)}\backslash \mathscr{L}^{\dag}$, 
which derives a contradiction, and thereby case \textbf{(c-1)} must be excluded.
In case \textbf{(c-2)},
\[
\Tilde{\mu}_{i_{k}^{\prime}}^{\dag(k-1)}=
\Tilde{\mu}\lbrack i_{1}^{\prime},\ldots,i_{k-1}^{\prime}]_{i_{k}^{\prime}}=
\Tilde{\mu}_{i_{k}^{\prime}}=\Tilde{\mu}_{j_{q}+1}.
\]
On the other hand,
\[
\Tilde{\mu}_{i_{k}^{\prime}-1}^{\dag(k-1)}=
\Tilde{\mu}\lbrack i_{1}^{\prime},\ldots,i_{k-1}^{\prime}]_{i_{k}^{\prime}-1}=
\Tilde{\mu}_{i_{k}^{\prime}-1}=\Tilde{\mu}_{j_{q}},
\]
where we have used the fact that $i_{k-1}^{\prime}<i_{k}^{\prime}-1$.
This is shown as follows.
If $i_{k-1}^{\prime}=i_{k}^{\prime}-1$, then, 
$l_{r-1}^{\dag}=i_{k-1}^{\prime}=i_{k}^{\prime}-1=j_{q}$,  
which contradicts the fact that 
$l_{r-1}^{\dag}$ is not a $\mathscr{J}^{(i,n_{c}-x+1)}$-letter.
Now since $\overline{\mathscr{J}^{(i,n_{c}-x+1)}}$ is smooth on  $\Tilde{\mu}$, 
we have
\[
\Tilde{\mu}\lbrack\overline{j_{b}},\ldots,\overline{j_{q+1}}]_{j_{q}}>
\Tilde{\mu}\lbrack\overline{j_{b}},\ldots,\overline{j_{q+1}}]_{j_{q}+1}.
\]
By noting $j_{q}+1=l_{r}^{\dag}\notin\{j_{b},\ldots,j_{q+1}\}$, 
the right-hand side of the above inequality is seen to be $\Tilde{\mu}_{j_{q}+1}$, 
while the left-hand side is clearly $\Tilde{\mu}_{j_{q}}$.
Hence, we have $\Tilde{\mu}_{i_{k}^{\prime}-1}^{\dag(k-1)}>
\Tilde{\mu}_{i_{k}^{\prime}}^{\dag(k-1)}$.
In case \textbf{(c-3)}, since $i_{k}^{\prime}-1=i_{k-1}^{\prime}(=l_{r-1}^{\dag})$,
\[
\Tilde{\mu}_{i_{k}^{\prime}-1}^{\dag(k-1)}=
\Tilde{\mu}\lbrack i_{1}^{\prime},\ldots,i_{k-1}^{\prime}=l_{r-1}^{\dag}]_{l_{r-1}^{\dag}}=
\Tilde{\mu}_{i_{k}^{\prime}-1}+1,
\]
while
\[
\Tilde{\mu}_{i_{k}^{\prime}}^{\dag(k-1)}=
\Tilde{\mu}\lbrack i_{1}^{\prime},\ldots,i_{k-1}^{\prime}]_{i_{k}^{\prime}}=\Tilde{\mu}_{i_{k}^{\prime}}.
\]
Hence, we have 
$\Tilde{\mu}_{i_{k}^{\prime}-1}^{\dag(k-1)}>\Tilde{\mu}_{i_{k}^{\prime}}^{\dag(k-1)}$.
In \textbf{(II)}, we have verified that 
$\Tilde{\mu}_{i_{k}^{\prime}-1}^{\dag(k-1)}>\Tilde{\mu}_{i_{k}^{\prime}}^{\dag(k-1)}$, that is, 
$\Tilde{\mu}^{\dag(k-1)}[i_{k}^{\prime}]$ is a Young diagram for all possible cases.
From \textbf{(I)} and \textbf{(II)} and by induction, we have completed the proof of (2) of Lemma~\ref{lem:smooth3}.

The proof of (1) is as follows.
We proceed by induction on $i$.
Since $\mu^{(0)\prime}=\mu\left[\overline{\mathscr{J}^{(n_{c},0)}},\ldots,
\overline{\mathscr{J}^{(1,0)}}\right]  =\zeta$ and 
$\mathscr{I}^{(n_{c},0)}$ is smooth on $\zeta$ 
by Eqs.~\eqref{eq:zeta2lambda} and \eqref{eq:mu2zeta}, 
we have that $\mathscr{I}^{(n_{c},0)}$ is smooth on $\mu^{(0)\prime}$.
For $i=0,\ldots,n_{c}-1$, suppose that 
$\mathscr{I}^{(n_{c},i)}$ is smooth on $\mu^{(i)\prime}$.
This is satisfied for $i=0$.
Then we have that $\mathscr{I}^{(n_{c},i+1)}$ is smooth on $\mu^{(i+1)\prime}$ 
by the same argument as in (2).

The proof of (3) is as follows.
We proceed by induction on $x$ and $i$.

\textbf{(I).}
We have
$\mu\left[ \overline{\mathscr{J}^{(n_{c},0)}},\ldots,\overline{\mathscr{J}^{(1,0)}}\right]
\left[\underrightarrow{\mathscr{I}^{(n_{c},0)}}\right],\ldots,
\mu\left[  \overline{\mathscr{J}^{(n_{c},0)}}\right]  \left[\underrightarrow{\mathscr{I}^{(n_{c},n_{c}-1)}}\right]$, 
and $\mu\left[ \underrightarrow{\mathscr{I}^{(n_{c},n_{c})}}\right]=\mu^{(n_{c})}$ 
are all Young diagrams by the claim of (1).
For $1 \leq i \leq n_{c}$,  
\[
\left\langle \mathscr{I}^{(n_{c},i-1)},\overline{\mathscr{J}^{(i,0)}}\right\rangle_{\mathrm{pair}},
\left\langle\mathscr{I}^{(n_{c},i)},\overline{\mathscr{J}^{(i+1,0)}}\right\rangle_{\mathrm{pair}},\ldots,  
\left\langle \mathscr{I}^{(n_{c},n_{c}-1)},\overline{\mathscr{J}^{(n_{c},0)}}\right\rangle_{\mathrm{pair}},
\]
so that we have 
\begin{align*}
\mu\left[  \overline{\mathscr{J}^{(n_{c},0)}},\ldots,\overline{\mathscr{J}^{(i,0)}}\right]
\left[  \mathscr{I}^{(n_{c},i-1)}\right]
&=\mu\left[  \overline{\mathscr{J}^{(n_{c},1)}},\ldots,\overline{\mathscr{J}^{(i,1)}}\right]
\left[ \mathscr{I}^{(n_{c},n_{c})}\right] \\
&=\mu^{(n_{c})}\left[ \overline{\mathscr{J}^{(n_{c},1)}},\ldots,\overline{\mathscr{J}^{(i,1)}}\right].
\end{align*}
Hence,
\[
\mu^{(n_{c},i)}=\mu^{(n_{c})}\left[ \overline{\mathscr{J}^{(n_{c},1)}},\ldots,
\overline{\mathscr{J}^{(i,1)}}\right]
\quad (1\leq i\leq n_{c}) 
\]
are all Young diagrams and the smoothness of 
$\overline{\mathscr{J}^{(n_{c},1)}},\ldots,\overline{\mathscr{J}^{(i,1)}}$ on $\mu^{(n_{c})}$ follows from 
Lemma~\ref{lem:smooth0}.

\textbf{(II).}
For $x=n_{c},\ldots,2$, let us assume that $\mu^{(x)}$ and 
\[
\mu^{(x,i)}=\mu^{(x)}\left[ \underrightarrow{\overline{\mathscr{J}^{(x,n_{c}-x+1)}}},\ldots,
\underrightarrow{\overline{\mathscr{J}^{(i,n_{c}-x+1)}}}\right] \quad (1\leq i\leq x)
\]
are all Young diagrams, where $\mu^{(x)}$ is defined in (2).
For $x=n_{c}$ this is satisfied by \textbf{(I)}.
\textbf{(i).}
For $i=1$,
\begin{align*}
\mu^{(x,1)}  =&\mu^{(x)}\left[  \overline{\mathscr{J}^{(x,n_{c}-x+1)}},\ldots,
\overline{\mathscr{J}^{(1,n_{c}-x+1)}}\right]  \\
=&\mu\left[  \mathscr{I}^{(n_{c},n_{c})},\ldots,\mathscr{I}^{(x,x)}\right]  
\left[\overline{\mathscr{J}^{(n_{c},1)}},\ldots,\overline{\mathscr{J}^{(x,n_{c}-x+1)}}\right] \\
&\quad \left[\overline{\mathscr{J}^{(x-1,n_{c}-x+1)}},\ldots,\overline{\mathscr{J}^{(1,n_{c}-x+1)}}\right].  \\
\end{align*}
The right-hand side of this equation is written as
\[
\mu\left[ \overline{\mathscr{J}^{(n_{c},0)}},\ldots,\overline{\mathscr{J}^{(1,0)}}\right]
\left[  \mathscr{I}^{(n_{c},0)},\ldots,\mathscr{I}^{(x,0)}\right]  
=\zeta\left[  \underrightarrow{\mathscr{I}^{(n_{c},0)}},\ldots,
\underrightarrow{\mathscr{I}^{(x,0)}}\right] \quad (\because \eqref{eq:mu2zeta})
\]
because
\begin{align*}
\left\langle \mathscr{I}^{(n_{c},0)},\overline{\mathscr{J}^{(1,0)}}\right\rangle_{\mathrm{pair}} ,&\ldots
,\left\langle \mathscr{I}^{(n_{c},n_{c}-1)},\overline{\mathscr{J}^{(n_{c},0)}}\right\rangle_{\mathrm{pair}}, \\
& \ldots \\
\left\langle \mathscr{I}^{(x,0)},\overline{\mathscr{J}^{(1,n_{c}-x)}}\right\rangle_{\mathrm{pair}} ,&\ldots,
\left\langle \mathscr{I}^{(x,x-1)},\overline{\mathscr{J}^{(x,n_{c}-x)}}\right\rangle_{\mathrm{pair}}.
\end{align*}
Thus, we have that $\mathscr{I}^{(x-1,0)}$ is smooth on $\mu^{(x,1)}$ 
by Eq.~\eqref{eq:zeta2lambda}.
\textbf{(ii).}
For $i=1,\ldots,x-1$, suppose that 
$\mathscr{I}^{(x-1,i-1)}$ is smooth on $\mu^{(x,i)}$ 
(for $i=1$ this is satisfied).
Then we have that 
$\mathscr{I}^{(x-1,i)}$ is smooth on $\mu^{(x,i+1)}$ 
by the same argument as in (2).
From \textbf{(i)} and \textbf{(ii)} and by induction on $i$, we have that 
\[
\mu^{(x,1)}\left[  \underrightarrow{\mathscr{I}^{(x-1,0)}}\right]  ,\ldots,
\mu^{(x,x)}\left[  \underrightarrow{\mathscr{I}^{(x-1,x-1)}}\right]
\] 
are all Young diagrams.
Here, 
\begin{align} \label{eq:muxi}
&\mu^{(x,i)}\left[ \mathscr{I}^{(x-1,i-1)}\right] \\  
 =&\mu\left[ \mathscr{I}^{(n_{c},n_{c})},\overline{\mathscr{J}^{(n_{c},1)}},\ldots,
\mathscr{I}^{(x,x)},\overline{\mathscr{J}^{(x,n_{c}-x+1)}}\right]  \notag \\
& \quad \left[  \overline{\mathscr{J}^{(x-1,n_{c}-x+1)}},\ldots,\overline{\mathscr{J}^{(i,n_{c}-x+1)}}\right]
\left[ \mathscr{I}^{(x-1,i-1)}\right] \quad (1\leq i \leq x-1). \notag
\end{align}
and
$\mu^{(x,x)}\left[ \mathscr{I}^{(x-1,x-1)}\right]  =\mu^{(x-1)}$ so that 
\[
\mu^{(x-1)}=\mu^{(x,x)}\left[  \underrightarrow{\mathscr{I}^{(x-1,x-1)}}\right]
=\mu^{(x)}\left[  \underrightarrow{\overline{\mathscr{J}^{(x,n_{c}-x+1)}}},
\underrightarrow{\mathscr{I}^{(x-1,x-1)}\vphantom{ \overline{\mathscr{J}^{(x,n_{c}-x+1)}} }}\right]
\]
by Lemma~\ref{lem:smooth0} and by the assumption of \textbf{(II)}.
Since
\begin{multline*}
\left\langle \mathscr{I}^{(x-1,i-1)},\overline{\mathscr{J}^{(i,n_{c}-x+1)}}\right\rangle_{\mathrm{pair}},
\left\langle \mathscr{I}^{(x-1,i)},\overline{\mathscr{J}^{(i+1,n_{c}-x+1)}}\right\rangle_{\mathrm{pair}}, 
\\
\ldots,\left\langle \mathscr{I}^{(x-1,x-2)},\overline{\mathscr{J}^{(x-1,n_{c}-x+1)}}\right\rangle_{\mathrm{pair}} 
\quad (1 \leq i \leq x-1),
\end{multline*} 
the right-hand side of Eq.~\eqref{eq:muxi} is written as 
\begin{align*}
& \mu\left[ \mathscr{I}^{(n_{c},n_{c})},\overline{\mathscr{J}^{(n_{c},1)}},\ldots,
\mathscr{I}^{(x,x)},\overline{\mathscr{J}^{(x,n_{c}-x+1)}}\right]  \\
&\quad \left[  \overline{\mathscr{J}^{(x-1,n_{c}-x+2)}},\ldots,\overline{\mathscr{J}^{(i,n_{c}-x+2)}}\right]
\left[ \mathscr{I}^{(x-1,x-1)}\right]  \\
=&\mu^{(x-1)}\left[ \overline{\mathscr{J}^{(x-1,n_{c}-x+2)}},\ldots,\overline{\mathscr{J}^{(i,n_{c}-x+2)}}\right].
\end{align*}
Hence, 
\[
\mu^{(x-1)}
=\mu^{(x)}\left[  \underrightarrow{\overline{\mathscr{J}^{(x,n_{c}-x+1)}}},
\underrightarrow{\mathscr{I}^{(x-1,x-1)}\vphantom{ \overline{\mathscr{J}^{(x,n_{c}-x+1)}} }}\right]
\]
and 
\[
\mu^{(x-1,i)}=\mu^{(x-1)}\left[ \underrightarrow{\overline{\mathscr{J}^{(x-1,n_{c}-x+2)}}},\ldots,
\underrightarrow{\overline{\mathscr{J}^{(i,n_{c}-x+2)}}}\right] \quad (1\leq i\leq x-1) 
\]
are all Young diagrams.
The smoothness of $\overline{\mathscr{J}^{(x,n_{c}-x+2)}},\ldots,\overline{\mathscr{J}^{(i,n_{c}-x+2)}}$ 
on $\mu^{(x-1)}$ follows from Lemma~\ref{lem:smooth0}.
From \textbf{(I)} and \textbf{(II)} and by induction on $x$, we have 
$\mu^{(1)}\left[ \underrightarrow{\overline{\mathscr{J}^{(1,n_{c})}}}\right]=
\mu\left[ \underrightarrow{\mathscr{I}^{(n_{c},n_{c})}\vphantom{\overline{\mathscr{J}^{(n_{c},1)}}}},
\underrightarrow{\overline{\mathscr{J}^{(n_{c},1)}}},\ldots,
\underrightarrow{\mathscr{I}^{(1,1)}\vphantom{\overline{\mathscr{J}^{(n_{c},1)}}}},
\underrightarrow{\overline{\mathscr{J}^{(1,n_{c})}}}\right]=\mu\left[ \mathrm{FE}\left(\Psi(T)\right)\right]$.
Since $\Psi$ is weight-preserving, 
\begin{align*}
\mu\left[ \mathrm{FE}\left(\Psi(T)\right)\right]=\mu\left[ \mathrm{FE}(T)\right]=&\mu\left[  \mathscr{I}^{(n_{c},0)},\overline{\mathscr{J}^{(n_{c},0)}},
\ldots,\mathscr{I}^{(1,0)},\overline{\mathscr{J}^{(1,0)}}\right]  \\
 =&\zeta\left[  \mathscr{I}^{(n_{c},0)},\ldots,\mathscr{I}^{(1,0)}\right]  =\lambda.
\end{align*}
The last line is due to Eqs.~\eqref{eq:zeta2lambda} and \eqref{eq:mu2zeta}.
This completes the proof.
\end{proof}

\begin{proof}[Proof of Proposition~\ref{prp:main12}]
Let $T$ be the tableau described in Proposition~\ref{prp:main12}.
Suppose that $T$ consists of $n_{c}$ columns.
By Lemma~\ref{lem:Psi}, we have $\Psi(T)\in C_{n}\text{-}\mathrm{SST}_{\mathrm{KN}}(\nu)$.
By Lemma~\ref{lem:smooth3}, we have
$\mu\left[ 
\underrightarrow{\mathscr{I}^{(n_{c},n_{c})}\vphantom{\overline{\mathscr{J}^{(n_{c},1)}}}},
\underrightarrow{\overline{\mathscr{J}^{(n_{c},1)}}},\ldots,
\underrightarrow{\mathscr{I}^{(1,1)}\vphantom{\overline{\mathscr{J}^{(n_{c},1)}}}}
\underrightarrow{\overline{\mathscr{J}^{(1,n_{c})}}}\right]  =\lambda$.
This completes the proof. 
\end{proof}

\section{Main Theorem II} \label{sec:main2}

In this section, we will show that LR crystals of $C_{n}$-type 
$\mathbf{B}_{n}^{\mathfrak{sp}_{2n}}(\nu)_{\mu}^{\lambda}$ defined by Eq.~\eqref{eq:LRcrystal2} are identical to LR crystals of type $B_{n}$ or $D_{n}$ $\mathbf{B}_{n}^{\mathfrak{g}}(\nu)_{\mu}^{\lambda}$ ($\mathfrak{g}=\mathfrak{so}_{2n+1}$ or $\mathfrak{so}_{2n}$) in the stable region, $l(\mu)+l(\nu)\leq n$ (Theorems~\ref{thm:main_B} and \ref{thm:main_D}).
Here $\mathfrak{so}_{N}=\mathfrak{so}(N,\mathbb{C})$ ($N=2n+1$ or $2n$) is the special orthogonal Lie algebra.
Consequently, Theorem~\ref{thm:main1} with $\mathfrak{sp}_{2n}$ being replaced by $\mathfrak{so}_{2n+1}$ or $\mathfrak{so}_{2n}$ holds and it provides the crystal interpretation of the branching rule (Eq.~\eqref{eq:KK2}).

\subsection{LR crystals of $B_{n}$-type} \label{sec:crystal_B}

The odd special orthogonal Lie algebra $\mathfrak{so}(2n+1,\mathbb{C})=\mathfrak{so}_{2n+1}$ is the classical Lie algebra of $B_{n}$-type.
Using the standard unit vectors $\epsilon_{i}\in\mathbb{Z}^{n}$ ($i=1,2,\ldots,n$), the simple roots are expresses as
\begin{align*}
\alpha_{i}  & =\epsilon_{i}-\epsilon_{i+1}\quad (i=1,2,\ldots,n-1),\\
\alpha_{n}  & =\epsilon_{n},
\end{align*}
and the fundamental weights as
\begin{align*}
\omega_{i} & =\epsilon_{1}+\epsilon_{2}+\cdots+\epsilon_{i}\quad (i=1,2,\ldots,n-1),\\
\omega_{n} & =\frac{1}{2}(\epsilon_{1}+\epsilon_{2}+\cdots+\epsilon_{n}).
\end{align*}

Let 
$\Tilde{\lambda}=a_{1}\omega_{1}+\cdots+a_{n}\omega_{n}$ 
($a_{i}\in\mathbb{Z}_{\geq0}$) be a dominant integral weight.
Then $\Tilde{\lambda}$ can be written as 
$\Tilde{\lambda}=\lambda_{1}\epsilon_{1}+\cdots+\lambda_{n}\epsilon_{n}$, where
\begin{align*}
\lambda_{1}  & =a_{1}+a_{2}+\cdots+a_{n-1}+\frac{1}{2}a_{n}, \\
\lambda_{2}  & =a_{2}+\cdots+a_{n-1}+\frac{1}{2}a_{n}, \\
& \vdots\\
\lambda_{n}  & =\frac{1}{2}a_{n}.
\end{align*}

Here, we do not need to consider the spin representation for the finite-dimensional irreducible $U_{q}(\mathfrak{so}_{2n+1})$-module $V_{q}^{\mathfrak{so}_{2n+1}}(\omega_{n})$ as explained later so that $\frac{1}{2}a_{n}\in\mathbb{Z}_{\geq0}$.
Hence we can associate a Young diagram $\lambda=(\lambda_{1},\ldots,\lambda_{n})$ to $\Tilde{\lambda}$ and simplify the original definition of $B_{n}$-tableaux~\cite{N}.
Throughout this section, $B_{n}$-tableaux are referred to as $B_{n}$-tableaux without spin columns associated with the spin representations. 

\begin{df}[\cite{HK,N}]
\begin{itemize}
\item[(1)]
Let $\lambda$ be a Young diagram with at most $n$ rows.
A $B_{n}$-tableau of shape $\lambda$ is a tableau obtained by filling the boxes in $\lambda$ with entries from the set
\[
\{1,2,\ldots,n,0,\overline{n},\ldots,\overline{1}\}
\]
equipped with the total order
\[
1\prec2\prec\cdots\prec n\prec0\prec\overline{n}\prec\cdots\prec\overline{1}.
\]

\item[(2)]
A $B_{n}$-tableau is said to be semistandard if
\begin{itemize}
\item[(a)]
the entries in each rows are weakly increasing, but zeros cannot be repeated;
\item[(b)]
the entries in each column are strictly increasing, but zeros can be repeated.
\end{itemize}
\end{itemize}
\end{df}

We denote by $B_{n}\text{-}\mathrm{SST}(\lambda)$ the set of all semistandard $B_{n}$-tableaux of shape $\lambda$.
For a tableau $T\in B_{n}\text{-}\mathrm{SST}(\lambda)$, we define its weight to be 
\[
\mathrm{wt}(T):=\sum_{i=1}^{n}(k_{i}-\overline{k_{i}})\epsilon_{i},
\]
where $k_{i}$ (resp. $\overline{k_{i}}$) is the number of $i$'s (resp. $\Bar{i}$'s) 
appearing in $T$.

\begin{df}[\cite{HK,N}] \label{df:KN_B}
A tableau $T\in B_{n}\text{-}\mathrm{SST}(\lambda)$ is said to be \emph{KN-admissible} when the following conditions are satisfied.

\begin{itemize}

\item[(B1)]
If $T$ has a column of the form
\setlength{\unitlength}{12pt}
\begin{center}
\begin{picture}(3,6)
\put(2,0){\line(0,1){6}}
\put(3,0){\line(0,1){6}}
\put(2,0){\line(1,0){1}}
\put(2,1){\line(1,0){1}}
\put(2,2){\line(1,0){1}}
\put(2,4){\line(1,0){1}}
\put(2,5){\line(1,0){1}}
\put(2,6){\line(1,0){1}}
\put(2,1){\makebox(1,1){$\Bar{i}$}}
\put(2,4){\makebox(1,1){$i$}}
\put(0,1){\makebox(2,1){$q\rightarrow$}}
\put(0,4){\makebox(2,1){$p\rightarrow$}}
\end{picture},
\end{center}
then we have
$(q-p)+i > N$,
where $N$ is the length of the column.

\item[(B2)]
If $T$ has a pair of adjacent columns having one of the following configurations with $p\leq q < r\leq s$ and $a\leq b<n$:
\setlength{\unitlength}{12pt}
\begin{center}
\begin{picture}(7,5)
\put(3,0){\line(0,1){5}}
\put(6,0){\line(0,1){5}}
\put(3,0){\makebox(1,1){$\Bar{a}$}}
\put(3,1){\makebox(1,1){$\Bar{b}$}}
\put(3,3){\makebox(1,1){$b$}}
\put(2,4){\makebox(1,1){$a$}}
\put(6,0){\makebox(1,1){$\Bar{a}$}}
\put(5,1){\makebox(1,1){$\Bar{b}$}}
\put(5,3){\makebox(1,1){$b$}}
\put(5,4){\makebox(1,1){$a$}}
\put(4,0){\makebox(1,1)[l]{$,$}}
\put(0,0){\makebox(2,1){$s\rightarrow$}}
\put(0,1){\makebox(2,1){$r\rightarrow$}}
\put(0,3){\makebox(2,1){$q\rightarrow$}}
\put(0,4){\makebox(2,1){$p\rightarrow$}}
\end{picture},
\end{center}
then we have $(q-p)+(s-r)<b-a$.

\item[(B3)]
If $T$ has a pair of adjacent columns having one of the following configurations with $p\leq q<r=q+1\leq s$ and $a<n$:
\setlength{\unitlength}{12pt}
\begin{center}
\begin{picture}(13,5)
\put(3,0){\line(0,1){5}}
\put(6,0){\line(0,1){5}}
\put(9,0){\line(0,1){5}}
\put(12,0){\line(0,1){5}}
\put(2,4){\makebox(1,1){$a$}}
\put(3,0){\makebox(1,1){$\Bar{a}$}}
\put(3,1.5){\makebox(1,1){$\Bar{n}$}}
\put(3,2.5){\makebox(1,1){$n$}}

\put(5,4){\makebox(1,1){$a$}}
\put(6,0){\makebox(1,1){$\Bar{a}$}}
\put(6,1.5){\makebox(1,1){$0$}}
\put(6,2.5){\makebox(1,1){$n$}}

\put(8,4){\makebox(1,1){$a$}}
\put(9,0){\makebox(1,1){$\Bar{a}$}}
\put(9,1.5){\makebox(1,1){$0$}}
\put(9,2.5){\makebox(1,1){$0$}}

\put(11,4){\makebox(1,1){$a$}}
\put(12,0){\makebox(1,1){$\Bar{a}$}}
\put(12,1.5){\makebox(1,1){$\Bar{n}$}}
\put(12,2.5){\makebox(1,1){$0$}}

\put(4,0){\makebox(1,1)[l]{$,$}}
\put(7,0){\makebox(1,1)[l]{$,$}}
\put(10,0){\makebox(1,1)[l]{$,$}}
\put(0,0){\makebox(2,1){$s\rightarrow$}}
\put(0,1.5){\makebox(2,1){$r\rightarrow$}}
\put(0,2.5){\makebox(2,1){$q\rightarrow$}}
\put(0,4){\makebox(2,1){$p\rightarrow$}}
\end{picture},
\end{center}

\begin{center}
\begin{picture}(13,5)
\put(3,0){\line(0,1){5}}
\put(6,0){\line(0,1){5}}
\put(9,0){\line(0,1){5}}
\put(12,0){\line(0,1){5}}

\put(2,1.5){\makebox(1,1){$\Bar{n}$}}
\put(2,2.5){\makebox(1,1){$n$}}
\put(2,4){\makebox(1,1){$a$}}
\put(3,0){\makebox(1,1){$\Bar{a}$}}

\put(5,1.5){\makebox(1,1){$0$}}
\put(5,2.5){\makebox(1,1){$n$}}
\put(5,4){\makebox(1,1){$a$}}
\put(6,0){\makebox(1,1){$\Bar{a}$}}

\put(8,1.5){\makebox(1,1){$0$}}
\put(8,2.5){\makebox(1,1){$0$}}
\put(8,4){\makebox(1,1){$a$}}
\put(9,0){\makebox(1,1){$\Bar{a}$}}

\put(11,1.5){\makebox(1,1){$\Bar{n}$}}
\put(11,2.5){\makebox(1,1){$0$}}
\put(11,4){\makebox(1,1){$a$}}
\put(12,0){\makebox(1,1){$\Bar{a}$}}

\put(4,0){\makebox(1,1)[l]{$,$}}
\put(7,0){\makebox(1,1)[l]{$,$}}
\put(10,0){\makebox(1,1)[l]{$,$}}
\put(0,0){\makebox(2,1){$s\rightarrow$}}
\put(0,1.5){\makebox(2,1){$r\rightarrow$}}
\put(0,2.5){\makebox(2,1){$q\rightarrow$}}
\put(0,4){\makebox(2,1){$p\rightarrow$}}
\end{picture},
\end{center}
then we have $(q-p)+(s-r)=s-p-1<n-a$.

\item[(B4)]
The tableau $T$ cannot have a pair of adjacent columns having one of the following configurations with $p<s$:
\setlength{\unitlength}{12pt}
\begin{center}
\begin{picture}(13,3)
\put(3,0){\line(0,1){3}}
\put(6,0){\line(0,1){3}}
\put(9,0){\line(0,1){3}}
\put(12,0){\line(0,1){3}}

\put(2,2){\makebox(1,1){$n$}}
\put(3,0){\makebox(1,1){$\Bar{n}$}}

\put(5,2){\makebox(1,1){$n$}}
\put(6,0){\makebox(1,1){$0$}}

\put(8,2){\makebox(1,1){$0$}}
\put(9,0){\makebox(1,1){$0$}}

\put(11,2){\makebox(1,1){$0$}}
\put(12,0){\makebox(1,1){$\Bar{n}$}}

\put(5,0){\makebox(1,1)[l]{$,$}}
\put(8,0){\makebox(1,1)[l]{$,$}}
\put(11,0){\makebox(1,1)[l]{$,$}}
\put(0,0){\makebox(2,1){$s\rightarrow$}}

\put(0,2){\makebox(2,1){$p\rightarrow$}}
\end{picture}.
\end{center}

\end{itemize}
\end{df}

We denote by $B_{n}\text{-}\mathrm{SST}_{\mathrm{KN}}(\lambda)$ the set of all KN-admissible semistandard $B_{n}$-tableau (without spin columns) of shape $\lambda$.

A crystal $\mathcal{B}^{\mathfrak{so}_{2n+1}}(\lambda)$ associated with the finite-dimensional irreducible $U_{q}(\mathfrak{so}_{2n+1})$-module $V_{q}^{\mathfrak{so}_{2n+1}}(\lambda)$ of a dominant integral weight $\Tilde{\lambda}$ is defined in the same way as in Section~\ref{sec:crystal_C}.
As a set, 
$\mathcal{B}^{\mathfrak{so}_{2n+1}}(\lambda)$ is $B_{n}\text{-}\mathrm{SST}_{\mathrm{KN}}(\lambda)$.
The crystal structure of 
$\mathcal{B}^{\mathfrak{so}_{2n+1}}(\lambda)$ is given by the crystal graph of 
$\mathcal{B}^{\mathfrak{so}_{2n+1}}(\square)$, the tensor product rule, and the far-eastern reading of $T\in\mathcal{B}^{\mathfrak{so}_{2n+1}}(\lambda)$.
The crystal graph of 
$\mathcal{B}^{\mathfrak{so}_{2n+1}}(\square)$ is given by as follows:
\setlength{\unitlength}{12pt}
\begin{center}
\begin{picture}(21,1.5)
\put(0,0){\framebox(1,1){$1$}}
\put(2.5,0){\framebox(1,1){$2$}}
\put(7.5,0){\framebox(1,1){$n$}}
\put(10,0){\framebox(1,1){$0$}}
\put(12.5,0){\framebox(1,1){$\Bar{n}$}}
\put(17.5,0){\framebox(1,1){$\Bar{2}$}}
\put(20,0){\framebox(1,1){$\Bar{1}$}}
\put(1,0.5){\vector(1,0){1.5}}
\put(3.5,0.5){\vector(1,0){1.5}}
\put(6,0.5){\vector(1,0){1.5}}
\put(8.5,0.5){\vector(1,0){1.5}}
\put(11,0.5){\vector(1,0){1.5}}
\put(13.5,0.5){\vector(1,0){1.5}}
\put(16,0.5){\vector(1,0){1.5}}
\put(18.5,0.5){\vector(1,0){1.5}}
\put(5,0){\makebox(1,1){$\cdots$}}
\put(15,0){\makebox(1,1){$\cdots$}}
\put(1,0.5){\makebox(1.5,1){$1$}}
\put(3.5,0.5){\makebox(1.5,1){$2$}}
\put(5.9,0.5){\makebox(1.5,1){$n-1$}}
\put(8.5,0.5){\makebox(1.5,1){$n$}}
\put(11,0.5){\makebox(1.5,1){$n$}}
\put(13.6,0.5){\makebox(1.5,1){$n-1$}}
\put(16,0.5){\makebox(1.5,1){$2$}}
\put(18.5,0.5){\makebox(1.5,1){$1$}}
\end{picture},
\end{center}
where  
$\mathrm{wt}\left(\framebox[12pt]{$i$\rule{0pt}{8pt}}\right)=\epsilon_{i}$, 
$\mathrm{wt}\left(\framebox[12pt]{$0$\rule{0pt}{8pt}}\right)=\epsilon_{i}$, 
and 
$\mathrm{wt}\left(\framebox[12pt]{$\Bar{i}$\rule{0pt}{8pt}}\right)=-\epsilon_{i}$
$(i=1,2,\ldots,n)$.
In $B_{n}$ case, Definition~\ref{df:addition} is still valid, but the following rule has to be added~\cite{N}.
For a Young diagram 
$\lambda=(\lambda_{1},\ldots,\lambda_{n})\in\mathcal{P}_{n}$,
\begin{equation} \label{eq:add0}
\lambda\lbrack0]:=
\begin{cases}
\lambda & (\lambda_{n}>0),\\
(\lambda_{1},\ldots,\lambda_{n-1},-\infty) & (\lambda_{n}=0).
\end{cases}
\end{equation}

The generalized LR rule of $B_{n}$-type is given by:
\begin{thm}[\cite{HK,KN,N}]
Let $\Tilde{\mu}=\sum_{i=1}^{n}\mu_{i}\epsilon_{i}$ and $\Tilde{\nu}=\sum_{i=1}^{n}\nu_{i}\epsilon_{i}$ be dominant integral weights, and $\mu=(\mu_{1},\mu_{2},\ldots,\mu_{n})$ and 
$\nu=(\nu_{1},\nu_{2},\ldots,\nu_{n})$ be the corresponding Young diagrams, respectively.
Then we have the following isomorphism:
\begin{equation} \label{eq:so1}
\mathcal{B}^{\mathfrak{so}_{2n+1}}(\mu)\otimes\mathcal{B}^{\mathfrak{so}_{2n+1}}(\nu)
\simeq
{\textstyle\bigoplus\limits_{\substack{T\in \mathcal{B}^{\mathfrak{so}_{2n+1}}(\nu)\\
\mathrm{FE}(T)=
\framebox[16pt]{$m_{1}$}\otimes
\cdots\otimes 
\framebox[16pt]{$m_{N}$}
}}}
\mathcal{B}^{\mathfrak{so}_{2n+1}}\left(  \mu\lbrack m_{1},m_{2},\ldots,m_{N}]\right),
\end{equation}
where $N=\left\vert \nu\right\vert$.
In the right-hand side of Eq.~\eqref{eq:so1}, we set 
$\mathcal{B}^{\mathfrak{so}_{2n+1}}\left(  \mu\lbrack m_{1},\ldots,m_{N}]\right)  =\emptyset$ 
if the sequence of letters $m_{1},\ldots,m_{N}$ is not smooth on $\mu$.
\end{thm}

Let us denote by $d_{\mu\nu}^{\lambda}$ the number of 
$\mathcal{B}^{\mathfrak{so}_{2n+1}}(\lambda)$ appearing in the right-hand side of Eq.~\eqref{eq:so1}.
Then the multiplicity $d_{\mu\nu}^{\lambda}$ is given by the cardinality of the following set:
\[
\mathbf{B}_{n}^{\mathfrak{so}_{2n+1}}(\nu)_{\mu}^{\lambda}:=
\left\{ T\in\mathcal{B}^{\mathfrak{so}_{2n+1}}(\nu) \relmiddle| 
\mu\left[ \underrightarrow{\mathrm{FE}(T)}\right]
=\lambda\right\}.
\]
In the stable region, i.e., $l(\mu)+l(\nu)\leq n$, a tableau 
$T\in \mathbf{B}_{n}^{\mathfrak{so}_{2n+1}}(\nu)_{\mu}^{\lambda}$ dose not contain zeros.
This is shown as follows.
We can assume that $l(\mu)=n-k$ and $l(\nu)\leq k$ ($k=1,2,\ldots,n-1$) so that $\mu_{n}=\nu_{n}=0$ and $\mu$ and $\nu$ (and therefore $\lambda$) do not contain spin columns.
Suppose that in the far-eastern reading of 
$T\in \mathbf{B}_{n}^{\mathfrak{so}_{2n+1}}(\nu)_{\mu}^{\lambda}$, $0$ appears firstly in the $i$-th box;
\[
\mathrm{FE}(T)=\framebox[15pt]{$m_{1}$\rule{0pt}{8pt}}\otimes \cdots 
\otimes \framebox[30pt]{$m_{i}=0$\rule{0pt}{8pt}}\otimes \cdots.
\] 
Since the sequence of letters $m_{1},\ldots,m_{i}=0$ is smooth on $\mu$,  
$l\left(  \mu\lbrack m_{1},\ldots,m_{i-1}]\right)  =n$.
Otherwise, $\mu\lbrack m_{1},\ldots,m_{i-1}][0]$ would not be a Young diagram by the rule of Eq.~\eqref{eq:add0}.
Hence, $k$ letters $n-k+1,\ldots,n$ must appear in the sequence of letters $m_{1},\ldots,m_{i-1}$ in this order.
This implies $l(\nu)\geq k+1$ because $k+1$ letters $n-k+1,\ldots,n,0$ in $T$ appear at different rows due to the semistandardness of $T$.
This contradicts the assumption that $l(\nu)\leq k$.
Thus, $T$ has no zeros.
Therefore, conditions (B1), (B2), and (B3) in Definition~\ref{df:KN_B} can be replaced by conditions (C1) and (C2) in Definition~\ref{df:KN_C} (with $\lambda$ being replaced by $\nu$) as long as tableaux in $\mathbf{B}_{n}^{\mathfrak{so}_{2n+1}}(\nu)_{\mu}^{\lambda}$ are considered in the stable region.
Condition (B4) in Definition~\ref{df:KN_B} is replaced by:

\begin{itemize}
\item[(B4')]
A tableau $T\in B_{n}\text{-}\mathrm{SST}_{\mathrm{KN}}(\nu)$ cannot have a pair of adjacent columns having the following configuration with $p<s$:
\setlength{\unitlength}{12pt}
\begin{center}
\begin{picture}(4,3)
\put(3,0){\line(0,1){3}}
\put(2,2){\makebox(1,1){$n$}}
\put(3,0){\makebox(1,1){$\Bar{n}$}}
\put(3,2){\makebox(1,1){$n$}}
\put(0,0){\makebox(2,1){$s\rightarrow$}}
\put(0,2){\makebox(2,1){$p\rightarrow$}}
\end{picture}.
\end{center}
\end{itemize}
This is contained in condition (C2) in Definition~\ref{df:KN_C} (with $\lambda$ being replaced by $\nu$).

Combining these, we obtain: 
\begin{thm} \label{thm:main_B}
Fix $\lambda, \mu, \nu \in \mathcal{P}_{n}$.
If $l(\mu)+l(\nu)\leq n$, then we have
$\mathbf{B}_{n}^{\mathfrak{so}_{2n+1}}(\nu)_{\mu}^{\lambda}=
\mathbf{B}_{n}^{\mathfrak{sp}_{2n}}(\nu)_{\mu}^{\lambda}$.
\end{thm}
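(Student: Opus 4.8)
The plan is to show that, in the stable region, membership in $\mathbf{B}_{n}^{\mathfrak{so}_{2n+1}}(\nu)_{\mu}^{\lambda}$ is governed by exactly the same combinatorial data as membership in $\mathbf{B}_{n}^{\mathfrak{sp}_{2n}}(\nu)_{\mu}^{\lambda}$, so that the two sets coincide literally as sets of fillings of the shape $\nu$. The cleanest route is to verify a chain of matchings: (i) both sides live in the same universe of zero-free fillings on the common alphabet $\mathscr{C}_{n}$; (ii) the semistandardness together with the KN-admissibility conditions agree there; and (iii) the weight map and the smoothness predicate defining each LR crystal agree there. Since set equality needs both inclusions, the key structural point is that after (i) both $\mathbf{B}_{n}^{\mathfrak{so}_{2n+1}}(\nu)_{\mu}^{\lambda}$ and $\mathbf{B}_{n}^{\mathfrak{sp}_{2n}}(\nu)_{\mu}^{\lambda}$ become subsets of one common universe cut out by one common pair of conditions.

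First I would invoke the no-zeros result established in the paragraph preceding the statement: for $T\in\mathbf{B}_{n}^{\mathfrak{so}_{2n+1}}(\nu)_{\mu}^{\lambda}$ with $l(\mu)+l(\nu)\leq n$, no entry of $T$ equals $0$. On the $C_{n}$ side there are never zeros at all, since the alphabet $\mathscr{C}_{n}=\{1,\ldots,n,\bar n,\ldots,\bar 1\}$ excludes $0$; thus in the stable region the possible asymmetry between the two pictures disappears and both sides consist of $\mathscr{C}_{n}$-fillings of shape $\nu$, ordered by the same total order $\prec$. With zeros absent, the two clauses of $B_{n}$-semistandardness (rows weakly increasing with non-repeatable zeros, columns strictly increasing with repeatable zeros) reduce precisely to ordinary $\mathscr{C}_{n}$-semistandardness, so $B_{n}\text{-}\mathrm{SST}(\nu)$ and $C_{n}\text{-}\mathrm{SST}(\nu)$ restrict to the same set of fillings on zero-free tableaux.

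Next I would match the admissibility and the defining LR conditions. Once zeros are excluded, conditions (B1), (B2), (B3) of Definition~\ref{df:KN_B} collapse to (C1) and (C2) of Definition~\ref{df:KN_C} (with $\lambda$ replaced by $\nu$), and (B4) is replaced by (B4'), which is itself a special case of (C2); this is exactly the reduction recorded in the text above the statement. Hence the KN-admissible zero-free $B_{n}$-tableaux of shape $\nu$ are precisely the KN-admissible $C_{n}$-semistandard tableaux of shape $\nu$, i.e. $\mathcal{B}^{\mathfrak{so}_{2n+1}}(\nu)$ and $\mathcal{B}^{\mathfrak{sp}_{2n}}(\nu)$ agree after discarding the tableaux containing a $0$ (which in this regime do not occur in the LR crystal). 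Finally, the two weight functions $\mathrm{wt}(T)=\sum_{i}(k_{i}-\overline{k_{i}})\epsilon_{i}$ coincide, and the addition rule of Definition~\ref{df:addition} never needs the supplementary rule \eqref{eq:add0}, which fires only on the letter $0$; therefore the far-eastern reading and the smoothness requirement $\mu[\underrightarrow{\mathrm{FE}(T)}]=\lambda$ defining each set are literally the same predicate on a zero-free $T$. Combining the three matchings yields the equality of sets.

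The main obstacle is precisely the no-zeros argument, but it has already been carried out in the paragraph preceding the theorem: a $0$ appearing in $\mathrm{FE}(T)$ forces the partition built just before it to have full length $n$, which in turn forces $l(\nu)\geq k+1$ and contradicts the stable-region bound $l(\nu)\leq k$. Beyond that, the remaining work is the case-by-case bookkeeping verifying that (B1)--(B4) and (C1)--(C2) describe the same zero-free configurations; this is routine but must be handled with care at the boundary cases (B3)/(B4') involving $n$ and $\bar n$, where one checks that the single-column length-$N$ inequality underlying (C2) reproduces the $B_{n}$ bound $(q-p)+(s-r)<n-a$.
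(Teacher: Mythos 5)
Your proposal is correct and follows essentially the same route as the paper: the no-zeros argument in the stable region, followed by the observation that zero-free $B_{n}$-semistandardness and the conditions (B1)--(B4) reduce to $C_{n}$-semistandardness with (C1)--(C2), so that the weight map, far-eastern reading, and smoothness predicate coincide on the common set of fillings. Nothing essential is missing.
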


\subsection{LR crystals of $D_{n}$-type}

The even special orthogonal Lie algebra $\mathfrak{so}(2n,\mathbb{C})=\mathfrak{so}_{2n}$ is the classical Lie algebra of $D_{n}$-type.
Using the standard unit vectors $\epsilon_{i}\in\mathbb{Z}^{n}$ ($i=1,2,\ldots,n$), the simple roots are expressed as
\begin{align*}
\alpha_{i} &  =\epsilon_{i}-\epsilon_{i+1}\quad (i=1,2,\ldots,n-1),\\
\alpha_{n} &  =\epsilon_{n-1}+\epsilon_{n},
\end{align*}
and the fundamental weights as 
\begin{align*}
\omega_{i} &  =\epsilon_{1}+\epsilon_{2}+\cdots+\epsilon_{i}\quad (i=1,2,\ldots,n-2), \\
\omega_{n-1} &  =\frac{1}{2}(\epsilon_{1}+\cdots+\epsilon_{n-1}-\epsilon_{n}), \\
\omega_{n} &  =\frac{1}{2}(\epsilon_{1}+\cdots+\epsilon_{n-1}+\epsilon_{n}). 
\end{align*}

Let 
$\Tilde{\lambda}=a_{1}\omega_{1}+\cdots+a_{n}\omega_{n}$ ($a_{i}\in\mathbb{Z}_{\geq0}$) be a dominant integral weight.
Then $\Tilde{\lambda}$ can be written as 
$\Tilde{\lambda}=\lambda_{1}\epsilon_{1}+\cdots+\lambda_{n}\epsilon_{n}$, where
\begin{align*}
\lambda_{1}  &  =a_{1}+a_{2}+\cdots+a_{n-2}+\frac{1}{2}(a_{n-1}+a_{n}),\\
\lambda_{2}  &  =a_{2}+\cdots+a_{n-2}+\frac{1}{2}(a_{n-1}+a_{n}),\\
&  \vdots\\
\lambda_{n-1}  &  =\frac{1}{2}(a_{n-1}+a_{n}),\\
\lambda_{n}  &  =\frac{1}{2}(a_{n}-a_{n-1}).
\end{align*}

Here we do not consider the spin representations for the finite-dimensional irreducible $U_{q}(\mathfrak{so}_{2n})$-modules $V_{q}^{\mathfrak{so}_{2n}}(\omega_{n-1})$ and $V_{q}^{\mathfrak{so}_{2n}}(\omega_{n})$ as in Section~\ref{sec:crystal_B} so that 
$\lambda_{n-1}, \left\vert \lambda_{n} \right\vert \in \mathbb{Z}_{\geq 0}$.
Hence we can associate a Young diagram 
$\lambda=(\lambda_{1},\ldots,\lambda_{n-1},\left\vert \lambda_{n}\right\vert)$ to $\Tilde{\lambda}$ and simplify the original definition of $D_{n}$-tableaux~\cite{N}.
Throughout this section, $D_{n}$-tableaux are referred to as $D_{n}$-tableaux without spin columns associated with the spin representations. 

\begin{df}[\cite{HK,N}]
\begin{itemize}
\item[(1)]
Let $\lambda$ be a Young diagram with at most $n$ rows.
A $D_{n}$-tableau of shape $\lambda$ is a tableau obtained by filling the boxes in $\lambda$ with entries from the set
\[
\{1,2,\ldots,n,\overline{n},\ldots,\overline{1}\}
\]
equipped with the linear order
\[
1\prec2\prec\cdots\prec n-1\prec
\begin{tabular}{c}
n \\
$\Bar{n}$
\end{tabular}
\prec\overline{n-1}\prec\cdots\prec
\overline{1},
\]
where the order between $n$ and $\Bar{n}$ is not defined.

\item[(2)]
A $D_{n}$-tableau is said to be semistandard if
\begin{itemize}
\item[(a)]
the entries in each rows are weakly increasing, and $n$ and $\Bar{n}$ do not appear simultaneously;
\item[(b)]
the entries in each column are strictly increasing, and $n$ and $\Bar{n}$ can appear successively.
\end{itemize}
\end{itemize}
\end{df}

For a $D_{n}$-tableau $T$, we write
\setlength{\unitlength}{12pt}
\begin{center}
\begin{picture}(8,4)
\put(2,0){\line(0,1){4}}
\put(4,0){\line(0,1){4}}
\put(5,1){\line(0,1){1}}
\put(7,2){\line(0,1){1}}
\put(8,3){\line(0,1){1}}
\put(2,0){\line(1,0){2}}
\put(4,1){\line(1,0){1}}
\put(5,2){\line(1,0){2}}
\put(7,3){\line(1,0){1}}
\put(2,4){\line(1,0){6}}
\put(0,1,5){\makebox(2,1){$T=$}}
\put(2,1){\makebox(2,2){$T^{\pm}$}}
\put(5,2.5){\makebox(1,1){$T^{0}$}}
\end{picture},
\end{center}
where $T^{\pm}=T^{+}$ if $a_{n}\leq a_{n-1}$, $T^{\pm}=T^{-}$ if $a_{n}\geq a_{n-1}$, 
$l(T^{\pm})=n$ and $l(T^{0})\leq n-1$.
We denote by $D_{n}\text{-}\mathrm{SST}(\lambda)$ the set of all semistandard $D_{n}$-tableaux of shape $\lambda$.
For a tableau $T\in D_{n}\text{-}\mathrm{SST}(\lambda)$, we define its weight to be 
\[
\mathrm{wt}(T):=\sum_{i=1}^{n}(k_{i}-\overline{k_{i}})\epsilon_{i},
\]
where $k_{i}$ (resp. $\overline{k_{i}}$) is the number of $i$'s (resp. $\Bar{i}$'s) appearing in $T$.

\begin{df}[\cite{HK,N}] \label{df:KN_D}
A tableau $T\in D_{n}\text{-}\mathrm{SST}(\lambda)$ is said to be \emph{KN-admissible} when the following conditions are satisfied.
\begin{itemize}

\item[(D1)]
If $T$ has a column of the form
\setlength{\unitlength}{12pt}
\begin{center}
\begin{picture}(3,6)
\put(2,0){\line(0,1){6}}
\put(3,0){\line(0,1){6}}
\put(2,0){\line(1,0){1}}
\put(2,1){\line(1,0){1}}
\put(2,2){\line(1,0){1}}
\put(2,4){\line(1,0){1}}
\put(2,5){\line(1,0){1}}
\put(2,6){\line(1,0){1}}
\put(2,1){\makebox(1,1){$\Bar{i}$}}
\put(2,4){\makebox(1,1){$i$}}
\put(0,1){\makebox(2,1){$q\rightarrow$}}
\put(0,4){\makebox(2,1){$p\rightarrow$}}
\end{picture},
\end{center}
then we have $(q-p)+i > N$, where $N$ is the length of the column.

\item[(D2)]
If $T^{+}$ has a column whose $k$-th entry is $n$ (resp. $\Bar{n}$), then $n-k$ is even (resp. odd）.

\item[(D3)]
If $T^{-}$ has a column whose $k$-th entry is $n$ (resp. $\Bar{n}$), then $n-k$ is odd (resp. even）.

\item[(D4)]
If $T$ has a pair of adjacent columns having one of the following configurations with $p\leq q < r\leq s$ and $a\leq b<n$:
\setlength{\unitlength}{12pt}
\begin{center}
\begin{picture}(7,5)
\put(3,0){\line(0,1){5}}
\put(6,0){\line(0,1){5}}
\put(3,0){\makebox(1,1){$\Bar{a}$}}
\put(3,1){\makebox(1,1){$\Bar{b}$}}
\put(3,3){\makebox(1,1){$b$}}
\put(2,4){\makebox(1,1){$a$}}
\put(6,0){\makebox(1,1){$\Bar{a}$}}
\put(5,1){\makebox(1,1){$\Bar{b}$}}
\put(5,3){\makebox(1,1){$b$}}
\put(5,4){\makebox(1,1){$a$}}
\put(4,0){\makebox(1,1)[l]{$,$}}
\put(0,0){\makebox(2,1){$s\rightarrow$}}
\put(0,1){\makebox(2,1){$r\rightarrow$}}
\put(0,3){\makebox(2,1){$q\rightarrow$}}
\put(0,4){\makebox(2,1){$p\rightarrow$}}
\end{picture},
\end{center}
then we have $(q-p)+(s-r)<b-a$.

\item[(D5)]
If $T$ has a pair of adjacent columns having one of the following configurations with $p\leq q<r=q+1\leq s$ and $a<n$:
\setlength{\unitlength}{12pt}
\begin{center}
\begin{picture}(13,5)
\put(3,0){\line(0,1){5}}
\put(6,0){\line(0,1){5}}
\put(9,0){\line(0,1){5}}
\put(12,0){\line(0,1){5}}
\put(2,4){\makebox(1,1){$a$}}
\put(3,0){\makebox(1,1){$\Bar{a}$}}
\put(3,1.5){\makebox(1,1){$\Bar{n}$}}
\put(3,2.5){\makebox(1,1){$n$}}

\put(5,4){\makebox(1,1){$a$}}
\put(6,0){\makebox(1,1){$\Bar{a}$}}
\put(6,1.5){\makebox(1,1){$n$}}
\put(6,2.5){\makebox(1,1){$\Bar{n}$}}

\put(8,1.5){\makebox(1,1){$\Bar{n}$}}
\put(8,2.5){\makebox(1,1){$n$}}
\put(8,4){\makebox(1,1){$a$}}
\put(9,0){\makebox(1,1){$\Bar{a}$}}

\put(11,1.5){\makebox(1,1){$n$}}
\put(11,2.5){\makebox(1,1){$\Bar{n}$}}
\put(11,4){\makebox(1,1){$a$}}
\put(12,0){\makebox(1,1){$\Bar{a}$}}

\put(4,0){\makebox(1,1)[l]{$,$}}
\put(7,0){\makebox(1,1)[l]{$,$}}
\put(10,0){\makebox(1,1)[l]{$,$}}
\put(0,0){\makebox(2,1){$s\rightarrow$}}
\put(0,1.5){\makebox(2,1){$r\rightarrow$}}
\put(0,2.5){\makebox(2,1){$q\rightarrow$}}
\put(0,4){\makebox(2,1){$p\rightarrow$}}
\end{picture},
\end{center}
then we have $(q-p)+(s-r)=s-p-1<n-a$.

\item[(D6)]
The tableau $T$ cannot have a pair of adjacent columns having one of the following configurations with $p<s$:
\setlength{\unitlength}{12pt}
\begin{center}
\begin{picture}(13,3)
\put(3,0){\line(0,1){3}}
\put(6,0){\line(0,1){3}}
\put(9,0){\line(0,1){3}}
\put(12,0){\line(0,1){3}}

\put(2,2){\makebox(1,1){$n$}}
\put(3,0){\makebox(1,1){$n$}}

\put(5,2){\makebox(1,1){$n$}}
\put(6,0){\makebox(1,1){$\Bar{n}$}}

\put(8,2){\makebox(1,1){$\Bar{n}$}}
\put(9,0){\makebox(1,1){$n$}}

\put(11,2){\makebox(1,1){$\Bar{n}$}}
\put(12,0){\makebox(1,1){$\Bar{n}$}}

\put(4,0){\makebox(1,1)[l]{$,$}}
\put(7,0){\makebox(1,1)[l]{$,$}}
\put(10,0){\makebox(1,1)[l]{$,$}}
\put(0,0){\makebox(2,1){$s\rightarrow$}}
\put(0,2){\makebox(2,1){$p\rightarrow$}}
\end{picture}.
\end{center}

\item[(D7)]
If $T$ has a pair of adjacent columns having one of the following configurations with $p\leq q<r\leq s$ and $a<n$;
\setlength{\unitlength}{12pt}
\begin{center}
\begin{picture}(15,6)
\put(4,1){\line(0,1){5}}
\put(7,1){\line(0,1){5}}
\put(10,1){\line(0,1){5}}
\put(13,1){\line(0,1){5}}

\put(3,2){\makebox(1,1){$\Bar{n}$}}
\put(3,5){\makebox(1,1){$a$}}
\put(4,1){\makebox(1,1){$\Bar{a}$}}
\put(4,4){\makebox(1,1){$n$}}

\put(6,2){\makebox(1,1){$n$}}
\put(6,5){\makebox(1,1){$a$}}
\put(7,1){\makebox(1,1){$\Bar{a}$}}
\put(7,4){\makebox(1,1){$\Bar{n}$}}

\put(9,2){\makebox(1,1){$n$}}
\put(9,5){\makebox(1,1){$a$}}
\put(10,1){\makebox(1,1){$\Bar{a}$}}
\put(10,4){\makebox(1,1){$n$}}

\put(12,2){\makebox(1,1){$\Bar{n}$}}
\put(12,5){\makebox(1,1){$a$}}
\put(13,1){\makebox(1,1){$\Bar{a}$}}
\put(13,4){\makebox(1,1){$\Bar{n}$}}

\put(5,1){\makebox(1,1)[l]{$,$}}
\put(11,1){\makebox(1,1)[l]{$,$}}
\put(3,0){\makebox(5,1){$r-q+1=\mathrm{odd},$}}
\put(10,0){\makebox(5,1){$r-q+1=\mathrm{even},$}}

\put(0,1){\makebox(2,1){$s\rightarrow$}}
\put(0,2){\makebox(2,1){$r\rightarrow$}}
\put(0,4){\makebox(2,1){$q\rightarrow$}}
\put(0,5){\makebox(2,1){$p\rightarrow$}}
\end{picture}
\end{center}
then we have $s-p<n-a$.

\end{itemize}
\end{df}

We denote by $D_{n}\text{-}\mathrm{SST}_{\mathrm{KN}}(\lambda)$ the set of all KN-admissible semistandard $D_{n}$-tableau (without spin columns) of shape $\lambda$.

A crystal $\mathcal{B}^{\mathfrak{so}_{2n}}(\lambda)$ associated with the finite-dimensional irreducible $U_{q}(\mathfrak{so}_{2n})$-module $V_{q}^{\mathfrak{so}_{2n}}(\Tilde{\lambda})$ of a dominant integral weight $\Tilde{\lambda}$ is defined in the same way as in Section~\ref{sec:crystal_C}.
As a set, 
$\mathcal{B}^{\mathfrak{so}_{2n}}(\lambda)$ is $B_{n}\text{-}\mathrm{SST}_{\mathrm{KN}}(\lambda)$.
The crystal structure of 
$\mathcal{B}^{\mathfrak{so}_{2n}}(\lambda)$ is given by the crystal graph of 
$\mathcal{B}^{\mathfrak{so}_{2n}}(\square)$, the tensor product rule, and the far-eastern reading of $T\in\mathcal{B}^{\mathfrak{so}_{2n}}(\lambda)$.
The crystal graph of 
$\mathcal{B}^{\mathfrak{so}_{2n}}(\square)$ is given by as follows:

\setlength{\unitlength}{15pt}
\begin{center}
\begin{picture}(21,5)
\put(0,2){\framebox(1,1){$1$}}
\put(2.5,2){\framebox(1,1){$2$}}
\put(7.5,2){\framebox(1.5,1){\small $n-1$}}
\put(10,0){\framebox(1,1){$\Bar{n}$}}
\put(10,4){\framebox(1,1){$n$}}
\put(12,2){\framebox(1.5,1){\small $\overline{n-1}$}}
\put(17.5,2){\framebox(1,1){$\Bar{2}$}}
\put(20,2){\framebox(1,1){$\Bar{1}$}}

\put(1,2.5){\vector(1,0){1.5}}
\put(3.5,2.5){\vector(1,0){1.5}}
\put(6,2.5){\vector(1,0){1.5}}

\put(8.5,2){\vector(1,-1){1.5}}
\put(8.5,3){\vector(1,1){1.5}}
\put(11,0.5){\vector(1,1){1.5}}
\put(11,4.5){\vector(1,-1){1.5}}

\put(13.5,2.5){\vector(1,0){1.5}}
\put(16,2.5){\vector(1,0){1.5}}
\put(18.5,2.5){\vector(1,0){1.5}}

\put(5,2){\makebox(1,1){$\cdots$}}
\put(15,2){\makebox(1,1){$\cdots$}}
\put(1,2.5){\makebox(1.5,1){$1$}}
\put(3.5,2.5){\makebox(1.5,1){$2$}}
\put(5.9,2.5){\makebox(1.5,1){$n-2$}}
\put(8,0.5){\makebox(1.5,1){$n$}}
\put(7.7,3.5){\makebox(1.5,1){$n-1$}}
\put(11.75,0.5){\makebox(1.5,1){$n-1$}}
\put(11.5,3.5){\makebox(1.5,1){$n$}}
\put(13.6,2.5){\makebox(1.5,1){$n-2$}}
\put(16,2.5){\makebox(1.5,1){$2$}}
\put(18.5,2.5){\makebox(1.5,1){$1$}}
\end{picture},
\end{center}
where 
$\mathrm{wt}\left(\framebox[12pt]{$i$\rule{0pt}{8pt}}\right)=\epsilon_{i}$ 
and 
$\mathrm{wt}\left(\framebox[12pt]{$\Bar{i}$\rule{0pt}{8pt}}\right)=-\epsilon_{i}$
$(i=1,2,\ldots,n)$.

Even in $D_{n}$ case, Definition~\ref{df:addition} is valid and the generalized LR rule of $D_{n}$-type is given by:
\begin{thm}[\cite{HK,KN,N}]
Let $\Tilde{\mu}=\sum_{i=1}^{n}\mu_{i}\epsilon_{i}$ and $\Tilde{\nu}=\sum_{i=1}^{n}\nu_{i}\epsilon_{i}$ be dominant integral weights, and 
$\mu=(\mu_{1},\ldots,\mu_{n-1},\left\vert\mu_{n}\right\vert)$ and 
$\nu=(\nu_{1},\ldots,\nu_{n-1},\left\vert\nu_{n}\right\vert)$ be the corresponding Young diagrams, respectively.
Then we have the following isomorphism:
\begin{equation} \label{eq:so2}
\mathcal{B}^{\mathfrak{so}_{2n}}(\mu)\otimes\mathcal{B}^{\mathfrak{so}_{2n}}(\nu)
\simeq
{\textstyle\bigoplus\limits_{\substack{T\in \mathcal{B}^{\mathfrak{so}_{2n}}(\nu)\\
\mathrm{FE}(T)=
\framebox[16pt]{$m_{1}$}\otimes
\cdots\otimes 
\framebox[16pt]{$m_{N}$}
}}}
\mathcal{B}^{\mathfrak{so}_{2n}}\left(  \mu\lbrack m_{1},m_{2},\ldots,m_{N}]\right),
\end{equation}
where $N=\left\vert \nu\right\vert$.
In the right-hand side of Eq.~\eqref{eq:so2}, we set 
$\mathcal{B}^{\mathfrak{so}_{2n}}\left(  \mu\lbrack m_{1},\ldots,m_{N}]\right)  =\emptyset$ 
if the sequence of letters $m_{1},\ldots,m_{N}$ is not smooth on $\mu$.
\end{thm}

Let us denote by $d_{\mu\nu}^{\lambda}$ the number of 
$\mathcal{B}^{\mathfrak{so}_{2n}}(\lambda)$ appearing in the right-hand side of Eq~\eqref{eq:so2}.
Then the multiplicity $d_{\mu\nu}^{\lambda}$ is given by the cardinality of the following set:
\[
\mathbf{B}_{n}^{\mathfrak{so}_{2n}}(\nu)_{\mu}^{\lambda}:=
\left\{ T\in\mathcal{B}^{\mathfrak{so}_{2n}}(\nu) \relmiddle| 
\mu\left[ \underrightarrow{\mathrm{FE}(T)}\right]
=\lambda\right\}.
\]
Suppose that the far-eastern reading of $T\in \mathbf{B}_{n}^{\mathfrak{so}_{2n}}(\nu)_{\mu}^{\lambda}$ is
\[
\mathrm{FE}(T)=\framebox[15pt]{$m_{1}$\rule{0pt}{8pt}}\otimes \framebox[15pt]{$m_{2}$\rule{0pt}{8pt}}\cdots 
\otimes \framebox[15pt]{$m_{N}$\rule{0pt}{8pt}}.
\] 
If $l(\mu)+l(\nu)\leq n$, then the following additional rule is imposed on the sequence of entries, 
$m_{1},m_{2},\ldots,m_{n}$, in order to guarantee the smoothness of $\mathrm{FE}(T)$ on $\mu$:
To each $n$ (resp. $\Bar{n}$) in this sequence, we assign $+$ (resp. $-$) and cancel out all $(+,-)$-pairs.
Then, the resulting sequence must not have $-$'s.

To verify this rule, it is sufficient to show that \framebox[10pt]{$n$} must appear before \framebox[10pt]{$\Bar{n}$} in $\mathrm{FE}(T)$ (if $\Bar{n}$'s exist in $T$). 
This is shown as follows.
If $l(\mu)=n$, then $\nu=\emptyset$.
Excluding this trivial case, we can assume that $l(\mu)\leq n-1$.
Suppose that in the far-eastern reading of 
$T\in \mathbf{B}_{n}^{\mathfrak{so}_{2n}}(\nu)_{\mu}^{\lambda}$, $\Bar{n}$ appears firstly in the $i$-th box;
\begin{equation} \label{eq:FEso2}
\mathrm{FE}(T)=\framebox[15pt]{$m_{1}$\rule{0pt}{8pt}}\otimes \cdots 
\otimes \framebox[31pt]{$m_{i}=\Bar{n}$\rule{0pt}{8pt}}\otimes \cdots.
\end{equation} 
Since the sequence of letters $m_{1},\ldots,m_{i}=\Bar{n}$ is smooth on $\mu$,  
$l\left(  \mu\lbrack m_{1},\ldots,m_{i-1}]\right)  =n$.
However, this cannot occur because the sequence of letters $m_{1},\ldots,m_{i-1}$ does not contain $n$ and $l(\mu)\leq n-1$.
The same is true for $\mathbf{B}_{n}^{\mathfrak{sp}_{2n}}(\nu)_{\mu}^{\lambda}$ in the stable region, $l(\mu)+l(\nu)\leq n$.
In particular, a tableau $T\in \mathbf{B}_{n}^{\mathfrak{so}_{2n}}(\nu)_{\mu}^{\lambda}$ does not have vertical dominoes 
{\setlength{\tabcolsep}{3pt}\footnotesize
\begin{tabular}{|c|} \hline
$\Bar{n}$ \\ \hline
$n$ \\ \hline
\end{tabular}}.
Thus, conditions (D1), (D2), (D3), (D4), and (D5) in Definition~\ref{df:KN_D} can be replaced by conditions (C1) and (C2) in Definition~\ref{df:KN_C} (with $\lambda$ being replaced by $\nu$) as long as tableaux in $\mathbf{B}_{n}^{sp_{2n}}(\nu)_{\mu}^{\lambda}$ are considered in the stable region.
Condition (D6) in Definition~\ref{df:KN_D} is replaced by:

\begin{itemize}
\item[(D6')]
A tableau $T\in D_{n}\text{-}\mathrm{SST}_{\mathrm{KN}}(\nu)$ cannot have a pair of adjacent columns having the following configurations with $p<s$:
\setlength{\unitlength}{12pt}
\begin{center}
\begin{picture}(4,3)
\put(3,0){\line(0,1){3}}

\put(2,2){\makebox(1,1){$n$}}
\put(3,0){\makebox(1,1){$\Bar{n}$}}
\put(3,2){\makebox(1,1){$n$}}

\put(0,0){\makebox(2,1){$s\rightarrow$}}
\put(0,2){\makebox(2,1){$p\rightarrow$}}
\end{picture}.
\end{center}
\end{itemize}
This is contained in (C2) in Definition~\ref{df:KN_C} (with $\lambda$ being replaced by $\nu$).
Condition (D7) in Definition~\ref{df:KN_D} is replaced by:

\begin{itemize}
\item[(D7')]
If $T\in D_{n}\text{-}\mathrm{SST}_{\mathrm{KN}}(\nu)$ has a pair of adjacent columns having one of the following configurations with $p\leq q<r\leq s$ and $a<n$;
\setlength{\unitlength}{12pt}
\begin{center}
\begin{picture}(15,6)
\put(4,1){\line(0,1){5}}
\put(11,1){\line(0,1){5}}

\put(3,2){\makebox(1,1){$\Bar{n}$}}
\put(3,5){\makebox(1,1){$a$}}
\put(4,1){\makebox(1,1){$\Bar{a}$}}
\put(4,4){\makebox(1,1){$n$}}

\put(10,2){\makebox(1,1){$n$}}
\put(10,5){\makebox(1,1){$a$}}
\put(11,1){\makebox(1,1){$\Bar{a}$}}
\put(11,4){\makebox(1,1){$n$}}

\put(2,0){\makebox(5,1){$r-q+1=\mathrm{odd},$}}
\put(10,0){\makebox(5,1){$r-q+1=\mathrm{even},$}}

\put(0,1){\makebox(2,1){$s\rightarrow$}}
\put(0,2){\makebox(2,1){$r\rightarrow$}}
\put(0,4){\makebox(2,1){$q\rightarrow$}}
\put(0,5){\makebox(2,1){$p\rightarrow$}}

\put(6,5){\makebox(1,1){$(A)$}}
\put(13,5){\makebox(1,1){$(B)$}}
\end{picture}
\end{center}
then we have $s-p<n-a$.

\end{itemize}
This is due to the fact that $\mathrm{FE}(T)$ of Eq.\eqref{eq:FEso2} is not allowed.

Suppose that $T\in D_{n}\text{-}\mathrm{SST}_{\mathrm{KN}}(\nu)$ has configuration $(A)$ above.

\setlength{\unitlength}{12pt}
\begin{center}
\begin{picture}(4,10)
\put(2,0){\line(0,1){10}}
\put(3,0){\line(0,1){10}}
\put(4,0){\line(0,1){10}}

\put(2,3){\line(1,0){1}}
\put(2,4){\line(1,0){1}}
\put(2,6){\line(1,0){2}}
\put(2,7){\line(1,0){2}}
\put(2,8){\line(1,0){1}}
\put(2,9){\line(1,0){1}}
\put(3,1){\line(1,0){1}}
\put(3,2){\line(1,0){1}}

\put(2,3){\makebox(1,1){$\Bar{n}$}}
\put(2,4){\makebox(1,2){$A$}}
\put(2,6){\makebox(1,1){$b_{1}$}}
\put(2,8){\makebox(1,1){$a$}}
\put(3,1){\makebox(1,1){$\Bar{a}$}}
\put(3,6){\makebox(1,1){$n$}}

\put(0,1){\makebox(2,1){$s\rightarrow$}}
\put(0,3){\makebox(2,1){$r\rightarrow$}}
\put(0,6){\makebox(2,1){$q\rightarrow$}}
\put(0,8){\makebox(2,1){$p\rightarrow$}}

\end{picture}.
\end{center}
Since $r-q+1$ is odd, $A$ has at least one box.
Let $b_{2}$ be the entry at the $(q+1)$-st position in the left column ($a\leq b_{1}<b_{2}\leq n$).
Then,
\[
(q-p)+(s-r)<s-p<n-a=\max(b_{1},n)-a,
\] 
and
\[
(q+1-p)+(s-r) \leq s-p<n-a=\max(b_{2},n)-a.
\]
Thus, the condition for the right configuration of (C2) in Definition~\ref{df:KN_C} is satisfied irrespective of whether $q-p$ is odd or even.
Similarly, the configuration (B) leads to the condition for the left configuration of (C2) in Definition~\ref{df:KN_C}

Combining these, we obtain:
\begin{thm} \label{thm:main_D}
Fix $\lambda, \mu, \nu \in \mathcal{P}_{n}$.
If $l(\mu)+l(\nu)\leq n$, then we have
$\mathbf{B}_{n}^{\mathfrak{so}_{2n}}(\nu)_{\mu}^{\lambda}=
\mathbf{B}_{n}^{\mathfrak{sp}_{2n}}(\nu)_{\mu}^{\lambda}$.
\end{thm}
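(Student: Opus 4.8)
The plan is to prove the set equality by showing that, under $l(\mu)+l(\nu)\le n$, the orthogonal KN-admissibility conditions (D1)--(D7) of Definition~\ref{df:KN_D} are equivalent, on tableaux of shape $\nu$, to the symplectic conditions (C1), (C2) of Definition~\ref{df:KN_C}. Both sets $\mathbf{B}_{n}^{\mathfrak{so}_{2n}}(\nu)_{\mu}^{\lambda}$ and $\mathbf{B}_{n}^{\mathfrak{sp}_{2n}}(\nu)_{\mu}^{\lambda}$ sit inside the tableaux of shape $\nu$ over the common alphabet $\{1,\dots,n,\bar n,\dots,\bar 1\}$ and are cut out by the \emph{same} shape, weight, and smoothness requirement $\mu\left[\underrightarrow{\mathrm{FE}(T)}\right]=\lambda$; the only possible discrepancies lie in the admissibility rules and in the position of $n$ relative to $\bar n$. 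Hence it suffices to check that membership in either set forces the $C_{n}$-conditions, and conversely.

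The decisive step is the structural observation, symmetric in the two types, that any $T$ counted by either crystal has $n$ occurring before $\bar n$ in $\mathrm{FE}(T)$, so that $T$ carries no vertical domino consisting of $\bar n$ above $n$. I would prove this by contradiction from smoothness: discarding the trivial case $l(\mu)=n$ (which forces $\nu=\emptyset$), we have $l(\mu)\le n-1$, so if $\bar n$ first appeared at the $i$-th letter of the reading then $\mu\left[m_{1},\dots,m_{i-1}\right]$ would have to attain length $n$ for $\mu\left[m_{1},\dots,m_{i-1}\right][\bar n]$ to remain a Young diagram, which is impossible because $m_{1},\dots,m_{i-1}$ contains no $n$ and affects only the first $n-1$ rows of $\mu$. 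The identical argument on the symplectic side is what makes the eventual equality symmetric rather than merely an inclusion.

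With domino-freeness in hand, I would assemble the reductions recorded in the discussion preceding the theorem. Condition (D1) is literally (C1) and (D4) is literally (C2) (with $\lambda$ replaced by $\nu$); the parity conditions (D2), (D3) and the single-column configurations of (D5) cannot be violated once the forbidden relative placement of $n$ and $\bar n$ is excluded, so they hold automatically. The remaining genuinely orthogonal rules (D6) and (D7) collapse to their weakened forms (D6') and (D7'), and one verifies that the surviving forbidden and admissible configurations are exactly those already enforced by (C2) on a tableau of shape $\nu$. Collecting these equivalences gives $T\in\mathbf{B}_{n}^{\mathfrak{so}_{2n}}(\nu)_{\mu}^{\lambda}$ if and only if $T\in\mathbf{B}_{n}^{\mathfrak{sp}_{2n}}(\nu)_{\mu}^{\lambda}$, which is the assertion.

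The main obstacle will be the analysis underlying (D7'): one must translate each two-column configuration carrying $n$ and $\bar n$ into the single $C_{n}$-column language, use the parity of $r-q+1$ together with domino-freeness to rule out the authentically orthogonal patterns, and then confirm that the orthogonal inequality $s-p<n-a$ coincides with the bound supplied by (C2). I would handle this by a short case split on whether the two adjacent columns contain $n$, contain $\bar n$, or contain both, checking in each case that exclusion of the $\bar n$-over-$n$ domino reduces the configuration to one already covered by (C2) applied to $\nu$. Once this matching is confirmed, combining it with the structural lemma and the elementary identifications of the other conditions completes the proof.
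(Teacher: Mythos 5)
Your proposal is correct and takes essentially the same route as the paper: the same key structural lemma (in the far-eastern reading, $n$ must precede $\bar{n}$, hence no vertical $\bar{n}$-over-$n$ dominoes), proved by the identical smoothness argument using $l(\mu)\leq n-1$ after discarding the trivial case $l(\mu)=n$, and applied symmetrically to the symplectic side. The subsequent condition-by-condition reduction --- (D1), (D4) matching (C1), (C2), the parity conditions (D2), (D3) and (D5) becoming vacuous, and (D6), (D7) collapsing to (D6'), (D7') which are then absorbed into (C2) --- is exactly the paper's argument.
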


\subsection*{Acknowledgements}
The author would like to express his gratitude to Professor Susumu Ariki and Professor Ryoichi Kase for helping him in writing this paper and warm encouragement.
He would like to thank Professor Satoshi Naito for helpful discussions and comments.
He also would like to thank Seoul National University for kind hospitality and support during his visit in April 2017.
The results of Section~\ref{sec:main2} were suggested by Professor Jae-Hoon Kwon and were proven in the course of our discussions.

\end{document}